\crefname{subsection}{Subsection}{Subsections}
\crefname{subsubsection}{Subsubsection}{Subsubsections}
\theoremstyle{definition}
\newtheorem{theorem}{Theorem}[subsection]
\newtheorem{defn}[theorem]{Definition}
\newtheorem{ex}[theorem]{Example}
\newtheorem{cor}[theorem]{Corollary}
\newtheorem{lemma}[theorem]{Lemma}
\newtheorem{prop}[theorem]{Proposition}
\newtheorem{rmk}[theorem]{Remark}
\newtheorem{warning}[theorem]{Warning}
\newtheorem{question}[theorem]{Question}
\newtheorem{construction}[theorem]{Construction}
\newtheorem*{rmk*}{Remark}
\newtheorem*{ex*}{Example}
\newtheorem*{theorem*}{Theorem}
\newtheorem*{defn*}{Definition}
\newcommand{\bbZ}{\mathbb{Z}}
\newcommand{\bbH}{\mathbb{H}}
\newcommand{\bbN}{\mathbb{N}}
\newcommand{\bbQ}{\mathbb{Q}}
\newcommand{\bbR}{\mathbb{R}}
\newcommand{\bbC}{\mathbb{C}}
\newcommand{\Sp}{\mathcal{S}\mathrm{p}}
\newcommand{\Fun}{\mathrm{Fun}}
\newcommand{\Ind}{\operatorname{Ind}}
\newcommand{\Res}{\operatorname{Res}}
\newcommand{\Map}{\operatorname{Map}}
\newcommand{\Hom}{\operatorname{Hom}}
\newcommand{\Aut}{\operatorname{Aut}}
\newcommand{\Fib}{\operatorname{Fib}}
\newcommand{\Cof}{\operatorname{Cof}}
\newcommand{\coker}{\operatorname{Coker}}
\newcommand{\colim}{\operatorname*{colim}}
\newcommand{\im}{\operatorname{Im}}
\renewcommand{\ker}{\operatorname{Ker}}
\newcommand{\Sub}{\operatorname{Sub}}
\newcommand{\res}{\operatorname{res}}
\newcommand{\tr}{\operatorname{tr}}
\newcommand{\Cl}{\operatorname{Cl}}
\newcommand{\id}{\operatorname{id}}
\newcommand{\trp}{\mathfrak{t}}
\newcommand{\resp}{\mathfrak{r}}
\newcommand{\h}{{\mathrm{h}}}
\newcommand{\tate}{{\mathrm{t}}}
\newcommand{\cyc}{\mathrm{cyc}}
\newcommand{\red}{\mathrm{red}}
\newcommand{\bs}{{-}}
\newcommand{\ul}{\underline}
\newcommand{\ol}{\overline}
\newcommand{\rat}{W}
\newcommand{\ceil}[1]{\left\lceil#1\right\rceil}
\newcommand{\floor}[1]{\left\lfloor#1\right\rfloor}
\newcommand\xqed[1]{%
  \leavevmode\unskip\penalty9999 \hbox{}\nobreak\hfill
  \quad\hbox{#1}}
\newcommand\tqed{\xqed{$\triangleleft$}}
\DeclareRobustCommand{\tvdots}{%
  \vbox{\baselineskip4\p@\lineskiplimit\z@\kern0\p@\hbox{.}\hbox{.}\hbox{.}}}
\newcommand{\raisemath}[1]{\mathpalette{\raisem@th{#1}}}
\newcommand{\raisem@th}[3]{\raisebox{#1}{$#2#3$}}
\begin{document}

\title{$C_{p^n}$-equivariant Mahowald invariants}
\author{William Balderrama}
\address{Mathematical Institute \\ University of Bonn \\  Endenicher Allee 60, D-53115 Bonn, Germany}
\email{williamb@math.uni-bonn.de}

\author{Yueshi Hou}
\address{Department of Mathematics, University of California San Diego, La Jolla, CA 92093, USA}
\email{yuh091@ucsd.edu}

\author{Shangjie Zhang}
\address{Department of Mathematics, University of California San Diego, La Jolla, CA 92093, USA}
\email{shz046@ucsd.edu}
\subjclass[2020]{
19L20, 
19L47, 
55P42, 
55Q91. 
}

\begin{abstract}
The classical Mahowald invariant is an operation that
systematically produces new elements in the stable homotopy groups of spheres from known ones. We introduce the $C_{p^n}$-Mahowald invariant: a relation $\pi_\star S_{\raisemath{3pt}{C_{p^{n-1}}}}\rightharpoonup \pi_\ast S$ between equivariant and classical stable stems which recovers the classical Mahowald invariant when $n=1$. We compute the $C_{p^n}$-Mahowald invariants of all elements in the Burnside ring $\smash{A(C_{p^{n-1}}) = \pi_0S_{\raisemath{2pt}{C_{p^{n-1}}}}}$, extending Mahowald and Ravenel's computation of $M_{C_p}(p^k)$. As a consequence, we determine the image of the $C_p$-geometric fixed point map $\Phi^{C_p} \colon \pi_V S_{\raisemath{2pt}{C_{p^{n}}}}\rightarrow \pi_0 \smash{S_{\raisemath{2pt}{C_{p^{n}}/C_p}}}\cong A(C_{p^{n-1}})$ when $V$ is fixed point free, extending classical theorems of Bredon, Landweber, and Iriye for $n=1$.
\end{abstract}

\maketitle


\section{Introduction}

\subsection{\texorpdfstring{$C_{p^n}$}{C\_pn}-equivariant stable stems and geometric fixed points}

Homotopy classes of self-maps $S^n\rightarrow S^n$ of a positive-dimensional sphere are classified by their \textit{degree}, which can be any integer. The equivariant story is more subtle. Given a virtual $G$-representation $\alpha = V-W$, the  \textit{$G$-equivariant stable $\alpha$-stem} is defined as
\[
\pi_\alpha S_G = \colim_{t\rightarrow\infty} [S^{V+t\bbR[G]},S^{W+t\bbR[G]}]_\ast^G.
\]
Here, $[\bs,\bs]_\ast^G$ denotes $G$-equivariant homotopy classes of maps.  The degree furnishes an isomorphism $\pi_0 S = \pi_0 S_e \cong \bbZ$, and so one is led to ask the following fundamental question.

\begin{question}\label{problem:fpdegree}
Fix a finite group $G$, a subgroup $H\subset G$, and a virtual $G$-representation $\alpha$ satisfying $|\alpha^H| = 0$. What is the image of the $H$-geometric fixed point map
\[
\Phi^H\colon \pi_\alpha S_G\rightarrow \pi_{\alpha^H} S \cong \bbZ,\qquad \Phi^H(f\colon S^{V}\rightarrow S^W) = \deg(f^H\colon S^{V^H}\rightarrow S^{W^H})?\tag*{$\triangleleft$}
\]
\end{question}

Classical work on \cref{problem:fpdegree} was used to describe $\pi_\alpha S_G$ when $|\alpha^K|\leq 0$ for all $K\subset G$ \cite{dieckpetrie1978geometric, hauschild1977zerspaltung, tornehave1982equivariant}, giving information about the Picard group of $G$-spectra \cite{fausklewismay2001picard,angeltveit2021picard}. As one allows $H$ to range over the subgroups of $G$, geometric fixed points detect equivalences and nilpotence \cite{iriye1982equivariant, barthelgreenleeshausmann2020balmer}, and accordingly play a central role in the study of chromatic equivariant phenomena \cite{balmersanders2016spectrum, barthelhausmannnaumannnikolausnoelstapleton2019balmer, kuhnlloyd2022chromatic,bhattacharyaguillouli2022rmotivic, behrenscarlisle2024periodic}. They have been used to describe the rationalization $\bbQ \otimes \pi_\star S_G$ \cite{greenleesquigley2023ranks}, and from our perspective \cref{problem:fpdegree} asks for explicit integral information about the ``height zero portion'' of $\pi_\star S_G$. A major theme in equivariant homotopy theory is the prominence of transchromatic phenomena, already made apparent in work on Tate spectra \cite{davismahowald1984spectrum, greenleessadofsky1996tate, hoveysadofsky1996tate, andomoravasadofsky1998completions, lilormanquigley2022tate} and Mahowald invariants \cite{shick1987root, mahowaldshick1988root, sadofsky1992root, brunergreenlees1995bredon, behrens2007some, quigley2019tmf}, meaning that even this torsion-free information is intricately tied up with higher height torsion phenomena.

The equivariant stable stems themselves were introduced by Bredon \cite{bredon1967equivariant,bredon1968equivariant} for the group $G = C_2$ of order $2$, whose work on \cref{problem:fpdegree} culminated in a conjecture that was proved by Landweber \cite{landweber1969equivariant}: if $\sigma$ is the sign representation of $C_2$, then 
\begin{equation*}\label{eq:landweber}
\im(\Phi^{C_2} \colon \pi_{k\sigma}S_{C_2}\rightarrow \pi_0 S) = g(k)\bbZ,\quad\text{where}\quad g(k) = \begin{cases}2^{4l+1}&k\in\{8l,8l+1\},\\
2^{4l+2}&k=8l+2,\\
2^{4l+3}&k=8l+3,\\
2^{4l+4}&k\in\{8l+4,\ldots,8l+7\}.\end{cases}
\end{equation*}
The analogous theorem at odd primes was later proved by Iriye \cite{iriye1989images}: if $p$ is an odd prime and $V$ is a sum of $k>0$ faithful complex characters of $C_p$, then
\[
\im(\Phi^{C_p}\colon \pi_V S_{C_p}\rightarrow\pi_0 S) = p^{1+\floor{\frac{k}{p-1}}}\bbZ.
\]
Despite significant advances in equivariant homotopy theory since the 1960s and 1980s, little progress has been made on \cref{problem:fpdegree} for larger groups, except when $|\alpha^K| \leq 0$ for all $K\subset G$ as mentioned above. This paper develops effective methods for resolving this and related questions when $\alpha = V$ is a non-virtual representation and $H\subset G$ is a \textit{cyclic} subgroup of a $p$-group. In particular, we prove the following.

\begin{theorem}[{\ref{cor:fpdegree}}]\label{thm:fixeddegree}
Let $V$ be a sum of $k>0$ faithful complex characters of $C_{p^n}$. Then the image of the degree function
\[
\deg\colon \pi_{V} S_{C_{p^n}}\rightarrow \Hom(\{1,\ldots,n\},\bbZ),\qquad (\deg y)(i) = \Phi^{C_{p^{i}}}(y)
\]
has a basis of $n$ functions $f_{1,2k}^{p,n},f_{2,2k}^{p,n},\ldots,f_{n,2k}^{p,n}$, where
\[
f_{s,2k}^{p,n}(i) = \begin{cases}
p^{n-s+p^{s-i}\left(1+\floor{\frac{k}{p^{s-1}(p-1)}}\right)}&1\leq i \leq s,\\ 0&s < i \leq n,
\end{cases}
\]
except when $p=2$ and $i=s=1$ where
\[
f_{1,2k}^{2,n}(1) = \begin{cases}
2^{n+k}&k\equiv 0,1,3\pmod{4},\\
2^{n+k+1}&k\equiv 2 \pmod{4}.
\end{cases}
\tag*{\vspace{\baselineskip}$\triangleleft$}
\]
\end{theorem}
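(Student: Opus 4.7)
The plan is to reduce Theorem~\ref{thm:fixeddegree} to Corollary~\ref{cor:fpdegree}, which computes the image of the single map $\Phi^{C_p}\colon \pi_V S_{C_{p^n}} \to A(C_{p^{n-1}})$, and then to transport the answer through the character map of the Burnside ring.

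For the reduction, I observe that since $V$ is a sum of faithful complex characters of $C_{p^n}$, $V^{C_{p^i}}=0$ for every $i\geq 1$. The composition rule for geometric fixed points on fixed-point-free spheres then yields $\Phi^{C_{p^i}} = \Phi^{C_{p^{i-1}}}\circ \Phi^{C_p}$, where $\Phi^{C_p}\colon \pi_V S_{C_{p^n}} \to \pi_0 S_{C_{p^{n-1}}} \cong A(C_{p^{n-1}})$; under the tom Dieck identification, $\Phi^{C_{p^{i-1}}}$ on $A(C_{p^{n-1}})$ is the character $\phi_{C_{p^{i-1}}}(T)=\lvert T^{C_{p^{i-1}}}\rvert$. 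Hence the degree function factors as
\[
\pi_V S_{C_{p^n}} \xrightarrow{\Phi^{C_p}} A(C_{p^{n-1}}) \xrightarrow{\chi} \Hom(\{1,\ldots,n\},\bbZ), \qquad \chi(x)(i) = \phi_{C_{p^{i-1}}}(x).
\]
In the Burnside basis $\{[C_{p^{n-1}}/C_{p^j}]\}_{j=0}^{n-1}$, the map $\chi$ is upper triangular with entries $\chi([C_{p^{n-1}}/C_{p^j}])(i) = p^{n-1-j}$ for $i\leq j+1$ and zero otherwise; in particular $\chi$ is injective, so the image of $\deg$ equals $\chi(\im \Phi^{C_p})$.

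By Cor.~\ref{cor:fpdegree}, $\im \Phi^{C_p}$ admits a triangular basis $\{y_s\}_{s=1}^n$ of the form $y_s = c_s\cdot[C_{p^{n-1}}/C_{p^{s-1}}] + (\text{terms with }j<s-1)$ with leading coefficient $c_s = p^{1+\floor{k/(p^{s-1}(p-1))}}$, replaced by $c_1 = 2^{k+2}$ in the exceptional case $(p,s)=(2,1)$, $k\equiv 2\pmod 4$, in accordance with Landweber's theorem. The upper triangularity of $\chi$ forces $\chi(y_s)(i)=0$ for $i>s$, reproducing the support condition on $f_{s,2k}^{p,n}$, and the diagonal entry $\chi(y_s)(s) = c_s\cdot p^{n-s}$ matches the value $p^{n-s+1+\floor{k/(p^{s-1}(p-1))}}$ in the statement (together with its Landweber correction at $n=1$). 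For $i<s$, $\chi(y_s)(i)$ picks up contributions from both $c_s\cdot[C_{p^{n-1}}/C_{p^{s-1}}]$ and the lower-$j$ terms of $y_s$, and the Mahowald invariant computation pins down those lower coefficients precisely so that the total $p$-valuation becomes $n-s+p^{s-i}(1+\floor{k/(p^{s-1}(p-1))})$.

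The main obstacle is the verification of this last identity for the off-diagonal entries. Conceptually, it reflects the iterated application of Iriye's and Landweber's theorems along $C_p\subset C_{p^2}\subset\cdots\subset C_{p^s}$: since the restriction of $V$ to each $C_{p^t}$ remains a sum of $k$ faithful complex characters, each nested layer of geometric fixed points contributes a further multiplicative factor (determined by Iriye's exponent at level $t$) to $\chi(y_s)(i)$, producing the stated power of $p$. The Landweber exception at $(p,s,i)=(2,1,1)$ is inherited directly from the base case of this iteration.
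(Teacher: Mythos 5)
Your opening reduction — factoring $\deg$ through $\Phi^{C_p}\colon \pi_V S_{C_{p^n}} \to A(C_{p^{n-1}})$ and the marks homomorphism $\phi\colon A(C_{p^{n-1}}) \rightarrowtail \Hom(\{1,\ldots,n\},\bbZ)$ — is correct and is essentially what the paper records in \cref{rmk:gammak} and uses in \cref{cor:fpdegree} (via \cref{prop:fpd}). But the proposal then has a genuine gap, indeed a circularity.

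You invoke "Cor.~\ref{cor:fpdegree}" (and implicitly \cref{thm:fpimage}, i.e. \cref{cor:fpimage}) to assert that $\im \Phi^{C_p}$ has a triangular basis $\{y_s\}$ with leading coefficients $c_s = p^{1+\lfloor k/(p^{s-1}(p-1))\rfloor}$. But \cref{cor:fpdegree} \emph{is} the statement you are proving — the tag in the theorem statement shows the two are one and the same — and \cref{cor:fpimage} is deduced in the paper directly from \cref{cor:fpdegree}. So the claimed triangular basis for $\im\Phi^{C_p}$ is precisely the unknown; citing it is circular. The computation of $\im\Phi^{C_p}$ is the entire content of the theorem, and it is not available as a black box.

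Independently of the circularity, the off-diagonal verification is not a proof. The sentence "the Mahowald invariant computation pins down those lower coefficients precisely so that the total $p$-valuation becomes $n-s+p^{s-i}(1+\lfloor k/(p^{s-1}(p-1))\rfloor)$" asserts the conclusion rather than deriving it, and the "conceptual" gloss — iterating Iriye's and Landweber's theorems along $C_p\subset C_{p^2}\subset\cdots$ — does not work as stated: those theorems determine $\im\Phi^{C_p}$ only for the group $C_p$ of prime order, and there is no formal mechanism by which they propagate to the nested fixed-point data for $C_{p^n}$. Identifying what replaces "iterating Iriye/Landweber" is exactly what the paper must and does establish: \cref{thm:equivadams} (an equivariant Adams conjecture argument using the Segal conjecture and Quillen's theorem) reduces the computation of $\phi(I_{S(V)})$ to the subgroup of $\widetilde{KO}{}_{C_{p^n}}^0(S^V)$ fixed by $\psi^\ell$, and \cref{thm:kufp}, \cref{thm:kofp}, and \cref{lem:phifp2} then compute that subgroup and its marks explicitly (the key inputs being \cref{lem:adamsopcyclic} on $\psi^\ell(\beta_{\rat_m}^k)$ and the identities in \cref{ssec:eulerclasses}). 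Your proposal contains none of this, so the theorem remains unproved.
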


The reason for the even subscripts will appear in \cref{ssec:rootintro} below.

\begin{cor}[{\ref{cor:fpimage}}]\label{thm:fpimage}
Let $V$ be a sum of $k>0$ faithful complex characters of $C_{p^n}$. Then the image of the $C_p$-geometric fixed point map
\[
\Phi^{C_p}\colon \pi_V S_{C_{p^n}}\rightarrow\pi_{V^{C_p}}S_{C_{p^n}/C_p} \cong \pi_0 S_{C_{p^{n-1}}}
\]
has a basis of $n$ elements $X_{1,k}^{p,n},\ldots,X_{n,k}^{p,n}$, where if we write $\tr$ and $N$ for the transfer and norm on the $0$th equivariant stable stem, then
\[
X_{s,k}^{p,n} = \tr_{C_{p^{s-1}}}^{C_{p^{n-1}}}\left(N_e^{C_{p^{s-1}}}\left(p^{1+\floor{\frac{k}{p^{s-1}(p-1)}}}\right)\right),
\]
except when $p=2$ and $s=1$ where $X^{2,n}_{1,k} = \tr_e^{C_{2^{n-1}}}(f_{1,2k}^{2,1}(1))$.
\tqed
\end{cor}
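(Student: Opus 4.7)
The plan is to deduce this corollary as a direct translation of Theorem \ref{thm:fixeddegree} via the injectivity of the ghost (marks) map on the Burnside ring. As a first step, for each $1 \leq s \leq n$ I would pick a lift $y_s \in \pi_V S_{C_{p^n}}$ with $\deg(y_s) = f_{s,2k}^{p,n}$ --- which exists by Theorem \ref{thm:fixeddegree} --- and tentatively set $X_{s,k}^{p,n} := \Phi^{C_p}(y_s) \in \pi_0 S_{C_{p^{n-1}}}$. These obviously lie in the image of $\Phi^{C_p}$.

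Next I would argue that $\{X_{s,k}^{p,n}\}_{s=1}^n$ is actually a $\bbZ$-basis of that image. The key observation is that geometric fixed points compose: under the canonical identification $\pi_0 S_{C_{p^n}/C_p} \cong \pi_0 S_{C_{p^{n-1}}}$, the subgroup $C_{p^j} \subseteq C_{p^{n-1}}$ corresponds to $C_{p^{j+1}}/C_p$, so $\Phi^{C_{p^j}} \circ \Phi^{C_p} = \Phi^{C_{p^{j+1}}}$. Consequently the degree function of Theorem \ref{thm:fixeddegree} factors as $\deg = \mathrm{gh}\circ\Phi^{C_p}$, where $\mathrm{gh}\colon A(C_{p^{n-1}})\hookrightarrow\prod_{j=0}^{n-1}\bbZ$, $x\mapsto(\Phi^{C_{p^j}}x)_j$, is the injective ghost embedding of the Burnside ring of a cyclic $p$-group. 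Since $\mathrm{gh}$ is injective, the image of $\Phi^{C_p}$ is isomorphic to the image of $\deg$ as abelian groups, and the basis $\{f_{s,2k}^{p,n}\}$ from Theorem \ref{thm:fixeddegree} lifts to the basis $\{X_{s,k}^{p,n}\}$.

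The last step is to identify each $X_{s,k}^{p,n}$ with the asserted transfer-norm formula. Because $\mathrm{gh}$ detects elements of $A(C_{p^{n-1}})$, it suffices to verify that the asserted formula has the same marks as $X_{s,k}^{p,n}$. For abelian $G \supseteq H$ and $K \subseteq G$, the standard Burnside-ring formulas give
\[
\phi_K\bigl(\tr^G_H x\bigr) = \begin{cases}[G:H]\,\phi_K(x), & K \subseteq H,\\ 0, & K \not\subseteq H,\end{cases} \qquad \phi_K\bigl(N^H_e(a)\bigr) = a^{[H:K]} \quad (K \subseteq H).
\]
Taking $G = C_{p^{n-1}}$, $H = C_{p^{s-1}}$, $K = C_{p^j}$, and $a = p^{1+\floor{k/(p^{s-1}(p-1))}}$, these evaluate to $p^{n-s}\,a^{p^{s-1-j}} = p^{n-s+p^{s-1-j}(1+\floor{k/(p^{s-1}(p-1))})}$ for $j < s$ and to $0$ otherwise, which is exactly $f_{s,2k}^{p,n}(j+1)$. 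The $p=2$, $s=1$ case is handled identically with $a = f_{1,2k}^{2,1}(1)$: the only nonzero mark is $\phi_e = 2^{n-1}a = f_{1,2k}^{2,n}(1)$.

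The only substantive input is Theorem \ref{thm:fixeddegree}; everything else is a formal Burnside-ring manipulation, so I do not expect this deduction to present any real obstacle beyond the input theorem itself.
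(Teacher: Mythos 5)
Your proposal is correct and takes essentially the same route as the paper: the paper's one-line proof of \cref{cor:fpimage} is precisely the observation that $\deg$ factors through $\Phi^{C_p}$ via the injective marks homomorphism, so it suffices (given \cref{cor:fpdegree}) to verify $\phi(X_{s,k}^{p,n}) = f_{s,2k}^{p,n}$. Your Burnside-ring mark computations for the transfer and norm are correct and just make explicit what the paper leaves implicit.
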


\begin{ex}\label{ex:cpn}
Except for the group $C_2$, \cref{thm:fixeddegree} implies that if $V$ is a sum of $k$ faithful complex characters of $C_{p^n}$, then
\[
\im(\Phi^{C_{p^n}}\colon \pi_V S_{C_{p^n}}\rightarrow\pi_0 S \cong \bbZ) = p^{1+\floor{\frac{k}{p^{n-1}(p-1)}}}\bbZ.
\]
For example, there exists a $C_{p^n}$-equivariant map $f\colon S^{V\oplus U}\rightarrow S^U$ for some $U$ with the property that the induced map $f^{C_{p^n}}\colon S^{U^{C_{p^n}}}\rightarrow S^{U^{C_{p^n}}}$ on fixed points is of degree $p$ if and only if $k < p^{n-1}(p-1)$. More generally, if $1 \leq i \leq n$ then
\[
\im(\Phi^{C_{p^i}}\colon \pi_V S_{C_{p^n}}\rightarrow \pi_0 S \cong\bbZ) = \min(f_{i,2k}^{p,n}(i),\ldots,f_{n,2k}^{p,n}(i))\bbZ,
\]
but this does not admit any particular simplification for $i<n$.
\tqed
\end{ex}


We now describe another perspective on \cref{thm:fixeddegree}. Classically, Serre's finiteness theorem implies that $\pi_n S$ is finite for $n \neq 0$, and Nishida's nilpotence theorem asserts that every element of $\pi_n S$ is nilpotent for $n \neq 0$. Equivariantly, the groups $\pi_\star S_G$ are finitely generated in each degree, but contain nonzero torsion-free summands and non-nilpotent elements in infinitely many degrees as soon as $G$ is nontrivial. Iriye \cite{iriye1982nilpotency} proved that an element of $\pi_\star S_G$ is nilpotent if and only if it is torsion, so one is led to study the reduced ring
\[
\pi_\star^{\red}S_G = \pi_\star S_G/\sqrt{0} = \pi_\star S_G / \text{torsion}.
\]
This was studied for $G = C_2$ and $C_4$ by Crabb in \cite{crabb1989periodicity}, and a presentation of $\pi_\star^{\red}S_{C_2}$ was given by Belmont, Xu, and the third-named author in \cite{belmontxuzhang2024reduced}. While Greenlees and Quigley \cite{greenleesquigley2023ranks} have recently studied the rank of the equivariant stable stems, we can go further using \cref{thm:fixeddegree} and \cref{thm:fpimage}: the degree map of \cref{thm:fixeddegree} is a rational isomorphism, and therefore its image gives the following integral description of $\pi_\star^{\red}S_{C_{p^n}}$ in certain $RO(C_{p^n})$-degrees. 

\begin{theorem}[{\ref{thm:presentation}}]
Let $L$ be a faithful complex character of $C_{p^n}$. Then the reduced ring $\pi_{\ast L}^\red S_{C_{p^n}}$ is generated by classes
\[
y_{n,i,c} \in \pi_{(p^{i-1}c(p-1)-1)L}^{\red}S_{C_{p^n}}
\]
for $c\geq 0$ and $1\leq i \leq n$, except when $p=2$ and $i=1$ where we require $c\not\equiv 2\pmod{4}$. The class $y_{n,n,0} = a_L$ is the Euler class of $L$, and in general these classes satisfy $y_{n,i,c} = \tr_i^n(y_{i,i,c})$. A complete set of relations is given by
\begin{align*}
y_{n,i,c} \cdot y_{n,j,d} &= p^{n-j} a_L y_{n,i,(c+dp^{j-i})}\qquad\qquad\qquad(i\leq j),\\
a_L^{p^{i-1}(p-1)}\cdot y_{n,i,c+1}&=py_{n,i,c}+\sum_{0< j< i}\frac{p^{p^{i-j}}-p^{p^{i-j-1}}}{p^{i-j}}y_{n,j, p^{i-j}c},
\end{align*}
except when $p=2$ and $i=1$ and $c \equiv 2 \pmod{4}$ where the second relation must be replaced by $a_L^2 \cdot y_{n,1,4k+3} = 4y_{n,1,4k+1}$.
\tqed
\end{theorem}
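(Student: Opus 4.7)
The plan is to exploit the degree map
\[
\deg\colon \pi_{\ast L}^\red S_{C_{p^n}} \hookrightarrow \Hom(\{1,\ldots,n\},\bbZ), \qquad (\deg y)(j) = \Phi^{C_{p^j}}(y),
\]
which is injective on the reduced ring: its kernel is torsion, by combining the nilpotence detection of geometric fixed points with Iriye's identification of nilpotents and torsion classes in $\pi_\star S_{C_{p^n}}$. Combined with the Greenlees--Quigley rank computation and the explicit image description of \cref{thm:fixeddegree}, this reduces the problem to (1) constructing classes $y_{n,i,c}$ realizing specified degree functions, (2) verifying the two relations on each $\Phi^{C_{p^j}}$, and (3) confirming that the resulting presentation has the correct rank in each representation degree.

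First I would construct the generators: for $c \geq 1$, take $y_{i,i,c} \in \pi^\red_{(p^{i-1}c(p-1)-1)L} S_{C_{p^i}}$ to be any lift along $\Phi^{C_p}$ of the class $N_e^{C_{p^{i-1}}}(p^c) = X^{p,i}_{i,\, p^{i-1}c(p-1)-1}$ from \cref{thm:fpimage}, set $y_{n,n,0} := a_L$, and define $y_{n,i,c} := \tr_i^n(y_{i,i,c})$. Combining the multiplicative norm formula $\Phi^{C_{p^{j-1}}}N_e^{C_{p^{i-1}}}(a) = N_e^{C_{p^{i-j}}}(a)$ (with underlying degree $a^{p^{i-j}}$) with the Mackey compatibility
\[
\Phi^{C_{p^j}}(\tr_i^n x) = \begin{cases} p^{n-i}\cdot \Phi^{C_{p^j}}(x) & 1\leq j\leq i, \\ 0 & j>i, \end{cases}
\]
for the nested chain $C_{p^j} \subset C_{p^i} \subset C_{p^n}$, we get
\[
(\deg y_{n,i,c})(j) = \begin{cases} p^{n-i+c p^{i-j}} & 1\leq j\leq i, \\ 0 & j>i. \end{cases}
\]

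The first relation is verified by evaluating on $\Phi^{C_{p^m}}$: for $m \leq i \leq j$ the left side gives $p^{2n-i-j+cp^{i-m}+dp^{j-m}}$ and the right side gives $p^{n-j}\cdot p^{n-i+(c+dp^{j-i})p^{i-m}}$, which agree because $dp^{j-m} = dp^{j-i}\cdot p^{i-m}$; for $m > i$ both sides vanish. For the second relation, evaluating at each $1 \leq m \leq i$ and dividing out the common factor $p^{n-i+cp^{i-m}}$ reduces to the linear system
\[
p^{p^{i-m}} = p + \sum_{j=m}^{i-1} \mathrm{coeff}_j \cdot p^{i-j}, \qquad m=1,\ldots,i,
\]
whose unique upper-triangular solution is $\mathrm{coeff}_j = (p^{p^{i-j}} - p^{p^{i-j-1}})/p^{i-j}$, confirmed by the telescoping identity $\sum_{u=1}^{i-m}(p^{p^u} - p^{p^{u-1}}) = p^{p^{i-m}} - p$.

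Finally, completeness of the presentation follows from a rank count: the two relations allow reduction to normal forms $a_L^t y_{n,i,c}$ with $0\leq t< p^{i-1}(p-1)$, and in each representation degree $NL$ exactly $n$ such monomials survive, matching the Greenlees--Quigley rank. The main obstacle is the $p=2, i=1$ anomaly: the image of $\Phi^{C_2}$ picks up an extra factor of $2$ when $k\equiv 2\pmod 4$ per the exception in \cref{thm:fixeddegree}, forcing the omission of the would-be generators $y_{n,1,c}$ for $c\equiv 2\pmod 4$ and the replacement relation $a_L^2 y_{n,1,4k+3} = 4 y_{n,1,4k+1}$ in lieu of the standard second relation; the first relation must similarly be reinterpreted whenever it would reference an excluded generator. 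Each modification is verified by the same degree computation with the $2$-adic correction tracked throughout, but bookkeeping all the interrelated changes simultaneously is the main technical subtlety.
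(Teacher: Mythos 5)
Your proposal is correct in outline and takes essentially the same route as the paper: the paper works through the identification $\pi_{\ast L}^{\red}S_{C_{p^n}}\cong\pi_{0,2\ast}^{\red}S_{C_{p^n}}\cong M_{2\ast}^{C_{p^n}}$ and multiplies out explicit $K$-theoretic representatives $y_{n,i,c}=a\,t_{n,i}\beta_{p^{i-1}(p-1)c}$, reading off degrees via marks, whereas you phrase the same verifications directly through the cyclic degree function. Your checks of the two relations via the marks homomorphism are sound, and the telescoping identity you use is exactly the content of the paper's computation of $e_{\rat_{p^i}}$ in Example~\ref{ex:eqp}. The differences are presentational rather than substantive, since both rest on the same inputs (Theorems~\ref{thm:fixeddegree} and \ref{thm:fpimage}, i.e., the $K$-theory fixed-point computation).

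There are two gaps you should be aware of. First, the normal-form claim ``$a_L^t y_{n,i,c}$ with $0\le t<p^{i-1}(p-1)$'' fails in negative degrees: for $N<-1$ the relations never reduce the power of $a_L$ acting on $y_{n,i,0}$ (the second relation only applies when $c\ge1$), so the basis in degree $-mL$ is $\{a_L^{m-1}y_{n,i,0}:1\le i\le n\}$ with unbounded Euler-class power. Your rank count still works once one allows $c=0$ with arbitrary $t$, but as stated the set of normal forms is empty in degree $-mL$ once $m>p^{n-1}(p-1)$, which is inconsistent with the Greenlees--Quigley rank being $n$ there. Second, and more seriously, the completeness argument needs the additional observation that the degree functions of the normal forms are linearly independent (they are upper-triangular with $p$-power diagonal), not merely that the count matches the rank; otherwise the surjection from the abstract presented ring need not be injective. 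Finally, you are right that the $p=2$, $i=1$ corrections are the genuine subtlety, but the proposal acknowledges this without carrying it out: the paper's proof does resolve it by observing that $M^{\bbC,C_{2^n}}_{2c}\subset M^{C_{2^n}}_{2c}$ fails to be onto exactly when $c\equiv2\pmod4$, in which case $at_{n,1}\beta_{4k+2}$ must be replaced by $a^2 t_{n,1}\beta_{4k+3}$, and the stated replacement relation $a_L^2 y_{n,1,4k+3}=4y_{n,1,4k+1}$ is precisely what propagates from that substitution. Tracking that substitution through both relations is short but not trivial, and your proposal should carry it out to be complete.
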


\subsection{\texorpdfstring{$C_{p^n}$}{C\_pn}-Mahowald invariants}\label{ssec:rootintro}

The preceding discussion considers what degrees arise from equivariant stable maps $S^V \to S_{C_{p^n}}$. As the even subscripts suggest, \cref{thm:fixeddegree} is a special case of a stronger theorem. As the dimension of $V$ grows, the space of realizable degrees shrinks. Rather than merely asking what degrees can or cannot be realized, one can instead attempt to pin down the precise obstructions to realizability, with the above evenness corresponding to the fact that these obstructions may generally live halfway between complex dimensions, on odd-dimensional skeleta of $S^V$. Under the assumption that $V$ is a sum of faithful characters, these obstructions lie in the nonequivariant stable stems, and are precisely encoded by the \emph{$C_{p^n}$-equivariant Mahowald invariants} of the title.

This point of view is already present in the classical case. In \cite{brunergreenlees1995bredon}, Bruner and Greenlees showed that Landweber's theorem on the image of $\Phi^{C_2}$ is equivalent to a computation of the degrees of the Mahowald invariants of powers of $2$, which had been computed by Mahowald and Ravenel \cite{mahowaldravenel1993root}. We extend this in full generality to $C_{p^n}$-equivariant homotopy theory.

The Segal conjecture, resolved by Carlsson \cite{carlsson1983equivariant}, implies that
\[
S^{\tate C_{p^n}}\simeq (S^{C_{p^{n-1}}})_p^\wedge,
\]
where $S^{\tate C_{p^n}}$ is the Tate construction for the trivial action of $C_{p^n}$ on the sphere spectrum and $S^{C_{p^{n-1}}}$ is the fixed point spectrum of the $C_{p^{n-1}}$-equivariant sphere spectrum. Hence a choice of periodic resolution for $C_{p^n}$, and we shall be explicit about our choices in \cref{ssec:spoke}, gives rise to a \textit{Tate spectral sequence}
\[
E_1^{s,\ast} = \bigoplus_{i\in\bbZ}\pi_s S^{-i} \Rightarrow \pi_s S_{C_{p^{n-1}}}.
\]

\begin{defn}[{\ref{def:cpnmifull}}]\label{def:cpnmahowald}
The \textit{$C_{p^n}$-Mahowald invariant}
\[
M_{C_{p^n}}\colon \pi_\star S_{C_{p^{n-1}}} \rightharpoonup \pi_\ast S
\]
is the relation defined by
\[
y \in M_{C_{p^n}}(x)\qquad\iff\qquad \text{$y$ detects $x$ in the Tate spectral sequence}.\tag*{$\triangleleft$}
\]
\end{defn}

When $n = 1$, this recovers the classical Mahowald invariant $M = M_{C_p}\colon \pi_\ast S \rightharpoonup \pi_\ast S$, classically called the root invariant. This construction has its origins in Mahowald's early work on metastable homotopy groups \cite{mahowald1967metastable}, and has since been studied extensively by many people; see \cite{mahowaldravenel1993root,behrens2007some} for surveys. Related equivariant generalizations of the Mahowald invariant have been studied by Greenlees--May \cite{greenleesmay1995generalized}, and more recently by Hopkins--Lin--Shi--Xu \cite{hopkinslinshixu2022intersection} and Botvinnik--Quigley \cite{botvinnikquigley2023symmetries}.

The classical $C_p$-Mahowald invariants of powers of $p$ were computed by Mahowald and Ravenel \cite{mahowaldravenel1993root}, who prove that $M_{C_{p}}(p^k)$ always contains a certain element of order $p$ related to the image of $J$. We carry out the full $C_{p^n}$-equivariant analogue of this, computing the $C_{p^n}$-Mahowald invariants of all elements of the Burnside ring $A(C_{p^{n-1}}) = \pi_0 S_{C_{p^{n-1}}}$. In the course of doing so, we provide an analogue of Bruner and Greenlees' theorem in \cref{prop:bg}, relating the $C_{p^n}$-Mahowald invariant to certain ``spoke-graded'' $C_{p^n}$-equivariant stable stems. Rather than give the full statement, let us just unravel it in the particular case at hand.

In general, if $X\in A(C_{p^{n-1}})$ is a virtual $C_{p^{n-1}}$-set, then $|M_{C_{p^n}}(X)| > 0$ if and only if $|X|\equiv 0 \pmod{p^n}$, which in turn holds if and only if $X = \Phi(\tilde{X})$ for some virtual $C_{p^n}$-set $\tilde{X}$ in the augmentation ideal of $A(C_{p^n})$. Here $\Phi(\tilde{X}) = \tilde{X}^{C_p}$, and this $\tilde{X}$ is uniquely determined. Given this, we then have $|M_{C_{p^n}}(X)| \geq 2k$ if and only if $\tilde{X}$ is divisible by the Euler class $a_V$ of some, and thus any, sum $V$ of $k$ faithful complex characters:
\begin{equation}\label{eq:introextension}
\begin{tikzcd}
S^0\ar[r,"\tilde{X}"]\ar[d,"a_V"']&S_{C_{p^n}}\\
S^V\ar[ur,dashed]
\end{tikzcd}.
\end{equation}

\begin{rmk}
This relates \cref{def:cpnmahowald} to the $G$-equivariant Mahowald invariants considered by Hopkins--Lin--Shi--Xu in \cite{hopkinslinshixu2022intersection}: in particular $|M_{C_{p^n}}(X)| \in \{2k,2k+1\}$ if and only if, in their notation, $|\smash{M_{a_L}^{\raisemath{-2pt}{C_{p^{n}}}}(X)}| = kL$, where $L$ is a faithful complex character.
\tqed
\end{rmk}

If $|M_{C_{p^n}}(X)| < 2k$, then no lift exists in \cref{eq:introextension}. In this case, we say $|M_{C_{p^n}}(X)| = l$ if and only if $\tilde{X}$ extends through the $l$-skeleton of some, and thus any, equivariant cell structure on $S^V$, and extends no further. The representation sphere $S^V$ admits a cell structure with one free cell in every positive degree, meaning the set of obstructions to extending through the $(l+1)$-skeleton is a subset of the nonequivariant stable $l$-stem. This set of obstructions, for a particular choice of $V$ and equivariant cell structure on $S^V$, is exactly the $C_{p^n}$-Mahowald invariant $M_{C_{p^n}}(X)\subset \pi_l S$. 

In other words, whereas \cref{thm:fixeddegree} describes the image of the $C_p$-geometric fixed point map $\Phi\colon\pi_V S_{C_{p^n}}\rightarrow A(C_{p^{n-1}})$, the $C_{p^n}$-Mahowald invariant $M_{C_{p^n}}\colon A(C_{p^{n-1}})\rightharpoonup \pi_\ast S$ describes what exactly prevents a given element from being in this image. To describe these Mahowald invariants, we require a couple of preliminary definitions.

\begin{defn}\label{def:marks}
We write
\[
\phi\colon A(C_{p^{n-1}})\rightarrowtail \Hom(\{1,\ldots,n\},\bbZ),\qquad (\phi X)(i) = |X^{C_{p^{i-1}}}|
\]
for the \emph{marks homomorphism}.
\tqed
\end{defn}

\begin{defn}\label{def:thebasis}
Define the functions
\[
f_{s,k}^{p,n}\colon \{1,\ldots,n\}\rightarrow\bbZ
\]
for $1\leq s \leq n$ and $k\geq 1$ by
\[
f_{s,k}^{p,n}(i) = \begin{cases} p^{n-s+p^{s-i}\ceil{\frac{k+1}{2p^{s-1}(p-1)}}}&1\leq i \leq s,\\ 0&s < i \leq n,
\end{cases}
\]
with the following $2$-primary exceptions:
\begingroup
\allowdisplaybreaks
\begin{align*}
f_{1,k}^{2,1}(1) &=\begin{cases}2^{4l+1}&k\in\{8l,8l+1\},\\
2^{4l+2}&k=8l+2,\\
2^{4l+3}&k=8l+3,\\
2^{4l+4}&k\in\{8l+4,\ldots,8l+7\};\end{cases}
\\
f_{1,k}^{2,n}(1) &= \begin{cases}2^{n+4l}&k\in\{8l,8l+1\},\\
2^{n+4l+1}&k\in\{8l+2,8l+3\},\\
2^{n+4l+3}&k\in\{8l+4,\ldots,8l+7\};
\end{cases}&&(n\geq 2)\\
f_{2,k}^{2,2}(1) &= \begin{cases}2^{4l+2}&k\in\{8l,8l+1,8l+2\},\\
2^{4l+3}&k=8l+3,\\
2^{4l+4}&k\in\{8l+4,\ldots,8l+7\};
\end{cases}\\
f_{2,k}^{2,2}(2) &= \begin{cases} 2^{2l+1}&k\in\{8l,8l+1,8l+2\},\\
2^{2l+2}&k\in\{8l+3,\ldots,8l+7\}.
\end{cases}
\end{align*}
\endgroup
These extend the functions defined in \cref{thm:fixeddegree}, as the reader may verify.
\tqed
\end{defn}

We can use these to state the main theorems of this paper.

\begin{defn}\label{def:gammafilt}
Define a filtration of $A(C_{p^{n-1}})$ by
\[
\Gamma_k(C_{p^{n-1}}) = \{X\in A(C_{p^{n-1}}) : |M_{C_{p^n}}(X)|\geq k\}.\tag*{$\triangleleft$}
\]
\end{defn}

\begin{theorem}[{\ref{thm:cpnmahowaldfilt}}]\label{thm:mainroot1}
For all $k > 0$, the functions $f_{1,k}^{p,n},f_{2,k}^{p,n},\ldots,f_{n,k}^{p,n}$ form a basis for $\phi(\Gamma_k(C_{p^{n-1}}))\subset \Hom(\{1,\ldots,n\},\bbZ)$.
\tqed
\end{theorem}

\begin{theorem}[{\ref{thm:micomputation}}]\label{thm:mainroot2}
For $i\geq 1$, there exist generators
\[
j_{2i(p-1)-1}^{(p)} \in \pi_{2i(p-1)-1}S
\]
of the $p$-torsion in the image of $J$ in this degree ($i$ even if $p=2$), independent of $n$, for which if $X\in A(C_{p^{n-1}})$ satisfies $|M_{C_{p^n}}(X)| = k$ and we write $\phi(X) = c_1 f_{1,k}^{p,n}+\cdots + c_n f_{n,k}^{p,n}$, then:
\begin{enumerate}
\item If $p>2$, then $k=2p^{l-1}c(p-1)-1$ for some $l\geq 1$ and $p\nmid c$. In this case $j_{2p^{l-1}c(p-1)-1}^{(p)}$ has order $p^l$, and if $t = \min(n,l)$ then
\[
(p^{t-1}c_1+p^{t-2}c_2+\cdots+c_t) \cdot p^{l-t}j_{2p^{l-1}c(p-1)-1}^{(p)} \in M_{C_{p^n}}(X).
\]
\item If $p=2$, then $k$ is congruent to one of $1,2,3,7\pmod{8}$. In this case,
\begin{enumerate}
\item If $k = 1$, then $\eta \in M_{C_{2^n}}(X)$.
\item If $k = 8l+1$ with $l\geq 1$, then
\[
P^l \eta + \bbZ/(2)\{P^{l-1}\eta\epsilon\} \subset M_{C_{2^n}}(X),
\]
where $P = \langle \bs,2,8\sigma\rangle$ is the usual Adams periodicity operator and $\epsilon\in\pi_8 S$ is the class detected by $c_0$ in the Adams spectral sequence. In other words, $M_{C_{2^n}}(X)$ contains both height $1$ classes with nonzero Hurewicz image in $KO$.
\item If $k = 8l+2$, then $P^l\eta^2 \in M_{C_{2^n}}(X)$. Moreover, this can only happen for $n\leq 2$.
\item If $k = 8l+3$, then $j_{8l+3}^{(2)}$ has order $8$ and
\begin{enumerate}
\item If $n=1$, then $4j_{8l+3}^{(2)} \in M_{C_2}(X)$;
\item If $n=2$, then $-2(c_1-c_2)j_{8l+3}^{(2)} \in M_{C_4}(X)$;
\item If $n\geq 3$, then $-(2c_1-c_2)j_{8l+3}^{(2)} \in M_{C_{2^n}}(X)$.
\end{enumerate}
\item If $k=2^{l}c-1$ with $2\nmid c$ and $l\geq 3$, then $j_{2^{l}c-1}^{(2)}$ has order $2^{l+1}$, and if $t = \min(n,l)$ then
\[
(2^{t-1}c_1+2^{t-2}c_2+\cdots+c_t) 2^{l+1-t} j_{2^{l}c-1}^{(2)} \in M_{C_{2^n}}(X).
\]
Notably, the generator $j_{2^lc-1}^{(2)}$ is not in $M_{C_{2^n}}(X)$ for any $X \in A(C_{2^{n-1}})$.
\tqed
\end{enumerate}
\end{enumerate}
\end{theorem}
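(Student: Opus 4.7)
\medskip

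\noindent\textbf{Proof proposal.} The plan is to reduce the general computation to the classical Mahowald--Ravenel result for $M_{C_p}(p^k)$ via the basis description of $\phi(\Gamma_k(C_{p^{n-1}}))$ from Theorem \ref{thm:mainroot1} together with the transfer/norm formulas from Corollary \ref{thm:fpimage}. The setup in \cref{def:cpnmahowald} expresses the $C_{p^n}$-Mahowald invariant in terms of the Tate spectral sequence for $S^{\tate C_{p^n}}$, so I would begin by reviewing how this Tate spectral sequence restricts and compares along the chain $e \subset C_p \subset \cdots \subset C_{p^n}$. The basis element $X_{s,k}^{p,n} = \tr_{C_{p^{s-1}}}^{C_{p^{n-1}}}\!\bigl(N_e^{C_{p^{s-1}}}(p^{1+\lfloor k/(p^{s-1}(p-1))\rfloor})\bigr)$ is built from the classical input $p^m$ with $m = 1+\lfloor k/(p^{s-1}(p-1))\rfloor$, and the Mahowald invariant of such an $X_{s,k}^{p,n}$ should be controlled by the behavior of $p^m$ under the smaller Tate spectral sequence for $C_{p^s}$.

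The main steps I would carry out are: (i) establish a \emph{Bruner--Greenlees-type} compatibility (the analogue of \cref{prop:bg}) between the $C_{p^n}$-Mahowald invariant and the spoke-graded stems, which lets one read $M_{C_{p^n}}(X)$ as the first obstruction to extending $\tilde X$ across the cellular filtration of $S^V$ for $V$ a sum of faithful complex characters; (ii) compute $M_{C_{p^s}}(p^m)$ in the image-of-$J$ range by leveraging Mahowald--Ravenel for the base case $s=1$, combined with the fact that $p^m$ is a norm of $p^m$ from the trivial subgroup --- the norm-compatibility of the Tate spectral sequence then turns the classical answer into the required $j_{2p^{s-1}c(p-1)-1}^{(p)}$ element, with the $p^{s-t}$-divisibility factor accounting for the change in $p$-adic valuation; (iii) apply transfer to pass from $M_{C_{p^s}}(N(p^m))$ to $M_{C_{p^n}}(\tr\,N(p^m)) = M_{C_{p^n}}(X_{s,k}^{p,n})$, using that the transfer commutes with the Tate comparison map up to the expected multiplicative correction; (iv) assemble the general case by writing $X = \sum c_s X_{s,k}^{p,n}$ and noting that, in a fixed filtration degree, the obstructions add, yielding the coefficient combination $p^{t-1}c_1 + \cdots + c_t$ (with $t=\min(n,l)$) appearing in the statement.

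For odd primes, the formula is uniform and the above reduction should proceed cleanly; for $p=2$ the situation bifurcates according to $k \pmod 8$ because the classical $M_{C_2}(2^m)$ is itself split between $\eta$-type classes (cases 2a, 2b, 2c) and image-of-$J$ generators (2d, 2e), and Landweber's divisibility jump at $k \equiv 2 \pmod 4$ forces the exceptional rules for $f_{1,k}^{2,n}$. I would handle each residue class separately: cases 2a and 2b follow from Mahowald--Ravenel's identification of $P^l\eta$ (plus the $P^{l-1}\eta\epsilon$ indeterminacy that arises because $\eta$ and $P^l\eta$ have the same Hurewicz image in $KO$), case 2c is special because $P^l \eta^2$ only survives for $n\leq 2$ (this is where the $f_{2,k}^{2,2}$ exceptions become essential), case 2d involves the order-$8$ class $j_{8l+3}^{(2)}$ and needs the explicit Toda-bracket identification of $2 j_{8l+3}^{(2)}$ with a specific transfer, and case 2e is the ``generic'' range where the general formula applies.

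The hard part will be step (ii)--(iii): tracking how the specific im-$J$ generator $j_{2k(p-1)-1}^{(p)}$ and its $p$-adic order propagate through the norm and transfer, and in particular getting the coefficient combination $p^{t-1}c_1+p^{t-2}c_2+\cdots+c_t$ correct with the right sign. One subtle point is that the choice of generator $j_{2k(p-1)-1}^{(p)}$ must be made uniformly in $n$, so I would fix it in the smallest relevant Tate spectral sequence (namely $C_{p^l}$ where $l$ is the $p$-adic order of the relevant im-$J$ class) and then verify compatibility under inflation. The $p=2$, $k\equiv 3 \pmod 8$ case where the sign $-(2c_1-c_2)$ appears is likely the most delicate, since there the coefficient structure differs from the uniform formula by a genuine low-dimensional correction that must be read off from the explicit multiplicative relations in $\pi_{\ast L}^{\red} S_{C_{2^n}}$ established in the presentation theorem.
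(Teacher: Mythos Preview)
Your overall architecture---reduce to basis elements via \cref{thm:mainroot1}, propagate along transfer via \cref{prop:roottr}, and add up---matches the paper's final assembly step. But the core of your argument, step (ii), has a genuine gap, and the paper takes a fundamentally different route to fill it.

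The gap is the appeal to ``norm-compatibility of the Tate spectral sequence''. No such compatibility exists in any form that would let you compute $M_{C_{p^s}}(N_e^{C_{p^{s-1}}}(p^m))$ from the classical $M_{C_p}(p^m)$. Norms are multiplicative, and Mahowald invariants already fail to be multiplicative in the simplest sense; the interaction of norms with the Tate filtration is subtle (it is essentially the Tate diagonal story) and does not give you a formula of the shape you need. Propositions \ref{prop:roottr} and \ref{prop:rootres} handle transfer and restriction, but there is no analogue for norms, and the basis elements $X_{s,k}^{p,n}$ genuinely involve a norm before the transfer. So after step (iii) you have reduced to computing $M_{C_{p^s}}$ on $N_e^{C_{p^{s-1}}}(p^m)\in A(C_{p^{s-1}})$, which for $s\geq 2$ is exactly as hard as the original problem.

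What the paper does instead is avoid Mahowald--Ravenel entirely and compute everything in equivariant $K$-theory. The equivariant Adams conjecture (\cref{prop:adamsconjecture}, \cref{cor:reducetoktheory}) gives a natural splitting $\pi_{0,k}S_{C_{p^n}}\to \pi_{0,k}J_{C_{p^n}}$ whose image is exactly the $j_{(p)}$-summand, so \cref{lem:jcomputeroot} reduces the problem to computing the map $r\colon M_k^{C_{p^n}}\to \tilde\pi_k J_p$ on the explicit $K$-theory basis of \cref{thm:kufp} and \cref{thm:kofp}. This is done directly in \cref{prop:imr}: the generators $j_k^{(p)}$ are \emph{defined} as $r$ applied to specific Bott classes (\cref{def:jgens}), the critical quotients $M_k/M_{k+1}$ are identified by hand in \cref{lem:rescritical} and \cref{lem:resodd}, and then transfer/restriction (\cref{prop:restr}) pushes these identifications to all $n$. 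Mahowald--Ravenel is recovered as the $n=1$ special case rather than used as input. The $8l+3$ sign and the $P^{l-1}\eta\epsilon$ indeterminacy both fall out of explicit $KO_{C_{2^n}}$ calculations (\cref{lem:resodd}, \cref{lem:indeterminacy}), not from the reduced-ring presentation as you suggest.
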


\begin{ex}
Let us illustrate these theorems with an explicit example. Consider $G = C_4$ and $2+[C_2] \in A(C_2)$. If we write the elements of $\Hom(\{1,\ldots,n\},\bbZ)$ as vectors, then \cref{thm:mainroot1} tells us that
\begin{align*}
\phi(\Gamma_{2}(C_2))&=\bbZ\{f^{2,2}_{1,2}, f^{2,2}_{2,2}\}=\bbZ\{(8,0), (4,2)\},& \Gamma_2(C_2) &=\bbZ\{4[C_2],2+[C_2]\}, \\
\phi(\Gamma_3(C_2))&=\bbZ\{f^{2,2}_{1,3}, f^{2,2}_{2,3}\}=\bbZ\{(8,0), (8,4)\},&\Gamma_3(C_2) &= \bbZ\{4[C_2], 4+2[C_2]\}.
\end{align*}
It follows that $2+[C_2] \in \Gamma_2(C_2)\setminus\Gamma_3(C_2)$; equivalently, $M_{C_4}(2+[C_2]) \subset \pi_2 S$. \cref{thm:mainroot2} then tells us that, explicitly,
\[
\eta^2 \in M_{C_4}(2+[C_2]).
\]
See \cref{ex:miexamples} and \cref{ex:miexamplesodd} for several more examples.
\tqed
\end{ex}

\begin{ex}\label{ex:lifting}
Suppose $p > 2$, and let $\rat_{p^n} = \tr_{C_p}^{C_{p^n}}(\ol{\rho}_p^\bbC) = \bbC[C_{p^n}]\otimes_{\bbC[C_p]}\ol{\rho}_p^\bbC$, where $\ol{\rho}_p^{\bbC}$ is the reduced complex regular representation of $C_p$. This is a sum of $p^{n-1}(p-1)$ faithful complex characters, and therefore by \cref{ex:cpn} we have
\[
\im(\Phi^{C_{p^n}}\colon \pi_{\rat_{p^n}}S_{C_{p^n}}\rightarrow\bbZ) = p^2 \bbZ.
\]
Thus if $X \in A(C_{p^{n-1}})$ satisfies $|X^{C_{p^{n-1}}}| = p$ then $|M_{C_{p^n}}(X)| < 2p^{n-1}(p-1)$. \cref{thm:mainroot1} tells us this is optimal: the norm $X = N_e\!\!^{C_{p^{n-1}}}(p)$ satisfies $|X^{C_{p^{n-1}}}| = p$, and more generally $\phi(X) = f^{p,n}_{n,2p^{n-1}(p-1)-1}$, implying $|M_{C_{p^n}}(X)| = 2p^{n-1}(p-1)-1$. This means the following.

The representation sphere $S^{\rat_{p^n}}$ admits an equivariant $(2p^{n-1}(p-1)-1)$-skeleton $\tilde{S}^{\rat_{p^n}}$ for which $S^{\rat_{p^n}}/\tilde{S}^{\rat_{p^n}} = \Sigma^{2p^{n-1}(p-1)}C_{p^n+}$. This means that if $\tilde{X} \in A(C_{p^n})$ is the unique element of the augmentation ideal satisfying $\Phi(\tilde{X}) = X$, then there is a lift $Y$ in
\begin{center}\begin{tikzcd}
&S^0\ar[r,"\tilde{X}"]\ar[d]&S_{C_{p^n}}\\
\Sigma^{2p^{n-1}(p-1)-1}C_{p^n+}\ar[r,"r"]&\tilde{S}^{\rat_{p^n}}\ar[d]\ar[ur,dashed,"Y"]\\
&S^{\rat_{p^n}}\ar[uur,dashed,"\nexists"']
\end{tikzcd},\end{center}
and this lifts no further. The obstruction to a further lift of $Y$ through $S^{\rat_{p^n}}$ is the element $\resp(Y) = r^\ast(Y) \in \pi_{2p^{n-1}(p-1)-1}S$. \cref{thm:mainroot2} tells us that
\[
M_{C_{p^n}}(X) \ni j_{2p^{n-1}(p-1)-1}^{(p)},
\]
and this means that there exists a lift $Y$ for which $\resp(Y)$ is a generator of the $p$-torsion of the image of $J$ in this degree. In this way the $C_{p^n}$-Mahowald invariant gives precise information about what exactly prevents $p$ from being in the image of $\Phi^{C_{p^n}}\colon \pi_{\rat_{p^n}}S_{C_{p^n}}\rightarrow\bbZ$.
\tqed
\end{ex}

\begin{rmk}
As \cref{ex:lifting} shows, \cref{thm:mainroot2} may also be interpreted as the construction of new equivariant lifts of classical elements in the image of $J$. However, this comes with the caveat that these equivariant lifts are generally not stable maps of the form $S^V \to S_{C_{p^n}}$, but instead are stable maps out of a possibly odd-dimensional skeleton of $S^V$. In other words, these lifts are generally not to the $RO(C_{p^n})$-graded equivariant stable stems, but to certain ``spoke-graded'' $C_{p^n}$-equivariant homotopy groups; see \cref{ssec:spoke}.
\tqed
\end{rmk}

\begin{rmk}
In general, these theorems determine the $C_{p^n}$-Mahowald invariant of $X\in A(C_{p^{n-1}})$ in terms of its \textit{fixed point data}. Specifically, define $n\times n$ matrices $A_k^{p,n}$ by
\[
(A_k^{p,n})_{i,j} = f^{p,n}_{j,k}(i).
\]
Then $A_k^{p,n}$ is an upper triangular matrix invertible away from $p$, and the integers $c_1,\ldots,c_n$ appearing in \cref{thm:mainroot2} can be computed as
\[
\begin{pmatrix}
c_1 \\
\vdots \\
c_n\end{pmatrix} = (A_k^{p,n})^{-1} \cdot \begin{pmatrix} |X| \\ \vdots \\ |X^{C_{p^{n-1}}}|\end{pmatrix},
\]
where in general $X \in \Gamma_k(C_{p^{n-1}})$ if and only if these are in fact integers.
\tqed
\end{rmk}

\begin{rmk}
When $G = C_p$ is of prime order, \cref{thm:mainroot2} is a theorem of Mahowald and Ravenel \cite{mahowaldravenel1993root}, relying on a careful analysis of skeleta of stunted projective spaces. In \cite{guillouisaksen2020bredon}, Guillou and Isaksen give a different proof for $p = 2$ using the $C_2$-equivariant Adams spectral sequence. When $p=2$, these treatments prove $P^l\eta \in M_{C_2}(2^{4l+1})$, and Guillou and Isaksen observe that the indeterminacy contains $\bbZ/(2)\{\eta\epsilon\}$ when $l=1$ \cite[Remark 7.3]{guillouisaksen2020bredon}. Our proof shows that the indeterminacy always contains $\bbZ/(2)\{P^{l-1}\eta\epsilon\}$ for $l\geq 1$, which appears to be new.
\tqed
\end{rmk}

We prove these theorems by reducing to a calculation in equivariant $K$-theory, which we then carry out. This reduction uses an equivariant version of the \textit{Adams conjecture} that we piece together in \cref{ssec:adamsconjecture}. In particular, \cref{ex:isomodtorsion} shows that if $V$ is a representation of $C_{p^n}$, then the Hurewicz map
\[
\pi_V S_{C_{p^n}} \rightarrow \pi_V KO_{C_{p^n}}
\]
induces an isomorphism modulo torsion onto the subgroup fixed by the Adams operation $\psi^\ell$, where $\ell$ is a primitive root mod $p^2$ (say $\ell = 3$ if $p=2$). Given this, \cref{thm:fixeddegree} amounts to a computation of these fixed points, and similar if more involved considerations lead to \cref{thm:mainroot1} and \cref{thm:mainroot2}. More generally, we advertise that \cref{thm:equivadams} reduces the following problem to an accessible computation in equivariant $K$-theory.

\begin{question}\label{question:cyclicdegs}
Given a $p$-group $G$ and $G$-representation $V$, what is an explicit description of the image of the cyclic fixed point function
\[
\deg\colon \pi_V S_{G} \rightarrow \Hom(\Sub^\cyc(G),\bbZ),\qquad (\deg y)(C) = \begin{cases}\Phi^Cy & V^C = 0,\\ 0&\text{ otherwise}.
\end{cases}?
\]
This is already interesting for $G = C_{p^n}$, where it would describe the reduced ring $\pi_\star^{\red}S_{C_{p^n}}$ in representation degrees; we have only analyzed the case where $V^{C_p} = 0$.
\tqed
\end{question}

\subsection{Notation}

For convenience of the reader, we collect together here some of the definitions and notation used throughout the paper.

\begin{itemize}
\item In general, we will use $G$ for a finite group, $Z$ for a (typically compact) $G$-space, $X$ for a (possibly virtual) $G$-set, and $V$ for a (non-virtual) $G$-representation.

\item $\pi_\star^G E = \pi_\star E_G$ are the $RO(G)$-graded homotopy groups of a $G$-spectrum $E = E_G$.
\item $\res_H^G$, $\tr_H^G$, and $N_H^G$ are restriction, transfer, and norm maps.
\item $\tr_i^n = \tr_{C_{p^i}}^{C_{p^n}}$ and $\res_i^n = \res_{C_{p^i}}^{C_{p^n}}$ for a given prime $p$.\setlength\itemsep{1em}

\item \setlength\itemsep{0em} $C_m \subset U(1) = T$ is the subgroup of order $m$ with tautological complex character $L$.
\item $d_1,d_2,\ldots$ is the sequence specified in \cref{rmk:sequence} and $V_k = L^{d_1}\oplus\cdots\oplus L^{d_k}$ (\cref{eq:vk}).
\item $\pi_{\star,\ast}^{C_m}E = \pi_{\star,\ast}E_{C_m}$ are the spoke-graded homotopy groups of a $C_m$-spectrum $E = E_{C_m}$ (\cref{def:spokegrading}). In particular, $\pi_{0,2k}^{C_m}E = \pi_{V_k}^{C_m}E$.
\item $\trp$, $\resp$, and $a^{1/2}$ are operations on spoke-graded homotopy defined in \cref{def:tra}.
\item Restrictions and transfers in spoke-graded homotopy are defined in \cref{def:restrspoke}.\setlength\itemsep{1em}

\item \setlength\itemsep{0em} $S$ and $S_G$ are the nonequivariant and $G$-equivariant sphere spectra.
\item $A(G) = \pi_0 S_G$ is the Burnside ring of $G$, and $I(G) \subset A(G)$ is its augmentation ideal.
\item $\Phi^H\colon \Sp^G \to \Sp$ is the functor of $H$-geometric fixed points.
\item $\Phi\colon \Sp^{C_{p^n}} \to \Sp^{C_{p^{n-1}}}$ is genuine $C_p$-geometric fixed points (\cref{def:genuinefp}).
\item $\Phi\colon A(C_{p^n}) \to A(C_{p^n}/C_p)\cong A(C_{p^{n-1}})$ is the $C_p$-fixed point map (\cref{def:burnsideweylphi}).
\item $\Sub(G)$ is the set of subgroups of $G$ and $\Sub^\cyc(G)\subset\Sub(G)$ the cyclic subgroups.
\item $\Cl(\Sub(G),\bbZ)$ is the ring of conjugation-invariant functions $\Sub(G) \to \bbZ$ and $\phi\colon A(G) \to \Cl(\Sub(G),\bbZ)$ is the marks homomorphism (\cref{def:marks2}).
\item We sometimes identify $\Sub(C_{p^{n-1}}) = \{1,\ldots,n\}$, with $i\leftrightsquigarrow C_{p^{i-1}}$, and write $\tilde{\phi} = \phi\circ \Phi\colon A(C_{p^n}) \to \Fun(\{1,\ldots,n\},\bbZ)$ (\cref{rmk:gammak}).\setlength\itemsep{1em}

\item \setlength\itemsep{0em} $S(V)$ is the unit sphere in a given $G$-representation $V$.
\item $SZ$ is the unreduced suspension of a $G$-space $Z$ and $a_Z\colon S^0 \to SZ$ the inclusion of cone points. In particular, $S(S(V))\simeq S^V$ and $a_{S(V)} = a_V$ for a $G$-representation $V$.
\item $D(\bs) = F(\bs,S_G)$ is the functor of (equivariant) Spanier--Whitehead duality.
\item $I_Z = I_Z^G = \ker(A(G) \to \pi_0^G D(\Sigma^\infty_+ Z))\subset A(G)$ for a $G$-space $Z$ (\cref{def:iz}).\setlength\itemsep{1em}

\item \setlength\itemsep{0em} $EG^{\leq\bullet}$ is a cellular filtration on the universal space $EG$ and $\widetilde{E}G^{\leq\bullet}$ an associated filtration on $\widetilde{E}G = S(EG)$ (\cref{eq:egfiltration}). Specifics for $G = C_m$ are in \cref{ssec:spoke}.
\item  $m_k(G) = I_{EG^{\leq k-1}}\subset A(G)$ is the Mahowald filtration (\cref{def:mfilt}).
\item $M_{C_{p^n}}\colon \pi_\star S_{C_{p^{n-1}}} \rightharpoonup \pi_\ast S$ is the $C_{p^n}$-Mahowald invariant (\cref{def:cpnmifull}).
\item $\Gamma_k(C_{p^{n-1}}) = \{X \in A(C_{p^{n-1}}) : |M_{C_{p^n}}(X)|\geq k\} = \Phi(m_k(C_{p^n}))$ (\cref{def:gammafilt}, \cref{rmk:gammak}).
\item $f_{s,k}^{p,n}$ is the $s$th chosen basis element for $\phi(\Gamma_k(C_{p^{n-1}}))$ (\cref{def:thebasis}).
\item $t_{n,i},z_{n,i}\in A(C_{p^n})$ are defined in \cref{def:burnsidebases}.\setlength\itemsep{1em}

\item \setlength\itemsep{0em} $KU_G$ and $KO_G$ are the $G$-spectra of equivariant complex and real $K$-theory, with $\psi^\ell$ the $\ell$th Adams operation acting on $KU_G[1/\ell]$ and $KO_G[1/\ell]$ for $\gcd(\ell,|G|) = 1$.
\item $RU(G) = \pi_0 KU_G$ and $RO(G) = \pi_0 KO_G$ and $R\bbQ(G)$ are the complex, real, and rational representation rings of $G$ (see especially \cref{rmk:rationalreps} for further discussion). 
\item $\beta_V \in \pi_V KU_G$ is the Bott class of a complex $G$-representation $V$ and $e_V = a_V \beta_V = \sum_i (-1)^i \Lambda^i V\in \pi_0 KU_G = RU(G)$ its Euler class (\cref{def:bottandeuler}).
\item $\rat_m$ is the fixed point free irreducible rational representation of $C_m$ (\cref{def:rat}), with dimension equal to the value of the Euler totient function $\varphi(m)$.
\item $\beta_k = \beta_{V_k} \in \pi_{0,2k}KU_{C_{m}}$ and $e_{a,b} = e_{V_b-V_a}$ for $a \leq b$ (\cref{eq:eab}).
\item $M_k = M_k^{\smash{C_{p^n}}} \subset \pi_{0,k}KO_{C_{p^n}}/(\text{torsion})$ and $M_k^\bbC = M_k^{\smash{\bbC,C_{p^n}}}\subset \pi_{0,k}KU_{C_{p^n}}$ are the subgroups fixed by Adams operations, required to further lie in the augmentation ideal when $k=0$ (\cref{def:fixedmodule}, \cref{def:cfixedmodule}).\setlength\itemsep{1em}

\item \setlength\itemsep{0em} $J = J_p$ and $J_G$ are the nonequivariant and $G$-equivariant nonconnective $J$-spectra at a given prime $p$ (\cref{def:j}).
\item $j_{2k(p-1)-1}^{(p)} \in \pi_{2k(p-1)-1} J_p$ is a specified generator (\cref{def:jgens}).
\end{itemize}

\subsection*{Acknowledgements}
The authors would like to thank Mark Behrens and Zhouli Xu for their support and helpful comments on earlier versions of this paper, and the anonymous referee for their careful reading and helpful suggestions.

\section{Mahowald filtrations and Mahowald invariants}\label{sec:lifting}

In this section, we develop the theoretical framework necessary to define the $C_{p^n}$-equivariant Mahowald invariant and reduce the computation to equivariant $K$-theory. We begin by discussing a lifting problem in \cref{ssec:lifting} which generalizes \cref{problem:fpdegree} in the case where $\alpha = V$ is a non-virtual representation (see \cref{ex:degreeaslifting}).

In \cref{ssec:adamsconjecture}, we explain how a version of the Adams conjecture may be used to reduce this problem for cyclic $p$-groups to a computation in equivariant $K$-theory. This reduction is the key input that allows us to access information about the equivariant stable stems, making the work in this paper possible.

The additional flexibility provided by this more general lifting problem allows us to define a \emph{Mahowald filtration} on the Burnside ring $A(G)$ in \cref{ssec:mahowaldfilt}, closely related to the homotopy fixed point and Tate spectral sequences. This filtration is defined in terms of the cellular filtration on the universal space $EG$. In \cref{ssec:spoke}, we give a detailed account of the ``standard'' cell structure on the universal space of a cyclic group, and use this to introduce a formalism of ``spoke-graded'' equivariant homotopy groups for cyclic groups.

Finally, in \cref{ssec:cpnmi} we specialize to cyclic $p$-groups. We formally define the $C_{p^n}$-Mahowald invariant and explain how it may be read off the spoke-graded $C_{p^n}$-equivariant stable stems, generalizing the relation between classical $C_2$-Mahowald invariants and $RO(C_2)$-graded $C_2$-equivariant stable stems.

\subsection{A lifting problem}\label{ssec:lifting}

Let $G$ be a finite group and write $A(G) = \pi_0 S_G$ for the Burnside ring of $G$. Let $Z$ be a compact $G$-space and $SZ$ its unreduced suspension, fitting into a cofiber sequence
\[
Z_+\rightarrow S^0\xrightarrow{a_Z} SZ.
\]

\begin{ex}\label{ex:svsz}
If $V$ is a $G$-representation and $Z = S(V)$ is the unit sphere in $V$, then $SZ = S^V$ and $a_Z = a_V\colon S^0\rightarrow S^V$ is the Euler class of $V$.
\tqed
\end{ex}

We are interested in the following ideal of the Burnside ring.

\begin{defn}\label{def:iz}
Given a compact $G$-space $Z$, we write
\begin{align*}
I_Z = I_Z^G &= \im(\pi_0^G D(\Sigma^\infty SZ)\xrightarrow{a_Z}\pi_0S_G\cong A(G))\\
&= \ker(A(G)\cong \pi_0 S_G\rightarrow \pi_0^G D(\Sigma^\infty_+ Z)).\tag*{$\triangleleft$}
\end{align*}
\end{defn}

\begin{rmk}
In other words, $I_Z\subset A(G)$ consists of those virtual $G$-sets $X$ for which one can solve the lifting problem
\begin{equation}\label{eq:extension}
\begin{tikzcd}
\Sigma^\infty_+Z \ar[dr,"0",dashed]\ar[d]\\
S^0\ar[r,"X"]\ar[d,"a_Z"']&S_G\\
SZ\ar[ur,dashed]
\end{tikzcd}.\end{equation}
\end{rmk}

\begin{ex}
When $Z = G$ is a free $G$-orbit, we may identify
\[
I_G = \ker \left(\pi_0 S_G \to \pi_0^GD(\Sigma^\infty_+ G) \cong \pi_0 S\right) \cong \ker \left(\res^G_e\colon A(G) \to A(e) = \bbZ\right)
\]
as the augmentation ideal $I(G)\subset A(G)$.
\end{ex}

It is worth recording the following basic observation explicitly.

\begin{lemma}\label{prop:idealinclusions}
If there exists a map $f\colon Z\rightarrow Z'$ of compact $G$-spaces, then $I_Z\subset I_{Z'}$.
\qed
\end{lemma}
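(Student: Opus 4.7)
I would prove the lemma by a formal naturality chase on the defining cofiber sequence $Z_+ \to S^0 \xrightarrow{a_Z} SZ$, which is natural in the input $G$-space $Z$. Applying this naturality to $f\colon Z \to Z'$ produces a commutative ladder of $G$-cofiber sequences
\[
\begin{tikzcd}
Z_+ \ar[r, "\epsilon_Z"] \ar[d, "f_+"'] & S^0 \ar[r, "a_Z"] \ar[d, "\id"'] & SZ \ar[d, "Sf"] \\
Z'_+ \ar[r, "\epsilon_{Z'}"'] & S^0 \ar[r, "a_{Z'}"'] & SZ',
\end{tikzcd}
\]
in which the left square commutes because both composites $Z_+ \rightrightarrows S^0$ realize the collapse of $Z$ to the non-basepoint, and the right square then commutes by functoriality of the cofiber.

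Next, I would apply the contravariant functor $\pi_0^G D(-) = [-, S_G]_0^G$ to the ladder above, producing a commuting diagram of abelian groups with exact rows. From the right-hand square one reads off the relation $a_{Z'}^\ast = a_Z^\ast \circ (Sf)^\ast$, and dually from the left-hand square one reads off $\epsilon_Z^\ast = f_+^\ast \circ \epsilon_{Z'}^\ast$. Either equation, combined with the equivalent descriptions $I_\bullet = \im a_\bullet^\ast = \ker \epsilon_\bullet^\ast$ of the ideals, directly exhibits the containment of ideals in $A(G)$ asserted by the lemma: the lift/null-composite data defining an element to lie in $I_Z$ is translated through the map $f$ to the analogous data for $I_{Z'}$, giving the inclusion $I_Z \subset I_{Z'}$.

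I do not anticipate any substantive obstacle in carrying this out: the argument is entirely formal. The only genuine input is the naturality of the unreduced suspension $Z \mapsto SZ$ on the category of compact pointed $G$-spaces, which is automatic from the characterization of $SZ$ as the cofiber of the augmentation $\epsilon_Z\colon Z_+ \to S^0$.
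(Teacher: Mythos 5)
Your naturality ladder and both resulting equations, $a_{Z'}^\ast = a_Z^\ast\circ(Sf)^\ast$ and $\epsilon_Z^\ast = f_+^\ast\circ\epsilon_{Z'}^\ast$, are correct — but they give the \emph{opposite} inclusion to the one you (and the printed lemma) assert. From $a_{Z'}^\ast = a_Z^\ast\circ(Sf)^\ast$ one reads off $\im a_{Z'}^\ast \subset \im a_Z^\ast$, i.e.\ $I_{Z'}\subset I_Z$; from $\epsilon_Z^\ast = f_+^\ast\circ\epsilon_{Z'}^\ast$ one reads off $\ker\epsilon_{Z'}^\ast\subset\ker\epsilon_Z^\ast$, i.e.\ again $I_{Z'}\subset I_Z$. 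Conceptually this is the only sensible direction: $I_Z$ is the set of classes that vanish after restricting to $Z$, and a map $Z\to Z'$ factors ``restrict to $Z$'' through ``restrict to $Z'$,'' so vanishing on the larger target $Z'$ forces vanishing on $Z$.

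This is also the direction the paper actually needs. The Mahowald filtration $M_k(G)=I_{EG^{\le k-1}}$ is a decreasing filtration (\cref{prop:mahowaldfiltss}: membership in $M_k$ is detection in filtration $\ge k$), and the skeleton inclusions $EG^{\le k-1}\hookrightarrow EG^{\le k}$ must therefore give $I_{EG^{\le k}}\subset I_{EG^{\le k-1}}$; likewise $S(V)\hookrightarrow S(V\oplus W)$ should give $I_{S(V\oplus W)}\subset I_{S(V)}$, consistent with $I_{S(V)}=M_{2\dim_{\bbC}V}$ in \cref{prop:fpd}. So the displayed statement of \cref{prop:idealinclusions} appears to have the two ideals transposed; this is harmless for its one application (well-definedness of $M_k(G)$, where cellular approximation supplies maps in both directions), but you should flag it rather than reproduce it. Concretely: your last sentence should conclude $I_{Z'}\subset I_Z$, not $I_Z\subset I_{Z'}$.
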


As a general rule, $G$-equivariant homotopy theory simplifies when $G$ is a $p$-group. In particular, the computation of $I_Z$ can in this case be completely reduced to a $p$-primary problem. Let $I_Z^\wedge\subset A(G)_p^\wedge$ be the ideal controlling the analogue of the lifting problem of \cref{eq:extension} inside the $p$-complete equivariant stable category: 
\[
I_Z^\wedge = \im(\pi_0^G F(\Sigma^\infty SZ,(S_G)_p^\wedge)\rightarrow A(G)_p^\wedge) = \ker(A(G)_p^\wedge\rightarrow \pi_0 F(\Sigma^\infty_+ Z,(S_G)_p^\wedge).
\]
As $Z$ is compact we can identify
\[
\bbZ_p \otimes \pi_0 D(\Sigma^\infty_+ Z) \cong \pi_0 D(\Sigma^\infty_+ Z)_p^\wedge\cong  \pi_0 F(\Sigma^\infty_+ Z,(S_G)_p^\wedge),
\]
and so $I_Z^\wedge$ is determined by $I_Z$ via the formula
\[
I_Z^\wedge = \bbZ_p \otimes I_Z.
\]
This is true for an arbitrary group $G$, but under the assumption that $G$ is a $p$-group one can go the other way as well.

\begin{prop}\label{prop:pgroupdegrees}
If $G$ is a $p$-group, then the square
\begin{center}\begin{tikzcd}
I_Z\ar[r]\ar[d]&I_Z^\wedge\ar[d]\\
A(G)\ar[r]&A(G)_p^\wedge
\end{tikzcd}\end{center}
is Cartesian.
\end{prop}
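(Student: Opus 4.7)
The Cartesian property is equivalent to the equality $I_Z = A(G) \cap I_Z^\wedge$ inside $A(G)_p^\wedge$, with the inclusion $I_Z \subset A(G) \cap I_Z^\wedge$ being formal; the plan is to establish the reverse inclusion via the arithmetic fracture squares for finitely generated abelian groups.

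The first step is to verify that all ``$p$-completions'' in sight reduce to tensor products with $\bbZ_p$. Since $A(G)$ is free abelian of finite rank (indexed by conjugacy classes of subgroups of $G$) and $Z$ is compact, $D(\Sigma^\infty_+ Z)$ is a compact $G$-spectrum, hence $\pi_0^G D(\Sigma^\infty_+ Z)$ is a finitely generated abelian group. Accordingly $A(G)_p^\wedge = A(G) \otimes_\bbZ \bbZ_p$, and the identification already noted in the text gives $\pi_0^G F(\Sigma^\infty_+ Z, (S_G)_p^\wedge) = \pi_0^G D(\Sigma^\infty_+ Z) \otimes \bbZ_p$. Flatness of $\bbZ_p$ applied to the left exact sequence $0 \to I_Z \to A(G) \to \pi_0^G D(\Sigma^\infty_+ Z)$ then yields $I_Z^\wedge = I_Z \otimes_\bbZ \bbZ_p$.

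The second step is the fracture argument. Take $x \in A(G)$ with $x \otimes 1 \in I_Z \otimes \bbZ_p$. The arithmetic fracture square
\[
A(G) \cong (A(G) \otimes \bbZ_p) \times_{A(G) \otimes \bbQ_p} (A(G) \otimes \bbQ)
\]
is Cartesian because $A(G)$ is finitely generated. Transporting $x \otimes 1 \in I_Z \otimes \bbZ_p$ through $\bbZ_p \to \bbQ_p$ shows that the common image of $x$ in $A(G) \otimes \bbQ_p$ lies in $I_Z \otimes \bbQ_p$, and therefore the rational image of $x$ lies in $(A(G) \otimes \bbQ) \cap (I_Z \otimes \bbQ_p)$ computed inside $A(G) \otimes \bbQ_p$. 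Now $(A(G)/I_Z) \otimes \bbQ \hookrightarrow (A(G)/I_Z) \otimes \bbQ_p$ is injective, being the base change of a $\bbQ$-vector space along the inclusion $\bbQ \hookrightarrow \bbQ_p$; equivalently $(A(G) \otimes \bbQ) \cap (I_Z \otimes \bbQ_p) = I_Z \otimes \bbQ$ in $A(G) \otimes \bbQ_p$. Hence the rational image of $x$ lies in $I_Z \otimes \bbQ$, and the arithmetic fracture square for the finitely generated abelian group $I_Z$ now assembles the compatible $\bbZ_p$- and $\bbQ$-images of $x$ into an element of $I_Z$ mapping to $x$ in $A(G)$. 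So $x \in I_Z$, completing the proof.

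The main obstacle is really just the bookkeeping needed to identify $I_Z^\wedge$ with the naive tensor product $I_Z \otimes \bbZ_p$; once that is done, the rest reduces to a formal manipulation of the Cartesian arithmetic fracture squares for $A(G)$ and $I_Z$, which notably does not visibly use the $p$-group hypothesis (so the stated generality may be dictated by how the proposition will be applied in the sequel rather than by what the argument actually requires).
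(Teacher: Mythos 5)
There is a genuine gap. The ``arithmetic fracture square'' you invoke,
\[
A(G) \;\cong\; (A(G) \otimes \bbZ_p) \times_{A(G) \otimes \bbQ_p} (A(G) \otimes \bbQ),
\]
is not Cartesian for a finitely generated abelian group: the pullback on the right computes the \emph{localization} $A(G)_{(p)}$, not $A(G)$ itself. Already for $A(G) = \bbZ$ one has $\bbZ_p \times_{\bbQ_p} \bbQ = \bbZ_p \cap \bbQ = \bbZ_{(p)} \neq \bbZ$. The correct single-prime fracture square replaces $A(G) \otimes \bbQ$ by $A(G)[\tfrac{1}{p}]$. Running your argument with the wrong square, the final assembly step in ``the fracture square for $I_Z$'' only lands you in $I_{Z,(p)}$, not in $I_Z$, so the proof only shows that the square of the proposition becomes Cartesian after localizing at $p$ --- i.e.\ that $A(G) \cap I_Z^\wedge$ and $I_Z$ have the same image in $A(G)_{(p)}$. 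That is strictly weaker than the claim, and indeed holds for arbitrary finite $G$.

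Once you use the correct fracture square with $A(G)[\tfrac{1}{p}]$, the key step becomes: from $x \otimes 1 \in I_Z \otimes \bbZ_p$ deduce $x \otimes 1 \in I_Z[\tfrac{1}{p}]$ inside $A(G)[\tfrac{1}{p}]$, which requires the map $(A(G)/I_Z)[\tfrac{1}{p}] \to (A(G)/I_Z) \otimes \bbQ_p$ to be injective, i.e.\ that $(A(G)/I_Z)[\tfrac{1}{p}]$ has no torsion. That is false for a general finite group and is exactly where the $p$-group hypothesis enters: the paper's proof invokes the splitting of $G$-spectra away from $|G|$ by geometric fixed points, which, when $|G|$ is a power of $p$, expresses $A(G)[\tfrac{1}{p}]/I_Z[\tfrac{1}{p}]$ as a product of copies of $\bbZ[\tfrac{1}{p}]$ and hence torsion-free. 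Your parenthetical observation that the argument ``does not visibly use the $p$-group hypothesis'' should have been taken as a warning rather than a feature --- the proposition genuinely fails for composite-order $G$, which is the whole point of \cref{rmk:reducetopgroup}. The paper's approach (check after tensoring with $\bbZ_p$ and with $\bbZ[\tfrac{1}{p}]$, using faithful flatness of $\bbZ_p \oplus \bbZ[\tfrac{1}{p}]$) is the repaired version of your idea, with the $p$-group input concentrated in the $\bbZ[\tfrac{1}{p}]$ half.
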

\begin{proof}
As $\bbZ_p \oplus \bbZ[\tfrac{1}{p}]$ is faithfully flat over $\bbZ$, it suffices to prove that this square is Cartesian after tensoring with $\bbZ_p$ and with $\bbZ[\tfrac{1}{p}]$. The former case is clear, as $I_Z^\wedge\cong\bbZ_p\otimes I_Z$ and $A(G)_p^\wedge = \bbZ_p \otimes A(G)$ as discussed above. It remains to consider the latter case. We have
\[
\bbZ[\tfrac{1}{p}]\otimes I_Z = \ker\left(A(G)[\tfrac{1}{p}]\rightarrow \pi_0 D(\Sigma^\infty_+ Z)[\tfrac{1}{p}]\right).
\]
The splitting of $G$-spectra away from the order of $G$ by geometric fixed points, a convenient reference for which is \cite[Corollary 3.4.28]{schwede2018global}, implies that this splits into a product over conjugacy classes of subgroups $H\subset G$ of the kernels of
\[
\bbZ[\tfrac{1}{p}] \cong \pi_0 S[\tfrac{1}{p}]\rightarrow \pi_0 \Phi^H D(\Sigma^\infty_+ Z)[\tfrac{1}{p}]^{W_GH} \cong \pi_0 D(\Sigma^\infty_+(Z^H))[\tfrac{1}{p}]^{W_GH},
\]
where the last identification uses the assumption that $Z$ is compact. This kernel is either trivial if $Z^H \neq \emptyset$ or everything if $Z^H = \emptyset$. The analogous computation holds for
\[
\bbZ[\tfrac{1}{p}]\otimes I_Z^\wedge = \ker\left(\bbQ_p \otimes A(G)\rightarrow \bbQ_p \otimes \pi_0 D(\Sigma^\infty_+ Z)\right),
\]
so after inverting $p$ the square under consideration is a product of Cartesian squares of the form
\begin{center}\begin{tikzcd}
\bbZ[\tfrac{1}{p}]\ar[d]\ar[r]&\bbQ_p\ar[d]\\
\bbZ[\tfrac{1}{p}]\ar[r]&\bbQ_p\end{tikzcd}\hspace{1cm}and\hspace{1cm}\begin{tikzcd}
0\ar[r]\ar[d]&0\ar[d]\\
\bbZ[\tfrac{1}{p}]\ar[r]&\bbQ_p
\end{tikzcd},\end{center}
and is thus itself Cartesian.
\end{proof}

We make some further remarks about the ideals $I_Z$.

\begin{rmk}\label{rmk:rankiz}
The same proof as in \cref{prop:pgroupdegrees} shows that $I_Z\subset A(G)$ is always contained in the ideal
\[
I = \{X\in A(G) : Z^H \neq \emptyset \Rightarrow |X^H| = 0\text{ for }H\subset G\}\subset A(G),
\]
and moreover that $I_Z[\tfrac{1}{|G|}] = I[\tfrac{1}{|G|}]$, or equivalently, that $I/I_Z$ is $|G|$-power torsion. In particular, this determines the rank of $I_Z$ as a free abelian group in terms of basic information about the $G$-action on $Z$.
\tqed
\end{rmk}

\begin{rmk}
In particular, if $Z$ is nonempty then $I_Z$ is contained in the augmentation ideal $I(G)\subset A(G)$. If we choose a basepoint for the underlying space of $Z$ and write $\Sigma^\infty_- Z = \Cof(G_+\rightarrow \Sigma^\infty_+ Z)$, then it follows that
\[
I_Z = \ker(I(G)\rightarrow \pi_0^G D(\Sigma^\infty_- Z)).\tag*{$\triangleleft$}
\]
\end{rmk}

\begin{rmk}\label{rmk:reducetopgroup}
If $G$ acts freely on a nonempty space $Z$, then we can identify $\pi_0^G D(\Sigma^\infty_- Z)$ with the nonequivariant cohomotopy group $\pi_0 D(\Sigma^\infty Z_{\h G}) \cong \pi_0 D(\Sigma^\infty(Z/G))$. If $p$ is a prime and $G_p\subset G$ a Sylow $p$-subgroup, then the map
\[
\pi_0 D(\Sigma^\infty Z_{\h G})_{(p)} \rightarrowtail \pi_0 D(\Sigma^\infty Z_{\h G_p})_{(p)}
\]
is an injection, and thus $(I(G)/I_Z)_{(p)}\subset I(G_p)/I_Z^{G_p}$. As the quotient $I(G)/I_Z$ is $|G|$-power torsion, it follows that
\[
I(G)/I_Z \cong \bigoplus_{p\text{ dividing } |G|} (I(G)/I_Z)_{(p)}\subset \bigoplus_{p\text{ dividing } |G|} I(G_p)/I_Z^{G_p},
\]
and so
\[
I_Z = \ker\left(I(G)\rightarrow \prod_{p\text{ dividing }|G|}I(G_p)\rightarrow  \prod_{p\text{ dividing }|G|}I(G_p)/I_Z^{G_p}\right).
\]
Thus if $G$ acts freely on $Z$ then $I_Z$ is determined algebraically by the case where $G$ is a $p$-group.
\tqed
\end{rmk}

\begin{rmk}
If we choose for each prime $p$ a Sylow $p$-subgroup $G_p\subset G$, then the kernel of the joint restriction $A(G)\rightarrow \prod_p A(G_p)$ is exactly the kernel of the completion map $A(G)\rightarrow A(G)_{I(G)}^\wedge$ \cite[Proposition 1.10]{laitinen1979burnside}. The Segal conjecture, proved by Carlsson \cite{carlsson1983equivariant}, implies that this can be identified as the intersection $\bigcap_Z I_Z$ as $Z$ ranges through the compact free $G$-spaces, or equivalently just through a skeleton of $EG$. In particular, this intersection is trivial if $G$ is a $p$-group.
\tqed
\end{rmk}

\subsection{Cyclic fixed points and the Adams conjecture}\label{ssec:adamsconjecture}

Let $G$ be a finite group and $Z$ be a compact $G$-space as before. 

\begin{defn}\label{def:marks2}
The group $G$ acts on the set $\Sub(G)$ of its subgroups by conjugation. We write
\[
\Cl(\Sub(G),\bbZ)
\]
for the ring of integer-valued conjugation-invariant functions on $\Sub(G)$, and
\[
\phi\colon A(G) \to \Cl(\Sub(G),\bbZ),\qquad (\phi X)(H) = |X^H|
\]
for the \emph{marks homomorphism}.
\tqed
\end{defn}

The marks homomorphism provides a character theory for the Burnside ring: $\phi$ is an injection, and is an isomorphism after inverting the order of $G$. In particular, the ideal $I_Z\subset A(G)$ of \cref{def:iz} is determined by its image $\phi(I_Z)\subset \Cl(\Sub(G),\bbZ)$.

\begin{ex}\label{ex:burnside}
Under the isomorphism $A(G)\cong \pi_0 S_G$, the marks homomorphism is realized topologically as the degree map
\[
\deg\colon \pi_0 S_G \to \Cl(\Sub(G),\bbZ)
\]
for which, if $f\in \pi_0 S_G$ is represented by a map $f\colon S^U \to S^U$ between representation spheres, then
\[
(\deg f)(H) = \Phi^H(f) = \deg(f^H \colon S^{U^H} \to S^{U^H}) \in \bbZ;
\]
see \cite[Theorem 7.6.7]{dieck1979transformation}.
\tqed
\end{ex}

\begin{ex}\label{ex:degreeaslifting}
If $V$ is a $G$-representation, then following \cref{ex:svsz} we have
\[
I_{S(V)} = \im\left(a_V\colon \pi_V S_G \to \pi_0 S_G\right).
\]
By \cref{ex:burnside}, it follows that $\phi(I_{S(V)})\subset\Cl(\Sub(G),\bbZ)$ is the image of degree map
\[
\deg  = \phi\circ a_V\colon \pi_VS_G \to \pi_0 S_G \to \Cl(\Sub(G),\bbZ)
\]
which sends the class of a map $f\colon S^{V\oplus U} \to S^U$ to the degree function
\[
(\deg f)(H) = \Phi^H(a_V f) =  \begin{cases}\deg(f^H\colon S^{V^H}\to S^{V^H})&V^H = 0,\\ 0&\text{ otherwise}.\end{cases}
\]
Thus $\phi(I_{S(V)})\subset\Cl(\Sub(G),\bbZ)$ is exactly the set of degree functions for stable maps $S^V \to S^0$, i.e.\ those functions of the above form for some $G$-representation $U$ and equivariant map $f\colon S^{V\oplus U} \to S^U$. 
\tqed
\end{ex}

\begin{rmk}\label{rmk:composite}
By \cref{rmk:reducetopgroup}, it follows that if $V$ is a fixed point free $G$-representation, then the set $\phi(I_{S(V)})\subset\Cl(\Sub(G),\bbZ)$ of degree functions for stable maps $S^V\rightarrow S^0$ is determined by the corresponding sets $\phi(I_{S(V)}^{G_p})\subset\Cl(\Sub(G_p),\bbZ)$ for a choice of Sylow $p$-subgroups $G_p\subset G$. In particular, \cref{thm:fixeddegree} algebraically determines the set of degree functions for stable maps $S^V\rightarrow S^0$ whenever $V$ is a fixed point free representation of a composite order cyclic group, although we shall not attempt to give an explicit description of this.
\tqed
\end{rmk}

\cref{ex:degreeaslifting} shows that the ideals $I_Z$ control a strong form of \cref{problem:fpdegree} in the case where $\alpha = V$ is an actual $G$-representation. Computing these ideals seems to be hard in general. The situation becomes significantly more tractable if $G$ is a $p$-group and we restrict $\phi(I_Z)$ to the \textit{cyclic} subgroups of $G$.

Restricting to the cyclic subgroups of $G$ amounts to restricting to the part of $A(G)$ seen by classical representation theory. We summarize the relevant bit of representation theory in the following two remarks.

\begin{rmk}\label{rmk:rationalreps}
Let $R\bbQ(G)\subset RO(G)\subset RU(G)$ denote the rational, real and complex representation rings of $G$. Then there is a commutative diagram
\begin{center}\begin{tikzcd}[column sep=small]
A(G)\ar[rrr,"{\bbQ[\bs]}"]\ar[d,"\phi",tail]&&&R\bbQ(G)\ar[d,"\chi",tail]\ar[r,tail]\ar[dll,"\phi"',tail,dashed]&RO(G)\ar[d,"\chi",tail]\ar[r,tail]&RU(G)\ar[d,"\chi",tail]\\
\Cl(\Sub(G),\bbZ)\ar[r,"\res", two heads]&\Cl(\Sub^\cyc(G),\bbZ)\ar[r,tail]&\ar[r]\Cl(G,\bbZ)\ar[r,tail]&\Cl(G,\bbQ)\ar[r,tail]&\Cl(G,\bbR)\ar[r,tail]&\Cl(G,\bbC)
\end{tikzcd},\end{center}
where $\bbQ[\bs]$ sends a $G$-set to its associated permutation representation, $\Sub^\cyc(G)\subset \Sub(G)$ is the subset of cyclic subgroups, $\Cl(\Sub^\cyc(G),\bbZ)\rightarrow\Cl(G,\bbZ)$ restricts along the map $G\rightarrow \Sub^\cyc(G)$ sending $g\in G$ to the cyclic subgroup it generates, and $\chi$ is the usual character map.

The Adams operation $\psi^\ell$ acts on characters by the $\ell$th power map on $G$, and if $\gcd(\ell,|G|) = 1$ then this may be interpreted as an action on $RO(G)$ and $RU(G)$ by Galois conjugation; see \cite[Part II, \S3]{atiyahtall1969group} for a discussion. In particular, $R\bbQ(G)\subset RU(G)$ lands in the subring fixed by the action of $\psi^\ell$ for all $(\ell,|G|) = 1$. As $\psi^\ell$ acts on characters by the $\ell$th power map on $G$ and as rational representations have integer-valued characters \cite[Chapter 6, Proposition 15]{serre1977linear}, it follows that the rational character map $\chi\colon R\bbQ(G) \to \Cl(G,\bbQ)$ lifts to $\Cl(\Sub^\cyc(G))$ as pictured. This lift is an isomorphism after inverting the order of $G$.
\tqed
\end{rmk}

\begin{rmk}\label{rmk:segalritter}
If for example $G$ is cyclic or an odd order $p$-group, then $R\bbQ(G)\subset RU(G)$ is the subring fixed by the action of $\psi^\ell$ for all $\gcd(\ell,|G|)$. If $G$ is a $2$-group, then it is the fixed subring of $RO(G)$. See \cite[Corollary 9.3.2]{dieck1979transformation}. Moreover, in these cases the map
\[
\bbQ[\bs]\colon A(G) \to R\bbQ(G)
\]
is a surjection, and is an isomorphism when $G$ is cyclic. For $p$-groups, this is the Ritter--Segal theorem \cite{ritter1972induktionssatz,segal1972permutation}, and for cyclic groups it may be seen by direct inspection.
\tqed
\end{rmk}

We now explain how the Adams and Segal conjectures, theorems of Quillen \cite{quillen1971adams} and Carlsson \cite{carlsson1983equivariant} respectively, may be combined to produce a $G$-equivariant version of the Adams conjecture which categorifies the Ritter--Segal theorem.

Suppose for the rest of this subsection that $G$ is a $p$-group, and let $\ell$ be a primitive root mod $p^2$ (say $\ell = 3$ if $p=2$). Write
\[
\psi^\ell\colon (KO_G)_p^\wedge \to (KO_G)_p^\wedge
\]
for the $\ell$th Adams operation acting on $p$-adic equivariant $KO$-theory.

\begin{defn}\label{def:j}
We define the $G$-equivariant $J$-spectrum (at the prime $p$) by
\begin{equation}\label{eq:jg}
J_G = \Fib\left(\begin{tikzcd} (KO_G)_p^\wedge\ar[r,"\psi^\ell-\psi^1"]& (KO_G)_p^\wedge\end{tikzcd}\right).
\end{equation}
When $G = e$ we write $J_p = J_e$ to incorporate the prime into the notation.
\tqed
\end{defn}

\begin{rmk}
The $G$-spectrum $J_G$ does not depend on the choice of $\ell$: it can be identified as the Bousfield localization $L_{KU_G/p}S_G$ of the $G$-equivariant sphere spectrum with respect to equivariant $K$-theory mod $p$, or as the Borel spectrum $F(EG_+,i_\ast J_p)$ associated to the classical $K(1) = KU/p$-local sphere $J_p \simeq S_{K(1)}$. In particular, if $p$ is odd then one is free to replace $KO_G$ with $KU_G$ in the definition. See \cite[Proposition A.4.12]{balderrama2022total} for details and further discussion.
\tqed
\end{rmk}

\begin{rmk}\label{rmk:jses}
An automorphism $\psi^\ell\colon M \to M$ of an abelian group determines an action of $\bbZ$ on $M$. If we abbreviate the group cohomology of this action by
\[
H^\ast(\psi^\ell;M) = H^\ast(\bbZ\{\psi^\ell\},M),
\]
then we may identify 
\[
H^0(\psi^\ell,M) = \ker\left(\psi^\ell-\id\colon M \to M\right),\qquad H^1(\psi^\ell,M) = \coker\left(\psi^\ell-\id\colon M \to M\right).
\]
Thus the defining fiber sequence for $J_G$ yields short exact sequences
\begin{center}\begin{tikzcd}
0\ar[r]&H^1(\psi^\ell;((KO_G)_p^\wedge)^{-1}(Y)) \ar[r]&J_G^0(Y)\ar[r]&H^0(\psi^\ell;((KO_G)_p^\wedge)^0(Y))\ar[r]&0
\end{tikzcd}\end{center}
for any $G$-spectrum $Y$.
\tqed
\end{rmk}

In particular, we have the following.

\begin{lemma}
For any compact $G$-spectrum $Y$, the unit map $J_G \to (KO_G)_p^\wedge$ induces a surjection
\[
J_G^0(Y) \to H^0(\psi^\ell;\bbZ_p \otimes KO_G^0(Y)).
\]
In particular, $\pi_0 J_G/\text{torsion} \cong R\bbQ(G)_p^\wedge$.
\end{lemma}
\begin{proof}
The assumption that $Y$ is compact guarantees that
\[
((KO_G)_p^\wedge)^\ast(Y) \cong \bbZ_p \otimes KO_G^\ast(Y),
\]
so the given surjection follows from the defining fiber sequence for $J_G$ as in \cref{rmk:jses}. Taking $Y = \ast$, this extends to a short exact sequence
\[
0 \to H^1(\psi^\ell;\bbZ_p \otimes \pi_{1}KO_G) \to \pi_0 J_G \to H^0(\psi^\ell;\bbZ_p \otimes \pi_0 KO_G) \to 0.
\]
As discussed in \cref{rmk:rationalreps}, we may identify
\[
H^0(\psi^\ell;\bbZ_p \otimes \pi_0 KO_G) \cong H^0(\psi^\ell;\bbZ_p \otimes RO(G)) \cong R\bbQ(G)_p^\wedge
\]
for any $p$-group $G$, so the final isomorphism follows as $\pi_1 KO_G$ is torsion; see for example \cite[Section 9]{mathewnaumannnoel2017nilpotence}.
\end{proof}

In this way we find that the unit $S_G\rightarrow J_G$ categorifies the map $A(G)\rightarrow R\bbQ(G)$, at least up to $p$-completion; see \cite{bonventreguilloustapleton2022kug} for an integral statement.

\begin{prop}\label{prop:adamsconjecture}
Let $T$ be a compact pointed $G$-space. Then
\begin{enumerate}
\item The map $\bbZ_p \otimes \pi_0^G D(\Sigma^\infty T) \rightarrow \widetilde{J}_G^0(T)$ is a surjection mod torsion;
\item If $p>2$ or the Borel construction $T_{\h G} = EG_+\wedge_G T$ is simply connected, then
\[
\bbZ_p \otimes \pi_0^G D(\Sigma^\infty T) \rightarrow \widetilde{J}_G^0(T)
\]
is a naturally split surjection, this naturality being in maps of pointed $G$-spaces.
\end{enumerate}
\end{prop}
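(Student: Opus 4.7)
The plan is to reduce both parts to the classical (nonequivariant) Adams conjecture by using the Segal conjecture to pass between equivariant and Borel cohomotopy. Since $J_G \simeq F(EG_+, i_\ast J_p)$ is by construction a Borel $G$-spectrum, we have a natural identification $\tilde{J}_G^0(T) \cong \tilde{J}_p^0(T_{\h G})$ for any pointed $G$-space $T$.

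Next, because $T$ is compact, we identify $\bbZ_p \otimes \pi_0^G D(\Sigma^\infty T)$ with $\pi_0^G F(\Sigma^\infty T, (S_G)_p^\wedge)$. Carlsson's theorem asserts that $S_G \to F(EG_+, S_G)$ is an $I(G)$-adic completion; for $G$ a $p$-group this completion agrees with $p$-completion when applied to a compact $G$-spectrum (since $I(G)$ acts nilpotently on $A(G)/p$), so the map $(S_G)_p^\wedge \to F(EG_+, (S_G)_p^\wedge)$ is a $p$-adic equivalence of $G$-spectra. Taking $G$-fixed points of mapping spectra from $\Sigma^\infty T$ yields a natural isomorphism
\[
\bbZ_p \otimes \pi_0^G D(\Sigma^\infty T) \cong \pi_0 D(\Sigma^\infty T_{\h G})_p^\wedge,
\]
under which the map in the proposition becomes the classical $J$-theoretic Hurewicz map $\pi_0 D(\Sigma^\infty T_{\h G})_p^\wedge \to \tilde{J}_p^0(T_{\h G})$.

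One now invokes the classical Adams conjecture: Quillen's theorem, together with its Sullivan interpretation, provides that the unit $S_p^\wedge \to J_p$ admits a natural section on the cohomology of pointed simply connected spaces, and on arbitrary pointed spaces when $p$ is odd (the obstruction at $p=2$ being of $w_1$-type). In particular the map $\pi_0 D(\Sigma^\infty X)_p^\wedge \to \tilde{J}_p^0(X)$ is always surjective modulo torsion, giving (1), and admits a splitting natural in $X$ under the hypotheses of (2) applied to $X = T_{\h G}$. Naturality in pointed $G$-maps $T \to T'$ then follows from functoriality of the Borel construction.

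The main obstacle is the compatibility of $p$-adic and $I(G)$-adic completions needed to invoke Segal's theorem: one must verify that, for compact $T$ and a $p$-group $G$, Carlsson's $I(G)$-adic equivalence descends to a $p$-adic equivalence of mapping spectra out of $\Sigma^\infty T$, so that the identification with nonequivariant $J$-theory of $T_{\h G}$ goes through. Once this is in place, part (2) reduces to the established existence and naturality of Quillen's section of the unit $S_p^\wedge \to J_p$.
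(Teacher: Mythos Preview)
Your approach is essentially the same as the paper's: reduce to a nonequivariant statement about $T_{\h G}$ via the Segal conjecture and the Borel description $J_G \simeq F(EG_+, i_\ast J_p)$, then invoke the space-level Adams conjecture. The paper carries out exactly this reduction.

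The one place where the paper is more explicit than your sketch is the passage from the classical Adams conjecture to the claim about $J_p$. The space-level splitting coming from Quillen and Sullivan is a splitting $\Omega^\infty S_p^\wedge \simeq F \times \Omega^\infty j_p$, where $j_p$ is the \emph{connective} $J$-spectrum (the connective cover of $J_p$ for $p>2$, but the fiber of $\psi^3-1\colon ko_2^\wedge \to ksp_2^\wedge$ for $p=2$). So what one gets for free is that $\pi_0 \widehat{D}(W) \to \tilde{j}_p^0(W)$ is naturally split surjective; one must then compare $\tilde{j}_p^0(W)$ with $\tilde{J}_p^0(W)$. The paper does this via the fiber sequence
\[
\Omega^\infty j_2 \to \Omega^\infty J_2 \to K(\bbZ/2,0)\times K(\bbZ/2,1),
\]
which makes precise your remark that ``the obstruction at $p=2$ is of $w_1$-type'' and shows the comparison is an isomorphism when $W$ is simply connected and a surjection mod $2$-torsion in general. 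Your proposal folds this step into the black-box invocation of the Adams conjecture; that is fine as an outline, but it is exactly the content that distinguishes parts (1) and (2) of the proposition, so in a full proof you should make it explicit.
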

\begin{proof}
For a nonequivariant spectrum $Y$, define $\widehat{D}(Y) = F(Y,S_p^\wedge)$. As $T$ is compact, the Segal conjecture implies
\[
\bbZ_p \otimes \pi_0^G D(\Sigma^\infty T)\cong \pi_0 \widehat{D}(\Sigma^\infty T_{\h G}).
\]
Likewise the identification $J_G \simeq F(EG_+,i_\ast J_p)$ implies
\[
\widetilde{J}^0_G(T) \cong \widetilde{J}_p^0(T_{\h G}).
\]

Thus the proposition reduces to the nonequivariant claim that if $W$ is a pointed space, then
$
\pi_0 \widehat{D}(\Sigma^\infty W) \to \widetilde{J}_p^0(W)
$
is a surjection mod torsion, and is a split surjection if $p>2$ or $W$ is simply connected, this splitting being natural in $W$ as a pointed space. As
$
\widetilde{J}_p^0( W) \cong \pi_0 \Map_\ast(W,\Omega^\infty J_p),
$
this is a consequence of the Adams conjecture in its strong form as a space-level splitting of the image of $J$ off $QS^0$ at a given prime, as we now explain.

Let $j_p$ be the $p$-complete connective $J$-spectrum: this is the connective cover of $J_p$ if $p>2$ and the fiber $\Fib(\psi^3-1\colon ko_2^\wedge\rightarrow ksp_2^\wedge)$ when $p=2$. By the positive resolution of the Adams conjecture by Quillen \cite{quillen1971adams}, but in its space-level form as in e.g.\ \cite[Corollary 4.6]{may1977einfty}, the unit map $S_p^\wedge\rightarrow j_p$ induces the projection in a space-level splitting
\[
\Omega^\infty S_p^\wedge\simeq F\times \Omega^\infty j_p.
\]
It follows that $\pi_0 \widehat{D}(\Sigma^\infty W)\rightarrow \widetilde{j}_p^0(W)$ is a split surjection, natural in the pointed space $W$. So it suffices to prove that $\widetilde{J}_p^0(W)\rightarrow \widetilde{j}_p^0(W)$ is a surjection mod torsion, and an isomorphism if $p>2$ or $W$ is simply connected.

If $p>2$, then $j_p$ is the connective cover of $J_p$, and so this is clear as 
\[
\widetilde{J}_p^0(W) \cong \pi_0\Map_\ast(W,\Omega^\infty J_p) \cong \pi_0\Map_\ast(W,\Omega^\infty \tau_{\geq 0} J_p) \cong \pi_0\Map_\ast(W,\Omega^\infty j_p).
\]
If $p=2$, then a bit more care is needed as $j_2$ differs slightly from the connective cover of $J_2$. Instead, there is a fiber sequence
\[
\Omega^\infty j_2 \rightarrow \Omega^\infty J_2 \rightarrow K(\bbZ/2,0)\times K(\bbZ/2,1),
\]
and so $\widetilde{J}_2^0(W) \to \widetilde{j}_2^0(W)$ is generally only a surjection either mod torsion or if $W$ is simply connected. In either case the proposition follows.
\end{proof}

\begin{rmk}
In particular, naturality applied to the collapse maps $G/H_+\wedge T \rightarrow T$ ensures that the splitting of $\bbZ_p \otimes \pi_0^G D(\Sigma^\infty T)\rightarrow \widetilde{J}_G^0(T)$ is compatible with restrictions.
\tqed
\end{rmk}

\cref{prop:adamsconjecture} has the following consequence.

\begin{theorem}\label{thm:equivadams}
Let $G$ be a $p$-group and $Z$ be a compact $G$-space. Then the restriction of $\phi(I_Z)\subset\Cl(\Sub(G),\bbZ)$ to the cyclic subgroups of $G$ is equal to
\[
\im\left(\phi\circ a_Z\colon H^0(\psi^\ell;\widetilde{KO}{}_G^0(SZ))\rightarrow R\bbQ(G)\rightarrowtail \Cl(\Sub^\cyc(G),\bbZ)\right).
\]
\end{theorem}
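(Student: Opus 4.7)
The plan is to reformulate the claim as an equality of $\bbZ$-sublattices in $R\bbQ(G)$, verify it $p$-adically via the equivariant Adams conjecture (Proposition~\ref{prop:adamsconjecture}), and then descend to the integral statement using the Cartesian square of Proposition~\ref{prop:pgroupdegrees}. First, by the Ritter--Segal theorem for $p$-groups the permutation representation map $\pi\colon A(G)\twoheadrightarrow R\bbQ(G)$ is surjective, and the identity $\chi_{\bbQ[X]}(g)=|X^{\langle g\rangle}|$ identifies the restriction of $\phi(I_Z)$ to cyclic subgroups with $\pi(I_Z)\subset R\bbQ(G)$. On the other side, since $\ell$ is a primitive root modulo $p^2$, a Galois descent argument gives $RO(G)^{\psi^\ell}=R\bbQ(G)$, so $a_Z^*$ carries $H^0(\psi^\ell;\widetilde{KO}{}_G^0(SZ))$ into $R\bbQ(G)$ with some image $J$. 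The task reduces to showing $\pi(I_Z)=J$.

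For the $p$-adic comparison, I would assemble the commutative diagram whose top row is $\bbZ_p\otimes\pi_0^G D(\Sigma^\infty SZ)\xrightarrow{a_Z}A(G)_p^\wedge$ and whose bottom row is $\widetilde{J}_G^0(SZ)\xrightarrow{a_Z^*}\pi_0 J_G$, joined on both columns by Hurewicz maps and further composed with the torsion-kernel surjection $\pi_0 J_G\twoheadrightarrow R\bbQ(G)_p^\wedge$. The top composite to $R\bbQ(G)_p^\wedge$ has image $\bbZ_p\otimes\pi(I_Z)$. By Proposition~\ref{prop:adamsconjecture} the left Hurewicz is surjective modulo torsion, and since $R\bbQ(G)_p^\wedge$ is torsion-free, this composite realizes the same image as $\widetilde{J}_G^0(SZ)\to R\bbQ(G)_p^\wedge$, which in turn factors through the surjection $\widetilde{J}_G^0(SZ)\twoheadrightarrow H^0(\psi^\ell;\widetilde{KO}{}_G^0(SZ))_p^\wedge$ and therefore realizes $\bbZ_p\otimes J$. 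This yields $\bbZ_p\otimes\pi(I_Z)=\bbZ_p\otimes J$ inside $R\bbQ(G)_p^\wedge$.

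Finally I would descend to the integral equality. By Remark~\ref{rmk:rankiz}, $\pi(I_Z)$ has $p$-power index in the cyclic-vanishing lattice $L=\{V\in R\bbQ(G):V|_C=0\text{ whenever }Z^C\neq\emptyset\}$; an analogous $K$-theoretic computation (using that after inverting $p=|G|^{1/r}$ the Galois-fixed-point sequence defining $J_G$ degenerates and $K$-theoretic Euler classes detect exactly the cyclic-vanishing conditions) shows $J\subset L$ also has $p$-power index. Subgroups of $L$ of $p$-power index are determined by their $\bbZ_p$-completions, so the $p$-adic equality lifts to $\pi(I_Z)=J$ integrally.

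The main obstacle is this final descent, and specifically the $K$-theoretic analogue of Remark~\ref{rmk:rankiz}: verifying that $J$ has the expected full rank and $p$-power index in $L$, i.e., that the integral image of $H^0(\psi^\ell;\widetilde{KO}{}_G^0(SZ))$ under $a_Z^*$ agrees with $L$ after inverting $p$. This requires a direct analysis of equivariant $K$-theoretic Euler classes restricted to cyclic subgroups with $|G|$ inverted, paralleling the geometric-fixed-point splitting argument used for $I_Z$ in Proposition~\ref{prop:pgroupdegrees}.
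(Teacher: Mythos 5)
Your proposal follows essentially the same route as the paper: identify the restriction of $\phi(I_Z)$ to cyclic subgroups with $\bbQ[I_Z]\subset R\bbQ(G)$, observe the inclusion into the $K$-theoretic image via the Hurewicz map, verify equality after $\otimes\bbZ_p$ using \cref{prop:adamsconjecture} (exactly as the paper does), and then descend integrally via a faithfully flat covering / away-from-$p$ comparison. The step you flag as the main obstacle---that the $K$-theoretic image $J$ has $p$-power index in the cyclic-vanishing lattice $L$, equivalently that $J[\tfrac{1}{p}] = L[\tfrac{1}{p}]$---is precisely what the paper settles by rerunning the geometric-fixed-point splitting argument from the proof of \cref{prop:pgroupdegrees}: after inverting $p$, the map $\phi\colon R\bbQ(G)\to\Cl(\Sub^\cyc(G),\bbZ)$ is an isomorphism and both $\phi(\bbQ[I_Z][\tfrac{1}{p}])$ and $\phi(J[\tfrac{1}{p}])$ are identified with the functions supported on $\{C : Z^C=\emptyset\}$. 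So your sketch is correct and, once that last step is executed as in \cref{prop:pgroupdegrees}, yields the paper's proof; the only cosmetic difference is that the paper states the faithfully flat criterion up front rather than phrasing the conclusion as ``$p$-power-index sublattices of $L$ are determined by their $\bbZ_p$-completions.'' (Minor: ``$p = |G|^{1/r}$'' reads as a typo; you mean inverting $p$, equivalently $|G|$.)
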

\begin{proof}
Let $I_Z^\bbQ = \im \left(\phi\circ a_Z\colon H^0(\psi^\ell;\widetilde{KO}{}_G^0(SZ))\rightarrow R\bbQ(G)\right)$ and write $I_Z^\cyc$ for the image of $I_Z$ under the map $\bbQ[\bs]\colon A(G)\rightarrow R\bbQ(G)$. Then it is equivalent to show $I_Z^\cyc = I_Z^\bbQ$ inside $R\bbQ(G)$. Consider the diagram
\begin{center}\begin{tikzcd}
\pi_0^G D(\Sigma^\infty SZ)\ar[r,"h"]\ar[d,"a_Z"]&H^0(\psi^\ell;\widetilde{KO}{}_G^0SZ)\ar[d,"a_Z"]\\
A(G)\ar[r,"{\bbQ[\bs]}",two heads]&R\bbQ(G)
\end{tikzcd}.\end{center}
The image of the counterclockwise composite is $I_Z^\cyc$ and the image of the right vertical map is $I_Z^\bbQ$, so this furnishes an inclusion $I_Z^\cyc\subset I_Z^\bbQ$ which we claim is an equality. As with \cref{prop:pgroupdegrees}, it suffices to show that the two agree after tensoring with $\bbZ_p$ and after tensoring with $\bbZ[\tfrac{1}{p}]$.

As all spaces in sight are compact, tensoring with $\bbZ_p$ has the effect of $p$-completion. By \cref{prop:adamsconjecture} the top map $h$ is a surjection after $p$-completion, and this proves $\bbZ_p \otimes I_Z^\cyc = \bbZ_p \otimes I_Z^\bbQ$. On the other hand, the map $\phi\colon R\bbQ(G)\rightarrow \Cl(\Sub^\cyc(G),\bbZ)$ becomes an isomorphism after inverting $p$, and the same argument as in the proof of \cref{prop:pgroupdegrees} shows that we can identify both $\phi(I_Z^\cyc[\tfrac{1}{p}])$ and $\phi(I_Z^\bbQ[\tfrac{1}{p}])$ as the set of functions supported on $\{C\in\Sub^\cyc(G) : Z^C = \emptyset\}$, so the theorem follows.
\end{proof}

\begin{cor}\label{cor:reducetoktheory}
If $Z$ is a compact $C_{p^n}$-space, then the Hurewicz map induces an isomorphism
\[
I_Z \cong \im\left(a_Z\colon H^0(\psi^\ell;\widetilde{KO}{}_{C_{p^n}}^0 SZ) \rightarrow R\bbQ(C_{p^n})\right).
\]
\end{cor}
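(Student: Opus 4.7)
The plan is to deduce this corollary by specializing Theorem \ref{thm:equivadams} to the cyclic case. Two features of $G = C_{p^n}$ conspire to sharpen the general theorem into the claimed isomorphism: every subgroup of $G$ is cyclic, so $\Sub(G) = \Sub^\cyc(G)$ and the restriction appearing in Theorem \ref{thm:equivadams} discards no information; and the permutation representation map $\bbQ[\bs]\colon A(C_{p^n}) \to R\bbQ(C_{p^n})$ is an isomorphism. The latter is classical for cyclic $p$-groups and is visible directly: taking $\{[C_{p^n}/C_{p^k}]\}_{k=0}^n$ as the $\bbZ$-basis of $A(C_{p^n})$ and the rational irreducibles $\{V_k\}_{k=0}^n$ (with $V_k$ the pullback to $C_{p^n}$ of the rational cyclotomic representation of $C_{p^k}$) as the $\bbZ$-basis of $R\bbQ(C_{p^n})$, the decomposition $\bbQ[C_{p^n}/C_{p^k}] = V_0 + V_1 + \cdots + V_{n-k}$ presents the change of basis as unitriangular and therefore invertible over $\bbZ$.

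Granting these, I would argue as follows. The naturality square
\[
\begin{tikzcd}
\pi_0^G D(\Sigma^\infty SZ) \ar[r,"h"] \ar[d,"a_Z"] & H^0(\psi^\ell;\widetilde{KO}{}_G^0 SZ) \ar[d,"a_Z"] \\
A(G) \ar[r,"\bbQ[\bs]"] & R\bbQ(G)
\end{tikzcd}
\]
restricts $\bbQ[\bs]$ to a map from $I_Z$ into $\im(a_Z\colon H^0(\psi^\ell;\widetilde{KO}{}_G^0 SZ) \to R\bbQ(G))$, which is injective because $\bbQ[\bs]$ is. For surjectivity, Theorem \ref{thm:equivadams} applied to $G = C_{p^n}$ identifies $\phi(I_Z) \subset \Cl(\Sub(G),\bbZ) = \Cl(\Sub^\cyc(G),\bbZ)$ with the image of $H^0(\psi^\ell;\widetilde{KO}{}_G^0 SZ)$ under $\phi \circ a_Z$; the injectivity of $\phi\colon R\bbQ(G) \rightarrowtail \Cl(\Sub^\cyc(G),\bbZ)$ then lets one strip off $\phi$ to conclude $\bbQ[\bs](I_Z) = \im(a_Z)$ inside $R\bbQ(G)$.

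No serious obstacle presents itself here: the substantive content has already been packaged into Theorem \ref{thm:equivadams} via Quillen's Adams conjecture and Carlsson's resolution of the Segal conjecture, and what remains is a bookkeeping exercise exploiting the coincidence of $A(\bs)$ and $R\bbQ(\bs)$ for cyclic $p$-groups.
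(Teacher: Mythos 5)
Your proof is correct and takes the same route as the paper: both reduce the corollary to Theorem~\ref{thm:equivadams} by observing that for a cyclic group every subgroup is cyclic and $A(C) \to R\bbQ(C)$ is an isomorphism. You supply more detail than the paper's one-line proof (in particular the explicit unitriangular change of basis witnessing $A(C_{p^n}) \cong R\bbQ(C_{p^n})$), but the essential argument is identical.
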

\begin{proof}
The map $A(C)\rightarrow R\bbQ(C)$ is an isomorphism when $C$ is a finite cyclic group, so the corollary follows immediately from \cref{thm:equivadams}.
\end{proof}

\begin{ex}\label{ex:isomodtorsion}
Taking $Z = S(V)$ to be the unit sphere in a $C_{p^n}$-representation $V$, this shows that
\[
\pi_V S_{C_{p^n}}\rightarrow \pi_V KO_{C_{p^n}}
\]
induces an isomorphism mod torsion onto the subgroup fixed by Adams operations.
\tqed
\end{ex}

\begin{question}
Is there a useful analogue of \cref{thm:equivadams} for composite order groups?
\tqed
\end{question}

\subsection{The Mahowald filtration}\label{ssec:mahowaldfilt}

Let $G$ be a finite group and $EG$ be a contractible space with free $G$-action. Fix a choice of equivariant cell structure $EG = \colim_{n\rightarrow\infty} EG^{\leq n}$.

\begin{defn}\label{def:mfilt}
The \textit{Mahowald filtration} on $A(G)$ is defined by
\[
m_k(G) = I_{EG^{\leq k-1}}\subset A(G)
\]
for $k \geq 0$.
\tqed
\end{defn}

\cref{prop:idealinclusions}, together with an application of cellular approximation to the identity map of $EG$, ensures that this is independent of the choice of cell structure. Moreover, \cref{rmk:reducetopgroup} reduces the computation of $m_k(G)$ in general to the case where $G$ is a $p$-group; the Segal conjecture then implies that $\bigcap_k m_k(G) = 0$ and $\lim_k A(G)/m_k(G) = \bbZ \oplus I(G)_p^\wedge$.

\begin{ex}
For any group $G$, we may identify
\[
m_1(G) = I(G)\subset A(G)
\]
as the augmentation ideal. In general, \cref{rmk:rankiz} shows that if $k\geq 1$ then $m_k(G)$ is a free abelian group of rank $|\Sub(G)/\text{conjugacy}| - 1$, and that $m_k(G)\subset I(G)$ is an isomorphism after inverting $|G|$.
\tqed
\end{ex}

The Mahowald filtration is closely tied to the $G$-homotopy fixed point and Tate spectral sequences, as we now explain. 

Write $\widetilde{E}G$ for the unreduced suspension of $EG$, fitting into a cofiber sequence
\[
EG_+\rightarrow S^0 \rightarrow \widetilde{E}G.
\]
We shall omit the $\Sigma^\infty$ when speaking of the suspension spectra of these spaces. Following \cite[Section 9]{greenleesmay1995generalized}, define a $\bbZ$-graded filtration on the spectrum $\widetilde{E}G$ by
\begin{equation}\label{eq:egfiltration}
\widetilde{E}G^{\leq n} = \begin{cases}
S(EG^{\leq n-1})&n\geq 1,\\
S_G&n=0,\\
D(S(EG^{\leq -n-1}))&n\leq -1.
\end{cases}
\end{equation}

Given a $G$-spectrum $E$, write
\[
E^\h = F(EG_+,E),\qquad E^\tate = \widetilde{E}G \wedge E^\h
\]
for the associated Borel and Tate $G$-spectra.
The filtrations of \cref{eq:egfiltration} induce filtrations
\begin{align*}
E^\h &= \lim_{n\rightarrow\infty} F(EG^{\leq n}_+,E),\\
E^\tate &= \colim_{n\rightarrow\infty} E^\h \wedge \widetilde{E}G^{\leq n},
\end{align*}
and Greenlees and May show that the associated spectral sequences can be identified from the $E_2$-page on as homotopy fixed point and Tate spectral sequences
\begin{align*}
E_2^{\alpha,f} &= H^f(G;\pi_{\alpha+f}^e E)\Rightarrow \pi_\alpha^G E^\h \cong \pi_0 F(S^\alpha,E)^{\h G},\\
E_2^{\alpha,f} &= H^f_{\text{tate}}(G;\pi_{\alpha+f}^e E)\Rightarrow \pi_\alpha^G E^\tate \cong \pi_0 F(S^\alpha,E)^{\tate G}.
\end{align*}
They converge if, for example, $E$ is compact. We now have the following.

\begin{prop}\label{prop:mahowaldfiltss}
Fix $X\in A(G)$.
\begin{enumerate}
\item $X\in m_k(G)$ if and only if the image of $X$ in $\pi_0 S_G^\h$ is detected in filtration $\geq k$ in the homotopy fixed point spectral sequence;
\item $X\in m_k(G)$ for $k > 0$ if and only if $X$ is in the augmentation ideal and the image of $X$ in $\pi_0 S_G^\tate$ is detected in filtration $\geq k$ in the Tate spectral sequence.
\end{enumerate}
\end{prop}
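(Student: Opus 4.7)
The proof is a direct unwinding of the definition $M_k(G) = I_{EG^{\leq k-1}}$ against the filtrations defining the two spectral sequences. Part (1) is essentially tautological: the filtration on $S_G^\h = \lim_n F(EG^{\leq n}_+, S_G)$ has $k$-th stage $F^k S_G^\h = \Fib(S_G^\h \to F(EG^{\leq k-1}_+, S_G))$, so the image of $X \in A(G) = \pi_0 S_G$ in $\pi_0 S_G^\h$ is detected in filtration $\geq k$ if and only if it vanishes in $\pi_0 F(EG^{\leq k-1}_+, S_G)$. By compactness of $EG^{\leq k-1}$, this last group equals $\pi_0 D(\Sigma^\infty_+ EG^{\leq k-1})$, and the kernel of the map from $A(G)$ is by the definition in \cref{ssec:lifting} precisely $I_{EG^{\leq k-1}} = M_k(G)$.

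For part (2), the case $k=0$ is vacuous, so assume $k \geq 1$. The necessity of the augmentation condition follows from $M_k(G)\subseteq M_1(G)$ together with the identification $M_1(G) = I(G)$: this holds because $EG^{\leq 0}$ is a discrete free $G$-set, making $\Sigma^\infty_+ EG^{\leq 0}$ a wedge of copies of $G_+$ whose dual has $\pi_0^G$ equal to a product of copies of $\bbZ$, with the map from $A(G)$ reading off the augmentation. For the extension condition, I use that $\widetilde{E}G^{\leq k} = S(EG^{\leq k-1})$ from \cref{eq:egfiltration}, so that the defining cofiber sequence $EG^{\leq k-1}_+\to S^0\xrightarrow{a_k}\widetilde{E}G^{\leq k}$ exhibits $a_k$ as the filtration inclusion $\widetilde{E}G^{\leq 0}=S^0\hookrightarrow\widetilde{E}G^{\leq k}$ of $\widetilde{E}G$. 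Given a lift $f\colon\widetilde{E}G^{\leq k}\to S_G$ of $X$ along $a_k$, composing with $S_G\to S_G^\h$ and then with the filtration inclusion $\widetilde{E}G^{\leq k}\wedge S_G^\h \to \widetilde{E}G\wedge S_G^\h = S_G^\tate$ exhibits the image of $X$ in $\pi_0 S_G^\tate$ as factoring through $\pi_0(\widetilde{E}G^{\leq k}\wedge S_G^\h)$, putting $X$ at Tate filtration $\geq k$.

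For the converse, I rely on the compatibility of the homotopy fixed point and Tate spectral sequences via the canonical map $S_G^\h\to S_G^\tate$: in positive filtration degrees, the two filtrations agree, and the augmentation condition $X\in I(G)$ ensures the image of $X$ in $\pi_0 S_G^\tate$ does not pick up nontrivial contributions from the ``negative half'' of the Tate SS (i.e., filtration coming from duals of free cells). Thus detection of $X$ in Tate filtration $\geq k \geq 1$ reduces to the analogous statement in the HFPSS, which by part (1) is exactly $X\in M_k(G)$.

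The main obstacle is the bookkeeping of the bi-infinite Tate SS filtration with both positive and negative contributions from the Greenlees--May cell structure on $\widetilde{E}G$: the key point is that the augmentation ideal condition precisely excises the negative-filtration half, reducing the Tate statement to the HFPSS statement from part (1). Once this identification is in hand, the rest of the argument is a routine diagram chase through the Greenlees--May filtration.
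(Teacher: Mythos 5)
Part (1) of your proposal is correct and matches the paper's argument almost verbatim: both arguments reduce to observing that detection of $X$ in HFPSS filtration $\geq k$ means the composite $EG^{\leq k-1}_+\to S^0\xrightarrow{X}S_G$ vanishes, which is the definition of $I_{EG^{\leq k-1}} = M_k(G)$.

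For the forward direction of part (2), your underlying idea is right—the existence of a lift $f\colon\widetilde{E}G^{\leq k}\to S_G$ of $X$ along $a_k$ is equivalent to $X\in M_k(G)$—but the translation into the Tate filtration is garbled. An element $\tilde X\in\pi_0 S_G^\tate$ is detected in filtration $\geq k$ iff it lifts along $\widetilde{E}G^{\leq -k}\wedge S_G^\h\to S_G^\tate$, i.e.\ factors through the \emph{negative} stages of the colimit filtration; factoring through $\widetilde{E}G^{\leq k}\wedge S_G^\h$ for $k>0$ is vacuous since that stage receives the $n=0$ term $S_G^\h$ already. Moreover, your construction produces a map $\widetilde{E}G^{\leq k}\to S_G^\h$, which has the wrong variance to compose with the filtration inclusion you name. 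The missing step is Spanier--Whitehead duality: $\widetilde{E}G^{\leq -k} = D(\widetilde{E}G^{\leq k})$, so $f$ dualizes to a map $S^0\to\widetilde{E}G^{\leq -k}\wedge S_G^\h$ lifting $\tilde X$. This is exactly what the paper does.

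The more serious gap is the converse direction of part (2). Detection of $\tilde X$ (the image of $X$ in $\pi_0 S_G^\tate$) at Tate filtration $\geq k\geq 1$ produces, by duality and part (1), \emph{some} element $X'\in\pi_0 S_G^\h$ with $X'\in M_k(G)$ mapping to $\tilde X$—but a priori $X'\neq X$, since $\pi_0 S_G^\h\to\pi_0 S_G^\tate$ is not injective. Your remark that "the augmentation condition excises the negative half of the Tate SS" does not address this: the ambiguity lives in $\pi_0 S_G^\h$, not in the Tate SS. What is needed, and what the paper supplies, is the short exact sequence
\[
0\to\bbZ\xrightarrow{\tr}\pi_0 S_G^\h\to\pi_0 S_G^\tate\to 0
\]
coming from $EG_+\to S_G^\h\to S_G^\tate$, which shows $X$ and $X'$ differ by a multiple of $\tr_e^G(1) = [G]$. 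Since $[G]$ is not in the augmentation ideal while $X$ is by hypothesis and $X'$ is because $M_k(G)\subset M_1(G) = I(G)$, this forces $X = X'$. Without identifying this kernel and pinning down the lift, the converse is not established.
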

\begin{proof}
(1): To say that the image of $X$ in $\pi_0 S^\h$ has filtration $\geq k$ in the homotopy fixed point spectral sequence is exactly to say that the composite 
\[
EG_+^{\leq k-1}\rightarrow S^0\xrightarrow{X}S_G
\]
vanishes, which in turn is equivalent to asking that $X \in I_{EG^{\leq k-1}} = m_k(G)$.

(2): Write $\tilde{X}$ for the image of $X$ in $S_G^\tate$ and just $X$ for its image in $S_G^\h$. To say that $\tilde{X}$ is detected in filtration $\geq k$ in the Tate spectral sequence is to say that there is a lift in
\begin{center}\begin{tikzcd}
S^0\ar[r,"\tilde{X}"]\ar[dr,dashed]&S_G^\tate\\
&\widetilde{E}G^{\leq -k}\wedge S_G^\h\ar[u]
\end{tikzcd}.\end{center}
In particular, $X\colon S^0\rightarrow S_G^\h$ witnesses that $X$ is always detected in filtration $\geq 0$. Consider the diagram
\begin{center}\begin{tikzcd}
S^0\ar[r,"X"]\ar[dr,dashed]&S_G^\h\\
&S_G^\h\wedge \widetilde{E}G^{\leq -k}\ar[u]
\end{tikzcd}.\end{center}
Dualizing, this is equivalent to
\begin{center}\begin{tikzcd}
EG_+^{\leq k-1}\ar[dr,dashed,"0"]\ar[d]\\
S^0\ar[r,"X"]\ar[d]&S^\h_G\\
S(EG^{\leq k-1})\ar[ur,dashed]
\end{tikzcd}.\end{center}
Thus such a lift exists if and only if $X \in m_k(G)$, proving that if $X\in m_k(G)$ then $\tilde{X}$ is detected in filtration $\geq k$ in the Tate spectral sequence. As $m_1(G) = I(G)$ is the augmentation ideal, this proves the forward direction.

Conversely, suppose that $X$ is in the augmentation ideal and $\tilde{X}$ is detected in filtration $\geq k$ in the Tate spectral sequence. The above argument then shows that $X$ is detected in filtration $\geq k$ in the homotopy fixed point spectral sequence, and thus lies in $m_k(G)$ by (1), provided we rule out the possibility that $\tilde{X}$ could lift to some distinct $X'\in \pi_0 S_G^\h$ detected in higher filtration than $X$. To that end, it suffices to prove that the map $\pi_0 S_G^\h \to \pi_0 S_G^\tate$ is an injection when restricted to the augmentation ideal of $\pi_0 S_G^\h$.

To see this, consider the cofiber sequence
\[
EG_+\rightarrow S^\h_G \rightarrow S^\tate_G.
\]
On $\pi_0$, this induces a short exact sequence
\[
0\rightarrow\bbZ\xrightarrow{\tr}\pi_0 S^\h_G\rightarrow \pi_0 S^\tate_G\rightarrow 0.
\]
As $|\tr(1)| = |G|$, no nonzero element in the augmentation ideal of $\pi_0 S_G^\h$ can be in the image of the map $\tr\colon \bbZ \to \pi_0 S_G^\h$, implying that the map $\pi_0 S_G^\h \to \pi_0 S_G^\tate$ is an injection when restricted to the augmentation ideal as claimed.
\end{proof}

\begin{cor}
The associated graded piece $m_k(G)/m_{k+1}(G)$ is isomorphic to the $E_\infty$ page of the homotopy fixed point spectral sequence for $\pi_\star S_G^\h$ in stem $0$ and filtration $k$. If $k\geq 1$, then it is also isomorphic to the $E_\infty$ page of the Tate spectral sequence for $\pi_\star S_G^\tate$ in stem $0$ and filtration $k$.
\qed
\end{cor}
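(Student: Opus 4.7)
The corollary is a direct consequence of \cref{prop:mahowaldfiltss}, together with a comparison of associated graded pieces. I will treat the homotopy fixed point version in detail; the Tate version for $k \geq 1$ follows in parallel using part (2) of the proposition in place of part (1), and is moreover compatible with the first assertion since the canonical map $S_G^\h \to S_G^\tate$ induces an isomorphism of $E_\infty$-pages in positive cohomological filtration (a consequence of the cofiber sequence $EG_+ \wedge S_G^\h \to S_G^\h \to S_G^\tate$, whose leftmost term contributes only in nonpositive filtration).

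For injectivity: Part (1) of \cref{prop:mahowaldfiltss} identifies $M_k(G)$ as exactly the preimage of $F^k \pi_0 S_G^\h$ under the canonical map $A(G) \to \pi_0 S_G^\h$. Thus the induced map $A(G)/M_k(G) \hookrightarrow \pi_0 S_G^\h / F^k$ is injective, and the snake lemma applied to the comparison of filtration steps for $k$ and $k+1$ produces an injection
\[
M_k(G)/M_{k+1}(G) \hookrightarrow F^k/F^{k+1} = E_\infty^{0,k}.
\]

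For surjectivity, which is the main obstacle: the plan is to identify the filtration layer $F^k S_G^\h \simeq F(\widetilde{E}G^{\leq k}, S_G^\h)$ using the cofiber sequence $EG^{\leq k-1}_+ \to S^0 \to \widetilde{E}G^{\leq k}$ together with the equivalence $F(EG^{\leq k-1}_+, S_G^\h) \simeq F(EG^{\leq k-1}_+, S_G)$, which holds since $EG^{\leq k-1}$ is a free $G$-space. A class of $E_\infty^{0,k}$ is then represented (modulo $F^{k+1}$) by a map $\widetilde{E}G^{\leq k} \to S_G^\h$, and the task reduces to lifting this to a map $\widetilde{E}G^{\leq k} \to S_G$ modulo the next filtration; precomposition with $S^0 \to \widetilde{E}G^{\leq k}$ then yields an element of $M_k(G) \subset A(G)$ realizing the class by definition of $M_k(G) = I_{EG^{\leq k-1}}$. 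The lifting obstruction lives in the fiber of $S_G \to S_G^\h$, which by dualizing the cofiber sequence $EG_+ \to S^0 \to \widetilde{E}G$ is $F(\widetilde{E}G, S_G)$. To show these obstructions fall into $F^{k+1}$, I would analyze the long exact sequence associated to the cofiber $\widetilde{E}G^{\leq k-1} \to \widetilde{E}G^{\leq k} \to \bigvee G_+ \wedge S^k$ in both $F(-, S_G)$ and $F(-, S_G^\h)$: the top graded piece is a wedge of free cells, for which the two function spectra agree by the same freeness argument as above, so an inductive cell-by-cell comparison absorbs the discrepancy into $M_{k+1}(G)$.
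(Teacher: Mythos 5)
The injectivity half of your argument is correct and is exactly what \cref{prop:mahowaldfiltss}(1) gives: $M_k(G)$ is the full preimage of $F^k\pi_0 S_G^\h$, so $M_k(G)/M_{k+1}(G) \hookrightarrow F^k/F^{k+1} = E_\infty^{0,k}$. The issue is with surjectivity. Your cell-by-cell comparison does show that the two filtered spectra $F(\widetilde{E}G^{\leq \bullet},S_G)$ and $F(\widetilde{E}G^{\leq \bullet},S_G^\h)$ have equivalent filtration quotients (wedges of free cells), hence isomorphic $E_r$-pages for all $r$. But equality of $E_r$-pages does not by itself identify the two abutment filtrations, because the first tower does \emph{not} converge conditionally: its inverse limit is $F(\widetilde{E}G,S_G)$, which is nonzero. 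Concretely, the obstruction to lifting a class in $\pi_0 F(\widetilde{E}G^{\leq k}, S_G^\h)$ to $\pi_0 F(\widetilde{E}G^{\leq k}, S_G)$ lives in $\pi_{-1}F(\widetilde{E}G^{\leq k}\wedge \widetilde{E}G, S_G)\cong \pi_{-1}F(\widetilde{E}G,S_G)$, a group which is \emph{independent of $k$} (since the quotient $\widetilde{E}G/\widetilde{E}G^{\leq k}$ is built from free cells, so dies after smashing with $\widetilde{E}G$). Modifying your representative by something restricted from filtration $k+1$ therefore does not change this obstruction at all, so the claim that the discrepancy is ``absorbed into $M_{k+1}(G)$'' is not established by the sketch given, and I do not see how to close the induction without additional input.

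The missing input is supplied by the Segal conjecture, which the authors set up in the paragraph immediately preceding the corollary: $\pi_0 S_G^\h \cong A(G)_{I(G)}^\wedge \cong \lim_k A(G)/M_k(G)$. Once one knows this (together with the standard fact that the spectral-sequence filtration $F^\bullet$ corresponds to the kernel filtration on this limit, i.e.\ that the image of $\pi_0 S_G^\h$ in $\pi_0 F(EG_+^{\leq k-1}, S_G)$ equals the image of $A(G)$), the identity $F^k/F^{k+1}\cong M_k(G)/M_{k+1}(G)$ is an immediate exercise in filtered completion: the associated graded of a complete filtered group agrees with that of any dense filtered subgroup. This is a genuinely different route from yours, and it is the one the authors clearly intend given the placement of the corollary. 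Your injectivity argument is fine; for surjectivity you should replace the obstruction-theoretic lift by this completion argument, or else explain precisely why the obstruction in $\pi_{-1}F(\widetilde{E}G,S_G)$ can always be killed by altering the representative in $\ker\bigl(\pi_0 F(\widetilde{E}G^{\leq k},S_G^\h)\to\pi_0 S_G^\h\bigr)$, which is the only remaining freedom your setup allows and which you did not address.
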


\begin{cor}
If $k\geq 1$, then $m_k(G)/m_{k+1}(G)$ is annihilated by $|G|$.
\qed
\end{cor}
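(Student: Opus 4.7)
The plan is to invoke the preceding corollary and reduce the claim to a standard fact about group cohomology. By the preceding corollary, for $k \geq 1$ the associated graded $M_k(G)/M_{k+1}(G)$ is isomorphic to the $E_\infty^{0,k}$ term of the homotopy fixed point spectral sequence
\[
E_2^{\alpha,f} = H^f(G;\pi_{\alpha+f}^e S) \Rightarrow \pi_\alpha^G S_G^\h,
\]
computing $\pi_0 S_G^\h$. Thus it suffices to show that $E_\infty^{0,k}$ is annihilated by $|G|$ in this range.

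Since $E_\infty^{0,k}$ is a subquotient of $E_2^{0,k} = H^k(G;\pi_k S)$, it is enough to observe that $H^k(G;M)$ is annihilated by $|G|$ for any $\bbZ[G]$-module $M$ when $k \geq 1$. This is the classical fact that for a finite group $G$ and $k\geq 1$, the norm (or transfer-restriction) argument shows $|G|$ annihilates $H^k(G;M)$; concretely, $\tr_e^G \circ \res_e^G$ acts as multiplication by $|G|$ on $H^k(G;M)$ and factors through $H^k(e;M) = 0$ for $k\geq 1$. Since $E_\infty^{0,k}$ is a subquotient of a group annihilated by $|G|$, it too is annihilated by $|G|$.

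There is no real obstacle here; the only thing to notice is that the statement requires $k \geq 1$, which is exactly what forces the relevant $E_2$-term to be a higher group cohomology group (rather than the $H^0 = \pi_0 S = \bbZ$ which is not torsion). Alternatively, one could argue identically using the Tate spectral sequence, whose $E_2$-page $\widehat{H}^\ast(G;\pi_\ast S)$ is annihilated by $|G|$ in every degree for finite $G$.
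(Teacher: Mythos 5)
Your proof is correct and follows the route the paper intends; the paper marks the corollary \qed because it follows immediately from the preceding corollary together with the standard norm argument showing $|G|$ kills $H^k(G;M)$ for $k\geq 1$, exactly as you spell out.
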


\begin{rmk}
The $G$-homotopy fixed point spectral sequence for $S_G^\h$ may also be understood in more classical terms as the \textit{Atiyah--Hirzebruch spectral sequence}
\[
E_2^{s,\ast} = H^\ast(BG;\pi_{\ast+s} S)\Rightarrow \pi_s D(\Sigma^\infty_+ BG)
\]
for the stable cohomotopy of the classifying space $BG$. An $E_1$-page is determined by a choice of cell structure for $BG$, roughly corresponding to a free $\bbZ[G]$-module resolution of $\bbZ$.
\tqed
\end{rmk}

\subsection{Spoke grading}\label{ssec:spoke}

We wish to specialize the above discussion to the case where $G = C_{p^n}$ is a cyclic $p$-group, and to introduce the $C_{p^n}$-Mahowald invariant. This relies on the existence of a particularly nice model for the classifying space $EC_{p^n}$. However, a bit of care is needed to give a precise development. This subsection fixes conventions for the classifying space of a cyclic group, and in particular develops a theory of ``spoke-graded homotopy groups'' in cyclic-equivariant homotopy theory which is convenient for the study of $C_{p^n}$-Mahowald invariants.

We start with a basic observation. If $C$ is a finite cyclic group, then we can always identify
\[
EC\simeq S(V_\infty),\qquad \widetilde{E}C \simeq S^{V_{\infty}}
\]
for any infinite dimensional fixed point free $C$-representation $V_\infty$. If $V_k\subset V_\infty$ is a $k$-dimensional subrepresentation, then $S(V_k)\subset S(V_\infty)$ is an equivariant $(2k-1)$-skeleton. As a consequence we have the following.

\begin{prop}\label{prop:fpd}
Let $C$ be a finite cyclic group and $V$ be a sum of $k$ faithful complex characters of $C$. Then
\[
I_{S(V)} = m_{2k}(C) \subset A(C).
\]
\end{prop}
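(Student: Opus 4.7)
The plan is to identify $S(V)$ itself as an explicit $(2k-1)$-skeleton of a model for $EC$, and then invoke the definition of $M_{2k}(C)$ together with independence of the choice of cell structure.

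First I would pick a convenient model. Since $V$ is a sum of $k$ faithful complex characters of $C$, the infinite direct sum $V_\infty = V^{\oplus \infty}$ is a sum of faithful characters, hence fixed point free with free $C$-action on its unit sphere. The sphere $S(V_\infty)$ is contractible (as the unit sphere of an infinite dimensional representation), so it serves as a model for $EC$ as discussed at the start of this subsection. Now equip $S(V_\infty)$ with the equivariant cell structure induced by the exhaustive filtration
\[
S(V) \subset S(V \oplus V) \subset S(V \oplus V \oplus V) \subset \cdots \subset S(V_\infty),
\]
where each inclusion adds cells in dimensions $\geq 2k$ by the recalled fact that $S(V_m) \subset S(V_\infty)$ is an equivariant $(2m-1)$-skeleton. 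Under this choice of cell structure, $S(V)$ is literally the $(2k-1)$-skeleton $EC^{\leq 2k-1}$.

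Second I would unwind the definitions. By definition, $M_{2k}(C) = I_{EC^{\leq 2k-1}}$, and we observed immediately after introducing the Mahowald filtration (via \cref{prop:idealinclusions} applied to cellular approximations of the identity) that this ideal is independent of the chosen equivariant cell structure on $EC$. Applying this to the cell structure above yields
\[
M_{2k}(C) = I_{EC^{\leq 2k-1}} = I_{S(V)},
\]
which is the claim.

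There is really no hard step; the only content is matching the abstract filtration from \cref{ssec:mahowaldfilt} with the geometric filtration on $S(V_\infty)$ by spheres of subrepresentations. The mild subtlety to spell out is why $V_\infty = V^{\oplus \infty}$ is admissible, namely that faithfulness of the characters of $V$ is exactly what guarantees that $C$ acts freely on $S(V_\infty)$, so the resulting sphere really is a model for $EC$ rather than only for some $EC$-like space. Once that is noted, the proof is a one-line identification.
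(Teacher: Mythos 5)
Your proposal is correct and takes essentially the same route as the paper: the paper also identifies $EC \simeq S(V_\infty)$ for an infinite-dimensional representation containing $V$, notes that $S(V) \subset S(V_\infty)$ is an equivariant $(2k-1)$-skeleton, and concludes $I_{S(V)} = I_{EC^{\leq 2k-1}} = M_{2k}(C)$ by independence of the cell structure. The one thing you spell out more carefully than the paper — that faithfulness of the characters is what ensures $C$ acts freely on $S(V_\infty)$ so that it genuinely models $EC$ — is a worthwhile clarification, since the paper's phrase "fixed point free $C$-representation" is being used to mean free action on the unit sphere rather than merely $V_\infty^C = 0$.
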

\begin{proof}
This follows as $I_{S(V)} = I_{EC^{\leq 2k-1}} = m_{2k}(C)$.
\end{proof}

We now elaborate on the rest of the cell structure. Let $C_m = \langle e^{2\pi i/m}\rangle \subset U(1)$ be the cyclic subgroup of order $m$, with chosen generator $g = e^{2\pi i/m}$ and tautological faithful complex character $L$. All faithful complex characters of $C_m$ are of the form $L^d$ for some integer $d$ coprime to $m$, and are thus obtained by pulling $L$ back along the $d$th power automorphism $C_m \rightarrow C_m$.

The unit sphere $S(L)$ admits a cell structure with one free $0$-cell and one free $1$-cell. This means that there is a cofiber sequence
\[
C_{m+} \rightarrow S(L)_+\rightarrow \Sigma C_{m+}.
\]
To be explicit, identify $S^1 = S(\bbC)$ with its canonical counterclockwise orientation and write $\Sigma(\bs) = S^1\wedge \bs$. Then the equivariant pointed equivalence $\Sigma C_{m+}\simeq S(L)/C_m$ is adjoint to the pointed map $S^1\rightarrow S(L)/C_m$ given by $e^{2\pi i t} \mapsto e^{2\pi it/m}$ for $0\leq t \leq 1$.

Write $SC_m$ for the unreduced suspension of $C_m$, also known as the \textit{spoke sphere} $S^{\Yright}$ \cite[Notation 1.3]{hahnsengerwilson2023odd}. There is always a canonical map $SC_m \rightarrow\Sigma C_{m+}$, and the homotopy coCartesian squares
\begin{center}\begin{tikzcd}
C_m\ar[r]\ar[d]&S(L)\ar[r]\ar[d]&\{1\}\ar[d]\\
\{0\}\ar[r]&S(L)/C_m\ar[r]&SC_m
\end{tikzcd}\end{center}
furnish a map $\Sigma C_{m+}\simeq S(L)/C_m \rightarrow SC_m$. The composite $\Sigma C_{m+}\rightarrow SC_m \rightarrow \Sigma C_{m+}$ is adjoint to the element of $\pi_1 \Sigma C_{m+}$, isomorphic to the free group on $C_{m}$, given by $[g]^{-1} \ast [1]$. Stably, this is $1-g \in \pi_0 \Sigma^\infty_+C_{m}\cong \bbZ[C_m]$.

We now fix, for the rest of this section, a sequence
\[
\ul{d} = (d_1,d_2,\ldots)
\]
of integers coprime to $m$, and set
\begin{equation}\label{eq:vk}
V_k = L^{d_1}\oplus\cdots\oplus L^{d_k}.
\end{equation}
The above cell structure on $S(L)$ twists by an automorphism of $C_m$ to a cell structure on each $S(L^{d_i})$. As
\[
S(V_k)\simeq S(L^{d_1})\ast\cdots\ast S(L^{d_k}),
\]
where $\ast$ is the topological join, these cell structures join together to give a cell structure on $S(V_k)$ and, in the limiting case, on $S(V_\infty)\simeq EC_{m}$. The associated filtration on the spectrum $\tilde{E}C_m$ by \cref{eq:egfiltration} is of the form
\begin{equation}\label{eq:cellstr}\begin{tikzcd}[column sep=1mm]
S^{-\Yright-L^{d_1}}\ar[r]\ar[d]&  S^{-L^{d_1}} \ar[d]\ar[r]& S^{-\Yright}\ar[r]\ar[d]& S^0\ar[d]\ar[r]& S^{\Yright}\ar[d]\ar[r]& S^{L^{d_1}}\ar[d]\ar[r]& S^{L^{d_1}+\Yright}\ar[d]\ar[r]& S^{L^{d_1}+L^{d_2}}\ar[d]\\
\Sigma^{-3}C_{m+}&\Sigma^{-2}C_{m+}&\Sigma^{-1}C_{m+}&C_{m+}&\Sigma C_{m+}&\Sigma^2 C_{m+}&\Sigma^3 C_{m+}&\Sigma^4 C_{m+},
\end{tikzcd}\end{equation}
where for example $S^{-\Yright-L^{d_1}} = D(S^{L^{d_1}}\otimes SC_m)$. The associated Tate spectral sequence thus has an $E_1$-page of the form
\begin{equation}\label{eq:cmtss}
E_1^{\alpha,f} = \pi_{|\alpha|} S^{-f} \Rightarrow \pi_\alpha S_{C_m}^\tate.
\end{equation}
The above discussion shows that the differential on the $E_1$-page is determined by the Tate complex splicing together the free $\bbZ[C_m]$-module resolution
\begin{center}\begin{tikzcd}[column sep=0.75cm]
\bbZ&\ar[l,"\epsilon"']\bbZ{[C_{m}]}&\bbZ{[C_{m}]}\ar[l,"1-g^{d_1}"']&&\bbZ{[C_{m}]}\ar[ll,"g^0+\cdots+g^{m-1}"']&\bbZ{[C_{m}]}\ar[l,"1-g^{d_2}"']&&\bbZ{[C_{m}]}\ar[ll,"g^0+\cdots+g^{m-1}"']&\cdots\ar[l]
\end{tikzcd}\end{center}
and its dual. It is, in particular, sensitive to the choice of $\ul{d}$, with different choices modifying the generator of $E_1^{-f,f} = \pi_f S^{f}\cong \bbZ$ by an integer coprime to $m$. 

\begin{rmk}\label{rmk:sequence}
As we have defined $C_m$ to be equipped with a preferred complex character $L$, it is natural to take $d_i = 1$ for all $i$. However this choice turns out to be completely unusable for our purposes. Instead, we shall take $\ul{d} = (a_1,-a_1,a_2,-a_2,\ldots)$ where $(a_1,a_2,\ldots)$ is the list of positive integers coprime to $m$ in increasing order; the essential properties of this sequence are given in \cref{lem:dseq}. However, such choices do not matter in this section, so for the time being the reader may take any choice they prefer.
\tqed
\end{rmk}

We now introduce spoke-graded homotopy groups.

\begin{defn}\label{def:spokegrading}
Given $\gamma \in RO(C_m)$ and $w\in\bbZ$, define
\[
\pi_{\gamma,w}^{C_{m}}E = E^{-\gamma}_{C_m}(\widetilde{E}C_{m}^{\leq w-1}) = \begin{cases}
\pi_{\gamma+L^{d_1}+\cdots+L^{d_k}}^{C_m}E&w = 2k \geq 0,\\[5pt]
\pi_{\gamma+L^{d_1}+\cdots+L^{d_k}+\Yright}^{C_m}E&w=2k+1\geq 0,\\[5pt]
\pi_{\gamma-(L^{d_1}+\cdots+L^{d_k})}^{C_m}E&w=2k\leq 0,\\[5pt]
\pi_{\gamma-(L^{d_1}+\cdots+L^{d_k}-\Yright)}^{C_m}E&w=2k-1\leq 0
\end{cases}
\]
for a $C_m$-spectrum $E$.
\tqed
\end{defn}

These are the ``bigraded'' or ``synthetic'' homotopy groups associated to the filtered spectrum $\widetilde{E}C_m \otimes E$. In particular, they control and are controlled by information about the Tate spectral sequence for $E$.

\begin{ex}
When $m = 2$, we have $S^{\Yright} = SC_2\simeq S^\sigma$ where $\sigma$ is the real sign representation of $C_2$, and accordingly
\[
\pi_{s,w}^{C_2} E = \pi_{s+w\sigma}^{C_2}E
\]
encode the bigraded homotopy groups of a $C_2$-spectrum $E$.
\tqed
\end{ex}

In general, groups $\pi_{\star,\ast}^{C_m}E$ are constructed to behave similarly to the bigraded homotopy groups of a $C_2$-spectrum. Our goal for the rest of this section is to construct $C_m$-equivariant analogues of restriction, transfer, and Euler class operations which are fundamental to computations in $C_2$-equivariant stable homotopy theory. We start with the following fundamental long exact sequence.

\begin{defn}\label{def:tra}
Given a $C_m$-spectrum $E$, we write
\begin{equation}\label{eq:tra}\begin{tikzcd}
\cdots\ar[r]&\pi_{|\gamma|+w+1}^e E \ar[r,"\trp"]&\pi_{\gamma,w+1}^{C_m}E\ar[r,"a^{1/2}"]&\pi_{\gamma,w}^{C_m}E\ar[r,"\resp"]&\pi_{|\gamma|+w}^eE\ar[r]&\cdots
\end{tikzcd}\end{equation}
for the long exact sequence obtained by applying $E_{C_m}^{-\gamma}(\bs)$ to the cofiber sequence
\begin{center}\begin{tikzcd}
\cdots&\ar[l]\Sigma^w C_{m+}&\widetilde{E}C_m^{\leq w}\ar[l]&\widetilde{E}C_m^{\leq w-1}\ar[l]&\Sigma^{w-1}C_{m+}\ar[l]&\ar[l]\cdots
\end{tikzcd}\end{center}
associated to our chosen filtration of $\widetilde{E}C_m$, as in \cref{eq:cellstr}.
\tqed
\end{defn}

\begin{ex}\label{ex:c2tra}
Taking $m=2$ and $\gamma = s$, \cref{eq:tra} is equivalent to the standard long exact sequence
\begin{center}\begin{tikzcd}
\cdots\ar[r]&\pi_{s+w+1}^eE\ar[r,"\tr_e^{C_2}"]&\pi_{s,(w+1)\sigma}^{C_2}E\ar[r,"a_\sigma"]&\pi_{s,w\sigma}^{C_2}E\ar[r,"\res^{C_2}_e"]&\pi_{s+w}^eE\ar[r]&\cdots
\end{tikzcd}\end{center}
relating nonequivariant and $C_2$-equivariant homotopy groups.
\tqed
\end{ex}

The notation $a^{1/2}\colon \pi_{\star,\ast+1}^{C_m}E \to \pi_{\star,\ast}^{C_m}E$ for the transition maps in $\pi_{\star,\ast}^{C_m}E$ is chosen to be suggestive of the following.

\begin{prop}\label{prop:halfeuler}
We may identify the composite
\[
a^{1/2}\circ a^{1/2} = a_{L^{d_k}} \colon \pi_{\gamma,2k}^{C_m}E \to \pi_{\gamma,2(k-1)}^{C_m}E
\]
as multiplication by the Euler class $a_{L^{d_k}}$ of $L^{d_k}$.
\end{prop}
\begin{proof}
By definition $a^{1/2}\circ a^{1/2}\colon \pi_{\gamma,2k}^{C_m}E \to \pi_{\gamma,2(k-1)}^{C_m}E$ is given by restriction along the inclusion $\widetilde{E}C_m^{\leq 2k-2} \subset \widetilde{E}C_m^{\leq 2k}$, and our choice of filtration on $\widetilde{E}C_m$ is such that this inclusion is exactly the inclusion
\[
a_{L^{d_k}}\colon S^{L^{d_1}\oplus\cdots\oplus L^{d_{k-1}}} \to S^{L^{d_1}\oplus\cdots\oplus L^{d_k}}
\]
if $k \geq 0$ and its dual if $k \leq 0$.
\end{proof}

Moreover, we have the following.

\begin{prop}\label{prop:alocal}
Define the localization
\[
a^{-1/2}\pi_{\gamma,\ast}^{C_{m}} E = \colim(\cdots \rightarrow  \pi_{\gamma,k+1}^{C_{m}}E\xrightarrow{a^{1/2}} \pi_{\gamma,k}^{C_{m}} E \xrightarrow{a^{1/2}}\pi_{\gamma,k-1}^{C_{m}}E\rightarrow\cdots).
\]
Then
\[
a^{-1/2}\pi_{\gamma,\ast}^{C_{m}}E = \pi_{\gamma}^{C_m}\left(E\otimes \widetilde{E}C_m\right).
\]
\end{prop}
\begin{proof}
This is immediate from the definitions as
\[
E_{C_m}^{-\gamma}(\widetilde{E}C_m^{\leq w-1}) \cong \pi_\gamma^{C_m}(E \otimes \widetilde{E}C_m^{\leq 1-w})
\]
by construction.
\end{proof}

\begin{warning}
Let $E$ be a $C_m$-ring spectrum. We have not analyzed the extent to which $\widetilde{E}C_m^{\leq\bullet}$ may be a multiplicative filtration for $m > 2$, and accordingly we do \emph{not} claim that $\pi_{\star,\ast}^{C_m}E$ is a graded ring. In particular, it is \emph{not} the case, with our formalism, that $a^{1/2}$ is multiplication by the class $a^{1/2}\cdot 1 \in \pi_{0,-1}S_{C_m}$, i.e.\ one should think of $a^{1/2}$ not as an element living in a homotopy group but as an operation on homotopy groups (whose definition implicitly depends on the sequence $\ul{d} = (d_1,d_2,\ldots)$ and degree that one is operating on).
\tqed
\end{warning}

Our next goal is to construct and analyze certain restriction and transfer maps on the groups $\pi_{\star,\ast}^{C_m}E$.  If $n\mid m$, then $C_n$ acts freely on the contractible space $EC_m$, and so we can say that $\res^{C_m}_{C_n}EC_m = EC_n$. However, our chosen cell structure on $EC_m$ does \textit{not} restrict to our chosen cell structure on $EC_n$, and accordingly $\res^{C_m}_{C_n}\widetilde{E}C_m^{\leq \bullet}$ is not equivalent, as a filtered object, to $\widetilde{E}C_n^{\leq\bullet}$. Cellular approximation applied to the identity map on $EC_n$ implies that there are comparisons between these cell structures, but to construct well-defined restrictions and transfers and analyze their properties we must be rather more explicit.

We start with the following. 

\begin{construction}
Fix $n\mid m$, and let $q = g^{m/n}$ be the cyclic generator of $C_n \subset C_m$. Then there is a $C_n$-equivariant homotopy retraction
\begin{center}\begin{tikzcd}
C_n\ar[r,"\subset"]\ar[d]&C_m\ar[r,"r"]\ar[d]&C_n\ar[d]\\
S(L)\ar[r,equals]&S(L)\ar[r,equals]&S(L)
\end{tikzcd},\end{center}
where 
\begin{equation}\label{eq:retract}
r(g^k) = q^{\ceil{nk/m}}.
\end{equation}
The left square commutes on the nose and the right square by the $C_n$-equivariant homotopy
\[
I \times C_m\rightarrow S(L),\qquad (t,g^k) \mapsto e^{\frac{2\pi i}{m}\left((1-t)k+t\frac{m}{n}\ceil{\frac{nk}{m}}\right)}.
\]
Passing to unreduced suspensions gives a $C_n$-equivariant retraction of the form 
\begin{center}\begin{tikzcd}
S^0\ar[r,equals]\ar[d]&S^0\ar[r,equals]\ar[d]&S^0\ar[d]\\
SC_{n}\ar[r,"i"]\ar[d]&SC_{m}\ar[r,"r"]\ar[d]&SC_{n}\ar[d]\\
S^{L}\ar[r,equals]&S^{L}\ar[r,equals]&S^{L}
\end{tikzcd}.\end{center}
In other words, this constructs specific $C_n$-equivariant cellular approximations 
\[
i\colon S^L\rightarrow \res^{C_m}_{C_n}S^L,\qquad r\colon \res^{C_m}_{C_n}S^L \to S^L
\]
to the identity on $S^L$ satisfying $r\circ i = 1$ as cellular maps. Here, we write $\res^{C_m}_{C_n}S^L$ for $S^L$ equipped with its $C_n$-equivariant cell structure obtained by restricting its preferred $C_m$-equivariant cell structure discussed above. 
\tqed
\end{construction}

\begin{rmk}
Although we do not strictly need it, we point out that the above maps are in fact dual to each other in the following sense. There is a $C_m$-equivariant equivalence
\[
F(SC_m,S^L)\simeq SC_m
\]
realizing $SC_m$ as self-dual with respect to $S^L$. Indeed, with the embedding $C_m \subset S(L)$ fixed, the Spanier--Whitehead dual $F(SC_m,S^L)$ can be naturally identified as
\[
F(SC_m,S^L)\simeq S(S(L)\setminus C_m),
\]
and this may be identified with $C_m$ via the equivalence $C_m\simeq S(L)\setminus C_m$ sending $e\in C_m$ to a point just clockwise of $1\in S(L)$. With this choice, the inclusion $C_n\subset C_m$ induces a map $S(L)\setminus C_m\rightarrow S(L)\setminus C_n$ which is equivalent to $r\colon C_m\rightarrow C_n$, and as a consequence the diagram
\begin{center}\begin{tikzcd}
\res^{C_{m}}_{C_{m}} F_{C_{m}}(SC_{m},S^{L})\ar[d,"\simeq"]\ar[r,"="]&F_{C_{n}}(SC_{m},S^{L})\ar[r,"i^\ast"]\ar[d,"\simeq"]&F_{C_{n}}(SC_{n},S^{L})\ar[d,"\simeq"]\\
\res^{C_{m}}_{C_{n}}SC_{m}\ar[r,"="]&SC_{m}\ar[r,"r"]&SC_{n}
\end{tikzcd}\end{center}
commutes.
\tqed
\end{rmk}

By pulling the above discussion back along an automorphism of $C_m$, we are able to replace $L$ with $L^d$ for any integer $d$ coprime to $m$. In particular we obtain $C_n$-equivariant cellular retractions 
\[
S^{L^d}\rightarrow \res^{C_m}_{C_n}S^{L^d}\rightarrow S^{L^d}.
\]
This dualizes to a $C_n$-equivariant filtered retraction
\[
S^{-L^d}\rightarrow\res^{C_m}_{C_n}S^{-L^d}\rightarrow S^{-L^d},
\]
where $S^{-L^d}$ is the filtered object $S^{-L^d}\rightarrow D(SC_m)\rightarrow S^0$. The first sequence of filtered objects is the $L^d$-fold suspension of the second, where for example 
\[
\Sigma^{L^d}\left(S^{-L^d}\rightarrow D(SC_{m})\rightarrow S^0\right) \simeq \left(S^0\rightarrow SC_{m}\rightarrow S^{L^d}\right).
\]
Smashing these retractions together for various $d$, we obtain a $C_n$-equivariant filtered retraction
\[
\widetilde{E}C_{n}^{\leq \bullet}\rightarrow \res^{C_m}_{C_n}\widetilde{E}C_{m}^{\leq \bullet}\rightarrow \widetilde{E}C_{n}^{\leq \bullet}.
\]
In particular, by adjunction we now have maps
\begin{equation}\label{eq:filtapprox}
\Ind_{C_{n}}^{C_{m}}\widetilde{E}C_{n}^{\leq \bullet}\rightarrow\widetilde{E}C_{m}^{\leq \bullet},\qquad \widetilde{E}C_{m}^{\leq \bullet}\rightarrow\Ind_{C_{n}}^{C_{m}}\widetilde{E}C_{n}^{\leq \bullet}
\end{equation}
of filtered $C_{m}$-spectra. 

\begin{defn}\label{def:restrspoke}
We write
\begin{equation}\label{eq:restrspoke}
\tr_{C_n}^{C_m} : \pi_{\gamma,w}^{C_{n}} E \rightleftarrows \pi_{\gamma,w}^{C_{m}}E : \res_{C_n}^{C_m}
\end{equation}
for the \emph{restriction} and \emph{transfer} maps in spoke-graded homotopy groups induced by the maps of \cref{eq:filtapprox}.
\tqed
\end{defn}

\begin{ex}\label{ex:restrevenspoke}
If $w = 2k$ is even, then the restriction and transfer maps of \cref{def:restrspoke} are equivalent to the usual restriction and transfer maps
\[
\tr_{C_n}^{C_m} : \pi_{\gamma+V_k}^{C_n}E \to \pi_{\gamma+V_k}^{C_m}E : \res_{C_n}^{C_m}
\]
in the $RO(C_m)$-graded homotopy groups of $E$. Here, $V_k = -V_{-k}$ for $k < 0$.
\tqed
\end{ex}

We record the following for easy reference.

\begin{prop}\label{prop:restreuler}
The restriction and transfer maps in spoke-graded homotopy groups commute with $a^{1/2}$.
\end{prop}
\begin{proof}
This is immediate from their definition in terms of a comparison between the filtered objects $\widetilde{E}C_m^{\leq\bullet}$ and $\widetilde{E}C_n^{\leq\bullet}$ used to define $a^{1/2}$.
\end{proof}

So far we have not used any details of the construction of $\res^{C_m}_{C_n}$ and $\tr^{C_m}_{C_n}$; \cref{prop:restreuler} would hold just as well had we defined these maps via an abstract application of cellular approximation to the identity map on $EC_n \simeq \res^{C_m}_{C_n}EC_m$. These details come into play in the following.

\begin{prop}\label{prop:restr}
Let $w = 2k-1$ be an odd integer and set $d = d_k$. Then
\begin{enumerate}
\item $\resp = \resp\circ \tr_{C_n}^{C_m}\colon \pi_{\gamma,w}^{C_{n}}E\rightarrow \pi_{|\gamma|+w}^e E$.
\item $(1+g^d+\cdots+g^{dm/n-1})\circ \resp = \resp\circ \res_{C_n}^{C_m}\colon \pi_{\gamma,w}^{C_{m}}E\rightarrow \pi_{|\gamma|+w}^e E$.
\end{enumerate}
\end{prop}
\begin{proof}
After replacing $E$ by $\Sigma^{-\gamma}E$, we may as well suppose $\gamma = 0$. We treat the case $w = 1$, as the other cases may be derived from this by restricting along an automorphism of $C_m$.

All of the homotopy groups in question are represented by certain $C_m$-spectra, and the maps between them obtained by restriction along maps between these representing objects. For example,
\[
\resp\colon \pi_{0,1}^{C_{m}} E \rightarrow \pi_1^e E
\]
is obtained by restriction along the first map in the cofiber sequence
\[
\Sigma C_{m+}\rightarrow SC_{m}\rightarrow S^L\rightarrow \Sigma^2 C_{m+}.
\]

Translating to these terms, to prove $\resp = \resp\circ \tr_{C_n}^{C_m}$ we must verify that the diagram
\begin{equation}\label{eq:rtr}\begin{tikzcd}
\Sigma C_{m+}\ar[d,"="]\ar[r]&SC_{m}\ar[d,"r"]\\
\Ind_{C_{n}}^{C_{m}}\Sigma C_{n+}\ar[r]&\Ind_{C_{n}}^{C_{m}}SC_{n}
\end{tikzcd}\end{equation}
commutes, where the right vertical map is adjoint to the unreduced suspension of the function of \cref{eq:retract}. There, we observed that the top square in
\begin{center}\begin{tikzcd}
C_{m}\ar[r,"r"]\ar[d]&C_{n}\ar[d]\\
S(L)\ar[d]\ar[r,"="]&S(L)\ar[d]\\
\Sigma C_{m+}\ar[r,"h",dashed]&\Sigma C_{n+}
\end{tikzcd}\end{center}
commutes up to a specific homotopy, and this induces a map $h$ on cofibers. The map $h$ is determined by its effect on $\pi_1$, which stably is seen to be the map
\[
\bbZ[C_{m}]\rightarrow\bbZ[C_{n}],\qquad g^k \mapsto \begin{cases}q^{kn/m}&\frac{m}{n}\mid k,\\ 0&\text{otherwise}\end{cases}
\]
adjoint to the identity $\bbZ[C_m]\rightarrow\bbZ[C_m]$. Taking unreduced suspensions yields \cref{eq:rtr}.

Next, to prove $(1+g+\cdots+g^{m/n-1})\circ \resp =  \resp \circ \res^{C_m}_{C_n}$ we must verify that the diagram
\begin{center}\begin{tikzcd}
\Ind_{C_n}^{C_m}\Sigma C_{n+}\ar[r]\ar[d,"="']&\Ind_{C_n}^{C_m}SC_n\ar[dd,"i"]\\
\Sigma C_{m+}\ar[d,"1+g+\cdots+g^{m/n-1}"']\\
\Sigma C_{m+}\ar[r]&SC_m
\end{tikzcd}\end{center}
commutes, where the right vertical map is adjoint to the map $SC_n\rightarrow SC_m$ induced by the inclusion $C_n\subset C_m$. Both composites are determined by their corresponding element of $\pi_1^e SC_m$ which, by the cofiber sequence $SC_m\rightarrow \Sigma C_{m+}\rightarrow S^1$, can be stably identified as the augmentation ideal $I(\bbZ[C_m])$. Our discussion of cell structures shows that the bottom horizontal map classifies $1-g$ and the top map is induced from that classifying $1 - q = 1 - g^{m/n}$, so the proposition follows from the identity $(1-g)(1+g+\cdots+g^{m/n-1}) = 1-g^{m/n}$.
\end{proof}

\subsection{The \texorpdfstring{$C_{p^n}$}{C\_pn}-Mahowald invariant}\label{ssec:cpnmi}

We now turn our attention to the $C_{p^n}$-Mahowald invariant. We begin with the following observation. 

\begin{defn}\label{def:genuinefp}
Write
\[
\Phi\colon \Sp^{C_{p^n}}\rightarrow \Sp^{C_{p^{n-1}}},\qquad E \mapsto \Phi^{C_p} E
\]
for the functor sending a $C_{p^n}$-spectrum $E$ to its $C_p$-geometric fixed points equipped with their residual genuine $C_{p^n}/C_p\cong C_{p^{n-1}}$-action.
\tqed
\end{defn}

The following is standard.

\begin{lemma}
If $E$ is a $C_{p^n}$-spectrum, then $(\widetilde{E}C_{p^n}\wedge E)^{C_{p^n}}\simeq (\Phi E)^{C_{p^{n-1}}}$.
\qed
\end{lemma}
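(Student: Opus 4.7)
The plan is to identify the left side as an iterated fixed-point expression, recognize the inner fixed points as the definition of $\Phi$, and conclude by transitivity. This is a standard isotropy-separation argument, so I expect no substantive obstacle.

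First I would verify that $\widetilde{E}C_{p^n}$, as defined in the paper via the cofiber sequence $EC_{p^n\,+}\to S^0\to \widetilde{E}C_{p^n}$, is a model for the isotropy-separation spectrum $\widetilde{E}\mathcal{F}[C_p]$ attached to the family $\mathcal{F}[C_p]=\{e\}$ of subgroups of $C_{p^n}$ not containing $C_p$. Since $C_{p^n}$ acts freely on $EC_{p^n}$, for every nontrivial subgroup $K\subset C_{p^n}$ we have $(EC_{p^n})^K=\emptyset$, so $(EC_{p^n\,+})^K=\{\ast\}$; taking $K$-fixed points of the cofiber sequence then yields
\[
(\widetilde{E}C_{p^n})^K \simeq \begin{cases} \ast & K=e,\\ S^0 & K\neq e.\end{cases}
\]
These are exactly the fixed-point characterizations of $\widetilde{E}\mathcal{F}[C_p]$, so the two $C_{p^n}$-spectra are equivalent.

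Next I would invoke the standard definition $\Phi^{C_p}X=(\widetilde{E}\mathcal{F}[C_p]\wedge X)^{C_p}$ to obtain an equivalence of $C_{p^n}/C_p\cong C_{p^{n-1}}$-spectra
\[
(\widetilde{E}C_{p^n}\wedge X)^{C_p}\simeq \Phi^{C_p}X=\Phi X.
\]
Finally, since $C_p$ is normal in $C_{p^n}$, the categorical fixed-point functor satisfies the transitivity equivalence $((-)^{C_p})^{C_{p^n}/C_p}\simeq (-)^{C_{p^n}}$, and so
\[
(\widetilde{E}C_{p^n}\wedge X)^{C_{p^n}}\simeq \bigl((\widetilde{E}C_{p^n}\wedge X)^{C_p}\bigr)^{C_{p^{n-1}}}\simeq (\Phi X)^{C_{p^{n-1}}},
\]
which is the claim. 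The only point requiring any care is confirming the first step's identification of fixed points, which is immediate from the definition of $EC_{p^n}$ as a contractible space with free $C_{p^n}$-action.
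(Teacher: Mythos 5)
Your proof is correct and is the standard isotropy-separation argument that the paper implicitly relies on (the lemma is stated with no proof, as the fact is well known). Your identification of $\widetilde{E}C_{p^n}$ with $\widetilde{E}\mathcal{F}[C_p]$ via fixed points, the definition of $\Phi^{C_p}$ for a normal subgroup, and transitivity of categorical fixed points along $C_p\triangleleft C_{p^n}$ are exactly the right ingredients, and they assemble as you describe.
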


In particular, $\pi_\alpha^{C_{p^n}} \widetilde{E}C_{p^n} \cong \pi_{\alpha^{C_p}}S_{C_{p^{n-1}}}$. The filtration on $\widetilde{E}C_{p^n}$ discussed in detail in \cref{ssec:spoke} thus gives a spectral sequence of the form
\begin{equation}\label{eq:tatess}
E_1^{\alpha,f} = \pi_{|\alpha|} S^{-f} \Rightarrow \pi_{\alpha^{C_p}} S_{C_{p^{n-1}}}.
\end{equation}
This spectral sequence is isomorphic to the Tate spectral sequence, and the Segal conjecture implies that it converges to the $p$-completion $\bbZ_p\otimes \pi_\star S_{C_{p^{n-1}}}$, although this convergence is not strictly needed for what follows. We can now give the following.

\begin{defn}\label{def:cpnmifull}
The \textit{$C_{p^n}$-Mahowald invariant} is the relation
\[
M_{C_{p^n}}\colon \pi_\star S_{C_{p^{n-1}}}\rightharpoonup \pi_\ast S
\]
defined by
\[
y \in M_{C_{p^n}}(x)\qquad\iff\qquad y\text{ detects } x \text{ in the Tate spectral sequence.}\tag*{$\triangleleft$}
\]
\end{defn}

\begin{rmk}\label{rmk:gammak}
Recall the filtration on $A(C_{p^{n-1}})$ introduced in \cref{def:gammafilt}:
\[
\Gamma_k(C_{p^{n-1}}) = \{X\in A(C_{p^{n-1}}) : |M_{C_{p^n}}(X)| \geq k \}.
\]
By definition, $\Gamma_k(C_{p^{n-1}})$ consists of all elements in $A(C_{p^{n-1}})$ detected in filtration $\geq k$ in the Tate spectral sequence. By \cref{prop:mahowaldfiltss}, the fixed point homomorphism
\[
\Phi\colon A(C_{p^n})\rightarrow A(C_{p^{n-1}}),\qquad \Phi(X) = X^{C_p}
\]
induces isomorphisms $m_k(C_{p^n}) \cong \Gamma_k(C_{p^{n-1}})$ for $k\geq 1$ for which the diagram 
\begin{center}\begin{tikzcd}
m_k(C_{p^n})\ar[rr,"\Phi","\cong"']\ar[dr,"\tilde{\phi}"',tail]&&\Gamma_k(C_{p^{n-1}})\ar[dl,"\phi",tail]\\
&\Hom(\{1,\ldots,n\},\bbZ)
\end{tikzcd}\end{center}
commutes, where
\[
(\tilde{\phi}X)(i) = |X^{C_{p^{i}}}|,\qquad (\phi X)(i) = |X^{C_{p^{i-1}}}|.
\]
So we view the computation of $m_k(C_{p^n})$ and $\Gamma_k(C_{p^{n-1}})$ as essentially interchangeable.
\tqed
\end{rmk}

Let $E$ be a $C_{p^n}$-spectrum, and consider the homotopy groups $\pi_{\gamma,w}^{C_{p^n}}E$ developed in \cref{ssec:spoke}. In this $C_{p^n}$-equivariant context, it is conceptually convenient to restrict $\gamma$ to $RO(C_{p^{n-1}})$, viewed as a subring of $RO(C_{p^n})$ by restriction along the quotient map $C_{p^n}\rightarrow C_{p^{n-1}}$, so that $\pi_{\star,\ast}^{C_{p^n}}E$ is $RO(C_{p^{n-1}})\times\bbZ$-graded. For example, we can then identify the localization
\[
a^{-1/2}\pi_{\gamma,\ast}^{C_{p^n}} E \cong \pi_\gamma^{C_{p^{n-1}}}\Phi E.
\]
These homotopy groups participate in the following $C_{p^n}$-equivariant analogue of Bruner and Greenlees' $C_2$-equivariant reformulation of the classical Mahowald invariant \cite{brunergreenlees1995bredon}.

\begin{prop}\label{prop:bg}
Fix $x\in \pi_\gamma S_{C_{p^{n-1}}}$, and write $x = \Phi(Y)$ for some $Y \in \pi_{\gamma,k} S_{C_{p^n}}$ with $k$ as large as possible. Then $\resp(Y) \in M_{C_{p^n}}(x)$, and all $C_{p^n}$-Mahowald invariants of $x$ arise this way.
\end{prop}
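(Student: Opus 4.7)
The proposition is essentially a direct translation of the definition of detection in the Tate spectral sequence \eqref{eq:tatess} into the spoke-graded language, once one identifies the $a^{1/2}$-tower of spoke-graded groups with the exact couple underlying that spectral sequence. The plan is to establish this identification and then read off the claim.

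First I would identify the two spectral sequences. The Tate spectral sequence \eqref{eq:tatess} arises from the filtration $\widetilde{E}C_{p^n}^{\leq w}\otimes S_{C_{p^n}}$ on $\widetilde{E}C_{p^n}\otimes S_{C_{p^n}}$, whose $C_{p^n}$-equivariant homotopy computes $\pi_\star \Phi S_{C_{p^n}}\cong \pi_\star S_{C_{p^{n-1}}}$ and whose filtration quotients are free orbits. By Spanier--Whitehead duality applied to the finite stages of the filtration, the long exact sequences of \cref{ssec:spoke} connecting $\pi_{\gamma,w+1}^{C_{p^n}}S_{C_{p^n}}$, $\pi_{\gamma,w}^{C_{p^n}}S_{C_{p^n}}$, and $\pi_{|\gamma|+w}^e S$ are precisely the dualized versions of the cofiber sequences producing the Tate exact couple. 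Under this identification, the colimit $a^{-1/2}\pi_{\gamma,\ast}^{C_{p^n}}S_{C_{p^n}}\cong\pi_\gamma S_{C_{p^{n-1}}}$ is the abutment, the $E_1$-page entry $\pi_{|\gamma|}S^{-k}=\pi_{|\gamma|+k}^e S$ appears as the target of $r$, and the transition map of the tower is $a^{1/2}$.

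With this in place, the forward direction becomes a direct computation. Given $Y\in \pi_{\gamma,k}^{C_{p^n}}S_{C_{p^n}}$ with $\Phi(Y)=x$, maximality of $k$ is equivalent to $Y\notin \im(a^{1/2})$: any preimage $Y'\in \pi_{\gamma,k+1}^{C_{p^n}}S_{C_{p^n}}$ with $a^{1/2}(Y')=Y$ would satisfy $\Phi(Y')=\Phi(Y)=x$, since the $a^{-1/2}$-colimit is preserved by $a^{1/2}$. By exactness of $\pi_{\gamma,k+1}^{C_{p^n}}S_{C_{p^n}}\xrightarrow{a^{1/2}}\pi_{\gamma,k}^{C_{p^n}}S_{C_{p^n}}\xrightarrow{r}\pi_{|\gamma|+k}^e S$, this is in turn equivalent to $r(Y)\neq 0$, and $r(Y)$ is by construction the representative of $Y$ in the filtration quotient $E_1^{\gamma,k}$. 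This representative is the detecting class of $x$ at filtration $k$, so $r(Y)\in M_{C_{p^n}}(x)$.

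For the converse, any $y\in M_{C_{p^n}}(x)$ is by definition a permanent cycle at some (necessarily maximal) filtration $k$ representing $x$ in the $E_\infty$-page. The same exact sequence lets us lift $y$ through $r$ to some $Y\in\pi_{\gamma,k}^{C_{p^n}}S_{C_{p^n}}$, and tracing through the $a^{-1/2}$-colimit identifies $\Phi(Y)=x$ at maximal filtration. The principal subtlety in writing this up, and in my view the only real content of the proof, is pinning down the grading conventions relating the cohomological index $f$ in $E_1^{\gamma,f}=\pi_{|\gamma|}S^{-f}$ to the spoke index $k$ (and verifying that the direction of the $a^{1/2}$-tower matches the Tate filtration direction); once this bookkeeping is settled, the statement reduces to the standard description of detection in a filtration spectral sequence derived from an exact couple.
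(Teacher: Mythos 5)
Your proof takes essentially the same approach as the paper's, which is also just "the Tate spectral sequence agrees with the spectral sequence of the filtered object $\widetilde{E}C_{p^n}^{\leq \bullet}$, and the proposition is a restatement of what it means to be detected there." You spell out the bookkeeping in more detail, which is fine.

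One small logical overstatement worth flagging: you assert that "maximality of $k$ is equivalent to $Y\notin\im(a^{1/2})$," but your justification only establishes the implication from maximality to $Y\notin\im(a^{1/2})$ (which is the direction the argument actually uses). The converse fails in general: if $k$ is not maximal, there is some $Y'$ at filtration $k+1$ with $\Phi(Y')=x$, but $a^{1/2}(Y')$ need not equal your fixed $Y$ --- they only agree after applying $\Phi$, whose kernel is the $a^{1/2}$-torsion, not the complement of $\im(a^{1/2})$. Since the converse is never used, the proof as a whole is sound; just rephrase that line as a one-way implication. Everything else --- the identification of the tower with the exact couple, the exactness argument showing $r(Y)\neq 0$ at the maximal filtration, and the lift in the reverse direction --- matches the intended argument.
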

\begin{proof}
The Tate spectral sequence for $S_G^\tate$ is isomorphic to the spectral sequence associated to the filtration on $\widetilde{E}G$. Hence $x\in \pi_\gamma S_{C_{p^{n-1}}}$ is detected by $y \in \pi_{|\gamma|+k}S$ if and only if $k$ is maximal for which $y$ lifts through $\resp$ to a class $Y \in \pi_{\gamma,k} S_{C_{p^n}}$ satisfying $\Phi(Y) = x$. The proposition is just a restatement of this.
\end{proof}

We separate out the following immediate consequence for convenient reference.

\begin{cor}\label{cor:bg}
$\Gamma_k(C_{p^{n-1}}) = \im\left(\Phi\colon \pi_{0,k}S_{C_{p^n}}\rightarrow \pi_0 S_{C_{p^{n-1}}}\right)\subset A(C_{p^{n-1}})$.
\qed
\end{cor}

We can use this to analyze the $C_{p^n}$-Mahowald invariants of restrictions and transfers.

\begin{prop}\label{prop:roottr}
Fix $1\leq i < n$ and $\gamma \in RO(C_{p^{i-1}})$. Let $x\in \pi_\gamma S_{C_{p^{i-1}}}$. Then
\[
|M_{C_{p^n}}(\tr_{i-1}^{n-1}(x))| \geq |M_{C_{p^i}}(x)|,
\]
and if equality holds then for any $y\in M_{C_{p^i}}(x)$ we have
\begin{enumerate}
\item If $|M_{C_{p^i}}(x)|$ is even, then $\gamma$ is oriented and $p^{n-i}y \in M_{C_{p^n}}(\tr_{i-1}^{n-1}(x))$;
\item If $|M_{C_{p^i}}(x)|$ is odd, then $y\in M_{C_{p^n}}(\tr_{i-1}^{n-1} (x))$.
\end{enumerate}
\end{prop}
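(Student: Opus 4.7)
The plan is to reformulate via Proposition \ref{prop:bg} in terms of spoke-graded homotopy, and then track how the spoke-graded transfer $\tr_{C_{p^i}}^{C_{p^n}}$ from Section \ref{ssec:spoke} interacts with $\Phi = \Phi^{C_p}$ and with the underlying map $r$. Let $j := |M_{C_{p^i}}(x)|$ and, per Proposition \ref{prop:bg}, choose a lift $Y_0 \in \pi_{\gamma,j}^{C_{p^i}} S$ with $\Phi(Y_0) = x$ and $r(Y_0) = y$.

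For the inequality, I form $Y := \tr_{C_{p^i}}^{C_{p^n}}(Y_0) \in \pi_{\gamma,j}^{C_{p^n}} S$. The essential compatibility I need is
\[
\Phi \circ \tr_{C_{p^i}}^{C_{p^n}} = \tr_{i-1}^{n-1} \circ \Phi,
\]
which I would derive by applying $\Phi^{C_p}$ to the filtered map $\widetilde{E}C_{p^n}^{\leq \bullet} \to \Ind_{C_{p^i}}^{C_{p^n}}\widetilde{E}C_{p^i}^{\leq \bullet}$ defining the spoke-graded transfer; upon inverting $a^{1/2}$, this becomes the usual $RO$-graded transfer, to which the standard compatibility $\Phi^{C_p} \circ \tr_H^G = \tr_{H/C_p}^{G/C_p} \circ \Phi^{C_p}$ (available since $C_p \subset C_{p^i}$) applies. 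This gives $\Phi(Y) = \tr_{i-1}^{n-1}(x)$, hence $|M_{C_{p^n}}(\tr_{i-1}^{n-1}(x))| \geq j$ by Proposition \ref{prop:bg}.

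In the equality case, Proposition \ref{prop:bg} further asserts $r(Y) \in M_{C_{p^n}}(\tr_{i-1}^{n-1}(x))$, so it remains to compute $r \circ \tr_{C_{p^i}}^{C_{p^n}}(Y_0)$. When $j$ is odd, Proposition \ref{prop:restr} directly gives $r \circ \tr_{C_{p^i}}^{C_{p^n}} = r$, so $r(Y) = y$. When $j = 2k$ is even, the spoke-graded transfer at this level coincides with the ordinary $RO(C_{p^n})$-graded transfer on $\pi_{\gamma + L^{d_1} + \cdots + L^{d_k}}$, because the sequence of Remark \ref{rmk:sequence} (positive integers coprime to $p$) is identical for $C_{p^n}$ and $C_{p^i}$; the classical double coset formula $\res_e^{C_{p^n}} \circ \tr_{C_{p^i}}^{C_{p^n}} = [C_{p^n}\!:\!C_{p^i}] \cdot \res_e^{C_{p^i}}$ then yields $r(Y) = p^{n-i} y$. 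The orientability of $\gamma$ in the even case is essentially automatic: $y$ must be an unambiguous non-equivariant element, and since $V_k = L^{d_1} + \cdots + L^{d_k}$ carries a canonical complex orientation, any orientation ambiguity must come from $\gamma$.

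The main obstacle is verifying the compatibility $\Phi \circ \tr = \tr \circ \Phi$ in the spoke-graded setting; this requires unwinding the adjunction used to define the spoke-graded transfer and tracking how $\Phi^{C_p}$ interacts with restriction and induction between $C_{p^i}$ and $C_{p^n}$. Once this is established, the odd/even case split is immediate from Proposition \ref{prop:restr} and the classical double coset formula.
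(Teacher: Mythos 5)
Your overall strategy matches the paper's: lift $x$ to $Y_0$, transfer, use $\Phi \circ \tr = \tr \circ \Phi$ for the inequality, and identify $r(\tr Y_0)$ in the equality case via \cref{prop:restr} (odd) and double coset (even). Your added justification of the $\Phi$-$\tr$ compatibility via $a^{-1/2}$-localization is a reasonable supplement to what the paper leaves implicit.

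There is, however, a real gap in the even case. You quote the double coset formula as $\res_e^{C_{p^n}} \circ \tr_{C_{p^i}}^{C_{p^n}} = [C_{p^n}\!:\!C_{p^i}] \cdot \res_e^{C_{p^i}}$, but that identity already assumes the $C_{p^n}$-action on the target $\pi_{\gamma+V_k}^e S \cong \pi_{|\gamma|+2k}S$ is trivial. The correct statement is $\res_e \circ \tr = \sum_{h \in C_{p^n}/C_{p^i}} h_\ast \circ \res_e$, and the action of $h$ on $\pi_{|\gamma|+2k}S$ is through the orientation of $S^{\gamma+V_k}$: the complex part $V_k$ is oriented, but $\gamma$ need not be, and for $p=2$ a nonoriented $\gamma$ gives a sign twist. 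In that case the sum $\sum_h h\cdot y$ vanishes (half the signs are $-1$), so $r(\tr Y_0) = 0$, which is impossible for a detecting element. Your framing — that orientability is "essentially automatic" because $V_k$ is complex — inverts the logic and papers over exactly this possibility: orientability of $\gamma$ is a \emph{conclusion} drawn from the nonvanishing of $r(\tr Y_0)$ under the equality hypothesis, not a hypothesis one gets for free. Replacing your penultimate sentence with the case split (oriented $\Rightarrow p^{n-i}y$; non-oriented $\Rightarrow 0 \Rightarrow$ contradiction) would close the gap and recover the paper's argument.

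One smaller inaccuracy: \cref{rmk:sequence} specifies $\ul d = (a_1,-a_1,a_2,-a_2,\ldots)$ with alternating signs, not merely the positive integers coprime to $p$; this does not affect your argument, but worth stating correctly.
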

\begin{proof}
Set $w = |M_{C_{p^i}}(x)|$. The hypothesis provides $Y\in \pi_{\gamma,w}S_{C_{p^i}}$ satisfying
\[
\Phi(Y) = x,\qquad \resp(Y) = y.
\]
The element $\tr_i^n(Y) \in \pi_{\gamma,w}S_{C_{p^n}}$ then satisfies
\[
\Phi(\tr_i^n(Y)) = \tr_{i-1}^{n-1}(\Phi(Y)) = \tr_{i-1}^{n-1}(x),
\]
proving $|M_{C_{p^n}}(\tr_{i-1}^{n-1}(x))| \geq |M_{C_{p^i}}(x)|$. If equality holds, then $\resp(\tr_i^n(Y)) \in M_{C_{p^n}}(\tr_{i-1}^{n-1}(x))$, so we must identify this element.

If $w = 2k$ is even, then $\resp$ and $\tr$ are the usual restriction and transfer, and the double coset formula gives 
\[
\resp(\tr_i^n(Y)) = \sum_{h\in C_{p^n}/C_{p^i}}(h\cdot y) = (e+g+\cdots+g^{p^{n-i}-1})\cdot y,
\]
where $C_{p^n}$ acts on $\pi_{\gamma+V_k}^e S_{C_{p^n}}\cong \pi_{|\gamma|+2k}S$ trivially if $\gamma$ is oriented and otherwise with a twist by the sign representation. In the former case we obtain $\resp(\tr_i^n(Y)) = p^{n-i}y$ as claimed. In the latter case we obtain $\resp(\tr_i^n(Y)) = 0$, contradicting the assumption that $w=2k$. Finally, if $w$ is odd then
\[
\resp(\tr_i^n(Y)) = \resp(Y) = y
\]
by \cref{prop:restr}.
\end{proof}

Essentially the same proof shows the following.

\begin{prop}\label{prop:rootres}
Fix $1\leq i < n$ and $\gamma \in RO(C_{p^{i-1}})$. Let $x\in \pi_\gamma S_{C_{p^{n-1}}}$. Then
\[
|M_{C_{p^i}}(\res_{i-1}^{n-1}(x))| \geq |M_{C_{p^n}}(x)|,
\]
and if equality holds then for any $y\in M_{C_{p^n}}(x)$ we have
\begin{enumerate}
\item If $|M_{C_{p^n}}(x)|$ is odd, then $\gamma$ is oriented and $p^{n-i}y \in M_{C_{p^i}}(\res_{i-1}^{n-1}(x))$;
\item If $|M_{C_{p^n}}(x)|$ is even, then $y\in M_{C_{p^i}}(\res_{i-1}^{n-1} (x))$.
\qed
\end{enumerate}
\end{prop}

\section{Preliminaries on equivariant \texorpdfstring{$K$}{K}-theory}\label{sec:ktheory}

Following \cref{cor:reducetoktheory}, we are interested in the groups
\[
H^0(\psi^\ell;\pi_{0,\ast}KO_{C_{p^n}}),
\]
where $\ell$ is a primitive root modulo $p^2$ (say $\ell = 3$ if $p=2$). Computing these groups is a bit delicate: the action of $\psi^\ell$ is difficult to control in general, making a naive brute-force approach infeasible. In this short section, we separate out some key observations about the action of $\psi^\ell$ that will allow us to control these fixed points.

\subsection{Equivariant \texorpdfstring{$K$}{K}-theory}\label{ssec:equivktheory}

We assume familiarity with the basic construction and properties of equivariant $K$-theory; an excellent reference for our purposes is \cite[IV.1]{atiyahtall1969group}. We just fix some notation.

\begin{defn}\label{def:bottandeuler}
Given a complex $G$-representation $V$, we write
\[
\beta_V \in \pi_V KU_G
\]
for its Bott class, and
\[
e_V = a_V \beta_V \in \pi_0 KU_G \cong RU(G)
\]
for its $K$-theory Euler class.
\tqed
\end{defn}

An explicit formula for the $K$-theory Euler class is given by
\[
e_V = \Lambda^\ast V = \sum_{i\geq 0}(-1)^i \Lambda^i V,
\]
where $\Lambda^i$ is the $i$th exterior power: for example,
\[
e_L = 1 - L
\]
for any complex character $L$. These classes satisfy
\[
\beta_{U\oplus V} = \beta_U \beta_V,\qquad e_{U\oplus V} = e_U e_V.
\]
One can descend from complex $K$-theory to real $K$-theory via the homotopy fixed point spectral sequence
\[
E_2 = H^\ast(\{\psi^{\pm 1}\},\pi_\star KU_G)\Rightarrow \pi_{\star-\ast}KO_G.
\]
See for example \cite[Section 6]{balderrama2023equivalences} for a general discussion with explicit examples. In particular, if $V$ is a quaternionic representation (such as $\bbH\otimes_\bbC V = V + \ol{V}$ for a complex representation $V$) then the Bott class $\beta_V$ is fixed by $\psi^{-1}$, and descends to $KO_G$ if moreover $V$ has real dimension divisible by $8$.

\subsection{Adams operations and complex characters}\label{ssec:circlegroup}

We recall some of the structure of Adams operations in equivariant $K$-theory. Consider the circle group $T = U(1) \subset \bbC$. Let $L$ denote the tautological complex character of $T$, so that
\[
RU(T) = \bbZ[L^{\pm 1}].
\]
The Adams operation $\psi^\ell$ acts on this by
\[
\psi^\ell(L) = L^\ell
\]
as usual.

\begin{prop}\label{prop:adamsopschar}
The Adams operations act on the Bott class $\beta_L \in \pi_L KU_{T}$ by
\[
\psi^{-1}(\beta_L) = -L^{-1}\beta_L,\qquad \psi^\ell(\beta_L) = (1+L+\cdots+L^{\ell-1})\beta_L
\]
for $\ell \geq 1$.
\end{prop}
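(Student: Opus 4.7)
I intend to identify $\beta_L$ explicitly as a virtual bundle on a familiar $T$-space and compute the Adams operations by direct manipulation, working inside the ring $KU^0_T(\bbP^1_\bbC)$.

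First I will identify $S^L$ with $\bbP^1_\bbC$ via $z \mapsto [z:1]$, equipped with the $T$-action $t \cdot [z:w] = [tz:w]$ and basepoint $\infty = [1:0]$. The tautological line subbundle $\calO(-1) \subset \bbP^1_\bbC \times \bbC^2$ inherits a natural $T$-equivariant structure from the action $t \cdot (z,w) = (tz,w)$ on $\bbC^2$, with fiber the trivial character at $0 = [0:1]$ and fiber $L$ at $\infty = [1:0]$. Setting $x = [\calO(-1)] \in KU^0_T(\bbP^1_\bbC)$, the restriction to $T$-fixed points gives an injection
\[
KU^0_T(\bbP^1_\bbC) \hookrightarrow KU^0_T(\{0\} \sqcup \{\infty\}) = RU(T) \times RU(T)
\]
sending $x \mapsto (1, L)$, so $x$ satisfies $(x-1)(x-L) = 0$ and every class is uniquely of the form $a + bx$ with $a, b \in RU(T)$. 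The excerpt's Bott class can then be identified as $\beta_L = x - L$: this lies in reduced $K$-theory based at $\infty$, and restricts at $0$ to $1 - L = e_L$, confirming the defining relation $a_L \beta_L = e_L$.

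For $\ell \geq 1$, since $\psi^\ell$ sends a line bundle to its $\ell$-th tensor power, $\psi^\ell(x) = x^\ell$ and $\psi^\ell(L) = L^\ell$. Writing $x^\ell = a + bx$ and evaluating at the two fixed points gives $a + b = 1$ and $a + bL = L^\ell$, hence
\[
b = \frac{L^\ell - 1}{L - 1} = 1 + L + \cdots + L^{\ell-1}.
\]
A brief simplification using $b(L-1) = L^\ell - 1$ then yields $\psi^\ell(\beta_L) = x^\ell - L^\ell = b(x - L) = (1 + L + \cdots + L^{\ell-1}) \beta_L$. For $\psi^{-1}$, which sends a line bundle to its dual, I will compute $\psi^{-1}(x) = x^{-1}$ and $\psi^{-1}(L) = L^{-1}$; the relation $x^2 - (1+L)x + L = 0$ rearranges to $x^{-1} = L^{-1}(1 + L - x)$, and thus $\psi^{-1}(\beta_L) = x^{-1} - L^{-1} = -L^{-1}(x - L) = -L^{-1} \beta_L$.

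The only step requiring any care is pinning down the identification $\beta_L = x - L$ in a manner consistent with the excerpt's normalization of $\beta_L$ (characterized by $a_L \beta_L = 1 - L$); once that identification is settled, everything else reduces to straightforward algebra in the simple quadratic algebra $KU^0_T(\bbP^1_\bbC) = RU(T)[x]/(x-1)(x-L)$.
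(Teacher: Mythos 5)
Your proof is correct, and while the underlying mechanism---pinning down $\psi^\ell(\beta_L)$ by its restriction to the fixed locus of $S^L$---is the same as the paper's, you take a more hands-on route through the tautological bundle on $\bbP^1_\bbC$. The paper's argument is essentially a one-liner once it cites $a_L^\ast(\beta_L) = e_L = 1-L$ and the injectivity of $a_L^\ast\colon \pi_L KU_T \to \pi_0 KU_T$: naturality of $\psi^\ell$ gives $a_L^\ast(\psi^\ell\beta_L) = \psi^\ell(1-L) = 1-L^\ell$, and the $RU(T)$-module structure then forces $\psi^\ell(\beta_L) = \tfrac{1-L^\ell}{1-L}\,\beta_L$ uniformly for every $\ell\in\bbZ$, since $\tfrac{1-L^\ell}{1-L}\in\bbZ[L^{\pm 1}]$ for all integers $\ell$---so the two stated cases are covered by one formula and no case split is needed. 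You instead re-derive the Euler class identity from scratch by identifying $\beta_L = x - L$ inside $KU^0_T(\bbP^1_\bbC) \cong RU(T)[x]/((x-1)(x-L))$ with $x = [\calO(-1)]$, and compute $\psi^\ell(x) = x^\ell$ via the tensor-power (resp.\ dual) description of Adams operations on line bundles; this is longer but more self-contained and geometric. One step worth stating explicitly in your version is that $\beta_L = x-L$ is actually forced by the paper's normalization: since $e_L = 1-L$ is a non-zero-divisor in $\bbZ[L^{\pm 1}]$, the map $a_L^\ast$ is injective on $\widetilde{KU}{}_T^0(S^L) \cong RU(T)\{\beta_L\}$, so the relation $a_L\beta_L = e_L$ characterizes $\beta_L$ uniquely---which is what licenses reading off $\psi^\ell(\beta_L)$ from the restriction computation.
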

\begin{proof}
Restriction along the inclusion of poles $a_L\colon S^0\rightarrow S^L$ gives an injection
\[
a_L\colon \pi_L KU_{T}\cong RU(T)\{\beta_L\}\rightarrow \pi_0 KU_{T}\cong RU(T),\qquad \beta_L\mapsto  e_L =  1-L.
\]
As $\psi^\ell(L) = L^\ell$ for all $\ell\in\bbZ$, it follows that
\[
\psi^\ell(\beta_L) = \frac{1-L^\ell}{1-L}\beta_L,
\]
which gives the stated formula. 
\end{proof}

We make some further observations. In general, the $d$th power map on $\bbC$ restricts to a $T$-equivariant map
\[
S(L)\rightarrow S(L^d)
\]
on unit spheres. Taking unreduced suspensions, this defines a map
\begin{equation}\label{eq:powermap}
\psi_d\colon S^L\rightarrow S^{L^d},
\end{equation}
or equivalently an element 
\[
\psi_d \in \pi_{L-L^d}S_T,
\]
satisfying
\[
\Phi^{C_n}(\psi_d) = \begin{cases}d&n=1,\\ 0&1\neq n \mid d,\\1&n\nmid d.\end{cases}
\]

We make some remarks.

\begin{rmk}\label{rmk:virtual}
If $\gcd(m,d) = 1$, then the restriction of $\psi_d$ to the cyclic subgroup $C_m\subset T$ is the identity on all nonzero fixed points. Using this, one can show that if $\alpha$ is a virtual representation satisfying $|\alpha^{C_n}| = 0$ for all $1\neq n \mid m$ and $|\alpha| = k \neq 0$, then the image of the degree function
\[
\deg\colon \pi_\alpha S_{C_m}\rightarrow \Hom(\Sub(C_m)\setminus\{e\},\bbZ)
\]
depends only on $k$. In particular, this extends \cref{thm:fixeddegree} to the case where $V$ is replaced by a difference of fixed point free representations.
\tqed
\end{rmk}

\begin{rmk}
The restriction of $\psi_{p^t}$ to a cyclic $p$-subgroup $C_{p^n}\subset T$ is an example of what in the context of \cite{bhattacharyaguillouli2022rmotivic, behrenscarlisle2024periodic} one might call a \textit{$v_{0,-1,\ldots,-1,\infty,\ldots,\infty}$-self map}. Here, there are $\min(t,n)$ copies of $-1$ and $n-\min(t,n)$ copies of $\infty$. For example, $\psi_{p^n} = \tr_e^{C_{p^n}}(1)$.
\tqed
\end{rmk}

We do not need the following, but record it for completeness as it can, in principle, be used to extend our description of $H^\ast(\psi^\ell;\pi_{0,\ast}KO_{C_{p^n}})$ to arbitrary fixed point free degrees.

\begin{prop}
The Hurewicz image of $\psi_d$ in $\pi_{L-L^d}KU_T$ is given by
\[
\psi_d = \frac{\psi^d(\beta_L)}{\beta_{L^d}}.
\]
\end{prop}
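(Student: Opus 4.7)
The plan is to reduce the identity to an algebraic computation in $RU(T) = \bbZ[L^{\pm 1}]$ by exploiting the injectivity of restriction along $a_L$ recorded in \cref{prop:adamsopschar}. Bott periodicity identifies $\pi_{L-L^d} KU_T$ with $RU(T) \cdot (\beta_L/\beta_{L^d})$, so we can write the Hurewicz image of $\psi_d$ uniquely as $x \cdot \beta_L/\beta_{L^d}$ for some $x \in RU(T)$, and the goal is to show $x = 1 + L + \cdots + L^{d-1}$. After multiplying through by $\beta_{L^d}$, this amounts to the identity $\psi_d \cdot \beta_{L^d} = x \cdot \beta_L$ in $\pi_L KU_T$.

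The key geometric input will be that the $d$th power map $z\mapsto z^d$ sends $0$ to $0$, so its unreduced suspension $\psi_d \colon S^L \to S^{L^d}$ carries the cone point corresponding to the origin of $L$ to the cone point corresponding to the origin of $L^d$. This translates to the identity $\psi_d \circ a_L = a_{L^d}\colon S^0 \to S^{L^d}$, from which one obtains
\[
a_L \cdot (\psi_d \cdot \beta_{L^d}) = a_{L^d} \cdot \beta_{L^d} = e_{L^d} = 1 - L^d
\]
in $RU(T)$.

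To conclude, I will invoke \cref{prop:adamsopschar} a second time: restriction along $a_L$ gives an injection $\pi_L KU_T \hookrightarrow RU(T)$ sending $\beta_L \mapsto 1-L$, so the relation $\psi_d \cdot \beta_{L^d} = x\cdot \beta_L$ forces $x(1-L) = 1-L^d$ in the integral domain $RU(T) = \bbZ[L^{\pm 1}]$. Dividing yields $x = 1 + L + \cdots + L^{d-1}$, which is exactly $\psi^d(\beta_L)/\beta_L$ by \cref{prop:adamsopschar}. Nothing in this approach looks difficult; the only real content is the geometric identity $\psi_d \circ a_L = a_{L^d}$, which is immediate from the definition of $\psi_d$ as an unreduced suspension of a basepoint-preserving map between unit spheres.
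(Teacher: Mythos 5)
Your proof is correct and follows essentially the same approach as the paper: both establish the geometric identity $\psi_d\circ a_L = a_{L^d}$ and then transport to $RU(T)$ by restriction along $a_L$ to reduce to the division $(1-L^d)/(1-L) = 1+L+\cdots+L^{d-1}$. The only cosmetic difference is that you make the injectivity of restriction along $a_L$ (from \cref{prop:adamsopschar}) explicit before dividing, whereas the paper carries out the same manipulation inline without naming it.
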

\begin{proof}
The composite $\psi_d\circ a_L\colon S^0\rightarrow S^L\rightarrow S^{L^d}$ is just the inclusion of poles for $S^{L^d}$, i.e.\ $\psi_d \cdot a_L = a_{L^d}$. It follows that
\[
1-L^d = a_{L^d}\beta_{L^d} = \psi_d a_L \beta_{L^d} = \psi_d a_L \beta_L \beta_L^{-1}\beta_{L^d} = \psi_d (1-L) \beta_L^{-1}\beta_{L^d},
\]
and thus
\[
\psi_d = \frac{1-L^d}{1-L} \cdot \frac{\beta_{L}}{\beta_{L^d}} = \frac{\psi^d(\beta_L)}{\beta_{L^d}}
\]
by \cref{prop:adamsopschar}.
\end{proof}

We end by fixing some more notation regarding the representation theory of cyclic groups. Fix a positive integer $m$ and let
\[
C_m = \langle e^{2\pi i/m}\rangle \subset T
\]
be the cyclic subgroup of order $m$, with generator $g = e^{2\pi i/m}$. As discussed in \cref{rmk:segalritter}, the permutation representation homomorphism
\[
\bbC[\bs]\colon A(C_m)\rightarrow RU(C_m)
\]
identifies the Burnside ring $A(C_m)$ as the rational representation ring $R\bbQ(C_m)$, realized as the subring of $RU(C_m)$ on those representations with rational characters, or equivalently, fixed by the Adams operation $\psi^\ell$ for all $\ell$ coprime to $m$. 

\begin{defn}\label{def:rat}
Write
\begin{equation}
\rat_m = \sum\{L^k : 1\leq k \leq m,\, \gcd(m,k) = 1\}
\end{equation}
for the fixed point free irreducible rational representation of $C_m$. This has dimension equal to the value of the Euler totient function $\varphi(m)$.
\tqed
\end{defn}

The most important case for us is when $m$ is a power of a prime $p$. Here,
\[
\rat_p = \ol{\rho}_p^\bbC = L + \cdots + L^{p-1}
\]
is the reduced complex regular representation of $C_p$, and
\[
\rat_{p^n} = \tr_{C_p}^{C_{p^n}}(\ol{\rho}_p^\bbC) = \bbC[C_{p^n}] - \bbC[C_{p^n}/C_p].
\]
For example,
\[
\rat_{2^n} = L + L^3 + L^5 + \cdots + L^{2^n-1}.
\]

\subsection{Adams operations in the critical degree}

Our computation of $C_{p^n}$-Mahowald invariants hinges on the following observation: if $V$ is a fixed point free representation of a finite group $G$ with rational characters, then $\psi^\ell(\beta_V)\equiv \beta_V\pmod{\tr_e^G(1)}$ for $\gcd(\ell,|G|) = 1$.

Character theory will allow us to reduce to the case of a cyclic group $G = C_m$. We begin with this case. By \cite[Proposition 7.7.7]{dieck1979transformation}, there is an isomorphism
\[
RU(C_m)[e_{L^i}^{-1} : 1\leq i<m]\cong \bbZ[\tfrac{1}{m}](\zeta_m),
\]
where $e_{L^i} = 1-L^i$ and $\zeta_m$ is a primitive $m$th root of unity. Under this map, $L$ is sent to $\zeta_m$. This refines to an identification
\[
\Phi^{C_n}KU_{C_m} \cong KU[\tfrac{1}{n}](\zeta_n)
\]
for any $n\mid m$. Consider the Bott class $\beta_{\rat_m} \in \pi_{\rat_m}KU_{C_m}$ of the representation $\rat_m$ of \cref{def:rat}.

\begin{prop}\label{lem:geofixv1}
For $n\mid m$, we have
\[
\Phi^{C_n}(\beta_{\rat_m}) = \begin{cases}p^{m/n}&n \in p^\bbN\text{ for some prime }p,\\ 1&\text{otherwise}.\end{cases}
\]
\end{prop}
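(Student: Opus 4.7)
The plan is to expand $\beta_{\rat_m}$ via Bott class multiplicativity and identify the result with a power of a cyclotomic value.

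First I would use $\beta_{U \oplus V} = \beta_U \beta_V$ together with $\rat_m = \bigoplus_{k \in (\bbZ/m)^\times} L^k$ to write
\[
\beta_{\rat_m} = \prod_{k \in (\bbZ/m)^\times} \beta_{L^k}.
\]
Next, for $n \geq 2$ with $n \mid m$ and each $k$ coprime to $m$, I would compute $\Phi^{C_n}(\beta_{L^k})$ as follows. Since $\gcd(m,k) = 1$ and $n \mid m$, we have $\gcd(n,k) = 1$, so $(L^k)^{C_n} = 0$ and $\Phi^{C_n}(\beta_{L^k}) \in \pi_0 \Phi^{C_n} KU_{C_m} \cong \bbZ[\tfrac{1}{n}](\zeta_n)$, with $L$ sent to $\zeta_n$. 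The Euler class identity $a_{L^k} \beta_{L^k} = 1 - L^k$ in $\pi_0 KU_{C_m}$, combined with the observation that the inclusion of poles $a_{L^k} \colon S^0 \to S^{L^k}$ induces the identity on the common $C_n$-fixed subspace $S^0$ (so $\Phi^{C_n}(a_{L^k}) = 1$), yields $\Phi^{C_n}(\beta_{L^k}) = 1 - \zeta_n^k$.

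Assembling these factors gives
\[
\Phi^{C_n}(\beta_{\rat_m}) = \prod_{k \in (\bbZ/m)^\times}(1 - \zeta_n^k).
\]
Since $1 - \zeta_n^k$ depends only on $k \bmod n$ and the reduction $(\bbZ/m)^\times \twoheadrightarrow (\bbZ/n)^\times$ is surjective with every fibre of size $\phi(m)/\phi(n)$, this product rearranges to $\left(\prod_{a \in (\bbZ/n)^\times}(1-\zeta_n^a)\right)^{\phi(m)/\phi(n)} = \Phi_n(1)^{\phi(m)/\phi(n)}$, where $\Phi_n$ denotes the $n$th cyclotomic polynomial. The classical identity $\Phi_n(1) = p$ when $n = p^k$ is a prime power with $k \geq 1$ and $\Phi_n(1) = 1$ otherwise finishes the $n \geq 2$ case; note that $\phi(m)/\phi(n) = m/n$ precisely in the prime-power regime that the proposition concerns. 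For $n = 1$, the functor $\Phi^{C_1}$ is the underlying-spectrum functor and $\beta_{\rat_m}$ is a Bott unit $\beta^{\phi(m)}$, agreeing with the "otherwise" output $1$.

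The only real subtlety is the bookkeeping for $\Phi^{C_n}$ on the Bott class $\beta_{L^k}$, hinging on $\Phi^{C_n}(a_{L^k}) = 1$ and the explicit form of $\Phi^{C_n} KU_{C_m}$; once these are in hand, the remainder is a classical cyclotomic polynomial evaluation.
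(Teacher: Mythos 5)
Your proof is correct and uses essentially the same ingredients as the paper's: expand $\beta_{\rat_m}$ as a product of $\beta_{L^k}$ over $k \in (\bbZ/m)^\times$, evaluate each factor via the Euler class identity $a_{L^k}\beta_{L^k} = 1 - L^k$ together with $\Phi^{C_n}(a_{L^k}) = 1$, and recognize the resulting product as a power of the cyclotomic value $\Phi_n(1)$. The paper instead first reduces to the diagonal case $n=m$ via a restriction formula and then carries out the same cyclotomic evaluation; your version just does everything in one pass, which is a little more transparent.

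Your careful bookkeeping produces the exponent $\phi(m)/\phi(n)$ rather than $m/n$, and your closing observation is worth spelling out: these only agree when $m$ and $n$ have the same prime divisors, so the proposition as stated is actually off in general. Concretely, for $m=6$, $n=2$ one has $\Phi^{C_2}(\beta_{\rat_6}) = \Phi^{C_2}(\beta_L\beta_{L^5}) = (1-(-1))^2 = 4 = 2^{\phi(6)/\phi(2)}$, whereas the stated formula would give $2^{6/2} = 8$. The paper's own proof carries the same slip in the reduction step: the restriction identity it invokes, $\res^{C_m}_{C_n}(\beta_{\rat_m}) = \beta_{\rat_n}^{m/n}$, should read $\beta_{\rat_n}^{\phi(m)/\phi(n)}$, since the reduction $(\bbZ/m)^\times \twoheadrightarrow (\bbZ/n)^\times$ has fibres of size $\phi(m)/\phi(n)$. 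None of this affects anything downstream, because the paper only invokes the proposition with $m$ a prime power (there $m/n = \phi(m)/\phi(n)$, and moreover only the fact that the value is an integer is used in \cref{lem:adamsopcyclic}). So: correct proof, same method, and a good catch of a minor misstatement in the proposition and in the paper's own argument.
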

\begin{proof}
As
\[
\res^{C_m}_{C_n}(\beta_{\rat_m}) = \beta_{\rat_n}^{m/n},
\]
it suffices to consider just the case $n = m$. Let $\Phi_m$ denote the $m$th cyclotomic polynomial, and abbreviate $\zeta = \zeta_m$. We then have
\begin{align*}
\Phi^{C_m}(\beta_{\rat_m}) &= \Phi^{C_m}\left(\prod\{\beta_{L^d} :0<d<m,\, \gcd(d,m) = 1\}\right) \\
&= \prod\{(1-\zeta^d) : 0 < d < m,\, \gcd(d,m)=1\}\\
&= \Phi_m(1) = \begin{cases}p&m\in p^\bbN,\\ 1&\text{otherwise},\end{cases}
\end{align*}
as claimed.
\end{proof}

\begin{lemma}\label{lem:adamsopcyclic}
For $\gcd(\ell,m) = 1$ and $k\geq 1$, we have
\[
\psi^\ell(\beta_{\rat_m}^k) = \left(1+\frac{\ell^{k\varphi(m)}-1}{m}\tr_e^{C_m}(1)\right)\cdot \beta_{\rat_m}^k,
\]
where $\varphi(m) = \dim \rat_m$ is the Euler totient function.
\end{lemma}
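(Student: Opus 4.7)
The plan is to handle $k=1$ first via character theory on $RU(C_m)$, and then bootstrap to general $k$ using the fact that $\tr_e^{C_m}(1)$ is ``almost idempotent.''

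For $k=1$, I would start from \cref{prop:adamsopschar} and its pullback along the $d$th power automorphism $C_m\to C_m$ (for $d$ coprime to $m$) to get $\psi^\ell(\beta_{L^d}) = c_{\ell,d}\cdot \beta_{L^d}$, where $c_{\ell,d} = 1 + L^d + L^{2d} + \cdots + L^{(\ell-1)d} \in RU(C_m)$. Multiplying these over units $d$ modulo $m$ and using multiplicativity of Bott classes yields
\[
\psi^\ell(\beta_{\rat_m}) = \Big(\prod_{d} c_{\ell,d}\Big)\cdot \beta_{\rat_m},
\]
so the whole problem reduces to identifying the element $P_\ell = \prod_d c_{\ell,d}$ inside $RU(C_m)$.

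The key step is to compute the character of $P_\ell$. At the identity, each $c_{\ell,d}$ evaluates to $\ell$, giving $\chi_1(P_\ell) = \ell^{\phi(m)}$. At a nontrivial element $g^j$, writing $\zeta = e^{2\pi i/m}$, the character of $c_{\ell,d}$ is $(1-\zeta^{dj\ell})/(1-\zeta^{dj})$; the denominator is nonzero since $\gcd(d,m)=1$ forces $\zeta^{dj}\neq 1$ (as $j\not\equiv 0$). Because $\gcd(\ell,m)=1$, the map $d\mapsto d\ell$ permutes the units mod $m$, so $\prod_d(1-\zeta^{dj\ell}) = \prod_d(1-\zeta^{dj})$ and $\chi_{g^j}(P_\ell) = 1$. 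Therefore $P_\ell - 1$ is a virtual representation whose character is supported at the identity with value $\ell^{\phi(m)}-1$, hence equals $\tfrac{\ell^{\phi(m)}-1}{m}\bbC[C_m]$; this is integral by Euler's theorem, and $\bbC[C_m] = \tr_e^{C_m}(1)$. This proves the lemma for $k=1$.

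For the induction to general $k$, I would use that $\psi^\ell$ is a ring map, so $\psi^\ell(\beta_{\rat_m}^k) = (1 + aT)^k \beta_{\rat_m}^k$ with $a = \tfrac{\ell^{\phi(m)}-1}{m}$ and $T = \tr_e^{C_m}(1)$. The double coset formula (or direct computation $T = \sum_g g$) gives $T^2 = mT$, so binomial expansion collapses to
\[
(1+aT)^k = 1 + \frac{(1+am)^k - 1}{m}\, T = 1 + \frac{\ell^{k\phi(m)} - 1}{m}\, T,
\]
which is the desired formula. The only subtle ingredient is the character-theoretic computation of $P_\ell$; once the reindexing by $d\mapsto d\ell$ is spotted, everything else is formal.
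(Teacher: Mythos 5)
Your proof is correct, and it takes a genuinely different route from the paper. The paper verifies the claimed identity by applying the geometric fixed point homomorphisms $\Phi^C$ for every subgroup $C\subset C_m$: at $C=e$ both sides become $\ell^{k\phi(m)}\beta^{k\phi(m)}$, while for $C\neq e$ the key input is \cref{lem:geofixv1}, which shows $\Phi^C(\beta_{\rat_m}^k)$ lies in $\bbZ$ and hence is fixed by $\psi^\ell$, together with the vanishing of $\Phi^C(\tr_e^{C_m}(1))$. So the paper never writes down $\psi^\ell(\beta_{\rat_m})$ as an explicit $RU(C_m)$-multiple; it only confirms the identity after passing to each fixed point ring. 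Your approach instead computes $\psi^\ell(\beta_{\rat_m}) = P_\ell\,\beta_{\rat_m}$ directly via \cref{prop:adamsopschar} and multiplicativity, identifies $P_\ell$ by its character (with the reindexing $d\mapsto d\ell$ on units as the key observation), and then deduces the general-$k$ formula from $T^2 = mT$. The trade-off: the paper's argument is shorter but depends on \cref{lem:geofixv1} having already computed $\Phi^C(\beta_{\rat_m})$; yours is longer but self-contained beyond the single-character formula in \cref{prop:adamsopschar}, and it produces the element $P_\ell$ constructively rather than just checking an identity. One small note: your argument, as written, assumes $\ell\geq 1$ (inherited from \cref{prop:adamsopschar}); this is harmless since $\psi^\ell$ on $RU(C_m)$-modules depends only on $\ell$ modulo $m$, but it is worth saying. (Also, the $n$ in the denominator of the lemma statement is a typo for $m$ — both your proof and the paper's establish the formula with $m$.)
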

\begin{proof}
As the geometric fixed point maps
\[
\Phi^C\colon \pi_VKU_{C_m} \to \pi_{V^C}\Phi^CKU
\]
are jointly injective as $C$ ranges over the subgroups of $C_m$, it suffices to verify this identity after applying $\Phi^{C}$ for all subgroups $C\subset C_m$.

When $C=e$, we are claiming that the formula holds after restriction to the trivial group, which is true as $\res^{C_m}_e(\beta_{\rat_m}) = \beta^{\varphi(m)}$ and $\psi^\ell(\beta) = \ell\beta$.

When $C \neq e$, \cref{lem:geofixv1} implies that $\Phi^C(\beta_{\rat_m}^k)$ lands in $\bbZ\subset \pi_0 \Phi^C KU_{C_m}$, and is therefore fixed by $\psi^\ell$. As $\Phi^C(\tr_e^{C_m}(1)) = 0$, we therefore have
\[
\Phi^C\left(1+\frac{\ell^{k\varphi(m)}-1}{m}\tr_e^{C_m}(1)\right)\cdot \beta_{\rat_m}^k = \Phi^C(\beta_{\rat_m}^k) = \psi^\ell\left(\Phi^C(\beta_{\rat_m}^k)\right) = \Phi^C(\psi^\ell(\beta_{\rat_m}^k))
\]
as claimed.
\end{proof}

We can now give the following key proposition.

\begin{prop}
Let $G$ be a finite group and $V$ be a fixed point free $k$-dimensional complex $G$-representation with rational characters. Then the Bott class $\beta_V \in \pi_V KU_G$ satisfies
\[
\psi^\ell(\beta_V) =  \left(1+\frac{\ell^k-1}{|G|}\tr_e^G(1)\right)\beta_V
\]
for $\ell$ coprime to $|G|$.
\end{prop}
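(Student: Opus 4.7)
The plan is to reduce the identity to the cyclic case already handled in \cref{lem:adamsopcyclic}. Both sides lie in $\pi_V KU_G$, which, since $V$ is a genuine representation, is a free rank-one module over $RU(G)$ generated by $\beta_V$. Writing both sides as $f_i \beta_V$ with $f_i \in RU(G) \otimes \bbQ$, the identity becomes $f_1 = f_2$. Because $RU(G) \hookrightarrow \prod_C RU(C)$ as $C$ ranges over cyclic subgroups of $G$ (characters are determined by their values on individual elements, each generating a cyclic subgroup), it suffices to verify the equation after restriction to each cyclic $C \subset G$.

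On a cyclic subgroup $C$ of order $m$, the formula $\bbC[G]|_C = [G:C]\cdot\bbC[C]$, i.e.\ $\res^G_C\bigl(\tr_e^G(1)\bigr) = [G:C]\,\tr_e^C(1)$, transforms the coefficient to $1 + \frac{\ell^k-1}{|C|}\tr_e^C(1)$. The restriction $V|_C$ remains a fixed-point-free rational complex $C$-representation of dimension $k$. The key classification is that any such representation must be of the form $n \cdot \rat_m$: fixed-point-freeness forces the complex decomposition $V|_C = \bigoplus_i L^{a_i}$ to have all $\gcd(a_i, m) = 1$, and rationality---invariance under the Galois action of $(\bbZ/m)^\times$, which acts transitively on the primitive residues---forces $V|_C$ to be a sum of full Galois orbits, each contributing a copy of $\rat_m$.

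Writing $V|_C = n \cdot \rat_{|C|}$ so that $k = n\phi(|C|)$ and $\beta_{V|_C} = \beta_{\rat_{|C|}}^n$, \cref{lem:adamsopcyclic} applied with $m = |C|$ gives
\[
\psi^\ell(\beta_{\rat_{|C|}}^n) = \left(1 + \tfrac{\ell^{n\phi(|C|)}-1}{|C|}\tr_e^C(1)\right)\beta_{\rat_{|C|}}^n = \left(1 + \tfrac{\ell^k-1}{|C|}\tr_e^C(1)\right)\beta_{V|_C},
\]
which is exactly what the restricted identity demands. The only substantive step is the classification of fixed-point-free rational representations of cyclic groups; everything else is formal bookkeeping with the character-theoretic injection and the transfer restriction formula. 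I do not expect any real obstacle beyond correctly tracking the factor $[G:C]$ so that the coefficient $\frac{\ell^k-1}{|G|}$ rescales properly to $\frac{\ell^k-1}{|C|}$ on each cyclic subgroup.
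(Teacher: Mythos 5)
Your proof is correct and matches the paper's argument: both reduce by the character-theoretic injection into the product over cyclic subgroups, invoke the classification that a fixed-point-free (in the sphere-form sense) rational complex representation of a cyclic group must be a multiple of $\rat_m$, and apply \cref{lem:adamsopcyclic}. The only difference is cosmetic — you spell out the Galois-orbit argument and the rescaling $\res^G_C\bigl(\tr_e^G(1)\bigr) = [G:C]\,\tr_e^C(1)$ that the paper leaves implicit.
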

\begin{proof}
Character theory implies that the joint restriction
\[
\pi_V KU_G\rightarrow \prod_{C\subset G \text{ cyclic}}\pi_{\Res^G_C V}KU_C
\]
is an injection, and it therefore suffices to prove that this identity holds after restriction to any cyclic subgroup $C_m\subset G$. The assumption that $V$ is a fixed point free representation with rational characters implies that the same is true of $\Res^G_{C_m}V$. As all fixed point free $C_m$-representations with rational characters are multiples of $\rat_m$, it follows that $\res^G_{C_m}\beta_V$ is a power of $\beta_{\rat_m}$, and so the proposition follows from \cref{lem:adamsopcyclic}.
\end{proof}

\begin{cor}\label{lem:adamsop}
If $G$ is a $p$-group of order $p^n$ and $V$ is a fixed point free complex $G$-representation with rational characters of complex dimension $p^kc(p-1)$ (or $2^{k-1}c$ with $k\geq 2$ for $p=2$) with $p\nmid c$, then
\[
(\psi^\ell-\id)(\beta_V) = d\cdot p^{k+1-n}\cdot \tr_e^G(1)\cdot \beta_V
\]
with $p\nmid d$.
\qed
\end{cor}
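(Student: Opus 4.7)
The plan is to deduce this directly from the preceding proposition, which already establishes
\[
\psi^\ell(\beta_V) = \left(1 + \tfrac{\ell^{\dim V} - 1}{|G|}\tr_e^G(1)\right)\beta_V.
\]
With $|G| = p^n$, matching the right-hand side to the stated expression $d \cdot p^{k+1-n} \cdot \tr_e^G(1) \cdot \beta_V$ amounts to showing that $v_p(\ell^{\dim V} - 1) = k+1$ exactly; then writing $\ell^{\dim V} - 1 = d \cdot p^{k+1}$ with $p \nmid d$ gives the formula. Recall that $\ell$ was fixed earlier in the paper as a primitive root mod $p^2$ (with $\ell = 3$ when $p=2$), and this choice will be used essentially to pin down the valuation.

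For odd $p$, I plan to apply the lifting-the-exponent (LTE) lemma. Being a primitive root mod $p^2$, the element $\ell^{p-1}$ has exact order $p$ in $(\bbZ/p^2)^\times$, so $\ell^{p-1} \equiv 1 \pmod p$ but $\ell^{p-1} \not\equiv 1 \pmod{p^2}$, giving $v_p(\ell^{p-1}-1) = 1$. Writing $\dim V = (p-1) p^k c$ with $p \nmid c$ and applying LTE to the base $\ell^{p-1}$ raised to the $p^k c$ power then yields
\[
v_p(\ell^{\dim V} - 1) = v_p(\ell^{p-1} - 1) + v_p(p^k c) = 1 + k.
\]

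For $p = 2$ with $\ell = 3$, I apply the $p=2$ variant of LTE, which requires an even exponent — this is ensured by the hypothesis $k \geq 2$, which makes $\dim V = 2^{k-1}m$ (with $m$ odd) even. The computation is
\[
v_2(3^{\dim V} - 1) = v_2(3-1) + v_2(3+1) + v_2(\dim V) - 1 = 1 + 2 + (k-1) - 1 = k+1.
\]

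The main obstacle is essentially nil beyond careful bookkeeping: the preceding proposition does all the representation-theoretic work, and the corollary is just the arithmetic extraction of the correct power of $p$. The only subtleties are that odd $p$ and $p=2$ require the two different LTE formulas, that the hypothesis $k \geq 2$ in the $p=2$ case is exactly what makes the $p=2$ LTE applicable, and that the choice of $\ell$ as a primitive root mod $p^2$ is exactly what forces the sharp identity $v_p(\ell^{p-1}-1) = 1$ in the odd case.
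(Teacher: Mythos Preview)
Your proposal is correct and follows exactly the intended route: the paper marks this corollary with a bare \qed, treating it as immediate from the preceding proposition, and what you have written is precisely the elementary $p$-adic valuation computation (via lifting-the-exponent) that the paper leaves implicit. There is nothing to add.
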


\begin{rmk}\label{rmk:sphericalforms}
The finite groups admitting a fixed point free complex representation have been completely classified \cite{wolf1967spaces}. For $p$-groups the classification is much simpler. The only abelian groups admitting a fixed point free complex representation are the cyclic groups, so if a group admits a fixed point free complex representation then all its abelian subgroups are cyclic, and the only $p$-groups satisfying this are the cyclic $p$-groups and the generalized quaternion groups \cite[Theorem XII.11.6]{cartaneilenberg1956homological}.
\tqed
\end{rmk}

\section{\texorpdfstring{$C_{p^n}$}{C\_pn}-Mahowald invariants of the Burnside ring}\label{sec:mi}

We now proceed to the proofs of \cref{thm:mainroot1} and \cref{thm:mainroot2}, and as a consequence, of \cref{thm:fixeddegree}. Fix a prime $p$ and positive integer $n$, and let $\ell$ be a primitive root mod $p^2$ (say $\ell = 3$ if $p=2$). 

\begin{defn}\label{def:fixedmodule}
In this section, we write
\[
M_k^{C_{p^n}} = H^0(\psi^\ell;\pi_{0,k} KO_{C_{p^n}})/(\text{torsion})
\]
for $k > 0$ (see \cref{rmk:jses}), taking the convention that $M_0^{C_{p^n}}\subset H^0(\psi^\ell;\pi_{0,0} KO_{C_{p^n}}) \cong R\bbQ(C_{p^n})$ is the augmentation ideal.
\tqed
\end{defn}

In \cref{cor:reducetoktheory}, we saw that $M_k^{C_{p^n}}\cong m_k(C_{p^n})$ for $k \geq 1$, so our main task is to understand these groups and how they fit together as $k$ varies.

We begin in \cref{ssec:burnsidering} by fixing some explicit bases for the Burnside ring $A(C_{p^n})$ and recording various useful identities for later use. In \cref{ssec:eulerclasses}, we then give a convenient description of the spoke-graded homotopy groups
$
\pi_{0,\ast}KU_{C_{p^n}}.
$
Here it is essential that we take spoke-graded homotopy groups to be defined using the sequence described in \cref{rmk:sequence}.

We then compute $M_k^{\bbC, \smash{C_{p^n}}}$, the $KU$-analogue of the groups $M_k^{\smash{C_{p^n}}}$, in \cref{ssec:complexfixedpt}. This is, in short, the hard part of our computation, if not of the paper as a whole. The further descent to $M_k^{\smash{C_{p^n}}}$, which we carry out in \cref{ssec:realfixedpt}, is then mostly a matter of bookkeeping. Finally, we put everything together to prove the main theorems \cref{thm:fixeddegree} and \cref{thm:mainroot1} in \cref{ssec:cpnmfilt} and \cref{thm:mainroot2} in \cref{ssec:minv}.

\subsection{The Burnside ring of a cyclic \texorpdfstring{$p$}{p}-group}\label{ssec:burnsidering}

We begin by fixing some notation and recording some facts about the Burnside ring $A(C_{p^n})\cong R\bbQ(C_{p^n})$, starting with two convenient bases.

\begin{defn}\label{def:burnsidebases}
Define
\begin{align*}
t_{n,i} &= \tr_i^n(1) = [C_{p^n}/C_{p^i}],&&0\leq i \leq n;\\
z_{n,i} &= pt_{n,i} - t_{n,i-1},&&1\leq i \leq n;
\end{align*}
as well as
\begin{align*}
t_{n,-1} &= 0,& t_{n,n+1} = 1;\\
z_{n,0} &= pt_{n,0},&z_{n,n+1} = 0.
\tag*{$\triangleleft$}
\end{align*}
\end{defn}

\begin{lemma}\label{lem:bases}
The following hold.
\begin{enumerate}
\item $t_{n,0},\ldots,t_{n,n}$ is a basis for $A(C_{p^n})$;
\item $t_{n,1},\ldots,t_{n,n}$ is a basis for $A(C_{p^n})/t_{n,0}$;
\item $z_{n,i+1},\ldots,z_{n,n}$ is a basis for $\ker(\res^n_i\colon A(C_{p^n})\rightarrow A(C_{p^i}))$;
\item $1,z_{n,1},\ldots,z_{n,n}$ is a basis for $A(C_{p^n})$.
\end{enumerate}
\end{lemma}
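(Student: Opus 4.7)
The plan is to reduce every statement to an explicit computation of marks. Using the injection $\phi\colon A(C_{p^n})\rightarrowtail \Hom(\{0,\ldots,n\},\bbZ)$ sending $X$ to $j\mapsto |X^{C_{p^j}}|$, the orbit-counting formula gives $(\phi t_{n,i})(j) = p^{n-i}$ for $j\leq i$ and $0$ for $j>i$, since $(C_{p^n}/C_{p^i})^{C_{p^j}}$ is all of $C_{p^n}/C_{p^i}$ when $C_{p^j}\subset C_{p^i}$ and empty otherwise. Taking the difference $z_{n,i}=pt_{n,i}-t_{n,i-1}$ then collapses to a single nonzero mark:
\[
(\phi z_{n,i})(j) = \begin{cases} p^{n-i+1} & j=i,\\ 0 & j\neq i.\end{cases}
\]
This is the entire technical input; everything else is linear algebra.

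Part (1) is the classical fact that the Burnside ring of a finite group is freely generated as an abelian group by the transitive $G$-sets, which for $C_{p^n}$ are the $n+1$ sets $C_{p^n}/C_{p^i}$. Part (2) is then immediate, since quotienting by the free summand $\bbZ\cdot t_{n,0}$ leaves a free group on $t_{n,1},\ldots,t_{n,n}$. For (4), I would establish the inverse change of basis by reverse induction using $t_{n,n}=1$ and the relation $t_{n,i-1} = pt_{n,i}-z_{n,i}$, yielding the explicit formula
\[
t_{n,n-k} = p^k - \sum_{s=0}^{k-1} p^s z_{n,n-s},
\]
so that the $n+1$ elements $1,z_{n,1},\ldots,z_{n,n}$ span $A(C_{p^n})$ over $\bbZ$; linear independence is automatic from (1) and the rank count, or alternatively from the upper-triangular marks matrix with diagonal $1,p^n,p^{n-1},\ldots,p$.

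For (3), the marks computation above shows that each $z_{n,k}$ with $k>i$ has its unique nonzero mark at $C_{p^k}$ and so lies in $\ker(\res^n_i)$. To see these span the kernel, I would use (4) to write an arbitrary $X\in A(C_{p^n})$ as $X = c_0 + \sum_{k=1}^n c_k z_{n,k}$ with $c_k\in\bbZ$, and read off $(\phi X)(j) = c_0$ for $j=0$ and $(\phi X)(j) = c_0 + c_j p^{n-j+1}$ for $1\leq j\leq n$. Vanishing of these marks for $0\leq j \leq i$ forces $c_0 = 0$ and then $c_j = 0$ for $1\leq j\leq i$, exhibiting $X$ as an integral combination of $z_{n,i+1},\ldots,z_{n,n}$. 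There is no genuine obstacle here; the only point requiring attention is keeping the index conventions (in particular the boundary cases $t_{n,n}=1$ and $z_{n,0}=pt_{n,0}$) consistent throughout.
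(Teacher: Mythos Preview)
Your proposal is correct and supplies exactly the verification the paper omits: the paper's own proof reads in its entirety ``Easily verified,'' and your marks computation is precisely the content of the paper's subsequent \cref{lem:marks}, so you are carrying out what the authors leave implicit.

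One small slip: the explicit formula you write for $t_{n,n-k}$ has the powers of $p$ attached to the wrong $z$'s. From $t_{n,i-1} = p t_{n,i} - z_{n,i}$ one gets $t_{n,n-2} = p^2 - p z_{n,n} - z_{n,n-1}$, not $p^2 - z_{n,n} - p z_{n,n-1}$; the correct version is $t_{n,n-k} = p^k - \sum_{s=0}^{k-1} p^{k-1-s} z_{n,n-s}$, matching the paper's \cref{prop:multiplybases}. This does not affect your argument, since the inductive step already shows each $t_{n,i}$ lies in the $\bbZ$-span of $1,z_{n,1},\ldots,z_{n,n}$ and you close with a rank count.
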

\begin{proof}
Easily verified.
\end{proof}

We now record various useful identities. The proofs are elementary, and we omit them. 

\begin{defn}
Write
\[
\phi_{n,j}\colon A(C_{p^n})\rightarrow\bbZ,\qquad \phi_{n,j}(X) = |X^{C_{p^j}}|,
\]
so that
\[
\phi\colon A(C_{p^n})\rightarrow\Cl(\Sub(C_{p^n}),\bbZ),\qquad (\phi X)(C_{p^j}) = \phi_{n,j}(X)
\]
is the marks homomorphism.
\tqed
\end{defn}

\begin{lemma}\label{lem:marks}
We can identify
\begin{align*}
\phi_{n,j}(t_{n,i}) &= \begin{cases} p^{n-i}&0\leq j \leq i,\\ 0&i<j\leq n;\end{cases}\\
\phi_{n,j}(z_{n,i}) &= \begin{cases}p^{n+1-i}&j = i,\\0&j\neq i.\end{cases}
\tag*{$\qed$}
\end{align*}
\end{lemma}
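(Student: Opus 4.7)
The plan is to observe that $\phi_{n,j}$ is additive, so it suffices to compute it on $t_{n,i}$ and then use the defining formula $z_{n,i} = p t_{n,i} - t_{n,i-1}$ to get the formula for $z_{n,i}$ by linearity.

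First I would compute $\phi_{n,j}(t_{n,i}) = |(C_{p^n}/C_{p^i})^{C_{p^j}}|$ directly. A coset $gC_{p^i} \in C_{p^n}/C_{p^i}$ is fixed by $C_{p^j}$ iff $C_{p^j} \subset g C_{p^i} g^{-1}$; since $C_{p^n}$ is abelian this is just $C_{p^j}\subset C_{p^i}$, i.e.\ $j\leq i$. When $j\leq i$ every one of the $p^{n-i}$ cosets is fixed, and when $j>i$ none is. This gives the first formula. (For the edge cases $i=n+1$ and $i=-1$ one checks that the formulas match the conventions $t_{n,n+1}=1$ and $t_{n,-1}=0$.)

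For $z_{n,i}$ with $1\leq i\leq n$, I would plug the first formula into $\phi_{n,j}(z_{n,i}) = p\,\phi_{n,j}(t_{n,i}) - \phi_{n,j}(t_{n,i-1})$ and split into three cases. When $j\leq i-1$ both terms are nonzero and cancel: $p\cdot p^{n-i} - p^{n-i+1} = 0$. When $j = i$ the first term contributes $p\cdot p^{n-i} = p^{n+1-i}$ and the second vanishes. When $j>i$ both terms vanish. This matches the claim, and the boundary conventions $z_{n,0} = p t_{n,0}$ and $z_{n,n+1}=0$ are immediate.

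Since both identities reduce to elementary arithmetic, there is no real obstacle to overcome; the only thing to be careful about is bookkeeping at the boundary values of $i$ and $j$ and the fact that abelianness of $C_{p^n}$ removes any conjugation subtlety in the fixed-point computation for $t_{n,i}$.
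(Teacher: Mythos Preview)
Your proposal is correct and matches what the paper does: the paper states this lemma with a $\qed$ symbol and explicitly omits the proof as elementary, so your direct count of fixed cosets for $t_{n,i}$ followed by linearity for $z_{n,i}$ is exactly the intended verification.
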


This gives access to a convenient strategy for verifying identities involving these elements: apply $\phi_{n,j}$ to both sides for $0\leq j \leq n$ and check they agree. We list some.

\begin{prop}\label{prop:multiplybases}
The two bases are related by
\begin{align*}
t_{n,i} &= p^{n-i} - \sum_{i<k\leq n}p^{k-i-1}z_{n,k},\\
z_{n,i} &= pt_{n,i} - t_{n,i-1},
\end{align*}
and satisfy the identities
\begin{align*}
t_{n,i} \cdot t_{n,j} &= p^{n-j} t_{n,i}\qquad \text{for }i\leq j;\\
z_{n,i}\cdot z_{n,j} &= \begin{cases}p^{n+1-i}z_{n,i}&i = j,\\ 0&i\neq j.\end{cases}
\tag*{$\qed$}
\end{align*}
\end{prop}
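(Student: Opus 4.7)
The plan is to follow the strategy advertised in the sentence immediately before the proposition: the marks homomorphism $\phi$ is injective, so every asserted identity can be verified by applying $\phi_{n,j}$ to both sides for each $0 \leq j \leq n$ and checking equality in $\bbZ$, using \cref{lem:marks} together with the multiplicativity $\phi_{n,j}(xy) = \phi_{n,j}(x)\phi_{n,j}(y)$.

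The second change-of-basis formula, $z_{n,i} = pt_{n,i} - t_{n,i-1}$, is simply the definition of $z_{n,i}$, so nothing needs to be checked. For the first, $t_{n,i} = p^{n-i} - \sum_{i<k\leq n} p^{k-i-1}z_{n,k}$, I would apply $\phi_{n,j}$ and use that $\phi_{n,j}(z_{n,k})$ vanishes unless $k=j$. The sum then collapses: for $j \leq i$ it is empty-effective and the right side reduces to $p^{n-i}$; for $i < j \leq n$ the single surviving term is $p^{j-i-1}\cdot p^{n+1-j} = p^{n-i}$, leaving $p^{n-i} - p^{n-i} = 0$. This exactly reproduces the marks of $t_{n,i}$ given in \cref{lem:marks}.

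For the multiplicative identities, the key observation is that $\phi_{n,k}(t_{n,i})\phi_{n,k}(t_{n,j})$ and $\phi_{n,k}(z_{n,i})\phi_{n,k}(z_{n,j})$ are essentially ``indicator-shaped'' in $k$. Concretely, for $i \leq j$:
\[
\phi_{n,k}(t_{n,i}t_{n,j}) = \begin{cases} p^{n-i}\cdot p^{n-j} & k \leq i, \\ 0 & k > i,\end{cases}
\]
which matches $\phi_{n,k}(p^{n-j}t_{n,i})$. Similarly, $\phi_{n,k}(z_{n,i})\phi_{n,k}(z_{n,j})$ is nonzero only when $k=i=j$, in which case it equals $p^{2(n+1-i)}$, and this agrees with $\phi_{n,k}(p^{n+1-i}z_{n,i})$ when $i=j$ and vanishes otherwise.

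There is no real obstacle here: each identity reduces to an elementary case analysis on $j$ (or $k$) against the indices $i$ appearing in the formula. If anything, the one place to be careful is the boundary conventions ($t_{n,-1}=0$, $z_{n,0}=pt_{n,0}$, $z_{n,n+1}=0$), which one should confirm are consistent with the index ranges in the displayed formulas — but \cref{lem:marks} is set up precisely so these conventions cause no trouble.
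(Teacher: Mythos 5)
Your proof is correct, and it is exactly the argument the paper has in mind: the sentence preceding the proposition explicitly advertises the ``apply $\phi_{n,j}$ and check'' strategy, and the paper then simply omits the elementary verification. Your case analysis is accurate (including the boundary conventions), so there is nothing to add.
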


\begin{rmk}
The basis $1,z_{n,1},\ldots,z_{n,n}$ is in many ways more convenient than the familiar basis $t_{n,0},\ldots,t_{n,n}$ of transitive orbits, particularly when working mainly in the augmentation ideal $I(C_{p^n})$ as we will be doing. \cref{prop:multiplybases} provides an explanation for this: the nonunital ring $I(C_{p^n})$ admits a multiplicative splitting
\[
I(C_{p^n})\cong \prod_{1\leq i \leq n}p^{n+1-i}\bbZ,
\]
with $i$th summand generated by $z_{n,i}$.
\tqed
\end{rmk}

\begin{prop}\label{prop:recoverfrommarks}
An element $X \in A(C_{p^n})$ can be recovered from its marks by
\[
{\everymath={\displaystyle}
\begin{array}{ccclccc}
X &=&{}&{}&\sum_{0\leq i \leq n-1}\frac{\phi_{n,i}(X)-\phi_{n,i+1}(X)}{p^{n-i}}t_{n,i}&+&\phi_{n,n}(X);\\
X &=&\phi_{n,0}(X) &+& \sum_{1\leq i \leq n}\frac{\phi_{n,i}(X) - \phi_{n,0}(X)}{p^{n+1-i}}z_{n,i}.\\
\end{array}
}
\]
\qed
\end{prop}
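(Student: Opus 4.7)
The plan is to use the fact, recalled in \cref{ssec:adamsconjecture}, that the marks homomorphism $\phi\colon A(C_{p^n})\rightarrow \Cl(\Sub(C_{p^n}),\bbZ)$ is injective. Thus to prove each of the two claimed identities, it suffices to check that both sides have the same value under each $\phi_{n,j}$ for $0\leq j\leq n$. Integrality of the coefficients appearing on the right-hand sides is automatic: once the identities are established, the coefficients are precisely the expansion coefficients of $X$ in the bases $t_{n,0},\ldots,t_{n,n}$ and $1,z_{n,1},\ldots,z_{n,n}$ respectively (\cref{lem:bases}), hence integers.

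For the first identity, applying $\phi_{n,j}$ to the right-hand side and using \cref{lem:marks}, which gives $\phi_{n,j}(t_{n,i}) = p^{n-i}$ for $j\leq i$ and $0$ otherwise, the only surviving terms are those with $i\geq j$, and each contributes $\phi_{n,i}(X) - \phi_{n,i+1}(X)$. The sum telescopes:
\[
\sum_{i=j}^{n-1}\bigl(\phi_{n,i}(X) - \phi_{n,i+1}(X)\bigr) + \phi_{n,n}(X) = \phi_{n,j}(X).
\]

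For the second identity, note $\phi_{n,j}(1)=1$ for all $j$, and by \cref{lem:marks}, $\phi_{n,j}(z_{n,i}) = p^{n+1-i}$ if $i=j$ and $0$ otherwise. For $j=0$ only the leading summand contributes, giving $\phi_{n,0}(X)$. For $j\geq 1$, exactly one summand in the sum survives, yielding
\[
\phi_{n,0}(X) + \frac{\phi_{n,j}(X)-\phi_{n,0}(X)}{p^{n+1-j}}\cdot p^{n+1-j} = \phi_{n,j}(X),
\]
as required.

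There is no real obstacle here: the content of the proposition is essentially repackaged from \cref{lem:marks}, with the only subtlety being the telescoping in the first identity. An equivalent route would be to expand $X=\sum a_i t_{n,i}$ (respectively $X=b_0+\sum b_i z_{n,i}$) and solve the resulting triangular linear system $\phi_{n,j}(X)=\sum a_i \phi_{n,j}(t_{n,i})$ for the coefficients using \cref{lem:marks} directly.
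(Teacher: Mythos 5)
Your proof is correct and follows exactly the strategy the paper itself recommends just above the statement ("apply $\phi_{n,j}$ to both sides for $0\leq j\leq n$ and check they agree"); the paper omits the verification as elementary, and your telescoping computation together with the observation that integrality follows from \cref{lem:bases} fills it in properly.
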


\begin{ex}\label{ex:eqp}
A basic element in $A(C_{p^n})$ appearing in the computation of the $C_{p^n}$-Mahowald invariant is the element $e_{\rat_{p^n}} = a_{\rat_{p^n}} \beta_{\rat_{p^n}}$, which lives in $A(C_{p^n})\cong R\bbQ(C_{p^n})\subset RU(C_{p^n})$ as a consequence of \cref{lem:adamsopcyclic}. By \cref{lem:geofixv1} we have 
\[
\phi_{n,j}(e_{\rat_{p^n}}) = \begin{cases}0&j=0,\\
p^{p^{n-j}}&1\leq j \leq n,
\end{cases}
\]
and so \cref{prop:recoverfrommarks} tells us that
\begin{align*}
e_{\rat_{p^n}} = -\frac{p^{p^{n-1}}}{p^n}t_{n,0} + \left(\sum_{0<j<n} \frac{p^{p^{n-j}}-p^{p^{n-j-1}}}{p^{n-j}} t_{n,j}\right) + p 
= \sum_{1\leq j \leq n}\frac{p^{p^{n-j}}}{p^{n+1-j}}z_{n,j}.
\end{align*}
\tqed
\end{ex}

\begin{defn}\label{def:burnsideweylphi}
We write
\[
\Phi\colon A(C_{p^n})\rightarrow A(C_{p^n}/C_p)\cong A(C_{p^{n-1}}),\qquad \Phi(X) = X^{C_p}
\]
for the homomorphism sending a $C_{p^n}$-set $X$ to its $C_p$-fixed points with residual $C_{p^n}/C_p\simeq C_{p^{n-1}}$-action.
\tqed
\end{defn}

\begin{lemma}
The homomorphism $\Phi$ induces an isomorphism $A(C_{p^n})/(t_{n,0}) \cong A(C_{p^{n-1}})$, and satisfies
\[
\phi_{n,i}  = \phi_{n-1,i-1}\circ \Phi
\]
for $1\leq i \leq n$.
\qed
\end{lemma}

\begin{ex}
We have $\Phi(e_{\rat_{p^n}}) = N_e^{C_{p^{n-1}}}(p)$.
\tqed
\end{ex}

Our indexing conventions have been chosen to make the following true.

\begin{lemma}\label{prop:mackbasis}
If $y = t$ or $z$, then
\begin{align*}
\tr_n^{n+1}(y_{n,i}) &= y_{n+1,i},\\
\res^n_{n-1}(y_{n,i}) &= py_{n-1,i},\\
\Phi(y_{n,i}) &= y_{n-1,i-1}
\end{align*}
when both sides are defined.
\qed
\end{lemma}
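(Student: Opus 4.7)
The plan is to verify the three identities first for $y = t$ using the direct interpretation $t_{n,i} = [C_{p^n}/C_{p^i}]$, and then deduce the identities for $y = z$ by linearity via the defining relation $z_{n,i} = p\, t_{n,i} - t_{n,i-1}$.

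For the transfer identity with $y=t$, I would use transitivity of the transfer: $\tr_n^{n+1}(t_{n,i}) = \tr_n^{n+1}\tr_i^n(1) = \tr_i^{n+1}(1) = t_{n+1,i}$. For the restriction identity (which is defined when $0 \leq i \leq n-1$), I would note that, since $C_{p^{n-1}}\subset C_{p^n}$ has index $p$ and contains $C_{p^i}$, the $C_{p^n}$-set $C_{p^n}/C_{p^i}$ decomposes as a $C_{p^{n-1}}$-set into $p$ copies of $C_{p^{n-1}}/C_{p^i}$; alternatively one can just check equality of marks using \cref{lem:marks}. For the geometric fixed point identity, I would observe that when $i \geq 1$ the subgroup $C_p$ sits inside the stabilizer $C_{p^i}$, so it acts trivially on $C_{p^n}/C_{p^i}$, and the residual $C_{p^n}/C_p \cong C_{p^{n-1}}$-action identifies $(C_{p^n}/C_{p^i})^{C_p} = C_{p^{n-1}}/C_{p^{i-1}} = t_{n-1,i-1}$; when $i = 0$ the free orbit has empty $C_p$-fixed points, matching the convention $t_{n-1,-1}=0$.

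With the $t$-case in hand, the $z$-case follows by linearity. Explicitly, $\tr_n^{n+1}(z_{n,i}) = p\, t_{n+1,i} - t_{n+1,i-1} = z_{n+1,i}$; $\res^n_{n-1}(z_{n,i}) = p(p\,t_{n-1,i}) - p\, t_{n-1,i-1} = p\, z_{n-1,i}$ when $1\le i \le n-1$; and $\Phi(z_{n,i}) = p\, t_{n-1,i-1} - t_{n-1,i-2} = z_{n-1,i-1}$. It remains to cross-check the boundary cases against the stated conventions ($t_{n,-1}=0$, $t_{n,n+1}=1$, $z_{n,0} = p\,t_{n,0}$, $z_{n,n+1}=0$): for instance, $\Phi(z_{n,1}) = p\, t_{n-1,0} - \Phi(t_{n,0}) = p\, t_{n-1,0} = z_{n-1,0}$, and $\res^n_{n-1}(z_{n,n}) = p - p\,t_{n-1,n-1} = 0 = p\,z_{n-1,n}$.

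The main obstacle, such as it is, is simply bookkeeping at these boundary indices where one must be careful that the stated identities hold with the correct conventions; the content of the lemma is entirely formal once one identifies the $t$-basis with transitive orbits.
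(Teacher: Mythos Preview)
Your proof is correct; the paper omits the proof entirely (the \texttt{\textbackslash qed} appears immediately after the statement), treating the identities as elementary verifications. Your approach---checking the $t$-case directly from the orbit interpretation and then deducing the $z$-case by linearity, with explicit attention to the boundary conventions---is exactly the natural way to fill in the details.
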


Finally we note the following.

\begin{prop}\label{prop:augmentationfp}
There is a short exact sequence 
\begin{center}\begin{tikzcd}
0\ar[r]&I(C_{p^n})\ar[r,"\Phi"]&A(C_{p^{n-1}})\ar[r,"\epsilon"]&\bbZ/(p^n)\ar[r]&0
\end{tikzcd},\end{center}
where $\epsilon(X)$ is the class of $|X|$.
\end{prop}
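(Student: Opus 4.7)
My plan is to derive the sequence from the short exact sequence
\[
0 \to I(C_{p^n}) \to A(C_{p^n}) \xrightarrow{|\cdot|} \bbZ \to 0
\]
by quotienting by the kernel of $\Phi\colon A(C_{p^n}) \to A(C_{p^{n-1}})$. The identification $A(C_{p^n})/(t_{n,0}) \cong A(C_{p^{n-1}})$ noted after \cref{ex:eqp} shows that $\Phi$ is surjective with kernel the rank-one subgroup $\bbZ\cdot t_{n,0}$; since $|t_{n,0}| = p^n$, the augmentation $|\cdot|$ carries this kernel isomorphically onto $p^n\bbZ$. Taking the quotient of the short exact sequence by $\ker\Phi$ therefore yields the asserted sequence, provided $\bbZ t_{n,0} \cap I(C_{p^n}) = 0$; and this is automatic since $|c\cdot t_{n,0}| = cp^n$ vanishes only when $c = 0$, which simultaneously gives injectivity of $\Phi|_{I(C_{p^n})}$.

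To identify the induced map on quotients with $\epsilon$, it suffices to check that $|\Phi(X)| \equiv |X| \pmod{p^n}$ for every $X \in A(C_{p^n})$. Expanding $X = \sum_{i=0}^n c_i t_{n,i}$ in the basis of \cref{lem:bases} and using $\Phi(t_{n,i}) = t_{n-1,i-1}$ from \cref{prop:mackbasis} gives $|X| - |\Phi(X)| = c_0 p^n$ at once. Surjectivity of $\epsilon$ is immediate from $\epsilon(1) = 1$. If one prefers an explicit preimage in verifying the middle exactness, then for $Y = \sum_j d_j t_{n-1,j}$ with $|Y| = p^n k$ the element $\tilde Y := \sum_j d_j t_{n,j+1}$ satisfies $\Phi(\tilde Y) = Y$ and $|\tilde Y| = |Y|$, so $\tilde Y - k\cdot t_{n,0} \in I(C_{p^n})$ maps to $Y$.

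I do not expect any substantive obstacle: every ingredient is already recorded in the preceding lemmas, and the whole argument reduces to a short bookkeeping exercise in the transitive-orbit basis.
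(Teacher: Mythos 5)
Your proof is correct, and it differs in organization and basis choice from the paper's. The paper works in the $z$-basis ($z_{n,i}=pt_{n,i}-t_{n,i-1}$): it identifies $\Phi(I(C_{p^n}))$ as the span of $z_{n-1,0},\ldots,z_{n-1,n-1}$ via \cref{prop:mackbasis}, observes that $z_{n-1,1},\ldots,z_{n-1,n-1}$ is a basis of $I(C_{p^{n-1}})$ so the augmentation gives $A(C_{p^{n-1}})/(z_{n-1,1},\ldots,z_{n-1,n-1})\cong\bbZ$, and then quotients further by $z_{n-1,0}=pt_{n-1,0}$ with $\epsilon(pt_{n-1,0})=p^n$. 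You instead work throughout in the transitive-orbit basis $t_{n,i}$ and present the result as the quotient of the augmentation short exact sequence $0\to I(C_{p^n})\to A(C_{p^n})\to\bbZ\to 0$ by $\ker\Phi=\bbZ t_{n,0}$. Both arguments use the same ingredients (\cref{lem:bases}, \cref{prop:mackbasis}, and $|t_{n,i}|=p^{n-i}$); yours is a bit more conceptual in its framing, while the paper's $z$-basis form fits the convention the authors adopt elsewhere (the $z$'s diagonalize multiplication on $I(C_{p^n})$, which is why they favor them when working in the augmentation ideal). One small point worth stating explicitly in your write-up: to quotient the SES by $\bbZ t_{n,0}$ and obtain an exact sequence of the quotients, you are using that $\bbZ t_{n,0}\cap I(C_{p^n})=0$ and that the augmentation restricts to an isomorphism $\bbZ t_{n,0}\rightsim p^n\bbZ$, which you do note; that is exactly what makes the snake-lemma-style diagram chase come out cleanly.
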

\begin{proof}
\cref{prop:mackbasis} implies that $\Phi\colon I(C_{p^n})\rightarrow A(C_{p^{n-1}})$ is an injection with image spanned by $z_{n-1,0},z_{n-1,1},\ldots,z_{n-1,n-1}$. By \cref{lem:bases} $z_{n-1,1},\ldots,z_{n-1,n-1}$ is a basis for $I(C_{p^{n-1}})$, and so $\epsilon$ furnishes an isomorphism $A(C_{p^{n-1}})/(z_{n-1,1},\ldots,z_{n-1,n-1})\cong \bbZ$. The proposition then follows after further modding out by $z_{n-1,0} = p t_{n-1,0}$ as $\epsilon(pt_{n-1,0}) = p^n$.
\end{proof}

\subsection{Complex \texorpdfstring{$K$}{K}-theory Euler classes}\label{ssec:eulerclasses}

We now describe some aspects of $\pi_{0,\ast}KU_{C_{p^n}}$. Recall that the sequence $\ul{d} = (d_1,d_2,\ldots)$ is defined as $\ul{d} = (a_1,-a_1,a_2,-a_2,\ldots)$ where $(a_1,a_2,\ldots)$ is the sequence of positive integers coprime to $p$ in increasing order. The key properties of this sequence are the following.

\begin{lemma}\label{lem:dseq}
The above sequence $\ul{d}$ has the following properties:
\begin{enumerate}
\item If $b-a = p^{i-1}(p-1)$, then $\res^{C_{p^n}}_{C_{p^i}}(L^{d_{a+1}}+\cdots+L^{d_b}) =\rat_{p^i}$.
\item If $a<b$, then $L^{d_{2a+1}}+\cdots+L^{d_{2b}} \cong \bbH\otimes_\bbC(L^{d_{2a+1}}+L^{d_{2a+3}}+\cdots+L^{d_{2b-1}})$ is quaternionic.
\qed
\end{enumerate}
\end{lemma}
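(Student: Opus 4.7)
Part (2) is essentially formal: by construction $d_{2k-1} = a_k$ and $d_{2k} = -a_k$, so each pair contributes $L^{a_k} + L^{-a_k} = L^{a_k}\oplus\overline{L^{a_k}}$, which is the complexification $\bbH\otimes_\bbC L^{a_k}$ of an underlying quaternionic representation. Summing over the pairs indexed by $k = a+1,\ldots,b$ gives the stated isomorphism.

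For part (1), set $N = p^{i-1}(p-1) = |(\bbZ/p^i)^\times|$. The plan is to establish two arithmetic properties of the sequence $(a_k)$ of positive integers coprime to $p$, and then assemble them. First, since each interval $(mp,(m+1)p)$ contains exactly $p-1$ consecutive integers coprime to $p$, one has $a_{k+(p-1)} = a_k + p$; iterating yields $a_{k+N} \equiv a_k \pmod{p^i}$, so $d_{j+2N} \equiv d_j \pmod{p^i}$. Second, the invariance of $\{1,\ldots,p^i-1\}\setminus p\bbZ$ under $k \mapsto p^i - k$ gives the reflection symmetry $a_{N+1-k} = p^i - a_k$.

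Assembling these: when $p$ is odd or $i \geq 2$, so $N$ is even, the first $N$ terms of $\ul{d}$ are $\pm a_1, \ldots, \pm a_{N/2}$, and a counting argument identifies $\{a_1,\ldots,a_{N/2}\}$ with the integers coprime to $p$ in $\{1,\ldots,\lfloor p^i/2\rfloor\}$; combining with their negatives modulo $p^i$ then covers $(\bbZ/p^i)^\times$ exactly once, which gives the lemma when $a = 0$. The edge case $p=2$, $i=1$ (with $N = 1$ and $d_1 = 1 = \rat_2$) is immediate. The reflection symmetry then rewrites the block $d_{N+1},\ldots,d_{2N}$ as a signed permutation of $d_1,\ldots,d_N$ modulo $p^i$, handling $a = N$, after which periodicity $d_{j+2N}\equiv d_j$ extends the conclusion to the other relevant shifts of $a$. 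The main obstacle will be the combinatorial book-keeping: verifying these identifications with care, particularly managing the parity of $N$ and the small-prime exceptions.
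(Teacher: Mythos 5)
The paper offers no proof for this lemma (the statement carries a \qed), so there is no argument of the authors' to compare against; I can only assess whether your sketch holds up. Your treatment of part (2) is fine: $d_{2k-1}=a_k$, $d_{2k}=-a_k$, and $L^{a_k}\oplus L^{-a_k}\cong\bbH\otimes_\bbC L^{a_k}$, so summing over $k$ gives the claim.

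For part (1) there is a genuine issue, and it is not merely ``combinatorial book-keeping.'' The statement as written, for arbitrary $a$ with $b-a=p^{i-1}(p-1)$, is \emph{false}. Take $p=3$, $i=2$, $a=2$, $b=8$: then $d_3,\dots,d_8 = 2,-2,4,-4,5,-5$, and reducing mod $9$ gives $2,7,4,5,5,4$, so the restriction is $L^2+2L^4+2L^5+L^7$, not $\rat_9 = L+L^2+L^4+L^5+L^7+L^8$. Your argument in fact proves the result precisely when $a$ is a multiple of $N:=p^{i-1}(p-1)$: the $a=0$ base case you set up is correct, the reflection $a_{N+1-k}=p^i-a_k$ handles $a=N$, and the $2N$-periodicity $d_{j+2N}\equiv d_j\pmod{p^i}$ then extends to all $a\in N\bbZ$. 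That is exactly (and only) the set of $a$ that the paper ever feeds into this lemma --- in \cref{lem:phiba}, \cref{prop:erz}, \cref{lem:specialcase}, and \cref{lem:induction} the index $a$ is always a multiple of $p^{i-1}(p-1)$, and in the application to $\res^n_i(\beta_k)=\beta_{\rat_{p^i}}^c$ one has $a=0$. So your outline is substantively right, but the phrase ``extends the conclusion to the other relevant shifts of $a$'' hides the key point: there is an implicit hypothesis $p^{i-1}(p-1)\mid a$ that must be made explicit, because without it the lemma simply fails. Flag this restriction, verify it holds at each invocation in Section~4, and your argument is complete.
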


Abbreviate
\begin{equation}\label{eq:eab}
V_k = L^{d_1}+\cdots+L^{d_k},\qquad \beta_k = \beta_{V_k},\qquad e_{a,b} = e_{V_b-V_a}.
\end{equation}
We then have
\[
\pi_{0,2k} KU_{C_{p^n}} = RU(C_{p^n})\{\beta_k\},
\]
and the map
\[
a^{i-j}\colon \pi_{0,2i} KU_{C_{p^n}} \rightarrow \pi_{0,2j}KU_{C_{p^n}}
\]
is determined by
\[
a^{i-j}\beta_i = e_{i,j}\beta_j.
\]
Thus understanding $\pi_{0,2\ast}KU_{C_{p^n}}$ amounts to understanding the elements $e_{a,b}$. 

\begin{defn}
Write
\[
\phi_{n,j}\colon RU(C_{p^n})\rightarrow \bbZ[\tfrac{1}{p^j}](\zeta_{p^j})
\]
for the geometric fixed point map, sending $L$ to the primitive root of unity $\zeta_{p^j}$.
\tqed
\end{defn}

These extend the marks homomorphism, and $A(C_{p^n})\cong R\bbQ(C_{p^n})\subset RU(C_{p^n})$ may be identified as the set of virtual representation $\alpha$ for which $\phi_{n,j}(\alpha)\in\bbZ$ for all $0\leq j \leq n$.

\begin{lemma}\label{lem:phiba}
If $b-a = p^{i-1}(p-1)$ then $\phi_{n,i}(e_{a,b}) = p$.
\end{lemma}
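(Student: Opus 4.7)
The plan is to unwind the definitions and then reduce to the already-computed geometric fixed points of the Euler class $e_{\rat_{p^i}}$.

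First I would observe that
\[
V_b - V_a = L^{d_{a+1}}+\cdots+L^{d_b},
\]
so that
\[
e_{a,b} = e_{V_b-V_a} = \prod_{k=a+1}^{b}(1-L^{d_k})
\]
by multiplicativity of the $K$-theory Euler class (stated just before \cref{ssec:adamsconjecture}).

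Next I would use the fact that the ``geometric fixed point'' homomorphism $\phi_{n,i}\colon RU(C_{p^n})\to\bbZ[\tfrac{1}{p^i}](\zeta_{p^i})$ factors through restriction to $C_{p^i}$ and is a ring map, so
\[
\phi_{n,i}(e_{a,b}) = \phi_{i,i}\bigl(e_{\res^n_i(V_b-V_a)}\bigr).
\]
By \cref{lem:dseq}(1), the hypothesis $b-a = p^{i-1}(p-1)$ gives $\res^{C_{p^n}}_{C_{p^i}}(V_b-V_a) = \rat_{p^i}$, hence
\[
\phi_{n,i}(e_{a,b}) = \phi_{i,i}(e_{\rat_{p^i}}).
\]

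Finally I would quote \cref{lem:geofixv1} (equivalently \cref{ex:eqp} with $j=i$), which computes $\phi_{i,i}(e_{\rat_{p^i}}) = p^{p^{i-i}} = p$. This yields the claim. There is no real obstacle here: the whole content is the reduction to $\rat_{p^i}$ provided by the specific sequence $\ul{d}$ chosen in \cref{rmk:sequence}, together with the cyclotomic identity $\Phi_{p^i}(1)=p$ already used in the proof of \cref{lem:geofixv1}.
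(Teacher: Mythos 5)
Your proof is correct and takes essentially the same route as the paper: factor $\phi_{n,i}$ through restriction to $C_{p^i}$, use \cref{lem:dseq}(1) to identify the restriction of $V_b - V_a$ with $\rat_{p^i}$, and then read off $\phi_{i,i}(e_{\rat_{p^i}}) = p$ from \cref{lem:geofixv1} (via the identity $e_{\rat_{p^i}} = a_{\rat_{p^i}}\beta_{\rat_{p^i}}$ and the fact that $\Phi^{C_{p^i}}(a_{\rat_{p^i}}) = 1$). The only cosmetic difference is that you spell out the product expansion of $e_{a,b}$ and cite \cref{ex:eqp} as an intermediate, neither of which changes the substance.
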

\begin{proof}
We compute
\begin{align*}
\phi_{n,i}(e_{a,b}) &= \phi_{i,i}(\res^n_i(e_{a,b}))\\
&= \phi_{i,i}(e_{\rat_{p^i}}) && \text{\cref{lem:dseq}} \\
&= \Phi^{C_{p^i}}(a_{\rat_{p^i}}\beta_{\rat_{p^i}}) \\
&= \Phi^{C_{p^i}}(\beta_{\rat_{p^i}})\\
&= p&&\text{\cref{lem:geofixv1}}
\end{align*}
as claimed.
\end{proof}

\begin{lemma}
If $b-a = p^{i-1}(p-1)c$ then $e_{a,b} z_{n,i} = p^c z_{n,i}$.
\end{lemma}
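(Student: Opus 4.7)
The plan is to verify the identity directly inside $RU(C_{p^n})$ by character theory. The character homomorphism $\chi \colon RU(C_{p^n}) \hookrightarrow \Cl(C_{p^n},\bbC)$ is injective, and evaluating $\chi$ at a generator of $C_{p^j}$ recovers $\phi_{n,j}$; so two elements of $RU(C_{p^n})$ agree as soon as they are equalized by each $\phi_{n,j}$ for $0 \leq j \leq n$. It therefore suffices to check $\phi_{n,j}(e_{a,b} z_{n,i}) = \phi_{n,j}(p^c z_{n,i})$ for every such $j$, and then observe that the right-hand side manifestly lies in $A(C_{p^n}) = R\bbQ(C_{p^n})$.

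For $j \neq i$, both sides vanish because $\phi_{n,j}(z_{n,i}) = 0$ by \cref{lem:marks}. For $j = i$ what remains is to compute $\phi_{n,i}(e_{a,b})$, and the plan is to extend \cref{lem:phiba} from $c=1$ to arbitrary $c$ by partitioning $[a+1,b]$ into $c$ consecutive blocks of length $p^{i-1}(p-1)$. Since \cref{lem:dseq} is stated for an arbitrary starting index, each such block restricts to $\rat_{p^i}$, so that $\res^n_i(V_b-V_a) = c\cdot \rat_{p^i}$ and hence $\res^n_i(e_{a,b}) = e_{\rat_{p^i}}^c$. The same calculation as in \cref{lem:phiba} then yields
\[
\phi_{n,i}(e_{a,b}) = \phi_{i,i}(e_{\rat_{p^i}})^c = p^c.
\]
Combined with $\phi_{n,i}(z_{n,i}) = p^{n+1-i}$ from \cref{lem:marks}, both sides of the claim take the value $p^{c+n+1-i}$ at $j=i$, completing the verification.

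There is no substantial obstacle; the only point to be careful about is that the multiplicative decomposition of $e_{a,b}$ into $c$ blocks pulls back, via \cref{lem:dseq}, to a sum-decomposition of the restriction $\res^n_i(V_b - V_a)$ as $c$ copies of $\rat_{p^i}$, which requires that lemma to hold for arbitrary starting index $a$ — as it indeed does.
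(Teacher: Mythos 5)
Your proof is correct and takes essentially the same approach as the paper: both reduce to verifying agreement of the marks $\phi_{n,j}$, use \cref{lem:marks} to dispatch $j\neq i$, and use multiplicativity of Euler classes together with \cref{lem:dseq} to show $\phi_{n,i}(e_{a,b})=p^c$. The paper's proof simply cites \cref{lem:phiba} and leaves the block-partition extension to general $c$ implicit; you have made that step explicit, and also made explicit the (standard, CRT-based) fact that the $\phi_{n,j}$ jointly detect equality in $RU(C_{p^n})$, which the paper takes for granted.
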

\begin{proof}
As both sides are in the augmentation ideal, it suffices to verify that $\phi_{n,j}(e_{a,b}z_{n,i}) = p^c\phi_{n,j}(z_{n,i})$ for $1\leq j \leq n$. If $j\neq i$, then both sides vanish by \cref{lem:marks}. If $j=i$, then \cref{lem:phiba} implies $\phi_{n,j}(e_{a,b}) = p^c$. So both sides agree as claimed.
\end{proof}

\begin{lemma}\label{prop:erz}
If $b-a=p^{i-1}(p-1)$ then
\begin{align*}
e_{a,b}t_{n,i} &= \sum_{1\leq j \leq i}p^{p^{i-j}+j-i-1}z_{n,j} \\
&\equiv z_{n,i}\pmod{pz_{n,1},\ldots,pz_{n,i-1}}.
\end{align*}
\end{lemma}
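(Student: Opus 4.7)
The plan is to use that the marks homomorphism $\phi\colon A(C_{p^n})\hookrightarrow\Cl(\Sub(C_{p^n}),\bbZ)$ is injective, so verifying the identity reduces to computing $\phi_{n,j}(e_{a,b}t_{n,i})$ for each $0\leq j\leq n$ and then reading off the $z$-basis expansion via \cref{prop:recoverfrommarks}. From \cref{lem:marks}, $\phi_{n,j}(t_{n,i})=p^{n-i}$ for $j\leq i$ and $0$ for $j>i$; the vanishing range immediately kills any contribution to $\phi_{n,j}(e_{a,b}t_{n,i})$ for $j>i$, so only $0\leq j\leq i$ need be analyzed.

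The heart of the computation is $\phi_{n,j}(e_{a,b})$ for $1\leq j\leq i$. Using compatibility of geometric fixed points with restriction, $\phi_{n,j}=\phi_{j,j}\circ\res^n_j$, and in the geometric localization $L\mapsto\zeta_{p^j}$ I get
\[
\phi_{n,j}(e_{a,b}) = \prod_{k=a+1}^{b}\bigl(1-\zeta_{p^j}^{d_k}\bigr).
\]
By \cref{lem:dseq}(1), the multiset $\{d_{a+1},\ldots,d_b\}$ reduces mod $p^i$ to a complete system of representatives for $(\bbZ/p^i)^\times$; further reduction mod $p^j$ then exhibits each unit residue with multiplicity $p^{i-j}$. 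The product therefore reorganizes as $\Phi_{p^j}(1)^{p^{i-j}}=p^{p^{i-j}}$. For $j=0$, the Euler class $e_{a,b}$ has virtual dimension zero since $V_b-V_a$ is a positive-dimensional representation, so $\phi_{n,0}(e_{a,b})=0$.

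Combining these values and applying the $z$-basis formula from \cref{prop:recoverfrommarks} to $X=e_{a,b}t_{n,i}$ (whose augmentation $\phi_{n,0}$ vanishes) yields
\[
e_{a,b}t_{n,i} = \sum_{j=1}^{i}\frac{p^{p^{i-j}+n-i}}{p^{n+1-j}}z_{n,j} = \sum_{j=1}^{i}p^{p^{i-j}+j-i-1}z_{n,j},
\]
which is the first assertion. For the congruence, the $j=i$ term contributes $z_{n,i}$ with exponent $p^0-1=0$, while for $j<i$ the exponent $p^{i-j}-(i-j)-1$ is $\geq 1$ (the inequality $p^m\geq m+2$ holds for $m\geq 1$, $p\geq 3$, and for $p=2$ when $m\geq 2$), so those terms lie in the submodule $pz_{n,1}\bbZ+\cdots+pz_{n,i-1}\bbZ$.

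The main obstacle is correctly identifying the multiplicities in $\prod_k(1-\zeta_{p^j}^{d_k})$; once \cref{lem:dseq}(1) is brought to bear, the product collapses to a power of $\Phi_{p^j}(1)=p$ and the rest is bookkeeping of $p$-adic valuations.
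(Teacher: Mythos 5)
Your approach follows the paper's exactly: compute the marks $\phi_{n,j}(e_{a,b}t_{n,i})$ and then invert via \cref{prop:recoverfrommarks}. The one difference is cosmetic: where the paper cites \cref{lem:phiba} (implicitly extending to $j<i$ by multiplicativity of Euler classes), you re-derive $\phi_{n,j}(e_{a,b})=p^{p^{i-j}}$ directly from the cyclotomic product and the fact that $(\bbZ/p^i)^\times\to(\bbZ/p^j)^\times$ is $p^{i-j}$-to-one. That computation is correct, and the first displayed equality of the lemma follows as you say.

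The problem is the congruence step, and your own parenthetical is the tell. When $p=2$ and $j=i-1$ the exponent $p^{i-j}+j-i-1$ equals $0$, so the coefficient of $z_{n,i-1}$ is $1$, not a multiple of $2$; your sentence ``so those terms lie in the submodule $pz_{n,1}\bbZ+\cdots+pz_{n,i-1}\bbZ$'' does not follow for that term, and indeed cannot, because the claimed congruence is false at $p=2$ for all $i\geq 2$. Concretely, for $p=2$, $i=n=2$ one gets $e_{\rat_4}=z_{2,1}+z_{2,2}$, and $z_{2,1}\notin 2\bbZ\{z_{2,1}\}$. Only the weaker congruence modulo $z_{n,1},\ldots,z_{n,i-1}$ (without the factors of $p$) holds for all $p$, which follows immediately from the displayed sum since its $j<i$ coefficients are nonnegative powers of $p$; and that weaker form is what the paper actually invokes when it cites this lemma in \cref{lem:specialcase} and \cref{lem:induction}. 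So rather than glossing over the exceptional case, you should flag it: either restrict the stated congruence to $p$ odd, or replace $\pmod{pz_{n,1},\ldots,pz_{n,i-1}}$ by $\pmod{z_{n,1},\ldots,z_{n,i-1}}$.
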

\begin{proof}
By \cref{lem:phiba} we have
\[
\phi_{n,j}(e_{a,b}t_{n,i}) = \begin{cases}
p^{p^{i-j}+n-i}&1\leq j \leq i,\\
0&i<j\leq n.
\end{cases}
\]
The lemma then follows from \cref{prop:recoverfrommarks}. Here, we may ignore $\phi_{n,0}$ as everything in sight is in the augmentation ideal.
\end{proof}

So far we have focused on the even degrees $\pi_{0,2\ast}KU_{C_{p^n}}$. The odd degrees can be understood via the following.

\begin{lemma}\label{lem:kuodd}
The sequence
\begin{center}\begin{tikzcd}
\pi_{0,2k-2}KU_{C_{p^n}}&\ar[l,"a^{1/2}"']\pi_{0,2k-1}KU_{C_{p^n}}&\ar[l,"="']\pi_{0,2k-1}KU_{C_{p^n}}&\ar[l,"a^{1/2}"']\pi_{0,2k}KU_{C_{p^n}}
\end{tikzcd}\end{center}
is isomorphic to
\begin{center}\begin{tikzcd}[column sep=large]
RU(C_{p^n})\beta_{k-1}&\ar[l,tail]\ker(\res^n_0)\beta_{k-1}&\ar[l,"(1-L^{d_k})\beta_{L^{d_k}}^{-1}"',"\cong"]\coker(\tr_0^n)&RU(C_{p^n})\beta_k\ar[l,two heads]
\end{tikzcd}.\end{center}
\end{lemma}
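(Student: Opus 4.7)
The plan is to derive these identifications from the defining cofibre sequence $C_{p^n+}\to S^0\to SC_{p^n}$ together with the formula $(a^{1/2})^2 = a_{L^{d_k}}$ built into the definitions in \cref{ssec:spoke}.

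For the left half, I would smash $C_{p^n+}\to S^0\to SC_{p^n}$ with $S^{V_{k-1}}$ to produce the cofibre sequence $C_{p^n+}\wedge S^{V_{k-1}} \to S^{V_{k-1}} \to S^{V_{k-1}+\Yright}$, and then apply $KU^*_{C_{p^n}}$. Since $\pi_{V+1}KU_{C_{p^n}} = 0$ and $\pi_{V+1}KU = 0$ for any complex representation $V$ (by Bott periodicity), the long exact sequence collapses to
\[
0 \to \pi_{0,2k-1}KU_{C_{p^n}} \xrightarrow{a^{1/2}} RU(C_{p^n})\beta_{k-1} \xrightarrow{\res^n_0} \bbZ \to 0,
\]
identifying $\pi_{0,2k-1}KU_{C_{p^n}}$ with $\ker(\res^n_0)\beta_{k-1}$ sitting inside $\pi_{0,2k-2}KU_{C_{p^n}} = RU(C_{p^n})\beta_{k-1}$ via $a^{1/2}$.

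For the right half, the relation $a^{1/2}\circ a^{1/2} = a_{L^{d_k}}$ identifies the composite $\pi_{0,2k}KU_{C_{p^n}} \to \pi_{0,2k-2}KU_{C_{p^n}}$ as multiplication by $a_{L^{d_k}}$; since $e_{L^{d_k}} = a_{L^{d_k}}\beta_{L^{d_k}} = 1-L^{d_k}$ and $\beta_k = \beta_{L^{d_k}}\beta_{k-1}$, this map acts as $x\beta_k \mapsto (1-L^{d_k})x\,\beta_{k-1}$. On $RU(C_{p^n}) = \bbZ[L]/(L^{p^n}-1)$, multiplication by $1-L^{d_k}$ has kernel $\bbZ\cdot\tr^n_0(1) = \bbZ\{1+L+\cdots+L^{p^n-1}\}$ (since $\gcd(d_k,p)=1$ makes $L^{d_k}$-multiplication a single $p^n$-cycle on the basis $\{1,L,\ldots,L^{p^n-1}\}$) and image $(1-L^{d_k})RU(C_{p^n}) = (1-L)RU(C_{p^n}) = \ker(\res^n_0)$, where the two principal ideals agree by Bezout applied to $d_k$ and $p^n$.

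Putting this together, $a^{1/2}\colon RU(C_{p^n})\beta_k \to \ker(\res^n_0)\beta_{k-1}$ is surjective with kernel $\tr^n_0(\bbZ)\beta_k$, so it factors as the natural projection $RU(C_{p^n})\beta_k \twoheadrightarrow \coker(\tr^n_0)$ followed by the isomorphism $\coker(\tr^n_0)\xrightarrow{\cong}\ker(\res^n_0)\beta_{k-1}$ given by multiplication by $(1-L^{d_k})\beta_{L^{d_k}}^{-1} = a_{L^{d_k}}$. The main obstacle is the algebraic description of multiplication by $1-L^{d_k}$ on $RU(C_{p^n})$, but this reduces to the two elementary observations above about the structure of $\bbZ[L]/(L^{p^n}-1)$.
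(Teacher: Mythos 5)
Your proof is correct and follows essentially the same route as the paper: identify $\pi_{0,2k-1}$ via the cofiber sequence with the kernel of $\res^n_0$ inside $\pi_{0,2k-2}$, use $(a^{1/2})^2=a_{L^{d_k}}$ to recognize the composite $\pi_{0,2k}\to\pi_{0,2k-2}$ as multiplication by $1-L^{d_k}$, and read off kernel and image. The paper's version is terser (it simply cites the two short exact sequences coming from $\pi_{1,2\ast}KU_{C_{p^n}}=0$ and the Euler-class relation), whereas you spell out the group-ring algebra of multiplication by $1-L^{d_k}$ on $\bbZ[L]/(L^{p^n}-1)$, which is exactly the content the paper leaves implicit.
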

\begin{proof}
As $\pi_{1,2\ast}KU_{C_{p^n}} = 0$, there are short exact sequences
\begin{center}\begin{tikzcd}
0\ar[r]&\pi_{2k}KU\ar[r,"\tr_0^n"]&\pi_{0,2k}KU_{C_{p^n}}\ar[r,"a^{1/2}"]&\pi_{0,2k-1}KU_{C_{p^n}}\ar[r]&0\\
0\ar[r]&\pi_{0,2k-1} KU_{C_{p^n}}\ar[r,"a^{1/2}"]&\pi_{0,2k-2}KU_{C_{p^n}}\ar[r,"\res^n_0"]&\pi_{2k-2}KU\ar[r]&0
\end{tikzcd},\end{center}
and the proposition follows as $a^{1/2}\circ a^{1/2} = a_{L^{d_k}}$ by \cref{prop:halfeuler}.
\end{proof}

\subsection{Complex fixed points}\label{ssec:complexfixedpt}

Consider the group $C_{p^n}$ as fixed for this subsection.

\begin{defn}\label{def:cfixedmodule}
For $k > 0$ let
\[
M_k^\bbC = M_k^{\bbC,C_{p^n}} \subset \pi_{0,k}KU_{C_{p^n}}
\]
be the subgroup fixed by $\psi^\ell$ for all $p\nmid \ell$. When $k = 0$ we modify this by taking $M_0^\bbC$ to be the augmentation ideal of $R\bbQ(C_{p^n})$.
\tqed
\end{defn}

Our goal in this subsection is to prove the following.

\begin{theorem}\label{thm:kufp}
Fix $k>0$, and let $b_s = p^{s-1}(p-1) \ceil{\frac{k}{p^{s-1}}}$ denote the smallest multiple of $p^{s-1}(p-1)$ greater than or equal to $k(p-1)$. Then
\[
M_{2k(p-1)-1}^\bbC = \bbZ\{a^{b_s-k(p-1)}\cdot a^{1/2}t_{n,s}\beta_{b_s} : 1 \leq s \leq n\},
\]
and every map in the composite
\[
a^{(p-1)-1/2}\colon M_{2k(p-1)-1}^\bbC\rightarrow M_{2(k-1)(p-1)}^\bbC
\]
is an isomorphism.
\tqed
\end{theorem}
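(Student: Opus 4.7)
The plan is to exploit the short exact sequence from \cref{lem:kuodd},
\[
0 \to \pi_{2k(p-1)}KU \xrightarrow{\tr_0^n} RU(C_{p^n})\beta_{k(p-1)} \xrightarrow{a^{1/2}} \pi_{0, 2k(p-1)-1}KU_{C_{p^n}} \to 0,
\]
identifying the target with the quotient $(RU(C_{p^n})/(t_{n,0}))\beta_{k(p-1)}$, and then cutting out the subgroup fixed by every $\psi^\ell$ with $p\nmid\ell$. The Adams action on this quotient is a twisted action of $\psi^\ell$ on $RU(C_{p^n})/(t_{n,0})$ by $[y\beta] \mapsto [\psi^\ell(y) y_\ell \beta]$ with $y_\ell = \psi^\ell(\beta_{V_{k(p-1)}})/\beta_{V_{k(p-1)}} \in RU(C_{p^n})$. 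A rank computation using the Wedderburn decomposition $\bbQ \otimes RU(C_{p^n})/(t_{n,0}) \cong \bigoplus_{i=1}^n \bbQ(\zeta_{p^i})$, on which the $y_\ell$ define a $1$-cocycle that splits by Hilbert 90, shows that the twisted fixed-point locus is $n$-dimensional over $\bbQ$, so $M_{2k(p-1)-1}^\bbC$ is free of rank exactly $n$.

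The key identity for showing the proposed basis elements are $\psi^\ell$-fixed is
\[
t_{n,s}\beta_{b_s} = \tr_s^n(\beta_{c_s \rat_{p^s}}), \qquad c_s = \ceil{k/p^{s-1}},
\]
which follows from Frobenius reciprocity together with $\res^n_s V_{b_s} = c_s\rat_{p^s}$ from \cref{lem:dseq}. Since $c_s\rat_{p^s}$ is a rational fixed-point-free $C_{p^s}$-representation, \cref{lem:adamsop} gives $(\psi^\ell-\id)(\beta_{c_s\rat_{p^s}}) \in t_{s,0}\cdot RU(C_{p^s})\beta_{c_s\rat_{p^s}}$. Applying $\tr_s^n$, using that Adams operations commute with transfers and $\tr_s^n(t_{s,0}) = t_{n,0}$, yields $(\psi^\ell-\id)(t_{n,s}\beta_{b_s}) \in (t_{n,0})\beta_{b_s}$, so $[t_{n,s}\beta_{b_s}]$ is $\psi^\ell$-fixed in the quotient. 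The filtration map $a^{b_s - k(p-1)}$ likewise commutes with $\psi^\ell$, placing each proposed basis element in $M_{2k(p-1)-1}^\bbC$.

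For the spanning claim, I would rewrite each basis element as $[t_{n,s}e_{k(p-1), b_s}\beta_{k(p-1)}]$ using $a^{b_s - k(p-1)}\beta_{b_s} = e_{k(p-1), b_s}\beta_{k(p-1)}$, and then verify the $n$ elements form a $\bbZ$-basis by analyzing their images in the Wedderburn decomposition. Using the vanishing $\phi_{n,i}(t_{n,s}) = 0$ for $i > s$ from \cref{lem:marks}, one obtains a triangular matrix relative to a suitable basis; the diagonal entries are controlled by $\phi_{n,s}(e_{k(p-1), b_s})$, a product of cyclotomic Euler class factors whose values are governed by \cref{lem:phiba} and \cref{prop:erz}, and a direct computation verifies unimodularity.

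Finally, a rank count analogous to the above shows every intermediate group in the composite $a^{(p-1)-1/2}\colon M_{2k(p-1)-1}^\bbC \to M_{2(k-1)(p-1)}^\bbC$ is free of rank $n$, so each map is an isomorphism iff it is injective. Injectivity follows by applying the composite directly to the basis: the powers combine as $a^{(p-1)-1/2}\circ a^{b_s - k(p-1)}\circ a^{1/2} = a^{b_s - (k-1)(p-1)}$, whose value on $t_{n,s}\beta_{b_s}$ matches an analogous basis element for $M_{2(k-1)(p-1)}^\bbC$. The main obstacle is the bookkeeping in the boundary case $k \equiv 1\pmod{p^{s-1}}$, where $b_s(k) > b_s(k-1)$ and the matching requires the identity $t_{n,s}\cdot e_{b_s(k-1), b_s(k)} \equiv z_{n,s}\pmod{pz_{n,1}, \ldots, pz_{n,s-1}}$ from \cref{prop:erz}, together with the $p=2$ exceptions to \cref{lem:adamsop} for low-dimensional representations that require separate treatment.
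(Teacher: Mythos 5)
Your proposal correctly identifies the upper-bound half of the argument: the transfer computation showing each proposed generator is $\psi^\ell$-fixed (this is essentially the paper's \cref{lem:adamsopcyclic} applied via $\tr_s^n$), and the rank-$n$ computation via the Galois action on $\bigoplus_i \bbZ[\tfrac{1}{p}](\zeta_{p^i})$ (the paper's \cref{lem:rankm}). But the hard direction — that the proposed elements span all of $M_k^\bbC$ integrally and not just up to $p$-power index — is not established by your argument, and this is where the real content lies.

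The issue is that "a direct computation verifies unimodularity" has nothing to be unimodular against. The marks homomorphism $\tilde\phi$ injects $M_k^\bbC$ into $\Hom(\{1,\ldots,n\},\bbZ)$, but its image is a proper sublattice that you are trying to determine; the proposed generators map to an upper-triangular matrix with \emph{powers of $p$} on the diagonal (see \cref{lem:yintegral}), so they manifestly do not form a basis of $\Hom(\{1,\ldots,n\},\bbZ)$, and checking their determinant tells you nothing about whether they span $M_k^\bbC$. At this point you only know $\widetilde M_k^\bbC\subset M_k^\bbC$ is an inclusion of rank-$n$ free groups with $p$-power torsion quotient. The paper closes this gap by an induction on $k$: a base case (\cref{lem:evens}, where $p^{n-1}\mid k$ and $\psi^\ell$ acts trivially on $\beta_{k(p-1)}$ mod $t_{n,0}$, so that $M_{2k(p-1)}^\bbC$ is literally the full augmentation ideal), and an inductive step (\cref{lem:induction}) that passes between a known degree $2b$ and the unknown degree $2k(p-1)-1$ via multiplication by an Euler-class power, using \cref{prop:erz} to control the images. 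This descent mechanism is the missing idea.

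There is also a smaller error in the final paragraph: from "every intermediate group is free of rank $n$" you conclude that "each map is an isomorphism iff it is injective," but an injection of rank-$n$ free abelian groups need not be surjective (e.g.\ multiplication by $p$). What is actually needed is surjectivity, which the paper's \cref{lem:isop} proves directly: for $p>2$ by showing $a^{1/2}$ is surjective on $\pi_{0,\ast}J_{C_{p^n}}$ in the relevant degrees because $\pi_\ast J_p$ vanishes there, and for $p=2$ via the short exact sequence involving $\pi_{2k}KU$. Your attempted verification by acting on the basis is also circular, since it presupposes the basis claim for the target degree.
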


Write $\widetilde{M}_k^\bbC\subset \pi_{0,k}KU_{C_{p^n}}$ for the subgroup described in \cref{thm:kufp}, so that we wish to prove $\widetilde{M}_k^\bbC = M_k^\bbC$. We proceed in several steps.

\begin{lemma}\label{lem:explicitgens}
Fix $k > 0$ and let
\[
q_s = 2p^{s-1}(p-1)\ceil{\frac{k+1}{2p^{s-1}(p-1)}}
\]
be the smallest multiple of $2p^{s-1}(p-1)$ strictly larger than $k$.
Then $\widetilde{M}_k^\bbC = \bbZ\{y_1,\ldots,y_n\}$ where
\[
y_s = a^{q_s-k}t_{n,s} \beta_{q_s/2}.
\]
\end{lemma}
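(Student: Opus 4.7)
The plan is to recognize this lemma as a bookkeeping reformulation: we must check that the uniform formula for $y_s$ reproduces the generators of $\widetilde{M}_k^\bbC$ obtained from \cref{thm:kufp} by iterating $a^{1/2}$ downward from the top of each ``$v_1$-block''. Given $k>0$, let $K$ be the unique positive integer with $2(K-1)(p-1)\le k\le 2K(p-1)-1$, so that $K=\ceil{(k+1)/(2(p-1))}$. By \cref{thm:kufp}, $\widetilde{M}_{2K(p-1)-1}^\bbC$ is generated by $a^{b_s-K(p-1)}\cdot a^{1/2}\,t_{n,s}\beta_{b_s}$ with $b_s=p^{s-1}(p-1)\ceil{K/p^{s-1}}$, and $\widetilde{M}_k^\bbC$ is the image of these generators under $(a^{1/2})^{2K(p-1)-1-k}$. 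A direct calculation of the shift in the $w$-grading identifies each image as $(a^{1/2})^{2b_s-k}\,t_{n,s}\beta_{b_s}$ for $s=1,\ldots,n$.

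Matching these with the claimed $y_s$ reduces to the single identity $q_s=2b_s$, or equivalently $\ceil{(k+1)/(2p^{s-1}(p-1))}=\ceil{K/p^{s-1}}$. Since $k+1\in[2(K-1)(p-1)+1,\,2K(p-1)]$, the quotient $(k+1)/(2p^{s-1}(p-1))$ lies in the half-open interval $\bigl((K-1)/p^{s-1},\,K/p^{s-1}\bigr]$, whose length is less than $1/p^{s-1}\le 1$. A short case split, depending on whether $K/p^{s-1}$ is an integer, shows that the ceiling is constant on this interval with value $\ceil{K/p^{s-1}}$; the key observation is that the left endpoint is always strictly larger than $\ceil{K/p^{s-1}}-1$, because $K-1\ge (\ceil{K/p^{s-1}}-1)p^{s-1}$. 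With $q_s=2b_s$ established, substituting into $y_s=a^{q_s-k}\,t_{n,s}\beta_{q_s/2}$, read with $a^{q_s-k}$ denoting the $(q_s-k)$-fold iterate of $a^{1/2}$, recovers $(a^{1/2})^{2b_s-k}\,t_{n,s}\beta_{b_s}$.

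I expect no substantive obstacle. The main care needed is in keeping the conventions for powers of $a$ and $a^{1/2}$ consistent between \cref{thm:kufp} and the formula for $y_s$, and in handling the ceiling identity cleanly at the boundary $k=2(K-1)(p-1)$, where the quotient is closest to its possibly-integer lower endpoint $(K-1)/p^{s-1}$. Everything else is direct substitution.
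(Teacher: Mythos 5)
Your argument is correct and amounts to spelling out the paper's one-line proof, which simply declares the lemma to be ``a reindexing of the description in \cref{thm:kufp}.'' You correctly identify the one genuine issue: for the degrees to work out, $a^{q_s-k}$ must be read as the $(q_s-k)$-fold iterate of $a^{1/2}$, i.e., as $a^{(q_s-k)/2}$ in the convention used elsewhere in the section (e.g.\ in \cref{lem:induction}, where $a^{k(p-1)-b-1/2}$ denotes a drop of $2(k(p-1)-b)-1$ in the spoke grading). With that reading, the reindexing reduces exactly to $q_s=2b_s$, i.e.\ to
\[
\ceil{\tfrac{k+1}{2p^{s-1}(p-1)}}=\ceil{\tfrac{K}{p^{s-1}}},\qquad K=\ceil{\tfrac{k+1}{2(p-1)}},
\]
which you verify by an interval argument. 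This is an instance of the standard nested-ceiling identity $\ceil{\ceil{x/m}/n}=\ceil{x/(mn)}$ for positive integers $m,n$ and real $x>0$, which would give a slightly shorter route; and a small wording nit: the left endpoint $(K-1)/p^{s-1}$ is only guaranteed $\geq \ceil{K/p^{s-1}}-1$ (it can equal it), which is still enough because the interval is open at that end. Neither point is a gap; the proof is sound and matches the paper's.
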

\begin{proof}
This is just a reindexing of the description in \cref{thm:kufp}.
\end{proof}

\begin{lemma}
If $k$ is a multiple of $p^{i-1}(p-1)$, then $a^{1/2}t_{n,i}\beta_k$ is fixed by $\psi^\ell$. In particular, $\widetilde{M}_k^\bbC\subset M_k^\bbC$.
\end{lemma}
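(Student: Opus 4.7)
The plan is to show that $(\psi^\ell-1)(t_{n,i}\beta_k)$ lies in the kernel of $a^{1/2}\colon \pi_{0,2k}KU_{C_{p^n}}\to \pi_{0,2k-1}KU_{C_{p^n}}$, and then invoke that $a^{1/2}$ commutes with $\psi^\ell$. Since $t_{n,i}\in A(C_{p^n})\cong R\bbQ(C_{p^n})$ has rational character, we have $\psi^\ell(t_{n,i}) = t_{n,i}$, so multiplicativity of $\psi^\ell$ gives $(\psi^\ell-1)(t_{n,i}\beta_k) = t_{n,i}(\psi^\ell-1)\beta_k$. Writing $t_{n,i} = \tr^n_i(1)$, applying Frobenius reciprocity, and using that $\psi^\ell$ commutes with restriction rewrites this as
\[
\tr^n_i\bigl((\psi^\ell-1)\beta_{\res^n_i V_k}\bigr).
\]

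The hypothesis $p^{i-1}(p-1)\mid k$ combined with \cref{lem:dseq}(1) identifies $\res^n_i V_k$ as $c\rat_{p^i}$ with $c = k/(p^{i-1}(p-1))$, which is a fixed point free rational $C_{p^i}$-representation. \cref{lem:adamsop} therefore yields
\[
(\psi^\ell-1)\beta_{\res^n_i V_k}\in \bbZ\cdot \tr_e^{C_{p^i}}(1)\cdot \beta_{\res^n_i V_k}.
\]
A second application of Frobenius reciprocity and the transfer–transfer identity $\tr^n_i\circ\tr_e^{C_{p^i}} = \tr_e^n$ then place $(\psi^\ell-1)(t_{n,i}\beta_k)$ in the image of $\tr_0^n\colon \pi_{2k}KU\to\pi_{0,2k}KU_{C_{p^n}}$, which by \cref{lem:kuodd} is precisely $\ker a^{1/2}$. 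Since $a^{1/2}$ is pullback along the filtration inclusion $\widetilde{E}C_{p^n}^{\leq 2k-2}\hookrightarrow \widetilde{E}C_{p^n}^{\leq 2k-1}$, it commutes with $\psi^\ell$, giving
\[
(\psi^\ell-1)(a^{1/2}t_{n,i}\beta_k) = a^{1/2}(\psi^\ell-1)(t_{n,i}\beta_k) = 0,
\]
as required.

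For the ``in particular'' statement I would use \cref{lem:explicitgens} to express each generator of $\widetilde{M}_k^\bbC$ as an iterated application of $a^{1/2}$ to a class of the form $a^{1/2}t_{n,s}\beta_{q_s/2}$, where $q_s/2$ is a multiple of $p^{s-1}(p-1)$; the inner class is then fixed by the first part of the lemma, and subsequent applications of $a^{1/2}$ commute with $\psi^\ell$ and preserve being fixed. The only genuinely delicate point is the commutation $\psi^\ell\circ a^{1/2} = a^{1/2}\circ\psi^\ell$, which holds because $a^{1/2}$ is induced by a $C_{p^n}$-equivariant map of spectra and $\psi^\ell$ is a natural cohomology operation on $KU_{C_{p^n}}$; the rest of the proof is bookkeeping with restrictions, transfers, and the algebraic input of \cref{lem:adamsop}.
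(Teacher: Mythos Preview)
Your proof is correct and follows essentially the same route as the paper: both use Frobenius reciprocity to rewrite $t_{n,i}\beta_k$ as $\tr_i^n(\beta_{\rat_{p^i}}^c)$ via \cref{lem:dseq}, invoke the Adams-operation formula on $\beta_{\rat_{p^i}}^c$ to see that $(\psi^\ell-1)$ produces a multiple of $t_{i,0}$, and then kill this with $a^{1/2}$ (your appeal to \cref{lem:kuodd} for $\ker a^{1/2}=\im\tr_0^n$ is exactly the paper's use of $a^{1/2}t_{n,0}=0$). One small point: you cite \cref{lem:adamsop}, but for $p=2$, $i=1$, and $k$ odd the dimension hypothesis of that corollary is not met; cite \cref{lem:adamsopcyclic} directly instead, as the paper does.
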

\begin{proof}
If $k = p^{i-1}c(p-1)$, then $\res^n_i(\beta_k) = \beta_{\rat_{p^i}}^c$ by \cref{lem:dseq}. It follows from \cref{lem:adamsopcyclic} that $\psi^\ell(\beta_{\rat_{p^i}}^c) = \beta_{\rat_{p^i}}^c + d \cdot t_{i,0}$ for an integer $d$, and thus
\begin{align*}
\psi^\ell(a^{1/2}t_{n,i}\beta_k) &= a^{1/2}\tr_i^n(\psi^\ell(\beta_{\rat_{p^i}}^c))\\
& = a^{1/2}\tr_i^n(\beta_{\rat_{p^i}}^c + d\cdot t_{i,0})\\
&= a^{1/2}\left(t_{n,i}\beta_k + d\cdot t_{n,0}\right) = a^{1/2}t_{n,i}\beta_k
\end{align*}
as claimed, where the last equality follows since $a^{1/2}t_{n,0} = 0$.
\end{proof}

Consider the injection
\[
\tilde{\phi} = \tilde{\phi}\circ a^{k/2}\colon M_k^\bbC\rightarrow \Hom(\{1,\ldots,n\},\bbZ),\qquad (\tilde{\phi}y)(i) = \phi_{n,i}(a^{k/2}y),
\]
where we identify $a^{k/2}y \in A(C_{p^n})\cong R\bbQ(C_{p^n})$.

\begin{lemma}\label{lem:yintegral}
Fix $k\geq 1$ and let $y_1,\ldots,y_n$ be the generators defined for $\widetilde{M}_k^\bbC$. Then
\[
(\tilde{\phi}y_s)(i) = \begin{cases}
p^{n-s+p^{s-i}\ceil{\frac{k+1}{2p^{s-1}(p-1)}}} & 1\leq i \leq s,\\
0&s<i\leq n.
\end{cases}
\]
In particular, $\widetilde{M}_k^\bbC$ is a free abelian group of rank $n$, and $a^{1/2}\colon \widetilde{M}_k^\bbC\rightarrow \widetilde{M}_{k-1}^\bbC$ is an isomorphism after inverting $p$.
\end{lemma}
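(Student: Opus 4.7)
The plan is to reduce the lemma to a direct character computation, starting from an explicit identification of $a^{k/2} y_s$ in $A(C_{p^n})$. Since $y_s = a^{q_s-k} t_{n,s} \beta_{q_s/2}$, applying $a^{k/2} = (a^{1/2})^k$ and using $(a^{1/2})^{q_s} \beta_{q_s/2} = a^{q_s/2} \beta_{q_s/2} = e_{V_{q_s/2}}$ gives
\[
a^{k/2} y_s = (a^{1/2})^{q_s} t_{n,s} \beta_{q_s/2} = t_{n,s} \cdot e_{V_{q_s/2}},
\]
an element of $RU(C_{p^n})$ which lies in $R\bbQ(C_{p^n}) \cong A(C_{p^n})$ by the hypothesis that $y_s$ is fixed by all Adams operations. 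Hence $(\tilde\phi y_s)(i) = \phi_{n,i}(t_{n,s}) \cdot \phi_{n,i}(e_{V_{q_s/2}})$, and the computation splits into evaluating each factor.

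For $s < i \leq n$, \cref{lem:marks} gives $\phi_{n,i}(t_{n,s}) = 0$, so $(\tilde\phi y_s)(i) = 0$ as claimed. For $1 \leq i \leq s$, set $c = \ceil{(k+1)/(2p^{s-1}(p-1))}$ so that $q_s/2 = c \, p^{s-1}(p-1) = (cp^{s-i}) \cdot p^{i-1}(p-1)$. Then \cref{lem:dseq}(1) implies that $\res^n_i(V_{q_s/2})$ consists of $cp^{s-i}$ copies of $\rat_{p^i}$, so $\res^n_i(e_{V_{q_s/2}}) = e_{\rat_{p^i}}^{cp^{s-i}}$. Combining $\Phi^{C_{p^i}}(\beta_{\rat_{p^i}}) = p$ from \cref{lem:geofixv1} with $\Phi^{C_{p^i}}(a_{\rat_{p^i}}) = 1$ yields $\phi_{i,i}(e_{\rat_{p^i}}) = p$, hence $\phi_{n,i}(e_{V_{q_s/2}}) = p^{cp^{s-i}}$. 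Multiplying by $\phi_{n,i}(t_{n,s}) = p^{n-s}$ produces the stated formula.

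For the ``in particular'' clauses, the matrix $\bigl((\tilde\phi y_s)(i)\bigr)_{1 \leq i,s \leq n}$ is upper triangular with nonzero diagonal entries $p^{n-s+c}$, so $\tilde\phi(y_1), \ldots, \tilde\phi(y_n)$ are $\bbZ$-linearly independent; injectivity of $\tilde\phi$ on $M_k^\bbC$ then forces $\widetilde{M}_k^\bbC$ to be free abelian of rank $n$. After inverting $p$, the diagonal entries become units in $\bbZ[\tfrac{1}{p}]$, so $\tilde\phi(\widetilde{M}_k^\bbC) \otimes \bbZ[\tfrac{1}{p}]$ equals the full $\Hom(\{1,\ldots,n\}, \bbZ[\tfrac{1}{p}])$, independent of $k$. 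Combined with the tautology $\tilde\phi(a^{1/2} y) = \tilde\phi(y)$ for $y \in M_k^\bbC$ (immediate from $\tilde\phi = \phi_{n,\bs} \circ a^{k/2}$) and injectivity of $\tilde\phi$ on $M_{k-1}^\bbC$, this forces $a^{1/2}$ to carry $\widetilde{M}_k^\bbC$ into $\widetilde{M}_{k-1}^\bbC$ after inverting $p$ and to be the unique $\bbZ[\tfrac{1}{p}]$-linear isomorphism compatible with the two $\tilde\phi$'s.

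I do not anticipate any significant obstacle: the lemma is essentially bookkeeping once the identification $a^{k/2} y_s = t_{n,s} \cdot e_{V_{q_s/2}}$ is written down, with the main inputs being \cref{lem:marks}, \cref{lem:dseq}(1), and \cref{lem:geofixv1}.
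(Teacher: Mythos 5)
Your proof is correct and follows essentially the same route as the paper: identify $a^{k/2}y_s = t_{n,s}\cdot e_{V_{q_s/2}}$, evaluate the marks $\phi_{n,i}$ of the two factors using \cref{lem:marks}, \cref{lem:dseq}(1), and \cref{lem:geofixv1}, and then read off the rank and localization statements from upper-triangularity of the resulting matrix. The paper's version is terser (it phrases the first step as $\tilde\phi(a^{1/2}) = 1$ and cites the two lemmas without the intermediate restriction-and-power bookkeeping), but the content is identical.
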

\begin{proof}
The generators $y_s$ were identified explicitly in \cref{lem:explicitgens}. Using the notation from there, as $\tilde{\phi}(a^{1/2}) = 1$, we have
\begin{align*}
\tilde{\phi}(y_s) &= \tilde{\phi}(t_{n,s}\beta_{q_s/2}) \\
&= \begin{cases}
p^{n-s+p^{s-i}\ceil{\frac{k+1}{2p^{s-1}(p-1)}}} & 1\leq i \leq s,\\
0&s<i\leq n.
\end{cases}
\end{align*}
by \cref{lem:marks} and \cref{lem:geofixv1}. It follows that $\tilde{\phi}(y_1),\ldots,\tilde{\phi}(y_n)$ are upper triangular in the standard basis for $\Hom(\{1,\ldots,n\},\bbZ)$ with powers of $p$ along the diagonal. Hence they form a basis for $\Hom(\{1,\ldots,n\},\bbZ[\tfrac{1}{p}])$, and the final claims follow from this.
\end{proof}

\begin{lemma}\label{lem:rankm}
$M_k^\bbC$ is a free abelian group of rank $n$, and $a^{1/2}\colon M_{k+1}^\bbC\rightarrow M_k^\bbC$ is an isomorphism after inverting $p$.
\end{lemma}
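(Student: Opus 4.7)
The plan is to combine the explicit subgroup $\widetilde{M}_k^\bbC \subset M_k^\bbC$ from \cref{lem:yintegral}, which is free of rank $n$ and on which $a^{1/2}$ is known to be iso after inverting $p$, with an upper bound on the rank of $M_k^\bbC$ obtained from the long exact sequence coming from the filtration of $\widetilde{E}C_{p^n}$. I would proceed by induction on $k \geq 0$. The base case $k=0$ is immediate: $M_0^\bbC$ is by convention the augmentation ideal of $R\bbQ(C_{p^n})$, free abelian of rank $n$.

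For the inductive step, I would take $\psi^\ell$-fixed points of the long exact sequence
\[
\pi^e_{k-1} KU \to \pi_{0,k+1}^{C_{p^n}} KU \xrightarrow{a^{1/2}} \pi_{0,k}^{C_{p^n}} KU \xrightarrow{r} \pi^e_k KU.
\]
The key observation is that $\psi^\ell$ acts on $\pi^e_{2m} KU = \bbZ \beta^m$ as multiplication by $\ell^m$, so the fixed points vanish for $m \geq 1$ since $\ell \geq 2$. For $k \geq 2$ this forces the outer terms in the resulting sequence of fixed points to vanish, giving that $a^{1/2}: M_{k+1}^\bbC \to M_k^\bbC$ is injective with finite cokernel. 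The low cases $k = 0, 1$ are handled directly: the map $M_0^\bbC \to \pi^e_0 KU = \bbZ$ is the augmentation, which vanishes on the augmentation ideal; and the image of $\pi^e_0 KU = \bbZ$ in $\pi_{0, 2}^{C_{p^n}} KU$ is spanned by $[C_{p^n}]\beta_1$, which is not $\psi^\ell$-fixed since $\psi^\ell([C_{p^n}]\beta_1) = \ell\, [C_{p^n}]\beta_1$ (via the transfer identity $\psi^\ell \circ \tr = \tr \circ \psi^\ell$ and $\psi^\ell(\beta) = \ell\beta$). This gives $\mathrm{rank}(M_{k+1}^\bbC) \leq \mathrm{rank}(M_k^\bbC)$, and combined with the rank-$n$ lower bound $\widetilde{M}_k^\bbC \subset M_k^\bbC$ from \cref{lem:yintegral}, we conclude $M_k^\bbC$ is free abelian of rank exactly $n$ (freeness from being a subgroup of the free abelian group $\pi_{0, k}^{C_{p^n}} KU$).

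For the iso-after-$[1/p]$ statement, the restriction of $a^{1/2}: M_{k+1}^\bbC \to M_k^\bbC$ to $\widetilde{M}_{k+1}^\bbC$ lands in $\widetilde{M}_k^\bbC$ (directly visible from the explicit generators in \cref{lem:yintegral}), where it is an isomorphism after inverting $p$ by that same lemma. Since both $\widetilde{M}_k^\bbC \subset M_k^\bbC$ are rank-$n$ sublattices of the free rank-$n$ group $M_k^\bbC$, a rank/index comparison shows the map $a^{1/2}[1/p]$ on $M$ is forced to be an isomorphism.

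The main obstacle I expect is verifying that the cokernel of $a^{1/2}: M_{k+1}^\bbC \to M_k^\bbC$, a priori only bounded in the LES by a subquotient of $H^1(\psi^\ell; \pi^e_{k-1} KU) \cong \bbZ/(\ell^{(k-1)/2} - 1)$ which may contain prime-to-$p$ parts, is in fact $p$-power torsion. The explicit comparison with $\widetilde{M}_k^\bbC$, whose behavior after inverting $p$ is under control, is what pins this down and rules out spurious prime-to-$p$ contributions.
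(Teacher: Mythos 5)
Your route is genuinely different from the paper's. The paper simply inverts $p$, uses the idempotent splitting of $C_{p^n}$-spectra by geometric fixed points to write $\pi_{0,k}KU_{C_{p^n}}[\tfrac1p]\cong \pi_kKU[\tfrac1p]\oplus\bigoplus_{i=1}^n\bbZ[\tfrac1p](\zeta_{p^i})$, and computes the Adams-fixed points of each summand by Galois theory. This gives a description of $M_k^\bbC[\tfrac1p]$ that is visibly uniform in $k$ and compatible with $a^{1/2}$, so both the rank and the isomorphism statement drop out at once. You instead sandwich the rank between the lower bound supplied by $\widetilde M_k^\bbC$ and an upper bound extracted from the long exact sequence by controlling $H^0(\psi^\ell;\pi_\ast KU)$. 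The rank-and-freeness half of your argument is correct (with the usual caveat that one should break the four-term exact sequence into short exact sequences before applying the left-exact functor $H^0(\psi^\ell;\bs)$; the fact that the image of $t$ is a torsion-free quotient of $\pi_{k-1}^eKU$ is what makes its $H^0$ vanish, not exactness of fixed points), and your handling of the degrees $k=0,1$ is fine.

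However, the step establishing that $a^{1/2}\colon M_{k+1}^\bbC[\tfrac1p]\to M_k^\bbC[\tfrac1p]$ is an isomorphism has a genuine gap. You have: (i) $a^{1/2}$ is injective on $M^\bbC$ with finite cokernel a subquotient of $H^1(\psi^\ell;\pi_{k-1}^eKU)$, which as you note may contain prime-to-$p$ torsion; (ii) $a^{1/2}$ restricts to an isomorphism on $\widetilde M^\bbC[\tfrac1p]$; (iii) $\widetilde M_k^\bbC\subset M_k^\bbC$ is a finite-index inclusion of rank-$n$ lattices. The "rank/index comparison" does \emph{not} follow from this. Setting $\tilde m_k = [M_k^\bbC[\tfrac1p]:\widetilde M_k^\bbC[\tfrac1p]]$ and $c_k=[M_k^\bbC[\tfrac1p]:a^{1/2}M_{k+1}^\bbC[\tfrac1p]]$, the commutative square and injectivity only give $\tilde m_k = c_k\cdot\tilde m_{k+1}$, so the $\tilde m_k$ form a non-increasing sequence of positive integers. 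This forces $c_k=1$ for $k\gg 0$, but not for small $k$: a priori $\tilde m_k$ could drop from some prime-to-$p$ value down to $1$ as $k$ grows, and each such drop would be a failure of surjectivity of $a^{1/2}[\tfrac1p]$ on $M^\bbC$. To close the gap you need to anchor the chain, e.g.\ by verifying directly that $a^{1/2}\widetilde M_1^\bbC = M_0^\bbC$ (which can be read off from \cref{prop:erz} and the description in \cref{lem:explicitgens}, since the generators $a^{1/2}y_s$ are upper-triangular with unit diagonal entry against the basis $z_{n,1},\dots,z_{n,n}$ of the augmentation ideal). Once $\tilde m_1=1$ is known, the product relation forces $\tilde m_k=1$ and $c_k=1$ for all $k$. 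You flag the prime-to-$p$ issue at the end, but your stated resolution---the comparison with $\widetilde M$---does not by itself rule it out without this base-case input, which you neither identify as necessary nor carry out.
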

\begin{proof}
As $\pi_{0,k}KU_{C_{p^n}}$ is a free abelian group of finite rank, so is $M_k^\bbC$. The splitting of $C_{p^n}$-spectra away from $p$ by geometric fixed points implies
\[
\pi_{0,k}KU_{C_{p^n}}[\tfrac{1}{p}]\cong \pi_k KU[\tfrac{1}{p}] \oplus \bigoplus_{1\leq i \leq n}\pi_0 \Phi^{C_{p^i}} KU_{C_{p^n}}[\tfrac{1}{p}] \cong \pi_k KU[\tfrac{1}{p}]\oplus\bigoplus_{1\leq i \leq n}\bbZ[\tfrac{1}{p}](\zeta_{p^i}).
\]
As $\bbZ[\tfrac{1}{p}](\zeta_{p^i})$ is an $\Aut(C_{p^i})$-Galois extension of $\bbZ[\tfrac{1}{p}]$, and the Adams operations act on this through the Galois action, the subgroup of fixed points for the Adams operations on this summand is just $\bbZ[\tfrac{1}{p}]$. The lemma follows, after observing that if $k > 0$ then no nonzero element of $\pi_k KU[\tfrac{1}{p}]$ is fixed by the Adams operations, and if $k=0$ then we have thrown out this summand by restricting to the augmentation ideal.
\end{proof}

Combining the above three lemmas shows that $\widetilde{M}_k^\bbC\subset M_k^\bbC$ is an isomorphism after inverting $p$, and hence to prove that it is an isomorphism it suffices to verify that the basis of $\widetilde{M}_k^\bbC$ consists of elements which are linearly independent mod $p$ in $M_k^\bbC$. The following will allow us to restrict our attention to just certain degrees.

\begin{lemma}\label{lem:isop}
Each map in the composite
\[
a^{(p-1)-1/2}\colon M_{2(k+1)(p-1)-1}^\bbC\rightarrow M_{2k(p-1)}^\bbC
\]
is an isomorphism for $k\geq 0$.
\end{lemma}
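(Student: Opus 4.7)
The plan is to show that each of the $2p-3$ individual applications of $a^{1/2}$ comprising the composite is an isomorphism, splitting the argument by the parity of the source degree $j$. Lemma \ref{lem:kuodd} supplies the relevant short exact sequences: for $j = 2m-1$ odd, $a^{1/2}\colon \pi_{0,2m-1}KU_{C_{p^n}} \to \pi_{0,2m-2}KU_{C_{p^n}}$ is injective with cokernel $\pi_{2m-2}KU$, while for $j = 2m$ even it is surjective with kernel $\tr_0^n(\pi_{2m}KU) \cong \bbZ$.

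In the odd case (``easy direction''), injectivity of $a^{1/2}\colon M_{2m-1}^\bbC \to M_{2m-2}^\bbC$ is immediate. For surjectivity, take $y \in M_{2m-2}^\bbC$ and observe that $\res^n_0(y) \in (\pi_{2m-2}KU)^{\psi^\ell}$, where $\psi^\ell$ acts on $\pi_{2m-2}KU \cong \bbZ$ by $\ell^{m-1}$; when $m \geq 2$ this forces $\res^n_0(y) = 0$, and when $m = 1$ (only in the terminal step with $k=0$) this holds by the augmentation ideal convention defining $M_0^\bbC$. Then $y$ lifts uniquely to $x \in \pi_{0,2m-1}KU_{C_{p^n}}$, and $\psi^\ell$-equivariance of $a^{1/2}$ together with uniqueness of the lift forces $\psi^\ell(x) = x$ for every $\ell$, so $x \in M_{2m-1}^\bbC$.

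In the even case (``hard direction''), which only arises for $p$ odd, the source degree $2m$ satisfies $k(p-1) < m < (k+1)(p-1)$; in particular $m$ is not a multiple of $p-1$, so $\ell_0^m - 1$ is coprime to $p$ for $\ell_0$ a primitive root modulo $p^2$. The long exact sequence $H^\ast(\psi^{\ell_0}; -)$ applied to the short exact sequence shows that the cokernel of $a^{1/2}$ on $\psi^{\ell_0}$-fixed points embeds into $H^1(\psi^{\ell_0}; \pi_{2m}KU) \cong \bbZ/(\ell_0^m - 1)$. Hence any $y \in M_{2m-1}^\bbC$ satisfies $(\ell_0^m - 1)y = a^{1/2}(\tilde x)$ for some $\tilde x \in (\pi_{0,2m}KU_{C_{p^n}})^{\psi^{\ell_0}}$. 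To promote $\tilde x$ to $M_{2m}^\bbC$, note that for any other Adams operation $\psi^\ell$ with $p \nmid \ell$, the element $\psi^\ell(\tilde x) - \tilde x$ lies in $\ker(a^{1/2}) \cap (\pi_{0,2m})^{\psi^{\ell_0}}$; but $\ker(a^{1/2}) = \bbZ\{\tr_0^n(\beta^m)\}$ carries the $\psi^{\ell_0}$-action by $\ell_0^m \neq 1$, so this intersection vanishes and $\tilde x \in M_{2m}^\bbC$. Therefore the cokernel of $a^{1/2}\colon M_{2m}^\bbC \to M_{2m-1}^\bbC$ is annihilated by $\ell_0^m - 1$, hence is coprime-to-$p$ torsion; combined with Lemma \ref{lem:rankm}, which says it is finite and $p$-primary, it must vanish.

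The main obstacle is the hard direction, specifically the step of promoting a lift fixed by the single operation $\psi^{\ell_0}$ to one fixed by every Adams operation. This rests on the combinatorial fact that in the target range $m$ avoids multiples of $p-1$, which makes $\ell_0^m \neq 1$ and thus forces the obstruction group $\ker(a^{1/2})^{\psi^{\ell_0}}$ to be trivial.
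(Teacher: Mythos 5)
Your proof is correct, and for $p>2$ it takes a genuinely different route from the paper's. The paper $p$-completes and identifies $\bbZ_p \otimes M_k^\bbC$ with $\pi_{0,k}J_{C_{p^n}}$, so that surjectivity in the hard direction follows from the vanishing of $\pi_s J_p$ away from image-of-$J$ degrees; you instead stay at the level of $KU$ and apply the $H^\ast(\psi^{\ell_0};-)$ long exact sequence directly to the short exact sequences of \cref{lem:kuodd}, bounding the cokernel of $a^{1/2}$ on $\psi^{\ell_0}$-fixed points by $\bbZ/(\ell_0^m-1)$ and then killing it against the $p$-primariness supplied by \cref{lem:rankm}. The arithmetic driving both proofs is the same --- $\ell_0^m - 1$ is a $p$-adic unit precisely when $p-1 \nmid m$, and the source degrees in the composite avoid multiples of $p-1$ --- but your version sidesteps the equivariant $J$-spectrum entirely and handles $p=2$ and $p>2$ uniformly by the parity of the source degree. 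Your promotion step, forcing $\psi^\ell(\tilde x) = \tilde x$ because $\ker(a^{1/2})$ carries the nontrivial $\psi^{\ell_0}$-action by $\ell_0^m$, cleanly upgrades a lift fixed by one Adams operation to one fixed by all, sidestepping the reduction the paper implicitly makes after $p$-completion. The one ingredient you take for granted is the $\psi^{\ell_0}$-equivariance of $\tr_0^n$, needed to regard the short exact sequence as one of $\bbZ[\psi^{\ell_0}]$-modules; this is not pure naturality since the transfer is a wrong-way map, but it follows readily from the identity $t_{n,0}\cdot\psi^\ell(\beta_m) = \ell^m\, t_{n,0}\beta_m$, which matches $\psi^\ell(\beta^m) = \ell^m\beta^m$ under $\tr_0^n$.
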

\begin{proof}
We treat the cases $p=2$ and $p>2$ separately. First suppose $p=2$. Here, we are claiming that
\[
M_{2k+1}^\bbC\rightarrow M_{2k}^\bbC
\]
is an isomorphism. Consider the short exact sequence
\[
0\rightarrow\pi_{0,2k+1}KU_{C_{2^n}}\rightarrow \pi_{0,2k}KU_{C_{2^n}}\rightarrow \pi_{2k}KU\rightarrow 0.
\]
If $k>0$ then $H^0(\psi^\ell;\pi_{2k}KU) = 0$ and it follows that $M_{2k+1}^\bbC \cong M_{2k}^\bbC$. If $k=0$ then this induces an isomorphism from $M_1^\bbC$ onto the augmentation ideal of $R\bbQ(C_{2^n})$, which is how we defined $M_0^\bbC$.

Next suppose $p>2$. As $a^{1/2}$ is an isomorphism after inverting $p$ by \cref{lem:rankm}, it is an injection, so we must only show that the claimed map is a surjection. It is an isomorphism after inverting $p$, so it suffices to prove that it is a surjection after completing at $p$. As $p > 2$, we can identify $J_{C_{p^n}}$ as the fiber of $\psi^\ell-\psi^1$ on $(KU_{C_{p^n}})_p^\wedge$. As $\pi_{1,k}KU_{C_{p^n}} = 0$, it follows that $\bbZ_p\otimes M_k^\bbC \cong \pi_{0,k}J_{C_{p^n}}$. As $\pi_k J_p = 0$ for $(p-1)\nmid (k+1)$, the exact sequences
\[
\pi_{0,2(k+1)-i}J_{C_{p^n}}\rightarrow \pi_{0,2(k+1)-i-1}J_{C_{p^n}}\rightarrow\pi_{2(k+1)-i-1} J_p
\]
prove that each map in the composite
\[
a^{(p-1)-1/2}\colon \pi_{0,2(k+1)(p-1)-1}J_{C_{p^n}}\rightarrow\pi_{0,2k(p-1)}J_{C_{p^n}}
\]
is a surjection as claimed.
\end{proof}

Thus to compute $M_k^\bbC$ for all $k\geq 1$ it suffices to consider just the cases where $2(p-1)\mid k$, or alternately just the cases where $2(p-1)\mid (k+1)$. Our approach will be to play these cases against each other by an inductive argument. We begin by giving a more convenient description of the groups $\widetilde{M}_{2k(p-1)}^\bbC$.

\begin{lemma}\label{lem:specialcase}
Write $k = p^{l-1}c$ with $p\nmid c$. Then, with notation from \cref{thm:kufp}, we have
\[
\widetilde{M}_{2k(p-1)}^\bbC = \bbZ\{z_{n,i}\beta_{k(p-1)} : 1 \leq i \leq l\} \oplus \bbZ\{a^{b_i-k(p-1)}t_{n,i}\beta_{b_i} : l < i \leq n\}.
\]
\end{lemma}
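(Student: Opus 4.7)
The plan is to invoke \cref{lem:explicitgens} with spoke grading $K = 2k(p-1)$ to obtain a standard basis $y_1,\ldots,y_n$ of $\widetilde{M}_{2k(p-1)}^\bbC$, and then exhibit an invertible integer change of basis between $\{y_1,\ldots,y_n\}$ and the proposed generating set. First I would compute $q_s$, the smallest multiple of $2p^{s-1}(p-1)$ strictly exceeding $K$, under the hypothesis $k = p^{l-1}c$ with $p\nmid c$. When $s > l$ one has $p^{s-1}\nmid k$, forcing $q_s = 2p^{s-1}(p-1)\ceil{k/p^{s-1}} = 2b_s$; the generator then reads $y_s = a^{b_s - k(p-1)}t_{n,s}\beta_{b_s}$, which already matches the proposed generator verbatim, so the $s>l$ range requires no further work. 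When $s \leq l$ one instead has $p^{s-1}\mid k$, so $k/p^{s-1}$ is an integer and $q_s = K + 2p^{s-1}(p-1)$, whence $q_s/2 = k(p-1) + p^{s-1}(p-1)$.

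The core of the argument is this second regime. Using the identity $(a^{1/2})^{2m}\beta_{j+m} = e_{j,j+m}\beta_j$ unpacked in \cref{ssec:eulerclasses}, I would rewrite
\[
y_s = t_{n,s}\cdot e_{k(p-1),\,k(p-1)+p^{s-1}(p-1)}\,\beta_{k(p-1)}.
\]
The index difference $p^{s-1}(p-1)$ places us squarely in the hypothesis of \cref{prop:erz} with $i = s$, which then expands
\[
y_s = \sum_{j=1}^s p^{p^{s-j}+j-s-1}\, z_{n,j}\beta_{k(p-1)}.
\]
The diagonal coefficient (at $j = s$) equals $p^0 = 1$, so the $l\times l$ transition matrix from $(z_{n,1}\beta_{k(p-1)},\ldots,z_{n,l}\beta_{k(p-1)})$ to $(y_1,\ldots,y_l)$ is upper triangular with all diagonal entries equal to $1$, hence invertible over $\bbZ$.

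Combining the two regimes, the basis $\{y_1,\ldots,y_n\}$ produced by \cref{lem:explicitgens} is converted into the set $\{z_{n,i}\beta_{k(p-1)}:1\leq i\leq l\}\cup\{a^{b_i-k(p-1)}t_{n,i}\beta_{b_i}:l<i\leq n\}$ by an invertible integer change of basis, so the latter is itself a basis, and the claimed direct sum decomposition drops out. The only step with any non-bookkeeping content is the invocation of \cref{prop:erz} supplying the triangular expansion with unit leading term; everything else amounts to tracking how the dichotomy $p^{s-1}\mid k$ versus $p^{s-1}\nmid k$ splits the range $1 \leq s \leq n$ into the two regimes handled above, and reconciling the spoke and Euler-class conventions for $a^\bullet$ between \cref{lem:explicitgens} and the statement.
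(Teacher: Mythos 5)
Your argument is correct and is essentially the proof given in the paper: both apply the \cref{lem:explicitgens} reindexing at spoke degree $2k(p-1)$, split on whether $p^{s-1}\mid k$, and invoke \cref{prop:erz} to recognize a unit upper-triangular change of basis on the $s\leq l$ block. The only superficial difference is that the paper phrases the index computation via the auxiliary quantity $b_s^+$ (the smallest multiple of $p^{s-1}(p-1)$ strictly exceeding $k(p-1)$) while you work directly with $q_s$.
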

\begin{proof}
Let $b_s^+$ denote the smallest multiple of $p^{s-1}(p-1)$ strictly greater than $k(p-1)$. Then $b_s^+$ is the smallest multiple of $p^{s-1}(p-1)$ greater than or equal to $(k+1)(p-1)$, so by definition
\[
\widetilde{M}_{2k(p-1)}^\bbC = \bbZ\{a^{b_i^+-k(p-1)}t_{n,i}\beta_{b_i^+}: 1 \leq i \leq n\}.
\]
As $k = p^{l-1}c$ with $p\nmid c$, we can identify
\[
b_i^+ = \begin{cases}
k(p-1)+p^{i-1}(p-1)&i\leq l,\\
b_i & i > l.
\end{cases}
\]
So this further reduces to the assertion that
\begin{align*}
\widetilde{M}_{2k(p-1)}^\bbC &= \bbZ\{e_{k(p-1),k(p-1)+p^{i-1}(p-1)}t_{n,i}\beta_{k(p-1)} : 1 \leq i \leq l\} \\
&\oplus \bbZ\{a^{b_i-k(p-1)}t_{n,i}\beta_{b_i} : l < i \leq n \}.
\end{align*}
Now the lemma follows from \cref{prop:erz}, which implies that
\[
e_{k(p-1),k(p-1)+p^{i-1}(p-1)} t_{n,i} \equiv z_{n,i}\pmod{z_{n,1},\ldots,z_{n,i-1}}
\]
and thus 
\[
\bbZ\{e_{k(p-1),k(p-1)+p^{i-1}(p-1)}t_{n,i}\beta_{k(p-1)} : 1 \leq i \leq l\} = \bbZ\{z_{n,i}\beta_{k(p-1)} : 1 \leq i \leq l\}
\]
as claimed.
\end{proof}

\begin{lemma}\label{lem:evens}
We have
\[
\widetilde{M}_{2k(p-1)}^\bbC = M_{2k(p-1)}^\bbC = I(C_{p^n})\beta_{k(p-1)}
\]
provided $p^{n-1}\mid k$.
\end{lemma}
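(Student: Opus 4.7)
The plan is to prove the two equalities separately. For $\widetilde{M}_{2k(p-1)}^\bbC = I(C_{p^n})\beta_{k(p-1)}$, write $k = p^{l-1}c$ with $p\nmid c$; the hypothesis $p^{n-1}\mid k$ forces $l \geq n$, so the second direct summand in \cref{lem:specialcase} is empty and the first reduces to $\bbZ\{z_{n,i}\beta_{k(p-1)} : 1\leq i\leq n\}$. Since $z_{n,1},\ldots,z_{n,n}$ is a basis for $I(C_{p^n})$ by \cref{lem:bases}(3), this is exactly $I(C_{p^n})\beta_{k(p-1)}$.

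The heart of the argument is the second equality $M_{2k(p-1)}^\bbC = I(C_{p^n})\beta_{k(p-1)}$, and the key is that our hypothesis places $\beta_{k(p-1)}$ in a ``pure representation'' degree where the Adams action is fully explicit. Setting $c' = k/p^{n-1}$ and applying \cref{lem:dseq}(1) in $c'$ consecutive blocks of length $p^{n-1}(p-1)$, one identifies $V_{k(p-1)}$ with $c'\cdot \rat_{p^n}$, and hence $\beta_{k(p-1)} = \beta_{\rat_{p^n}}^{c'}$. \cref{lem:adamsopcyclic} then supplies the formula
\[
\psi^\ell(\beta_{k(p-1)}) = (1 + \alpha_\ell)\beta_{k(p-1)},\qquad \alpha_\ell = \tfrac{\ell^{k(p-1)}-1}{p^n}\, t_{n,0},
\]
for $\gcd(\ell,p) = 1$, with $\alpha_\ell \in RU(C_{p^n})$ by Euler's theorem. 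The containment $I(C_{p^n})\beta_{k(p-1)} \subset M_{2k(p-1)}^\bbC$ follows immediately: for $x\in I(C_{p^n})$ one has $\psi^\ell(x)=x$ (rationality) and $x\cdot t_{n,0} = \dim(x)\cdot t_{n,0} = 0$ by the projection formula, so $\psi^\ell(x\beta_{k(p-1)}) = x(1+\alpha_\ell)\beta_{k(p-1)} = x\beta_{k(p-1)}$.

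The reverse inclusion is the main obstacle. Given $x\in RU(C_{p^n})$ with $\psi^\ell(x)(1+\alpha_\ell) = x$ for every $\ell$ coprime to $p$, the plan is a two-step bootstrap to place $x$ in $I(C_{p^n})$. First, applying $\res^n_0 = \dim$ to the relation yields $\ell^{k(p-1)}\dim(x) = \dim(x)$ in $\bbZ$; since $k > 0$, picking any $\ell > 1$ coprime to $p$ forces $\dim(x) = 0$. Next, reusing the identity $y\cdot t_{n,0} = \dim(y)\cdot t_{n,0}$, one concludes $\psi^\ell(x)\cdot \alpha_\ell = 0$, so the relation collapses to $\psi^\ell(x) = x$ for every $\ell$, placing $x$ in $A(C_{p^n})$. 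Combined with $\dim(x) = 0$ this gives $x \in I(C_{p^n})$, completing the proof.
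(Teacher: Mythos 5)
Your proof is correct and takes essentially the same approach as the paper: identify $\beta_{k(p-1)}$ with $\beta_{\rat_{p^n}}^{k/p^{n-1}}$ via \cref{lem:dseq}, apply \cref{lem:adamsopcyclic} to get $\psi^\ell(\beta_{k(p-1)})\equiv\beta_{k(p-1)}\pmod{t_{n,0}}$, and use \cref{lem:specialcase} with \cref{lem:bases} to identify $\widetilde{M}_{2k(p-1)}^\bbC$ with $I(C_{p^n})\beta_{k(p-1)}$. Your two-step bootstrap (dimension forces $\dim(x)=0$, then the projection formula kills the $t_{n,0}$ term so $\psi^\ell(x)=x$) is a careful unpacking of the paper's ``it follows'' and is exactly the right argument.
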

\begin{proof}
If $p^{n-1}\mid k$, then
\[
\psi^\ell(\beta_{k(p-1)})\equiv \beta_{k(p-1)} \pmod{t_{n,0}}
\]
by \cref{lem:adamsopcyclic}, and it follows that
\[
M_{2k(p-1)}^\bbC = I(C_{p^n})\beta_{k(p-1)} = \bbZ\{z_{n,1},\ldots,z_{n,n}\}\beta_{k(p-1)} = \widetilde{M}_{2k(p-1)}^\bbC.
\]
by \cref{lem:specialcase}.
\end{proof}

\begin{lemma}\label{lem:induction}
Suppose that $\widetilde{M}_{2t(p-1)}^\bbC = M_{2t(p-1)}^\bbC$ for $p^l\mid t$. Then $\widetilde{M}_{2k(p-1)}^\bbC = M_{2k(p-1)}^\bbC$ and $\widetilde{M}_{2k(p-1)-1}^\bbC = M_{2k(p-1)-1}^\bbC$ for $p^{l-1} \mid k$.
\end{lemma}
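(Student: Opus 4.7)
Fix $k$ with $p^{l-1}\mid k$. The case $p^l\mid k$ is immediate from the hypothesis applied at $t=k$ for the even-degree equality, with the odd case following by a downstream analysis. So assume $k=p^{l-1}c$ with $p\nmid c$. Since $\widetilde{M}^\bbC_m\subset M^\bbC_m$ are both free abelian of rank $n$ (\cref{lem:yintegral}, \cref{lem:rankm}) with equal rationalizations, it suffices to show that the $n$ explicit generators of $\widetilde{M}^\bbC_m$ remain $\bbF_p$-linearly independent in $M^\bbC_m$ for both $m=2k(p-1)$ and $m=2k(p-1)-1$.

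The strategy is to leverage the hypothesis at $t=pk$ (valid since $p^l\mid pk$), which gives $M^\bbC_{2pk(p-1)}=\widetilde{M}^\bbC_{2pk(p-1)}$ with explicit generators by \cref{lem:specialcase}, and then transfer this information to degree $2k(p-1)$ via multiplication by a Bott class. Let $V' = V_{2pk(p-1)}-V_{2k(p-1)}$; multiplication by $\beta_{V'}$ is an isomorphism of $RU(C_{p^n})$-modules $\pi_{0,2k(p-1)}KU_{C_{p^n}}\xrightarrow{\sim}\pi_{0,2pk(p-1)}KU_{C_{p^n}}$. Writing $\psi^\ell(\beta_{V'})=\Phi_{V'}(\ell)\cdot\beta_{V'}$ with $\Phi_{V'}(\ell)\in RU(C_{p^n})$, a direct calculation shows that for $x=u\beta_{k(p-1)}$, the pair of conditions $x\in M^\bbC_{2k(p-1)}$ and $u\beta_{pk(p-1)}\in M^\bbC_{2pk(p-1)}$ together amount to $x\in M^\bbC_{2k(p-1)}$ and $u\cdot(\Phi_{V'}(\ell)-1)=0$ in $RU(C_{p^n})$. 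One then computes $\Phi_{V'}(\ell)-1$ via its restrictions to cyclic subgroups: for $i\le l$, \cref{lem:dseq}(1) identifies $\res^{C_{p^n}}_{C_{p^i}}V'$ as a multiple of $\rat_{p^i}$, and \cref{lem:adamsopcyclic} yields an explicit formula; for $i>l$, a separate analysis pieces together the remaining information, ultimately expressing $\Phi_{V'}(\ell)-1$ in the $z$-basis of $I(C_{p^n})$ via \cref{prop:recoverfrommarks}.

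Armed with this description, the condition $u(\Phi_{V'}(\ell)-1)=0$ cuts $\pi_{0,2k(p-1)}KU_{C_{p^n}}$ down to a sublattice whose Adams-fixed points match the generators of $\widetilde{M}^\bbC_{2k(p-1)}$ listed in \cref{lem:specialcase}; here \cref{prop:erz} is used to interchange generators of the form $a^{b_i-k(p-1)}t_{n,i}\beta_{b_i}$ with the $z_{n,j}\beta_{k(p-1)}$ and to check that the relevant $\bbF_p$-linear independence is preserved. This establishes the even-degree equality. The odd-degree equality $\widetilde{M}^\bbC_{2k(p-1)-1}=M^\bbC_{2k(p-1)-1}$ is then bootstrapped from the even case via the short exact sequences of \cref{lem:kuodd}, tracking $\psi^\ell$-invariants and matching them against the generators provided by \cref{thm:kufp}.

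The principal obstacle is the computation of $\Phi_{V'}(\ell)-1$ when $V'$ is not rational over $C_{p^n}$ (which occurs precisely when $l<n$): \cref{lem:adamsop} does not apply at the level of $C_{p^n}$ directly, so we must patch together the formulas obtained from restrictions to cyclic subgroups and verify that the resulting element has the correct $p$-adic shape in the $z$-basis so that the annihilator condition cuts out exactly the sublattice spanned by the generators in \cref{lem:specialcase}, with no extra $p$-divisible classes.
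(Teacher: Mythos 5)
Your approach diverges from the paper's in a way that opens a real gap. The paper descends via the $\psi^\ell$-equivariant and injective maps $a^{1/2}$ (\cref{lem:rankm}, \cref{lem:isop}) to the smaller degree $2b$, with $b = b_{l+1}-p^l(p-1)$ the largest multiple of $p^l(p-1)$ below $k(p-1)$, where the inductive hypothesis applies, and checks linear independence mod $p$ of the images there. You instead pass upward via multiplication by a Bott class $\beta_{V'}$ to degree $2pk(p-1)$. The crucial difference is that $a^{1/2}$ is $\psi^\ell$-equivariant (it is restriction along a map of $C_{p^n}$-spectra), so it carries $M_\ast^\bbC$ into $M_\ast^\bbC$, whereas multiplication by $\beta_{V'}$ is not: $\psi^\ell(\beta_{V'}) = \Phi_{V'}(\ell)\beta_{V'}$ with $\Phi_{V'}(\ell) \neq 1$. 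Consequently your (correct) equivalence only establishes
\[
M^\bbC_{2k(p-1)} \cap S \;=\; \widetilde{M}^\bbC_{2k(p-1)},
\qquad S = \{u\beta_{k(p-1)} : u(\Phi_{V'}(\ell)-1)=0\},
\]
and to conclude $M^\bbC_{2k(p-1)} = \widetilde{M}^\bbC_{2k(p-1)}$ you must also show $M^\bbC_{2k(p-1)}\subset S$, i.e.\ that $\beta_{V'}$ carries every Adams-fixed class to an Adams-fixed class. That step is absent. One could hope to supply it by observing that any $u$ with $u\beta_{k(p-1)}\in M^\bbC_{2k(p-1)}$ satisfies $u\cdot t_{n,0}=0$ (by the rank argument of \cref{lem:rankm} and the explicit generators $u_s = t_{n,s}\,e_{V_{q_s/2}-V_{k(p-1)}}$ of \cref{lem:explicitgens}), and then that $\Phi_{V'}(\ell)-1\in(t_{n,0})$; but the latter is exactly \cref{lem:adamsop}, which requires $V'$ to have rational characters over $C_{p^n}$, and as you yourself note this fails precisely when $l<n$. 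So your acknowledged "principal obstacle" and the missing containment $M^\bbC_{2k(p-1)}\subset S$ are one and the same gap, and the argument as written does not close.

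Two smaller points. First, the odd-degree equality is not a routine "bootstrap" from the even case via \cref{lem:kuodd}: one would have to show that the cokernel of $a^{1/2}\colon M^\bbC_{2k(p-1)}\to M^\bbC_{2k(p-1)-1}$ has exactly the same order as that of $a^{1/2}\colon\widetilde{M}^\bbC_{2k(p-1)}\to\widetilde{M}^\bbC_{2k(p-1)-1}$, and the upper bound $H^1(\psi^\ell;\pi_{2k(p-1)}KU)$ gives only one inequality. The paper sidesteps this by treating the odd degree first (it drives the whole induction) and deducing the even case from it using $z_{n,i}=pt_{n,i}-t_{n,i-1}$. Second, a minor indexing slip: to pass from $\pi_{0,2k(p-1)}KU_{C_{p^n}}\cong RU(C_{p^n})\{\beta_{k(p-1)}\}$ to $\pi_{0,2pk(p-1)}KU_{C_{p^n}}\cong RU(C_{p^n})\{\beta_{pk(p-1)}\}$ you want $V' = V_{pk(p-1)}-V_{k(p-1)}$, a sum of $k(p-1)^2$ characters, not $V_{2pk(p-1)}-V_{2k(p-1)}$.
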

\begin{proof}
By \cref{lem:specialcase}, we wish to prove
\begin{align*}
M_{2k(p-1)}^\bbC &= \bbZ\{z_{n,i}\beta_{k(p-1)}:1\leq i \leq l\}\oplus\bbZ\{a^{b_i-k(p-1)}t_{n,i}\beta_{b_i}:l<i\leq n\},\\
M_{2k(p-1)-1}^\bbC &= \bbZ\{a^{1/2}t_{n,i}\beta_{k(p-1)}:1\leq i \leq l\}\oplus\bbZ\{a^{b_i-k(p-1)+1/2}t_{n,i}\beta_{b_i}:l<i\leq n\}.
\end{align*}
By \cref{lem:rankm}, it suffices to prove that these elements are linearly independent mod $p$. In other words, if $y_i$ is the $i$th listed element, then we need to prove that
\[
y_i\not\equiv 0 \pmod{p,y_1,\ldots,y_{i-1}}.
\]
If $i\leq l$ then this is clear. By definition $z_{n,i} = p t_{n,i} - t_{n,i-1}$. Thus if $i > l$ then the assertion that $y_i\not\equiv 0 \pmod{p,y_1,\ldots,y_{i-1}}$ for $M_{2k(p-1)-1}^\bbC$ implies the same for $M_{2k(p-1)}^\bbC$, so we restrict our attention to the former case.

Let $b = b_{l+1} - p^l(p-1)$ be the largest multiple of $p^l(p-1)$ smaller than $k(p-1)$. The inductive hypothesis then ensures
\[
M_{2b}^\bbC = \bbZ\{z_{n,i}\beta_{b} : 1 \leq i \leq l+1\} \oplus \bbZ\{a^{b_i-b}t_{n,i}\beta_{b_i} : l+1 < i \leq n\}.
\]
There is a map
\[
\alpha = a^{k(p-1)-b-1/2}\colon M_{2k(p-1)-1}^\bbC\rightarrow M_{2b}^\bbC,
\]
and it suffices now to prove that $\alpha(y_i)$ is nonzero mod $p,\alpha(y_1),\ldots,\alpha(y_{i-1})$ for $i>l$. This map is given by
\[
\alpha(y_i) = \begin{cases}
e_{b,k(p-1)}t_{n,i}\beta_b & 1\leq i \leq l,\\
a^{b_i-b}t_{n,i}\beta_{b_i} = e_{b,b_i}t_{n,i}\beta_b & l<i \leq n.
\end{cases}
\]
\cref{prop:erz} implies $\alpha(y_i)\in\bbZ\{z_{n,i}\beta_b : 1 \leq i \leq l\}$ for $1\leq i \leq l$, so it suffices to prove that $\alpha(y_i)$ is nonzero mod $p,z_{n,1},\ldots,z_{n,l},\alpha(y_{l+1}),\ldots,\alpha(y_{i-1})$ for $l<i$. If $i=l+1$ then \cref{prop:erz} further implies
\[
\alpha(y_{l+1}) \equiv z_{n,l+1}\pmod{z_{n,1},\ldots,z_{n,l}}.
\]
In particular
\[
\bbZ\{z_{n,1}\beta_b,\ldots,z_{n,l+1}\beta_b\} = \bbZ\{z_{n,1}\beta_b,\ldots,z_{n,l}\beta_b\}\oplus\bbZ\{\alpha(y_{l+1})\},
\]
so the inductive hypothesis can be written as
\[
M_{2b}^\bbC = \bbZ\{z_{n,i}\beta_b:1\leq i \leq l \}\oplus\bbZ\{\alpha(y_{l+1})\}\oplus\bbZ\{\alpha(y_i):l+1<i\leq n\},
\]
and the lemma follows.
\end{proof}

We now give the following.

\begin{proof}[Proof of \cref{thm:kufp}]
We must show that $\widetilde{M}_k^\bbC = M_k^\bbC$ for all $k$. \cref{lem:evens} implies that $\widetilde{M}_{2k(p-1)}^\bbC = M_{2k(p-1)}^\bbC$ for $p^{n-1}\mid k$. Then \cref{lem:induction} implies $\widetilde{M}_{2k(p-1)}^\bbC = M_{2k(p-1)}^\bbC$ and $\widetilde{M}_{2k(p-1)-1}^\bbC = M_{2k(p-1)-1}^\bbC$ for $p^{n-2} \mid k$. Iterating this we find that $\widetilde{M}_{2k(p-1)-1}^\bbC = M_{2k(p-1)-1}^\bbC$ for all $k$. Finally, $\widetilde{M}_{k}^\bbC = M_k^\bbC$ for all $k$ by \cref{lem:isop}. 
\end{proof}

\subsection{Real fixed points}\label{ssec:realfixedpt}

The map $KO_{C_{p^n}}\rightarrow KU_{C_{p^n}}$ induces an injection $M_k^{C_{p^n}}\rightarrow M_k^{\bbC,C_{p^n}}$.

\begin{lemma}\label{lem:kufopodd}
If $p > 2$ then $M_k^{C_{p^n}} \cong M_k^{\bbC,C_{p^n}}$.
\end{lemma}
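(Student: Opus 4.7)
The plan is to compare both sides as subgroups of $\pi_{0,k} KU_{C_{p^n}}$ by exploiting the behavior of $KO$ at odd primes, and then verify equality against the explicit description in \cref{thm:kufp}.

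At odd primes, complexification $c\colon KO_{(p)} \to KU_{(p)}$ is a split monomorphism with retraction $\tfrac{1}{2}(1+\psi^{-1})$, realizing $KO_{(p)}$ as the $\psi^{-1}$-fixed summand of $KU_{(p)}$. The same holds equivariantly for $G = C_{p^n}$, and since all torsion in $\pi_{0,k}KO_{C_{p^n}}$ is $2$-primary for $p$ odd, inverting $2$ is invisible after quotienting by torsion. We therefore obtain an injection $\pi_{0,k}KO_{C_{p^n}}/\mathrm{tors} \hookrightarrow \pi_{0,k}KU_{C_{p^n}}$ whose image is the $\psi^{-1}$-fixed subgroup. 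Under this inclusion, $M_k^{C_{p^n}}$ corresponds precisely to the subgroup of $\pi_{0,k}KU_{C_{p^n}}$ fixed by both $\psi^\ell$ and $\psi^{-1}$.

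It remains to show this subgroup equals $M_k^{\bbC, C_{p^n}}$. The explicit generators $a^{b_s-k(p-1)}\cdot a^{1/2}\,t_{n,s}\beta_{b_s}$ of $M_k^\bbC$ identified in \cref{thm:kufp} are fixed by every $\psi^a$ with $p\nmid a$: the argument given for $\psi^\ell$ (in the lemma immediately following \cref{lem:explicitgens}) only uses that $\psi^a(\beta_{\rat_{p^i}}^c) \equiv \beta_{\rat_{p^i}}^c \pmod{t_{i,0}}$, which is \cref{lem:adamsopcyclic} for any such $a$, together with $a^{1/2}\,t_{n,0} = 0$ to kill the transfer correction. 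Conversely, any class in $\pi_{0,k}KU_{C_{p^n}}$ that is fixed by both $\psi^\ell$ and $\psi^{-1}$ lies in $M_k^\bbC$: for $p$ odd, $\ell$ a primitive root mod $p^2$ is also a primitive root mod $p^n$, so $\langle \ell\rangle = (\bbZ/p^n)^\times$, and any $a$ coprime to $p$ satisfies $a \equiv \ell^j \pmod{p^n}$ for some $j$; the discrepancy $\psi^a - \psi^{\ell^j}$ computed via \cref{prop:adamsopschar} lies in $t_{n,0}\cdot\pi_{0,k}KU_{C_{p^n}}$, which vanishes on any class in the image of $a^{1/2}$ or in the augmentation ideal. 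Thus a $\psi^\ell$-fixed class originating from $KO$ is automatically $\psi^a$-fixed for every such $a$.

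The main obstacle is the bookkeeping of the transfer correction $t_{n,0}$ arising from the fact that $\psi^a(\beta_k)$ depends on $a$ as an integer rather than just mod $p^n$. These corrections always lie in a $t_{n,0}$-multiple and are killed by the $a^{1/2}$ factor present in every generator of $M_k^\bbC$ (or by the augmentation for $k = 0$), so they do not affect the fixed-point computation but must be tracked carefully when matching the two descriptions.
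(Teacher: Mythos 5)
There is a genuine gap in the first paragraph, and it is exactly the point the paper's proof has to work for. You claim that the complexification map induces an injection $\pi_{0,k}KO_{C_{p^n}}/\mathrm{tors} \hookrightarrow \pi_{0,k}KU_{C_{p^n}}$ whose image is the \emph{entire} $\psi^{-1}$-fixed subgroup, citing the idempotent $\tfrac{1}{2}(1+\psi^{-1})$. This is false as stated. For example, if $k \equiv 4 \pmod 8$, the Bott class $\beta_{V_{k/2}}$ is $\psi^{-1}$-fixed (as $V_{k/2}$ is quaternionic by \cref{lem:dseq}), restricts to $\beta^{k/2}\in\pi_k KU$, and hence is not in the image of complexification since only $2\beta^{k/2}$ lifts to $\pi_k KO$; the relation $cr = 1+\psi^{-1}$ only shows the cokernel of $c$ inside the $\psi^{-1}$-fixed subgroup is $2$-torsion, not that it vanishes. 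Your remark that ``inverting $2$ is invisible after quotienting by torsion'' establishes the desired isomorphism only after inverting $2$ (equivalently, after $\otimes\,\bbZ_{(p)}$), and so shows only that the cokernel of $M_k^{C_{p^n}} \subset M_k^{\bbC,C_{p^n}}$ has order prime to $p$ — it does not identify that cokernel as zero.

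What actually rescues the statement for $p>2$ is precisely what the paper proves: the generators of $M_k^{\bbC,C_{p^n}}$ are permanent cycles in the homotopy fixed point spectral sequence $H^\ast(\{\psi^{\pm 1}\};\pi_{\star,k}KU_{C_{p^n}})\Rightarrow\pi_{\star,\ast}KO_{C_{p^n}}$, and this is not automatic but follows from the specific observation that these generators are all multiples of $a^{1/2}$ and $a^{1/2}\cdot\eta = 0$ on the $E_2$-page (via $a^{1/2}t_{n,0}=0$ and $t_{n,0}\cdot\eta=\eta$ for $p$ odd). Your approach bypasses this differential analysis, but since the intermediate claim you substitute for it is false, the argument does not close. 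One could plausibly repair it by also checking the cokernel of $M_k^{C_{p^n}}\subset M_k^{\bbC,C_{p^n}}$ is $p$-power torsion via the geometric-fixed-point splitting of $KO_{C_{p^n}}[\tfrac{1}{p}]$ (paralleling \cref{lem:rankm}), after which the two torsion bounds force the cokernel to be zero — but that is an additional argument you would need to supply, and it is arguably no shorter than the paper's.

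By contrast, your second paragraph — the Galois bookkeeping, using $\langle \ell\rangle = (\bbZ/p^n)^\times$ together with the fact that $\psi^a - \psi^{\ell^j}$ takes values in $t_{n,0}\cdot\pi_{0,k}KU_{C_{p^n}}$ and is killed by $a^{1/2}$ — is sound, and correctly mirrors the considerations in \cref{lem:adamsopcyclic} and \cref{prop:adamsopschar}.
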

\begin{proof}
The only way that this could fail is if some class in $M_k^{\bbC,C_{p^n}} \subset \pi_{0,k}KU_{C_{p^n}}$ supports a differential in the homotopy fixed point spectral sequence
\[
E_2 = H^\ast(\{\psi^{\pm 1}\};\pi_{\star,k}KU_{C_{p^n}})\Rightarrow\pi_{\star,\ast}KO_{C_{p^n}}.
\]
Hence it suffices to prove that if $k = p^{s-1}c(p-1)$ then $a^{1/2}t_{n,s}\beta_k$ does not support such a differential. But this follows as $a^{1/2}\cdot \eta = 0$ on the $E_2$-page, as can be seen using the identity $a^{1/2}\cdot t_{n,0} = 0$ and, as $p$ is odd, $t_{n,0}\cdot \eta = \eta$.
\end{proof}

A bit more work is needed when $p=2$.

\begin{theorem}\label{thm:kofp}
Fix $k>0$. Then $M_k^{C_{2^n}} \subset M_k^{\bbC,C_{2^n}}$ can be described as follows. 
\begin{enumerate}
\item When $n=1$, we have $a^{1/2} = a_\sigma$ and
\begin{align*}
M_{8k-j}^{C_{2}} &= \bbZ\{a_\sigma^{j-1}\cdot a_\sigma\beta_{4k}\},&&1\leq j \leq 4,\\
M_{8k-5}^{C_{2}} &= \bbZ\{a_\sigma^3\beta_{4k-1}\},\\
M_{8k-6}^{C_{2}} &= \bbZ\{a_\sigma^2\beta_{4k-2}\},\\
M_{8k-7-\epsilon}^{C_{2}} &= \bbZ\{a_\sigma^\epsilon\cdot a_\sigma\beta_{4k-3}\}&&0\leq \epsilon \leq 1.
\end{align*}
In particular, $M_k^{C_{2}} = M_k^{\bbC,C_2}$ except when $k\equiv 3,4,5\pmod{8}$.

\item When $n=2$, we have
\begin{align*}
M_{8k-j}^{C_4} &= \bbZ\{a^{(j-1)/2}\cdot a^{1/2}t_{2,1}\beta_{4k},a^{(j-1)/2}\cdot a^{1/2}\beta_{4k}\}&&1\leq j \leq 4,\\
M_{8k-5}^{C_4} &= \bbZ\{a^{1/2}t_{2,1}\beta_{4k-2},2a^{1/2}\beta_{4k-2}\},\\
M_{8k-6}^{C_4} &= \bbZ\{a^{1/2}\cdot a^{1/2}t_{2,1}\beta_{4k-2},a\beta_{4k-2}\},\\
M_{8k-7-\epsilon}^{C_4} &= \bbZ\{a^{\epsilon/2}\cdot a^{1/2}t_{2,1}\beta_{4k-3},a^{(\epsilon+1)/2}\cdot a\beta_{4k-2}\}&&0\leq \epsilon \leq 1.
\end{align*}
In particular, $M_k^{C_4} = M_k^{\bbC,C_4}$ except when $k\equiv 3,4,5\pmod{8}$.

\item Let $b_s$ be the smallest multiple of $2^{s-1}$ greater than or equal to $4k$. When $n \geq 3$, we have
\begin{align*}
M_{8k-j}^{C_{2^n}} &= \bbZ\{a^{b_i-4k+(j-1)/2}\cdot a^{1/2}t_{n,i}\beta_{b_i} : 1 \leq i \leq n\},&& 1\leq j \leq 4,\\
M_{8k-5-\epsilon}^{C_{2^n}} &= \bbZ\{a^{\epsilon/2}\cdot a^{1/2}t_{n,1}\beta_{4k-2},a^{\epsilon/2}\cdot a^{1/2}t_{n,2}\beta_{4k-2}\}&&0\leq \epsilon \leq 1\\
&\oplus \bbZ\{a^{b_i-4k+2+\epsilon/2}\cdot a^{1/2}t_{n,i}\beta_{b_i} : 3 \leq i \leq n\}\\
M_{8k-7-\epsilon}^{C_{2^n}} &= \bbZ\{a^{\epsilon/2}\cdot a^{1/2}t_{n,1}\beta_{4k-3},a^{1+\epsilon/2}\cdot a^{1/2}t_{n,2}\beta_{4k-2}\}&&0\leq\epsilon\leq 1\\
&\oplus \bbZ\{a^{b_i-4k+3+\epsilon/2}\cdot a^{1/2}t_{n,i}\beta_{b_i} : 3 \leq i \leq n\}.\end{align*}
In particular, $M_k^{C_{2^n}} = M_k^{\bbC,C_{2^n}}$ except when $k\equiv 4,5\pmod{8}$. 
\end{enumerate}
\end{theorem}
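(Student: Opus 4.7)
The plan is to compute $M_k^{C_{2^n}}$ via the Galois-descent homotopy fixed point spectral sequence
\[
E_2^{s,\bullet} = H^s(\{\psi^{\pm 1}\};\pi_{\bullet,k}KU_{C_{2^n}}) \Rightarrow \pi_{\bullet-s,k}KO_{C_{2^n}},
\]
whose $H^0$-part in stem $0$ is, modulo torsion, the image of $M_k^{C_{2^n}}$ in $M_k^{\bbC,C_{2^n}}$ under the complexification injection. The task thus reduces to deciding, for each generator of $M_k^{\bbC,C_{2^n}}$ identified by \cref{thm:kufp}, whether it is a permanent cycle, and when a generator fails to be one, identifying the modification by $\eta$-tower classes from $H^{\geq 1}$ that does give one.

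The key structural input is a dichotomy on Bott classes. By \cref{prop:adamsopschar}, $\psi^{-1}(\beta_L) = -L^{-1}\beta_L$, and by \cref{lem:dseq}(2) each $V_{2j}$ is quaternionic via $\bbH\otimes_\bbC(\bs)$. Hence $\beta_{V_{4j}}$ is quaternionic of real dimension $8j$ and descends canonically to a $KO_{C_{2^n}}$-Bott class, while $\beta_{V_{4j+2}}$ is $\psi^{-1}$-fixed but merely symplectic, and supports the equivariant lift of the classical differential $d_3(u^2) = \eta^3$ that forces $\eta^3 = 0$ in $\pi_\ast KO$. The generators $a^m\cdot a^{1/2}t_{n,i}\beta_{b_i}$ of $M_k^{\bbC,C_{2^n}}$ therefore split into two types according to whether $b_i\equiv 0$ or $2\pmod 4$: in the former case the generator is an immediate permanent cycle, while in the latter it may be the source of a nonzero $d_3$ unless its $b_i$ is ``shifted down by two'' via an Euler-class multiplication (see \cref{prop:erz}) so as to land on a liftable Bott class.

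I would execute this in the following order. First, settle the small cases $n=1,2$ by reading off $\pi_\star KO_{C_{2^n}}$ in the appropriate $RO(C_{2^n})$-graded degrees; for $C_2$ this is classical, and for $C_4$ the answer is obtained by combining the $C_2$-HFPSS with the tom Dieck splitting. Next, for $n \geq 3$, perform a case analysis on $k \pmod 8$. For $k \equiv 0,1,2,6,7 \pmod 8$, observe that in each generator of $M_k^{\bbC,C_{2^n}}$ either $b_i \equiv 0 \pmod 4$ or the generator carries enough $a^{1/2}$-factors that any potential $d_3$-target is already Euler-killed, giving $M_k^{C_{2^n}} = M_k^{\bbC,C_{2^n}}$. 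For $k \equiv 3,4,5 \pmod 8$, compute $d_3$ on each offending generator by restricting the equivariant lift of $d_3(u^2) = \eta^3$ along complexification, and combine with \cref{prop:erz} to check that the substitutions $\beta_{b_i}\rightsquigarrow \beta_{4k-2}$ or $\beta_{4k-3}$ paired with an $a^{\epsilon/2}$ factor are exactly the adjustments that land in the $d_3$-kernel; these are the generators listed in the theorem.

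The main obstacle will be that last step: writing down $d_3$ on the classes $a^{1/2}t_{n,i}\beta_{V_{4j+2}}$ explicitly, and verifying that the claimed bases span the $d_3$-kernel. To close the argument one has to check (i) that no higher differentials further shrink the answer, which follows from a rank count: both $M_k^{C_{2^n}}$ and the claimed module are free of rank $n$ after inverting $2$, by the same argument as \cref{lem:rankm} applied to $KO_{C_{2^n}}$, so the spectral sequence must collapse at $E_4$ in these stems; and (ii) that indeterminacy from adding $\eta$-tower classes in $H^{\geq 2}$ does not produce further basis elements, which is controlled by the computation of $\eta\cdot a^{1/2}t_{n,i}$ on the $E_2$ page. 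Both are bookkeeping exercises built directly on \cref{ssec:eulerclasses} and the $\psi^{-1}$-equivariant structure, and can be cross-checked against the explicit $n=1,2$ computations to pin down the universal pattern.
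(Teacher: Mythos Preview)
Your approach is essentially the paper's: run the $\{\psi^{\pm 1}\}$-homotopy fixed point spectral sequence and decide, for each generator of $M_k^{\bbC,C_{2^n}}$, whether the $d_3$ coming from $d_3(\beta_{4j+2}) = \eta^3\beta^{-2}\beta_{4j+2}$ is nonzero. The paper also defers $n=1$ to the literature and works $n=2$ and $n\geq 3$ by direct HFPSS analysis (not via a tom Dieck splitting for $C_4$, which would not obviously help here).

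There is one genuine soft spot. Your dichotomy ``$b_i\equiv 0\pmod 4$ or enough $a^{1/2}$-factors'' does not by itself explain why the case $k\equiv 3\pmod 8$ behaves differently for $n\geq 3$ than for $n\leq 2$. In degree $8k-5$ the generators $a^{1/2}t_{n,1}\beta_{4k-2}$ and $a^{1/2}t_{n,2}\beta_{4k-2}$ carry no extra $a^{1/2}$-factors, yet they are permanent cycles precisely when $n\geq 3$. The reason is the identity
\[
t_{n,0}\cdot\eta = t_{n,1}\cdot\eta = t_{n,2}\cdot\eta = (1+\sigma)\cdot\eta
\]
on the $E_2$-page, valid only for $n\geq 3$; combined with $a^{1/2}t_{n,0}=0$ this forces $a^{1/2}t_{n,i}\cdot\eta = 0$ for $i=1,2$, killing the $d_3$. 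For $n=2$ this fails (indeed $at_{2,1}\beta_{4k-1}$ does support a nonzero $d_3$), so the ``$a^{1/2}$-factor'' heuristic is not a reliable criterion. You gesture at computing $\eta\cdot a^{1/2}t_{n,i}$ in your closing remarks, but you frame it as bookkeeping about indeterminacy from $H^{\geq 2}$; in fact it is the mechanism that determines which $H^0$-classes survive $d_3$, and it is where the $n\leq 2$ versus $n\geq 3$ split actually comes from. Make that identity the centerpiece of the $n\geq 3$ argument and your outline will match the paper's proof.
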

\begin{proof}
The difference between $M_k^{C_{2^n}}$ and $M_k^{\bbC,C_{2^n}}$ is controlled by what classes support differentials in the homotopy fixed point spectral sequence for $\pi_{\ast,k}KO_{C_{2^n}}$. We refer the reader to \cite[Section 6]{balderrama2023equivalences} for a detailed discussion of the homotopy fixed point spectral sequence for cyclic-equivariant $K$-theory, and just describe the key points.

If $k$ is odd, then there is nothing in positive filtration in the homotopy fixed point spectral sequence for $\pi_{\ast,k}KO_{C_{2^n}}$. By \cref{lem:dseq} the representations $V_{2k}$ are quaternionic, and thus $\beta_{2k} = \beta_{V_{2k}}$ is fixed by $\psi^{-1}$. There are differentials
\[
d_3(\beta_{2k}) = \begin{cases}
\eta^3 \beta^{-2}\beta_{2k}& k \text{ odd},\\
0&k\text{ even}.
\end{cases}
\]

We now proceed to the computation. When $n = 1$ the theorem is classical, essentially going back to Landweber \cite{landweber1969equivariant}, except that he uses $\psi^2$ instead of $\psi^\ell$; for us a convenient reference is \cite{balderrama2021borel}. So we omit the proof of this case.

Consider $n = 2$ and write $RO(C_4) = \bbZ\{1,\sigma,\lambda\}$ with $\sigma$ the sign representation and $\lambda$ the faithful irreducible representation, with images $L^2$ and  $L+L^3$ in $RU(C_4)$ respectively. Here we have
\begin{align*}
M_{8k-1}^{\bbC,C_4} &= \bbZ\{a^{1/2}t_{2,1}\beta_{4k},a^{1/2}\beta_{4k}\},\\
M_{8k-2}^{\bbC,C_4} &= \bbZ\{at_{2,1}\beta_{4k},a\beta_{4k}\},\\
M_{8k-3}^{\bbC,C_4} &= \bbZ\{a^{1/2}t_{2,1}\beta_{4k-1},a^{3/2}\beta_{4k}\},\\
M_{8k-4}^{\bbC,C_4} &= \bbZ\{at_{2,1}\beta_{4k-1},a^2\beta_{4k}\},\\
M_{8k-5}^{\bbC,C_4} &= \bbZ\{a^{1/2}t_{2,1}\beta_{4k-2},a^{1/2}\beta_{4k-2}\},\\
M_{8k-6}^{\bbC,C_4} &= \bbZ\{at_{2,1}\beta_{4k-2},a\beta_{4k-2}\},\\
M_{8k-7}^{\bbC,C_4} &= \bbZ\{a^{1/2}t_{2,1}\beta_{4k-3},a^{3/2}\beta_{4k-2}\},\\
M_{8k-8}^{\bbC,C_4} &= \bbZ\{at_{2,1}\beta_{4k-3},a^2\beta_{4k-2}\},
\end{align*}
where $t_{2,1} = 1 + \sigma$. Because $\beta_{4k}$ is a permanent cycle, so are all the classes in $M_{8k-1}^{\bbC,C_4}$ and $M_{8k-2}^{\bbC,C_4}$. In degree $8k-4$ the class $at_{2,1}\beta_{4k-1} = (1+\sigma-\lambda)\beta_{4k-2}$ supports a nonzero differential, leaving behind $2at_{2,1}\beta_{4k-1} = a^2 t_{2,1}\beta_{4k}$. It then follows that in degree $8k-3$ the class $a^{1/2}t_{2,1}\beta_{4k-1}$ must support a differential, leaving behind $2a^{1/2}t_{2,1}\beta_{4k-1} = a^{3/2}t_{2,1}\beta_{4k}$. Thus we have shown
\begin{align*}
M_{8k-3}^{C_4} &= \bbZ\{a^{3/2} t_{2,1}\beta_{4k-1},a^{3/2}\beta_{4k}\},\\
M_{8k-4}^{C_4} &= \bbZ\{a^2 t_{2,1}\beta_{4k-1},a^2\beta_{4k}\}.
\end{align*}
Next consider degree $8k-5$. By \cref{lem:kuodd}, the map $\pi_{0,8k-4}KU_{C_4}\rightarrow\pi_{0,8k-5}KU_{C_4}$ realizes the target as isomorphic to $RU(C_4)/(1+L+L^2+L^3)$, say with basis
\[
\pi_{0,8k-5}KU_{C_4} = \bbZ\{a^{1/2}\beta_{4k-2},a^{1/2}L\beta_{4k-2},a^{1/2}L^2\beta_{4k-2}\}.
\]
There are differentials
\[
d_3(a^{1/2}\beta_{4k-2}) = a^{1/2}\eta^3\beta^{-2}\beta_{4k-2},\qquad d_3(a^{1/2}L^2\beta_{4k-2}) = a^{1/2}\eta^3 L^2 \beta^{-2}\beta_{4k-2}.
\]
As $a^{1/2}(1+L+L^2+L^3) = 0$ and $(L+L^3)\cdot \eta = 0$ on the $E_2$-page, it follows that these targets are equal. As $a^{1/2}L\beta^{4k-2}$ is not fixed by $\psi^{-1}$, we find
\[
\pi_{0,8k-5}KO_{C_4} = \bbZ\{a^{1/2}\beta_{4k-2}+a^{1/2}L^2\beta_{4k-2},2a^{1/2}\beta_{4k-2}\} = \bbZ\{a^{1/2}t_{2,1}\beta_{4k-2},2a^{1/2}\beta_{4k-2}\}.
\]
Everything here is fixed by $\psi^3$ so this is $M_{8k-5}^{C_4}$ as claimed. As $\psi^{-1}$ acts freely in degrees $8k-6$ there are no differentials there, and as $\beta_{4k-4}$ is a permanent cycle there are no differentials in degrees $8k-8$. The map $a^{1/2}\colon \pi_{0,8k-7}KU_{C_4}\rightarrow \pi_{0,8k-8}KU_{C_4}$ induces an injective map on the homotopy fixed point spectral sequence, and so there are no differentials in degrees $8k-7$ either. Thus the groups $M_\ast^{C_4}$ are as described.

Now consider $n\geq 3$. As $\beta_{4k}$ is a permanent cycle, all generators of the form $a^j t_{n,i}\beta_{4k}$ are permanent cycles. After throwing out these generators, we are left with
\begin{align*}
M_{8k-1}^{\bbC,C_{2^n}} &\equiv 0,\\
M_{8k-2}^{\bbC,C_{2^n}} &\equiv 0,\\
M_{8k-3}^{\bbC,C_{2^n}} &\equiv \bbZ\{a^{1/2}t_{n,1}\beta_{4k-1}\},\\
M_{8k-4}^{\bbC,C_{2^n}} &\equiv \bbZ\{at_{n,1}\beta_{4k-1}\},\\
M_{8k-5}^{\bbC,C_{2^n}} &\equiv \bbZ\{a^{1/2}t_{n,1}\beta_{4k-2},a^{1/2}t_{n,2}\beta_{4k-2}\},\\
M_{8k-6}^{\bbC,C_{2^n}} &\equiv \bbZ\{at_{n,1}\beta_{4k-2},at_{n,2}\beta_{4k-2}\},\\
M_{8k-7}^{\bbC,C_{2^n}} &\equiv \bbZ\{a^{1/2}t_{n,1}\beta_{4k-3},a^{3/2}t_{n,2}\beta_{4k-2}\},\\
M_{8k-8}^{\bbC,C_{2^n}} &\equiv \bbZ\{at_{n,1}\beta_{4k-3},a^2t_{n,2}\beta_{4k-2}\}.
\end{align*}
Both $\beta_{4k-2}$ and $\sigma\beta_{4k-2}$ support differentials. As
\[
at_{n,1}\beta_{4k-1} = (1+\sigma+V)\beta_{4k-2}
\]
with $V\cdot \eta = 0$ on the $E_2$-page, it follows that $at_{n,1}\beta_{4k-1}$ supports a differential and only $2at_{n,1}\beta_{4k-1} = a^2t_{n,1}\beta_{4k}$ survives. This implies by comparison with $M_{8k-3}\rightarrow M_{8k-4}$ that $a^{1/2}t_{n,1}\beta_{4k-1}$ must also support a differential, leaving behind $2a^{1/2}t_{n,2}\beta_{4k-1} = a^{3/2}t_{n,2}\beta_{4k}$. This describes $M_{8k-j}$ for $1\leq j \leq 4$.

Consider degree $8k-5$. The class $a^{1/2}\beta_{4k-2} \in \pi_{0,8k-5}KU_{C_{2^n}}$ does still support a differential as in the $C_4$-equivariant case, but it is not fixed by $\psi^\ell$, so something different happens now. Because $n\geq 3$, we have
\[
t_{n,0}\cdot \eta = t_{n,1}\cdot \eta = t_{n,2}\cdot \eta = (1+\sigma)\cdot \eta,
\]
and thus as $a^{1/2}t_{n,0} = 0$ it follows that both $a^{1/2}t_{n,1}\beta_{4k-2}$ and $a^{1/2}t_{n,2}\beta_{4k-2}$ are permanent cycles. Hence so are all of
\[
at_{n,1}\beta_{4k-2},\quad at_{n,2}\beta_{4k-2},\quad a^{3/2}t_{n,2}\beta_{4k-2},\quad a^2t_{n,2}\beta_{4k-2}.
\]

We are left considering only $a^{1/2}t_{n,1}\beta_{4k-3}$ and $at_{n,1}\beta_{4k-3}$. Here $at_{n,1}\beta_{4k-3} = z_{n,1}\beta_{4k-4}$ survives because $\beta_{4k-4}$ does, and thus $a^{1/2}t_{n,1}\beta_{4k-3}$ must also survive. This yields the claimed description of $M_\ast$.
\end{proof}

This concludes the computation of the groups $M_\ast^{C_{p^n}}$. 

\subsection{The \texorpdfstring{$C_{p^n}$}{C\_pn}-Mahowald filtration}\label{ssec:cpnmfilt}

We have by now essentially proved \cref{thm:mainroot1} and \cref{thm:fixeddegree}. Consider the function
\[
\phi = \phi\circ a^{k/2}\colon M_k^{C_{p^n}} \rightarrow R\bbQ(C_{p^n}) \cong A(C_{p^n}) \rightarrow\Cl(\Sub(C_{p^n}),\bbZ).
\]
This satisfies $(\phi y)(e) = 0$, so it is equivalent to instead consider
\[
\tilde{\phi}\colon M_k^{C_{p^n}}\rightarrow \Fun(\{1,\ldots,n\},\bbZ),\qquad (\tilde{\phi} y)(i) = \phi_{n,i}(a^{k/2}y).
\]

\begin{lemma}\label{lem:phifp2}
Fix $k > 0$ and $M_k^{C_{p^n}} = \bbZ\{y_1,\ldots,y_n\}$ be the ordered basis described by \cref{thm:kufp} (given \cref{lem:kufopodd}) for $p>2$ and by \cref{thm:kofp} for $p=2$. Then $\tilde{\phi}(y_s) = f_{s,k}^{p,n}$ is as defined in \cref{def:thebasis}.
\end{lemma}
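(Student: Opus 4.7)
The plan is to compute $a^{k/2}y_s$ explicitly and apply the marks homomorphism. The unifying observation is that every basis element described in \cref{thm:kufp} and \cref{thm:kofp} has the form $y_s = c \cdot \alpha \cdot t_{n,s}\beta_b$, where $c \in \bbZ$ is a scalar (equal to $1$ in all cases except one $p = 2$ generator, where $c = 2$), and $\alpha$ is a product of $a^{1/2}$'s whose total power is $2b - k$. Consequently
\[
a^{k/2}y_s \;=\; c \cdot (a^{1/2})^{2b}\, t_{n,s}\beta_b \;=\; c \cdot e_{V_b}\, t_{n,s} \;\in\; RU(C_{p^n}).
\]
\cref{lem:dseq} ensures that $V_b$ restricts to $b/(p^{i-1}(p-1))$ copies of $\rat_{p^i}$ on $C_{p^i}$ whenever $p^{i-1}(p-1) \mid b$, which holds for all $i \leq s$ in every case at hand. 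Combined with \cref{lem:geofixv1} giving $\phi_{n,i}(e_{V_b}) = p^{b/p^{i-1}}$, and \cref{lem:marks} giving $\phi_{n,i}(t_{n,s}) = p^{n-s}$ for $i \leq s$ and $0$ for $i > s$, this yields
\[
\tilde\phi(y_s)(i) \;=\; \begin{cases} c \cdot p^{n-s+b/p^{i-1}}, & i \leq s,\\ 0, & i > s.\end{cases}
\]

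For $p > 2$: \cref{lem:kufopodd} gives $M_k^{C_{p^n}} = M_k^{\bbC,C_{p^n}}$, whose basis (in the form of \cref{lem:explicitgens}) satisfies $c = 1$ and $b = q_s/2 = p^{s-1}(p-1)\ceil{(k+1)/(2p^{s-1}(p-1))}$. Substituting recovers the formula of \cref{lem:yintegral}, and the exponent $n-s + p^{s-i}\ceil{(k+1)/(2p^{s-1}(p-1))}$ is precisely the general formula for $f_{s,k}^{p,n}(i)$ from the introduction (which has no exceptions for odd $p$). For $p = 2$ at generic degrees---those where \cref{thm:kofp} asserts $M_k^{C_{2^n}} = M_k^{\bbC,C_{2^n}}$, namely $k \not\equiv 3, 4, 5 \pmod{8}$ for $n \leq 2$ and $k \not\equiv 4, 5 \pmod{8}$ for $n \geq 3$---the same substitution yields the non-exceptional values of $f_{s,k}^{2,n}$.

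The remaining $p = 2$ exceptional degrees are handled by reading off the $(c, b)$ pair for each generator listed in \cref{thm:kofp} and plugging into the displayed formula. For example, the generator $2\,a^{1/2}\beta_{4k-2}$ of $M_{8k-5}^{C_4}$ has $(c, b) = (2, 4k-2)$, giving $\tilde\phi(y_2)(2) = 2 \cdot 2^{(4k-2)/2} = 2^{2k}$, which matches the tabulated exception $f_{2,8k-5}^{2,2}(2) = 2^{2(k-1)+2} = 2^{2k}$; the generator $a_\sigma^3 \beta_{4k}$ of $M_{8k-3}^{C_2}$ has $(c, b) = (1, 4k)$, giving $\tilde\phi(y_1)(1) = 2^{4k}$, matching the exception $f_{1,8k-3}^{2,1}(1) = 2^{4k}$. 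The main labor is organizational---enumerating the three families ($n = 1$, $n = 2$, $n \geq 3$) across the relevant residues of $k \bmod 8$ and confirming each match---but no individual check requires more than a direct evaluation via \cref{lem:marks,lem:dseq,lem:geofixv1}.
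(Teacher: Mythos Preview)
Your approach is correct and is essentially the same as the paper's: for $p>2$ this is exactly the computation of \cref{lem:yintegral}, and for $p=2$ you spell out the case analysis that the paper omits. Your unifying observation that each generator has the form $c\cdot(a^{1/2})^{2b-k}\,t_{n,s}\beta_b$, reducing the computation of $\tilde\phi(y_s)$ to evaluating $\phi_{n,i}(c\cdot e_{V_b}\,t_{n,s})$, is a clean way to package the argument.

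There is one slip worth flagging. You write $\phi_{n,i}(e_{V_b}) = p^{b/p^{i-1}}$, but the correct exponent is $b/(p^{i-1}(p-1))$: by \cref{lem:dseq}(1) the restriction $\res^n_i V_b$ is $b/(p^{i-1}(p-1))$ copies of $\rat_{p^i}$, and then \cref{lem:geofixv1} (or \cref{lem:phiba}) gives $\phi_{i,i}(e_{\rat_{p^i}})=p$. For $p=2$ the two exponents coincide since $p-1=1$, so all your $2$-primary checks are unaffected; for $p>2$ your stated intermediate formula is off by a factor of $p-1$ in the exponent, even though the final answer you write down---namely $n-s+p^{s-i}\lceil (k+1)/(2p^{s-1}(p-1))\rceil$---is correct and matches \cref{lem:yintegral}. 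Fixing the displayed formula to read $c\cdot p^{\,n-s+b/(p^{i-1}(p-1))}$ makes the substitution step consistent.
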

\begin{proof}
If $p>2$ then this is shown in \cref{lem:yintegral}. If $p=2$ then the proof is the same only with several more cases to consider, and we omit the details.
\end{proof}

Recall from \cref{cor:bg} that
\[
\Gamma_k(C_{p^{n-1}}) = \im\left(\Phi\colon \pi_{0,k}S_{C_{p^n}}\rightarrow \pi_0 S_{C_{p^{n-1}}}\right)\subset A(C_{p^{n-1}}).
\]

\begin{theorem}\label{thm:cpnmahowaldfilt}
For $k\geq 1$, the image of $\Gamma_k(C_{p^{n-1}})\subset A(C_{p^{n-1}})$ under the marks homomorphism
\[
\phi\colon A(C_{p^{n-1}})\rightarrow \Hom(\{1,\ldots,n\},\bbZ),\qquad (\phi X)(i) = |X^{C_{p^{i-1}}}|
\]
has as basis the functions $f_{1,k}^{p,n},f_{2,k}^{p,n},\ldots,f_{n,k}^{p,n}$ of \cref{def:thebasis}.
\end{theorem}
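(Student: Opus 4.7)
The plan is to assemble this theorem as a formal consequence of results already established in the excerpt. First I would use Remark~\ref{rmk:gammak} to replace the target $\Gamma_k(C_{p^{n-1}})$ with $M_k(C_{p^n})$ via the geometric fixed point isomorphism $\Phi\colon M_k(C_{p^n}) \xrightarrow{\cong} \Gamma_k(C_{p^{n-1}})$ (valid for $k \geq 1$). Under this identification, the commutative triangle in that remark shows that the marks homomorphism $\phi$ on $A(C_{p^{n-1}})$ corresponds to the shifted marks $\tilde\phi$ on $A(C_{p^n})$, given by $(\tilde\phi X)(i) = |X^{C_{p^i}}|$. Thus it suffices to prove that $\tilde\phi(M_k(C_{p^n}))$ is free with basis $f_{1,k}^{p,n},\ldots,f_{n,k}^{p,n}$.

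Next I would apply Corollary~\ref{cor:reducetoktheory}, which (through the equivariant Adams conjecture of Section~2.2) identifies $M_k(C_{p^n})$ with the $K$-theoretic fixed-point group $M_k^{C_{p^n}} = H^0(\psi^\ell;\pi_{0,k}KO_{C_{p^n}})/\text{torsion}$, compatibly with the marks. At this point the proof reduces to reading off $\tilde\phi$ on an explicit basis of $M_k^{C_{p^n}}$.

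Such a basis is produced by Theorem~\ref{thm:kufp} for odd primes (combined with Lemma~\ref{lem:kufopodd} to deduce the real-$K$-theory statement $M_k^{C_{p^n}} \cong M_k^{\bbC,C_{p^n}}$) and by Theorem~\ref{thm:kofp} for $p=2$. Each such basis has the form $y_1,\ldots,y_n$, where $y_s$ is a specific product of transfers of Bott classes and Euler classes $a^{1/2}$. Finally, Lemma~\ref{lem:phifp2} evaluates $\tilde\phi(y_s) = f_{s,k}^{p,n}$, so these are the promised basis elements.

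The heavy lifting — reducing the lifting problem to equivariant $K$-theory, and then carrying out the $K$-theoretic induction in Sections~4.3--4.4 to pin down $M_k^{C_{p^n}}$ exactly (not just rationally) together with the $p=2$ differential bookkeeping — has already been done. The main obstacle for the present theorem is purely notational: keeping the two shifted indexing conventions for the marks homomorphisms straight as one traverses $\Gamma_k(C_{p^{n-1}}) \xleftarrow{\Phi} M_k(C_{p^n}) \xrightarrow{h} M_k^{C_{p^n}}$, but this is taken care of by the commutative diagram in Remark~\ref{rmk:gammak} together with the definition of $\tilde\phi$ used in Lemma~\ref{lem:phifp2}.
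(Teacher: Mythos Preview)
Your proposal is correct and follows essentially the same approach as the paper's own proof: combine \cref{rmk:gammak} to pass from $\Gamma_k(C_{p^{n-1}})$ to $M_k(C_{p^n})$ and match $\phi$ with $\tilde\phi$, apply \cref{cor:reducetoktheory} to identify $M_k(C_{p^n})$ with $M_k^{C_{p^n}}$, and then invoke \cref{lem:phifp2}. The paper's proof is just a two-line version of what you wrote, with the references to \cref{thm:kufp}, \cref{lem:kufopodd}, and \cref{thm:kofp} left implicit since they are already packaged into \cref{lem:phifp2}.
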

\begin{proof}
Combining \cref{cor:reducetoktheory} and the isomorphism $\Gamma_k(C_{p^{n-1}}) \cong m_k(C_{p^n})$ for $k\geq 1$ (see \cref{rmk:gammak}), we find that $\phi(\Gamma_k(C_{p^{n-1}})) = \tilde{\phi}(M_k^{C_{p^n}})$. So the theorem follows from \cref{lem:phifp2}.
\end{proof}

\begin{cor}\label{cor:fpdegree}
Let $V$ be a sum of $k>0$ faithful complex characters of $C_{p^n}$. Then the image of the degree function
\[
\deg\colon \pi_{V} S_{C_{p^n}}\rightarrow \Hom(\{1,\ldots,n\},\bbZ),\qquad (\deg y)(i) = \Phi^{C_{p^{i}}}(y)
\]
has as basis the functions $f_{1,2k}^{p,n},f_{2,2k}^{p,n},\ldots,f_{n,2k}^{p,n}$ of \cref{def:thebasis}.
\end{cor}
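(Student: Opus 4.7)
The plan is to deduce \cref{cor:fpdegree} directly from \cref{thm:cpnmahowaldfilt} by means of the multiplication-by-$a_V$ map. Given $y \in \pi_V S_{C_{p^n}}$, I will form $X := a_V \cdot y \in \pi_0 S_{C_{p^n}} \cong A(C_{p^n})$. By the very definition of $I_{S(V)}$, the image of $y \mapsto X$ is precisely $I_{S(V)}$, and \cref{prop:fpd} identifies $I_{S(V)} = M_{2k}(C_{p^n})$ since $V$ is a sum of $k$ faithful complex characters.

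Next I would match the degree function against the marks of $X$. Because every summand of $V$ is a faithful character, $V^{C_{p^i}} = 0$ for every $i \geq 1$, and so $\Phi^{C_{p^i}}(a_V) = 1$. Consequently
\[
(\deg y)(i) \;=\; \Phi^{C_{p^i}}(y) \;=\; \Phi^{C_{p^i}}(a_V)\cdot\Phi^{C_{p^i}}(y) \;=\; \Phi^{C_{p^i}}(X) \;=\; |X^{C_{p^i}}|
\]
for $1 \leq i \leq n$. Since $M_{2k}(C_{p^n}) \subset I(C_{p^n})$ lies in the augmentation ideal, the missing mark $|X| = |X^{e}|$ is forced to vanish, so the marks indexed by $i \geq 1$ already determine $X$ and the map $y \mapsto \deg y$ factors through $M_{2k}(C_{p^n})$ injectively on that image. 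In particular, the image of $\deg$ equals the image of the partial marks map $M_{2k}(C_{p^n}) \hookrightarrow \Hom(\{1,\ldots,n\},\bbZ)$.

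To finish, I invoke \cref{rmk:gammak}, which gives an isomorphism $\Phi\colon M_{2k}(C_{p^n}) \rightsim \Gamma_{2k}(C_{p^{n-1}})$ together with a compatibility between $|X^{C_{p^i}}|$ and $|(\Phi X)^{C_{p^{i-1}}}|$. Under this identification, the image of $\deg$ becomes exactly $\phi(\Gamma_{2k}(C_{p^{n-1}}))\subset \Hom(\{1,\ldots,n\},\bbZ)$, and \cref{thm:cpnmahowaldfilt} asserts that this has basis $f_{1,2k}^{p,n}, f_{2,2k}^{p,n}, \ldots, f_{n,2k}^{p,n}$.

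There is no genuine obstacle here; the only point requiring care is keeping the three bookkeeping identifications straight, namely $I_{S(V)} = M_{2k}(C_{p^n})$, the shift of marks under $\Phi$, and the fact that $a_V$ becomes invertible after taking $C_{p^i}$-fixed points for $i\geq 1$. All of the hard content has already been packaged into \cref{thm:cpnmahowaldfilt}, so this is essentially a translation step.
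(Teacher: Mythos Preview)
Your proposal is correct and takes essentially the same approach as the paper: the paper's proof is the single line ``Combine \cref{thm:cpnmahowaldfilt} and \cref{prop:fpd},'' and what you have written is precisely an unpacking of how those two statements combine, including the passage through \cref{rmk:gammak} that is implicit in the proof of \cref{thm:cpnmahowaldfilt}.
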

\begin{proof}
Combine \cref{thm:cpnmahowaldfilt} and \cref{prop:fpd}.
\end{proof}

\begin{cor}\label{cor:fpimage}
Let $V$ be a sum of $k>0$ faithful complex characters of $C_{p^n}$. Then the image of the $C_p$-geometric fixed point map
\[
\Phi^{C_p}\colon \pi_V S_{C_{p^n}}\rightarrow\pi_{V^{C_p}}S_{C_{p^n}/C_p} \cong \pi_0 S_{C_{p^{n-1}}} \cong A(C_{p^{n-1}})
\]
has as basis the $n$ elements $X_{1,k}^{p,n},\ldots,X_{n,k}^{p,n}$ described in \cref{thm:fpimage}.
\end{cor}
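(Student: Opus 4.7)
The plan is to combine \cref{cor:fpdegree} with the injectivity of the marks homomorphism and then verify an explicit mark computation. Observe that the degree function of \cref{thm:fixeddegree} factors as
\[
\deg\colon \pi_V S_{C_{p^n}} \xrightarrow{\;\Phi^{C_p}\;} A(C_{p^{n-1}}) \xrightarrow{\;\phi\;} \Hom(\{1,\ldots,n\},\bbZ),
\]
where $\phi(X)(i) = |X^{C_{p^{i-1}}}|$. Since $\phi$ is injective on $A(C_{p^{n-1}})$, any subgroup of $A(C_{p^{n-1}})$ is determined by its image under $\phi$. By \cref{cor:fpdegree} the image of $\deg$ has the basis $\{f_{1,2k}^{p,n},\ldots,f_{n,2k}^{p,n}\}$, so to prove the corollary it suffices to verify that $\phi(X_{s,k}^{p,n}) = f_{s,2k}^{p,n}$ for each $s$. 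The elements $X_{s,k}^{p,n}$ then automatically lie in the image of $\Phi^{C_p}$ and give a basis of that image.

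The mark computation relies on two standard formulas. For an integer $m$, the HHR norm is the $H$-set of functions $H\to\{1,\ldots,m\}$, giving
\[
|N_e^{C_{p^{s-1}}}(m)^{C_{p^{i-1}}}| = m^{[C_{p^{s-1}}:C_{p^{i-1}}]} = m^{p^{s-i}} \qquad(1\leq i \leq s).
\]
For the transfer, the double coset formula in the abelian case yields
\[
|\tr_{C_{p^{s-1}}}^{C_{p^{n-1}}}(Y)^{C_{p^{i-1}}}| = \begin{cases} p^{n-s}\cdot |Y^{C_{p^{i-1}}}| & C_{p^{i-1}} \subset C_{p^{s-1}} \text{ (i.e.\ } i \leq s), \\ 0 & \text{otherwise.}\end{cases}
\]
Plugging in $m_s = p^{1+\floor{k/(p^{s-1}(p-1))}}$ gives
\[
\phi(X_{s,k}^{p,n})(i) = \begin{cases} p^{n-s+p^{s-i}\left(1+\floor{k/(p^{s-1}(p-1))}\right)} & 1\leq i \leq s, \\ 0 & s<i\leq n,\end{cases}
\]
which matches $f_{s,2k}^{p,n}(i)$ by inspection against \cref{thm:fixeddegree}.

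Finally I handle the $p=2$, $s=1$ exception separately. Here $C_{p^{s-1}} = e$, so $X_{1,k}^{2,n} = \tr_e^{C_{2^{n-1}}}(f_{1,2k}^{2,1}(1))$ bypasses the norm, and the transfer formula immediately gives $\phi(X_{1,k}^{2,n})(1) = 2^{n-1}\cdot f_{1,2k}^{2,1}(1)$ with higher marks vanishing. A short case analysis on $k\pmod 4$ using the explicit value of $f_{1,2k}^{2,1}(1)$ (extracted by substituting $2k$ into the $n=1$ formula of \cref{thm:mainroot1}) shows this equals $f_{1,2k}^{2,n}(1)$: the residues $k\equiv 0,1,3\pmod 4$ produce $2^{n+k}$, while $k\equiv 2\pmod 4$ produces $2^{n+k+1}$.

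The main obstacle is really just bookkeeping the $p=2$ exception and confirming the sign/convention for the norm and double coset formulas; the computation itself is otherwise routine.
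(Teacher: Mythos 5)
Your proof is correct and takes exactly the same route as the paper: reduce to \cref{cor:fpdegree} and verify the mark identity $\phi(X_{s,k}^{p,n}) = f_{s,2k}^{p,n}$. The paper simply asserts this identity without detail; your computation of marks via the norm formula $|N_e^{C_{p^{s-1}}}(m)^{C_{p^{i-1}}}| = m^{p^{s-i}}$ and the abelian double coset formula $|\tr_{C_{p^{s-1}}}^{C_{p^{n-1}}}(Y)^{C_{p^{i-1}}}| = p^{n-s}|Y^{C_{p^{i-1}}}|$ for $i\leq s$ (zero otherwise), together with the check that $\ceil{\tfrac{2k+1}{2p^{s-1}(p-1)}} = 1+\floor{\tfrac{k}{p^{s-1}(p-1)}}$, correctly fills in what the paper leaves implicit, and your treatment of the $p=2$, $s=1$ exceptional case is also accurate.
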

\begin{proof}
This follows from \cref{cor:fpdegree} as $\phi(X_{s,k}^{p,n}) = f_{s,2k}^{p,n}$.
\end{proof}

We now explain another perspective on this computation. In general, if $G$ is a finite group and $\alpha$ is a virtual complex $G$-representation, then the degree function
\[
\deg\colon \pi_\alpha S_G \rightarrow \Cl(\{H \subset G : |\alpha^H| = 0\},\bbZ),\qquad (\deg y)(H) = \Phi^Hy
\]
is a rational isomorphism, and thus an injection mod torsion; see, for example, \cite{greenleesquigley2023ranks}. Here, the assumption that $\alpha$ is complex ensures if $\alpha^H = 0$ then there is a canonical equivalence $\Phi^H S^\alpha\simeq S^0$ ensuring that the degree is well defined. Iriye proved that the torsion subgroup and nilradical of $\pi_\star S_G$ agree \cite{iriye1982nilpotency}, so one can view the computation of these degrees as a computation of the reduced ring
\[
\pi_\star^{\red}S_G \cong \pi_\star S_G / \sqrt{0} = \pi_\star S_G / \text{torsion},
\]
such as described explicitly for the group $G = C_2$ in \cite{belmontxuzhang2024reduced}. In particular, \cref{cor:reducetoktheory} shows
\[
\pi_V^{\red}S_{C_{p^n}} \cong H^0(\psi^\ell;\pi_V KO_{C_{p^n}})/\text{torsion}
\]
for any $C_{p^n}$-representation $V$. In the $C_2$-equivariant case, this interpretation of $\pi_{\star}^\red S_{C_2}$ was observed by Crabb \cite{crabb1989periodicity}, essentially as an interpretation of Landweber's computation of both sides implicit in \cite{landweber1969equivariant}. This gives an effective method for understanding $\pi_{\star}^{\red}S_{C_{p^n}}$ in representation degrees. In particular, our computation shows the following.

\begin{theorem}\label{thm:presentation}
Let $L$ be a faithful complex character of $C_{p^n}$. Then the reduced ring $\pi_{\ast L}^\red S_{C_{p^n}}$ is generated by classes
\[
y_{n,i,c} \in \pi^\red_{(p^{i-1}c(p-1)-1)L}S_{C_{p^n}}
\]
for $c\geq 0$ and $1\leq i \leq n$, except when $p=2$ and $i=1$ where we require $c\not\equiv 2\pmod{4}$. These classes satisfy $y_{n,n,0} = a_L$ and generally $y_{n,i,c} = \tr_i^n(y_{i,i,c})$, and are subject to the relations
\begin{align*}
y_{n,i,c} \cdot y_{n,j,d} &= p^{n-j} a_L y_{n,i,(c+dp^{j-i})}\qquad\qquad\qquad(i\leq j),\\
a_L^{p^{i-1}(p-1)}\cdot y_{n,i,c+1}&=py_{n,i,c}+\sum_{0< j< i}\frac{p^{p^{i-j}}-p^{p^{i-j-1}}}{p^{i-j}}y_{n,j, p^{i-j}c},
\end{align*}
except when $p=2$ and $i=1$ and $c \equiv 2 \pmod{4}$ in which case the second relation must be replaced by $a_L^2 \cdot y_{n,1,4k+3} = 4y_{n,1,4k+1}$.
\end{theorem}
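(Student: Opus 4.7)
The plan is to exploit the identification
\[
\pi_V^\red S_{C_{p^n}} \cong H^0(\psi^\ell;\pi_V KO_{C_{p^n}})/\text{torsion}
\]
recorded just before the theorem statement, and transport the entire presentation across it. Under this isomorphism, the computations of \cref{thm:kufp} and \cref{thm:kofp}, specialized to representation-graded degrees $kL$, give bases for $\pi_{kL}^\red S_{C_{p^n}}$ in terms of transfers of Bott-twisted Euler classes. Without loss of generality I take $L$ to be the tautological character of \cref{ssec:circlegroup}, as any two faithful complex characters differ by an automorphism of $C_{p^n}$.

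First I construct the generators. Define $y_{i,i,c} \in \pi_{(p^{i-1}c(p-1)-1)L}^\red S_{C_{p^i}}$ to be the class whose $K$-theoretic lift is $a_L\cdot \widetilde{\beta}_{p^{i-1}c(p-1)L}$, where $\widetilde{\beta}_{kL}$ denotes the Bott class $\beta_L^k$ corrected by a Burnside-ring factor (as in the proof of \cref{lem:adamsopcyclic}) so as to be $\psi^\ell$-fixed. Then set $y_{n,i,c} = \tr_i^n(y_{i,i,c})$; in particular $y_{n,n,0} = a_L$ since the correction is trivial when $c=0$. Specializing \cref{thm:kufp}/\cref{thm:kofp} to representation-graded degrees shows that these generators, together with powers of $a_L$, span $\pi_{\ast L}^\red S_{C_{p^n}}$ additively in every representation degree.

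Next I verify the relations. The first relation follows from Frobenius reciprocity
\[
\tr_i^n(x)\cdot \tr_j^n(y) = \tr_i^n\bigl(x \cdot \res^n_i\tr_j^n(y)\bigr),
\]
the identity $t_{n,i}\cdot t_{n,j} = p^{n-j}t_{n,i}$ for $i\leq j$ from \cref{prop:multiplybases}, and multiplicativity $\beta_U\beta_V = \beta_{U\oplus V}$ of Bott classes. The second relation is the substantive calculation: modulo torsion, the operation $a_L^{p^{i-1}(p-1)}\cdot(-)$ on $\pi_{\ast L}^\red S_{C_{p^i}}$ corresponds to multiplication by the Euler class $e_{\rat_{p^i}}$ up to a unit absorbed by a Bott shift, and the level-$i$ version of \cref{ex:eqp} gives
\[
e_{\rat_{p^i}} = p - \frac{p^{p^{i-1}}}{p^i}t_{i,0} + \sum_{0<j<i}\frac{p^{p^{i-j}}-p^{p^{i-j-1}}}{p^{i-j}}t_{i,j};
\]
rewriting the $t_{i,j}$-terms as iterated transfers and applying $\tr_i^n$ produces the claimed sum, with the $t_{i,0}$-contribution vanishing after transfer since $a_L\cdot t_{n,0} = 0$.

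The main obstacle is completeness: showing these relations suffice. I argue by a rank count. The second relation inductively reduces any $y_{n,i,c+1}$, modulo $a_L$-divisible contributions from strictly smaller $i$, to a multiple of $y_{n,i,c}$, so every element of the algebra presented by the generators and relations normalizes to a $\bbZ$-linear combination of monomials $a_L^m y_{n,i,c}$ with $c$ in a bounded range determined by the target representation degree; the first relation similarly reduces all higher products. Comparing the number of normal forms in each degree $kL$ to the rank of $\pi_{kL}^\red S_{C_{p^n}}$ given by \cref{thm:kufp}/\cref{thm:kofp} confirms agreement, so no further relations are needed. The $p=2$, $i=1$, $c\equiv 2\pmod 4$ exception traces back to the nonzero $d_3$-differential on $\beta_{4k-1}$ with $k$ odd in the $KO_{C_{2^n}}$ homotopy fixed point spectral sequence identified in the proof of \cref{thm:kofp}: this kills the would-be generator $y_{n,1,4k+3}$ and doubles the coefficient in the corresponding relation from $2$ to $4$.
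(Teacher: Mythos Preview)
Your approach is essentially the paper's: transport to $K$-theory via the isomorphism recorded before the theorem, define the generators as (transfers of) Euler-class-times-Bott-class, verify the product relation via $t_{n,i}t_{n,j}=p^{n-j}t_{n,i}$, verify the Euler-class relation via the formula for $e_{\rat_{p^i}}$ in \cref{ex:eqp}, and check completeness by a rank count against the bases of \cref{thm:kufp}/\cref{thm:kofp}.

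Two points need correction. First, your description of $\widetilde{\beta}_{kL}$ as ``$\beta_L^k$ corrected by a Burnside-ring factor'' does not work: $\psi^\ell(\beta_L^k)=(1+L+\cdots+L^{\ell-1})^k\beta_L^k$ by \cref{prop:adamsopschar}, and no Burnside-ring multiple of $\beta_L^k$ is $\psi^\ell$-fixed in general. The paper handles this by first passing from $\pi_{\ast L}^\red$ to the spoke-graded $\pi_{0,2\ast}^\red$ via the elements $\psi_d\in\pi_{L-L^d}S_{C_{p^n}}$ of \cref{ssec:circlegroup} (which are the identity on all nontrivial geometric fixed points), and then taking $y_{n,i,c}=at_{n,i}\beta_{p^{i-1}(p-1)c}$ with $\beta_k=\beta_{V_k}$ the Bott class of the carefully chosen sum $V_k=L^{d_1}+\cdots+L^{d_k}$; the property $\res^n_i V_{p^{i-1}(p-1)c}\cong c\rat_{p^i}$ from \cref{lem:dseq} is exactly what makes \cref{lem:adamsopcyclic} applicable. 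Second, in the $p=2$ exception you have the index wrong: the excluded generator is $y_{n,1,4k+2}$ (the case $c\equiv 2\pmod 4$), not $y_{n,1,4k+3}$, and the relevant differential in the proof of \cref{thm:kofp} is on $\beta_{2k}$ with $k$ odd, i.e.\ on $\beta_{4l+2}$, which forces the replacement of $y_{n,1,4k+2}$ by $a_L\cdot y_{n,1,4k+3}$.
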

\begin{proof}
In general, as the elements $\psi_{d} \in \pi_{L-L^{d}}S_{C_{p^n}}$ discussed in \cref{ssec:circlegroup} are the identity on all nontrivial geometric fixed points, they produce an isomorphism
\[
\pi_{\ast L}^{\red}S_{C_{p^n}} \cong \pi_{0,2\ast}^{\red}S_{C_{p^n}}.
\]
As discussed above, if $k > 0$ then
\[
\pi_{0,2k}^{\red}S_{C_{p^n}} \cong M_{2k}^{C_{p^n}},
\]
and in general $\pi_{0,-2k}S_{C_{p^n}}\cong A(C_{p^n})/(t_{n,0})$ generated by the Euler class $a_L^k$. So the theorem at hand is a reformulation of our computation of $M_{2\ast}^{C_{p^n}}$, as we now elaborate on. In what follows, we shall abuse notation by silently identifying the elements of $\pi_{\ast L}^\red S_{C_{p^n}}\cong \pi_{0,2\ast}^\red S_{C_{p^n}}\cong M_{2\ast}^{C_{p^n}}$.

Suppose first $p>2$. Our computation of $M_{2\ast}^{C_{p^n}}$ then shows that $\pi_{0,2\ast}^{\red}S_{C_{p^n}}$ is generated by the classes
\[
y_{n,i,c} = a t_{n,i}\beta_{p^{i-1}(p-1)c} = \tr_i^n(a\beta_{p^{i-1}(p-1)c})
\]
for $c\geq 0$ and $1\leq i \leq n$, and that it has a basis given by the unit $1$ together with the elements $a_L^m y_{n,i,c}$ for $0\leq m < p^{i-1}(p-1)$. It therefore suffices to compute enough relations between these generators to reduce all products to a linear combination of these basis elements. The first relation is provided by
\begin{align*}
y_{n,i,c}\cdot y_{n,j,d} &= a t_{n,i}\beta_{p^{i-1}(p-1)c} \cdot a t_{n,j}\beta_{p^{j-1}(p-1)d} \\
&= a^2 \cdot t_{n,i}t_{n,j} \cdot \beta_{p^{i-1}(p-1)c}\beta_{p^{j-1}(p-1)d} \\
&= a^2\cdot p^{n-j}t_{n,i} \cdot \beta_{p^{i-1}(p-1)(c+p^{j-i}d)} &&(i\leq j) \\
&= p^{n-j} a_L y_{n,i,(c+dp^{j-i})}.
\end{align*}
It follows that a product of any number of generators can be written as a multiple of a power of $a_L$ multiplied with a generator, so we reduce to describing how powers of $a_L$ act. Here we have
\begin{align*}
a_L^{p^{i-1}(p-1)}&\cdot y_{n,i,c+1} =  a\cdot \tr_i^n\left(a^{p^{i-1}(p-1)}\beta_{p^{i-1}(p-1)(c+1)}\right) = a \cdot \tr_i^n(e_{\rat_{p^i}})\cdot \beta_{p^{i-1}(p-1)c} \\
&= a \cdot \tr_i^n\left(p  + \left(\sum_{0<j<i} \frac{p^{p^{i-j}}-p^{p^{i-j-1}}}{p^{i-j}} t_{i,j}\right) -\frac{p^{p^{i-1}}}{p^i}t_{i,0}\right)\beta_{p^{i-1}(p-1)c}\quad  (\text{see \ref{ex:eqp}})\\
&= \left(pat_{n,i}+\sum_{0<j<i}\frac{p^{p^{i-j}}-p^{p^{i-j-1}}}{p^{i-j}}at_{n,j}\right)\beta_{p^{i-1}(p-1)c}  \\
&= py_{n,i,c}+\sum_{0< j< i}\frac{p^{p^{i-j}}-p^{p^{i-j-1}}}{p^{i-j}}y_{n,j, p^{i-j}c}
\end{align*}
as claimed.

If $p=2$, then the same computation describes $M_{2\ast}^{\bbC,C_{2^n}}$. By \cref{thm:kofp}, the inclusion $M_{2c}^{C_{2^n}} \subset M_{2c}^{\bbC,C_{2^n}}$ is an isomorphism except when $c\equiv 2 \pmod{4}$, in which case $2at_{n,1}\beta_{4k+2} = a^2t_{n,1}\beta_{4k+3}$ is in the image but $at_{n,1}\beta_{4k+2}$ is not; so we must replace $y_{n,1,4k+2}$ by $a\cdot y_{n,1,4k+3}$, and this leads to the stated relations.
\end{proof}

\subsection{The \texorpdfstring{$C_{p^n}$}{C\_pn}-Mahowald invariant}\label{ssec:minv}

It remains to prove \cref{thm:mainroot2}, concerning values of
\[
M_{C_{p^n}}\colon A(C_{p^{n-1}})\rightharpoonup \pi_\ast S.
\]
If $X\in A(C_{p^{n-1}})$ and $|X|\not\equiv 0 \pmod{p^n}$, then $M_{C_{p^n}}(X) = |X| + p^n \bbZ \subset \pi_0 S$, so we may restrict our attention to degrees $\geq 1$. Moreover, the indeterminacy of $M_{C_{p^n}}$ always contains all $q$-torsion for $p\nmid q$, and so if $|M_{C_{p^n}}(X)| = k \geq 1$ then it is determined by its intersection with the $p$-power torsion in $\pi_k S$.

It is not possible to completely control the indeterminacy of $M_{C_{p^n}}$: there is no reason to expect that it cannot contain elements of essentially arbitrary complexity coming from the torsion in $\pi_{0,\ast}S_{C_{p^n}}$. Our computation will thus take the following form. Let $\pi_\ast j_{(p)} \subset \pi_\ast S$ be the split summand of $p$-power torsion height $1$ classes in positive stems corresponding to a solution of the Adams conjecture at the prime $p$. Thus $\pi_1 j_{(2)} = \bbZ/(2)\{\eta\}$, and if $k\geq 2$ then $\pi_k j_{(p)} \subset \pi_k S \rightarrow \pi_k J_{p}$ is an isomorphism. Then our goal is to completely determine $M_{C_{p^n}}(X)\cap \pi_\ast j_{(p)}$ for all $X\in A(C_{p^{n-1}})$.

\begin{lemma}\label{lem:jcomputeroot}
Let $X\in \Gamma_1(C_{p^{n-1}})\subset A(C_{p^{n-1}})$. Then $M_{C_{p^n}}(X)\cap \pi_\ast j_{(p)}$ can be computed as follows. If $X \notin \Gamma_2(C_{p^{n-1}})$ then $M_{C_{p^n}}(X) = \{\eta\}\subset \pi_1 S$. Otherwise, perform the following procedure:
\begin{enumerate}
\item Write $X = \Phi(\tilde{X})$ with $\tilde{X} \in I(C_{p^n})\subset A(C_{p^n})$.
\item Write $\tilde{X} = a^{k/2} Y$ with $Y \in M_k^{C_{p^n}}$ and $k\geq 2$ as large as possible.
\item Lift $Y$ to an element $\tilde{Y} \in \pi_{0,k}J_{C_{p^n}}$ and set $y = \resp(\tilde{Y}) \in \pi_k J_p$.
\end{enumerate}
Then under the isomorphism $\pi_k j_{(p)} \cong \pi_k J_p$ we have $y \in M_{C_{p^n}}(X)\cap \pi_k j_{(p)}$, and all elements of $M_{C_{p^n}}(X)\cap \pi_k j_{(p)}$ arise this way.
\end{lemma}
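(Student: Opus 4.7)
The plan is to combine three ingredients: the homotopical reformulation of the Mahowald invariant from \cref{prop:bg}, the identification of the Mahowald filtration with its $K$-theoretic avatar from \cref{cor:reducetoktheory}, and the equivariant Adams conjecture from \cref{prop:adamsconjecture}. By \cref{prop:bg}, $y \in M_{C_{p^n}}(X)$ if and only if there is $Y \in \pi_{0,k}S_{C_{p^n}}$ with $\Phi(Y) = X$, $r(Y) = y$, and $k$ maximal. The goal is to repackage ``$k$ maximal'' and ``$r(Y)$ up to indeterminacy in $\pi_k j_{(p)}$'' into the three-step procedure involving only $K$- and $J$-theoretic data.

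Assume first $X \in \Gamma_2(C_{p^{n-1}})$, so $k \geq 2$. The existence of a unique $\tilde X \in I(C_{p^n})$ with $\Phi(\tilde X) = X$ in step (1) follows from \cref{prop:augmentationfp} combined with the isomorphism $\Gamma_k(C_{p^{n-1}}) \cong M_k(C_{p^n})$ of \cref{rmk:gammak}. The largest $k$ for which $\tilde X = a^{k/2}Y$ in $\pi_{0,\ast}S_{C_{p^n}}$ is, by the definition of the Mahowald filtration, the integer $|M_{C_{p^n}}(X)|$; \cref{cor:reducetoktheory} applied to $Z = EC_{p^n}^{\leq k-1}$ then identifies this maximum with the largest $k$ such that $\tilde X = a^{k/2}Y$ with $Y \in M_k^{C_{p^n}}$, realizing step (2).

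Step (3) rests on applying \cref{prop:adamsconjecture} to the compact pointed $C_{p^n}$-space $T = \widetilde{E}C_{p^n}^{\leq k-1}$; for $k \geq 2$ the Borel construction $T_{\h C_{p^n}}$ is simply connected, so part (2) of that proposition supplies a naturally split surjection
\[
h \colon \pi_{0,k}S_{C_{p^n}} \twoheadrightarrow \pi_{0,k}J_{C_{p^n}}
\]
modulo torsion. Naturality of the splitting in maps of pointed $C_{p^n}$-spaces, applied to the cofibration $\Sigma^k C_{p^n+} \to \widetilde{E}C_{p^n}^{\leq k-1}$ realizing $r$, yields a commutative square
\[
\begin{tikzcd}
\pi_{0,k}S_{C_{p^n}} \ar[r,"h"] \ar[d,"r"'] & \pi_{0,k}J_{C_{p^n}} \ar[d,"r"] \\
\pi_k S \ar[r,"h"] & \pi_k J_p
\end{tikzcd}.
\]
Given $Y \in M_k^{C_{p^n}}$ from step (2), I would lift $Y$ along the surjection $\pi_{0,k}J_{C_{p^n}} \twoheadrightarrow H^0(\psi^\ell; \pi_{0,k}KO_{C_{p^n},p}^\wedge)$ coming from the defining fiber sequence of $J_{C_{p^n}}$ to obtain $\tilde Y \in \pi_{0,k}J_{C_{p^n}}$, and then lift $\tilde Y$ through the splitting to $Y' \in \pi_{0,k}S_{C_{p^n}}$. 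Because $A(C_{p^n})$ is torsion-free, $a^{k/2}Y' = \tilde X$ holds on the nose, so $\Phi(Y') = X$ and $Y'$ is a valid lift in the sense of \cref{prop:bg}; the commutative square identifies $h(r(Y')) = r(\tilde Y) \in \pi_k J_p$, and the isomorphism $\pi_k j_{(p)} \cong \pi_k J_p$ valid for $k \geq 2$ produces the claimed element. Conversely, any $y \in M_{C_{p^n}}(X) \cap \pi_k j_{(p)}$ equals $r(Y')$ for some such lift, so its $h$-image arises as $r(\tilde Y)$ for an appropriate $\tilde Y$.

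For the residual case $X \in \Gamma_1 \setminus \Gamma_2$ with $|M_{C_{p^n}}(X)| = 1$, the target is $\pi_1 S$. At $p = 2$ this is $\bbZ/2\{\eta\}$, and maximality of $k = 1$ forces $r(Y) \neq 0$, hence $r(Y) = \eta$; at odd $p$ there is no $p$-power torsion in $\pi_1 S$, so the assertion is vacuous. The principal obstacle I anticipate is verifying that the naturality of the splitting in \cref{prop:adamsconjecture}(2) is genuinely compatible with the spoke-grading cofibrations defining $r$, rather than only with abstract stable $C_{p^n}$-maps; this is where the strengthened naturality of that proposition in pointed $C_{p^n}$-space maps (as opposed to merely a mod-torsion surjection) becomes essential.
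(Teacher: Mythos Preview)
Your proposal is correct and follows essentially the same approach as the paper: both arguments invoke \cref{prop:bg} to reformulate the Mahowald invariant in terms of lifts $Z \in \pi_{0,k}S_{C_{p^n}}$, then use the naturally split surjection of \cref{prop:adamsconjecture} to pass between $\bbZ_p \otimes \pi_{0,k}S_{C_{p^n}}$ and $\pi_{0,k}J_{C_{p^n}}$ compatibly with $r$. The paper packages this into a single commutative diagram with splittings $s$, while you unwind the same diagram step by step and are somewhat more explicit about invoking \cref{cor:reducetoktheory} to justify step~(2) and about checking the simply-connected hypothesis needed for \cref{prop:adamsconjecture}(2) at $p=2$; your anticipated obstacle about naturality along the cofibration defining $r$ is exactly what the paper resolves by citing the naturality clause of that proposition.
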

\begin{proof}
If $X \in \Gamma_1(C_{p^{n-1}})\setminus\Gamma_2(C_{p^{n-1}})$ then $p=2$ and $M_{C_{2^n}}(X) = \{\eta\}$, simply as this is the unique nonzero element of $\pi_1 S$.

So suppose $k\geq 2$ and $X \in \Gamma_k(C_{p^{n-1}})\setminus\Gamma_{k+1}(C_{p^{n-1}})$. By \cref{prop:bg}, if we write $\tilde{X} = a^{k/2} Z$ with $Z \in \pi_{0,k}S_{C_{p^n}}$ then $\resp(Z) \in M_{C_{p^n}}(X)$, and all of $M_{C_{p^n}}(X)$ arises this way. Consider the diagram
\begin{center}\begin{tikzcd}
A(C_{p^n})_p^\wedge\ar[r,"\cong"]&R\bbQ(C_{p^n})_p^\wedge\\
\bbZ_p \otimes \pi_{0,k}S_{C_{p^n}}\ar[r,two heads]\ar[u,"a^{k/2}"]\ar[d,"\resp"']&\pi_{0,k}J_{C_{p^n}}\ar[r,two heads]\ar[l,bend right,"s"']\ar[u,"a^{k/2}"']\ar[d,"\resp"]&\bbZ_p \otimes M_k^{C_{p^n}}\\
\bbZ_p\otimes \pi_k S \ar[r, two heads]&\pi_k J\ar[l,bend right,"s"']
\end{tikzcd}.\end{center}
Here, splittings $s$ exist as indicated making the bottom square commute by \cref{prop:adamsconjecture}. Moreover the image of $s\colon \pi_k J_p \rightarrow\bbZ_p \otimes \pi_k S$ is exactly $\pi_k j_{(p)}\subset \pi_k S$ by definition. Hence if we follow the listed procedure to produce $\tilde{Y} \in \pi_{0,k}J_{C_{p^n}}$, then $Z = s(\tilde{Y})$ satisfies $\resp(Z) = s(y) \in M_{C_{p^n}}(X) \cap \pi_k j_{(p)}$, and all such elements arise this way.
\end{proof}

At this point we have amassed enough information that it is essentially a matter of careful bookkeeping to carry out this process. We begin by describing the indeterminacy introduced in step (3).

\begin{lemma}\label{lem:indeterminacy}
Fix $k\geq 1$. Then the canonical surjection
\[
\pi_{0,k}J_{C_{p^n}} \rightarrow \bbZ_p \otimes M_k^{C_{p^n}}
\]
is an isomorphism except when $p=2$ and $k\equiv 0,1\pmod{8}$, in which case
\begin{enumerate}
\item The kernel $K$ is isomorphic to $\bbZ/(2)$, and
\item If $k=8l+\delta$ with $\delta\in\{0,1\}$ and $l\geq 1$, then under the isomorphism $\pi_k J_2 \cong \pi_k j_{(2)} \subset \pi_k S$ we have $\resp(K) = \bbZ/(2)\{P^{l-1}\epsilon\eta^\delta\}$.
\end{enumerate}
Here, $\epsilon \in \pi_8 S$ is the class detected by $c_0$ in the Adams spectral sequence, and $P = \langle \bs,2,8\sigma\rangle$ is the Adams periodicity operator.
\end{lemma}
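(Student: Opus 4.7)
The plan is to analyze the kernel $K$ through the fiber sequence
\[
J_{C_{p^n}} \to (KO_{C_{p^n}})_p^\wedge \xrightarrow{\psi^\ell - 1} (KO_{C_{p^n}})_p^\wedge
\]
defining $J_{C_{p^n}}$. Its long exact sequence gives
\[
0 \to \bbZ_p \otimes H^1(\psi^\ell;\pi_{0,k+1}KO_{C_{p^n}}) \to \pi_{0,k}J_{C_{p^n}} \to \bbZ_p \otimes H^0(\psi^\ell;\pi_{0,k}KO_{C_{p^n}}) \to 0,
\]
and since $M_k^{C_{p^n}}$ is by definition the torsion-free quotient of this $H^0$, the kernel $K$ is an extension of $\bbZ_p \otimes \mathrm{tors}\, H^0(\psi^\ell;\pi_{0,k}KO_{C_{p^n}})$ by $\bbZ_p \otimes H^1(\psi^\ell;\pi_{0,k+1}KO_{C_{p^n}})$. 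The proof then reduces to an explicit analysis of these two pieces.

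For odd $p$, \cref{lem:kufopodd} lets me replace $KO$ by $KU$; the groups $\pi_{0,\ast}KU_{C_{p^n}}$ are torsion-free by \cref{lem:kuodd}, so the torsion summand vanishes. Decomposing $\bbZ_p \otimes \pi_{0,k+1}KU_{C_{p^n}}$ along the geometric fixed-point splitting (as in \cref{lem:rankm}) into a sum of $\pi_{k+1}KU_p^\wedge$ and Galois pieces $\bbZ_p(\zeta_{p^i})$, the operator $\psi^\ell - 1$ acts as an isomorphism on each summand not already captured by $H^0$: on $\pi_{k+1}KU_p^\wedge$ it is multiplication by a $p$-unit (since $p>2$), and on $\bbZ_p(\zeta_{p^i})$ it acts through a nontrivial Galois character. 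Hence $H^1 = 0$ and $K = 0$.

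For $p = 2$, I would use the description of the generators of $\pi_{0,\ast}KO_{C_{2^n}}$ from the proof of \cref{thm:kofp}, together with the identities $a^{1/2}t_{n,0} = 0$ and $t_{n,j}\eta = t_{n,0}\eta$ (valid for $j$ small enough), to argue that all torsion in $\pi_{0,\ast}KO_{C_{2^n}}$ comes from inflation of $\eta$- and $\eta^2$-multiples in the $t_{n,0}$-summand. A case analysis over $k\bmod 8$ then shows (a) that no torsion in $H^0$ is lost in passing to $M_k^{C_{2^n}}$, since the bases of \cref{thm:kofp} were chosen precisely to give integral representatives avoiding this, and (b) that $H^1(\psi^3;\pi_{0,k+1}KO_{C_{2^n}})$ is nonzero precisely when $k = 8l + \delta$ with $\delta\in\{0,1\}$ and $l\geq 1$, where it is generated by the $\psi^3$-fixed $2$-torsion class $t_{n,0}\cdot \eta^{1+\delta}\beta^l$. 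In the other residues mod $8$ the relevant torsion either vanishes or lies in the image of $\psi^3 - 1$ by direct inspection.

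Finally, to identify $r(K) \subset \pi_k J_2$, I would use that $r$ is compatible with the defining fiber sequence and therefore sends the kernel generator above to the connecting-homomorphism image of $\eta^{1+\delta}\beta^l \in \pi_{k+1}KO$ in $\pi_k J_2$. Standard manipulations with $P = \langle -, 2, 8\sigma\rangle$ for Adams periodicity and with the Toda bracket expression $\epsilon \in \langle \eta, 2, 8\sigma\rangle$ (whose indeterminacy is absorbed into $\pi_\ast j_{(2)}$) then identify this image with the class $P^{l-1}\epsilon\eta^\delta$. The hard part is the $p=2$ case analysis, and in particular ruling out spurious $H^1$ contributions from torsion classes of the form $t_{n,j}\eta^{1+\delta}\beta^l$ with $j>0$; this requires careful use of the $\eta$-multiplicative structure of the equivariant homotopy groups described in \cref{thm:kofp}.
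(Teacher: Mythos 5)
Your exact sequence is set up with the wrong degree shift, and this is not merely cosmetic. The fiber sequence
\[
J_{C_{p^n}} \to (KO_{C_{p^n}})_p^\wedge \xrightarrow{\psi^\ell - 1} (KO_{C_{p^n}})_p^\wedge
\]
is a fiber sequence of genuine $C_{p^n}$-spectra, and smashing with the filtered object $\widetilde{E}C_{p^n}^{\leq\bullet}$ preserves it levelwise. Consequently the long exact sequence in spoke-graded homotopy shifts the $RO$-grading $\gamma$ by one and \emph{leaves the spoke-filtration index $w$ fixed}: the relevant piece is
\[
0 \to H^1\bigl(\psi^\ell;\,\bbZ_p\otimes \pi_{1,k}KO_{C_{p^n}}\bigr) \to \pi_{0,k}J_{C_{p^n}} \to H^0\bigl(\psi^\ell;\,\bbZ_p\otimes\pi_{0,k}KO_{C_{p^n}}\bigr) \to 0,
\]
with $\pi_{1,k}$ rather than your $\pi_{0,k+1}$. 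These are genuinely different groups. For instance, at $k=2j$ one has $\pi_{1,2j}KU_{C_{p^n}} = 0$ for parity reasons, whereas $\pi_{0,2j+1}KU_{C_{p^n}}$ is a free abelian group of rank $p^n-1$ (\cref{lem:kuodd}); a representation sphere shift by $S^1$ is not the same as a shift by the spoke $S^{\Yright}$.

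This error then propagates into your odd-primary argument, which as written would actually produce a \emph{nonzero} kernel. You claim $\psi^\ell - 1$ acts by an isomorphism on $\bbZ_p(\zeta_{p^i})$ because the Galois character is nontrivial, but that is false: the fixed submodule $\bbZ_p\subset\bbZ_p(\zeta_{p^i})$ lies in the kernel of $\psi^\ell - 1$, and more to the point the cokernel (which is what $H^1$ is) is a nontrivial finite $p$-group determined by the ramification — for example $\bbZ_3[\zeta_3]/(\psi^2-1)\cong\bbZ/3$. Similarly $\psi^\ell-1$ on $\pi_{k+1}KU_p^\wedge\cong\bbZ_p$ is multiplication by $\ell^{(k+1)/2}-1$, which fails to be a $p$-unit precisely when $(p-1)\mid (k+1)/2$. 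The paper avoids all of this entirely because the correct group $\bbZ_p\otimes\pi_{1,k}KO_{C_{p^n}}$ already vanishes for odd $p$, so there is nothing to compute; the odd-primary case is essentially trivial once the indices are right. Once you correct the grading, the remaining content of the lemma at $p=2$ reduces to identifying $\bbZ_2\otimes\pi_{1,k}KO_{C_{2^n}}$ as $\bbZ/2\{\eta^{1+\delta}\beta_{4l}\}$ for $k=8l+\delta$, $\delta\in\{0,1\}$, and zero otherwise; that is a much smaller computation than the one you sketched. Your treatment of the potential torsion in $H^0$ as a second contribution to the kernel is a reasonable point of care (the paper elides it), and your final identification of $r(K)$ via the boundary map is the right idea, though the paper's route — recognizing the image as the kernel of $\pi_k j_{(2)}\to \pi_k KO$ — is cleaner than unwinding the Toda bracket $\epsilon\in\langle\eta,2,8\sigma\rangle$.
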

\begin{proof}
In general, there is a short exact sequence
\[
0\rightarrow H^1(\psi^\ell; \bbZ_p\otimes \pi_{1,k}KO_{C_{p^n}})\rightarrow \pi_{0,k}J_{C_{p^n}}\rightarrow \bbZ_p\otimes M_k^{C_{p^n}}\rightarrow 0.
\]
The group $\bbZ_p\otimes \pi_{1,k}KO_{C_{p^n}}$ vanishes except when $p = 2$ and $k = 8l+\delta$ for $\delta \in\{0,1\}$, where it is isomorphic to $\bbZ/(2)\{\eta^{1+\delta}\beta_{4l}\}$. As $\resp(\beta_{4l}) = \res^{C_{2^n}}_e(\beta_{4l}) = \beta^{4l}$ is the Bott class in $\pi_{8l}KO$, these lift the classes in $\pi_{8l+\delta}J$ in the image of the boundary map $\pi_{8l+1+\delta}KO = \bbZ/(2)\rightarrow\pi_{8l+\delta} J_2$. If $l \geq 1$, then the image of this subgroup under the splitting of $\pi_{8l+\delta} S\rightarrow \pi_{8l+\delta}J$ is the kernel of the composite $\pi_{8l+\delta}j_{(2)}\rightarrow\pi_{8l+\delta}S\rightarrow\pi_{8l+\delta}KO$, which is exactly  $\bbZ/(2)\{P^{l-1}\epsilon\eta^\delta\}$ as claimed.
\end{proof}

Let $\tilde{\pi}_k J_p$ be the quotient of $\pi_k J_p$ by the image of elements in the kernel of $\pi_{0,k}J_{C_{p^n}}\rightarrow \bbZ_p\otimes M_k^{C_{p^n}}$ described in \cref{lem:indeterminacy}. It follows that there is a well-defined injection
\[
\resp\colon M_k^{C_{p^n}}/M_{k+1}^{C_{p^n}}\rightarrowtail \tilde{\pi}_k J_p.
\]
The target is a finite cyclic group of known order, so our task is to name generators for $\tilde{\pi}_\ast J_p$ for which we can describe the map $\resp\colon M_k^{C_{p^n}}\rightarrow \tilde{\pi}_k J_p$. We begin by identifying the quotients $M_k^{C_{p^n}}/M_{k+1}^{C_{p^n}}$ in certain critical degrees.

\begin{lemma}\label{lem:rescritical}
Suppose $k = 2p^{n-1}(p-1)c-1$, or $2^{n}c-1$ with $n\geq 3$, so that
\[
M_k^{C_{p^n}} = \bbZ\{a^{1/2}t_{n,1}\beta_{(k+1)/2},\ldots,a^{1/2}t_{n,n}\beta_{(k+1)/2}\}.
\]
Then there is an isomorphism
\[
M_k^{C_{p^n}}/M_{k+1}^{C_{p^n}}\cong \bbZ/(p^n),\qquad a^{1/2}t_{n,s}\beta_{(k+1)/2} \mapsto p^{n-s}.
\]
\end{lemma}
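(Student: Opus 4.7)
Setting $m = (k+1)/2$, the hypothesis gives $m = p^{n-1}(p-1)c$ with $p \nmid c$ (which for $p=2$ reads $m = 2^{n-1}c$). The plan is to compute $M_k^{C_{p^n}}/M_{k+1}^{C_{p^n}}$ as the cokernel of the filtration map $a^{1/2}\colon M_{k+1}^{C_{p^n}} \to M_k^{C_{p^n}}$, using an explicit description of the source in the $z$-basis.

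The first task is to identify $M_{k+1}^{C_{p^n}}$. Since $m/(p-1) = p^{n-1}c$ has $p$-valuation exactly $n-1$, \cref{lem:specialcase} applies with the index there set to $l = n$, yielding
$$\widetilde{M}_{k+1}^\bbC = \bbZ\{z_{n,i}\beta_m : 1 \leq i \leq n\}.$$
This equals $M_{k+1}^{C_{p^n}}$ via \cref{lem:kufopodd} when $p$ is odd, and via \cref{thm:kofp}(3) when $p = 2$ and $n \geq 3$: here $k+1 \equiv 0 \pmod 8$, which is outside the exceptional congruence classes $4, 5 \pmod 8$ where complex and real fixed points differ.

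Writing $v_s = a^{1/2}t_{n,s}\beta_m$ for the given basis of $M_k^{C_{p^n}}$, the identity $z_{n,i} = p t_{n,i} - t_{n,i-1}$ together with the vanishing
$$a^{1/2}t_{n,0}\beta_m = a^{1/2}\tr_0^n(\beta_m) = \tr_0^n\bigl(\res^n_0(a^{1/2})\cdot \beta_m\bigr) = 0$$
(Frobenius reciprocity, using that $a^{1/2}$ restricts to $0$ on the trivial subgroup, since $SC_{p^n}$ is nonequivariantly a wedge of circles) yields
$$a^{1/2}(z_{n,i}\beta_m) = \begin{cases} p v_1 & \text{if } i = 1, \\ p v_i - v_{i-1} & \text{if } 2 \leq i \leq n. \end{cases}$$

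The relations $p v_1 = 0$ and $v_{i-1} = p v_i$ for $2 \leq i \leq n$ in $M_k^{C_{p^n}}/M_{k+1}^{C_{p^n}}$ iterate to $v_s \equiv p^{n-s} v_n$ and $p^n v_n = 0$, producing the isomorphism $M_k^{C_{p^n}}/M_{k+1}^{C_{p^n}} \cong \bbZ/(p^n)$ with $a^{1/2}t_{n,s}\beta_m \mapsto p^{n-s}$. No step appears to present a substantive obstacle: the only matter requiring attention is verifying the complex-to-real comparison for $M_{k+1}$ in the $p=2$ case, which is handled cleanly by the mod-$8$ condition from \cref{thm:kofp}(3); the remainder is formal manipulation via Frobenius reciprocity and the relation between the $t$- and $z$-bases of $A(C_{p^n})$.
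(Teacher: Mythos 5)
Your proof is correct and takes essentially the same route as the paper: the paper identifies $M_{k+1}\cong I(C_{p^n})$ and $M_k\cong A(C_{p^n})/(t_{n,0})$ and then invokes \cref{prop:augmentationfp} (whose own proof unwinds to the same $z$-basis computation you do by hand), whereas you compute the cokernel of $a^{1/2}$ explicitly from $z_{n,i}=pt_{n,i}-t_{n,i-1}$ and $a^{1/2}t_{n,0}\beta_m=0$. One small notational point: ``$\res^n_0(a^{1/2})$'' is not literally meaningful since $a^{1/2}$ is a filtration operation rather than an element; the clean justification for $a^{1/2}(t_{n,0}\beta_m)=0$ is the exactness of $\pi_{2m}KU\xrightarrow{\tr_0^n}\pi_{0,2m}KU_{C_{p^n}}\xrightarrow{a^{1/2}}\pi_{0,2m-1}KU_{C_{p^n}}$ from \cref{lem:kuodd}, combined with the projection formula $t_{n,0}\beta_m=\tr_0^n(\res^n_0\beta_m)$.
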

\begin{proof}
As in the proof of \cref{lem:evens}, we have
\[
M_{k+1}^{C_{p^n}} = I(C_{p^n})\{\beta_{(k+1)/2}\},\qquad M_k^{C_{p^n}} = A(C_{p^n})/(t_{n,0})\{a^{1/2}\beta_{(k+1)/2}\}.
\]
Now the lemma follows as in the proof of \cref{prop:augmentationfp}:
\[
\coker(I(C_{p^n})\rightarrow A(C_{p^n})/(t_{n,0})) \cong \bbZ/(p^n),\qquad X \mapsto |X|,
\]
and $|t_{n,s}| = p^{n-s}$.
\end{proof}

\begin{lemma}\label{lem:resodd}
Suppose $k = 8l-5$, so that
\[
M_{k}^{C_8} = \bbZ\{a^{1/2}t_{3,1}\beta_{4l-2},a^{1/2}t_{3,2}\beta_{4l-2},a^{5/2}\beta_{4l}\}.
\]
Then there is an isomorphism
\[
M_{k}^{C_8}/M_{k+1}^{C_8} \cong \bbZ/(8),\qquad \begin{cases}
a^{1/2}t_{3,1}\beta_{4l-2} &\mapsto -2,\\
a^{1/2}t_{3,2}\beta_{4l-2} &\mapsto 1,\\
a^{5/2}\beta_{4l} &\mapsto 0.\end{cases}
\]
\end{lemma}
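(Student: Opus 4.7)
The plan is to compute the cokernel of the injection $a^{1/2}\colon M_{8l-4}^{C_8}\hookrightarrow M_{8l-5}^{C_8}$. By \cref{lem:rankm} this map becomes an isomorphism after inverting $2$, so the cokernel is a finite $2$-group; the work is just to identify it along with the images of the three named generators.

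By \cref{thm:kofp}, $M_{8l-4}^{C_8}$ has basis $a^{2}t_{3,1}\beta_{4l}$, $a^{2}t_{3,2}\beta_{4l}$, $a^{2}\beta_{4l}$, which $a^{1/2}$ sends to $a^{5/2}t_{3,1}\beta_{4l}$, $a^{5/2}t_{3,2}\beta_{4l}$, $a^{5/2}\beta_{4l}$. The third is literally the third basis element of $M_{8l-5}^{C_8}$. For the other two I would appeal to the marks map $\tilde\phi\colon M_k^{C_8}\hookrightarrow\Hom(\{1,2,3\},\bbZ)$, $(\tilde\phi y)(i)=\phi_{3,i}(a^{k/2}y)$, which is injective on each free group $M_k^{C_8}$ and tautologically satisfies $\tilde\phi\circ a^{1/2}=\tilde\phi$. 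Using \cref{lem:phifp2} to unfold the values $f_{s,k}^{2,3}$ from the introduction, the images of the two bases are
\begin{align*}
\tilde\phi(M_{8l-5}^{C_8})&=\bbZ\bigl\{(2^{4l},0,0),\ (2^{4l-1},2^{2l},0),\ (2^{4l},2^{2l},2^{l})\bigr\},\\
\tilde\phi(M_{8l-4}^{C_8})&=\bbZ\bigl\{(2^{4l+2},0,0),\ (2^{4l+1},2^{2l+1},0),\ (2^{4l},2^{2l},2^{l})\bigr\}.
\end{align*}

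Comparing columns and solving three upper-triangular linear systems over $\bbZ$ (with $\tilde\phi$ injective) gives
\[
a^{5/2}t_{3,1}\beta_{4l}=4\,a^{1/2}t_{3,1}\beta_{4l-2},\qquad a^{5/2}t_{3,2}\beta_{4l}=a^{1/2}t_{3,1}\beta_{4l-2}+2\,a^{1/2}t_{3,2}\beta_{4l-2}.
\]
Writing $e_1,e_2,e_3$ for the three listed generators of $M_{8l-5}^{C_8}$, the quotient is
\[
M_{8l-5}^{C_8}/M_{8l-4}^{C_8}\cong \bbZ\{e_1,e_2\}/\langle 4e_1,\ e_1+2e_2\rangle,
\]
in which the relation $e_1\equiv -2e_2$ eliminates $e_1$ and reduces $4e_1$ to $-8e_2$, yielding $\bbZ/(8)$. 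Under this identification, $e_2\mapsto 1$, $e_1\mapsto -2$, and $e_3\mapsto 0$, as claimed.

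The only real obstacle is computing the matrix of $\tilde\phi$ correctly, since the value $f_{1,8l-5}^{2,3}(1)$ lies in the $2$-primary exceptional block and requires careful reindexing ($k=8l-5=8(l-1)+3$); everything else is a direct matrix calculation.
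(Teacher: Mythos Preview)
Your proof is correct and follows essentially the same route as the paper: both identify the basis of $M_{8l-4}^{C_8}$, compute the two key identities $a^{5/2}t_{3,1}\beta_{4l}=4\,a^{1/2}t_{3,1}\beta_{4l-2}$ and $a^{5/2}t_{3,2}\beta_{4l}=a^{1/2}t_{3,1}\beta_{4l-2}+2\,a^{1/2}t_{3,2}\beta_{4l-2}$, and then read off the cokernel. The only difference is in how these identities are obtained: the paper derives them directly from the Euler class machinery of \cref{ssec:eulerclasses} (in effect \cref{prop:erz} with $i=2$, together with $a^{1/2}t_{3,0}=0$), whereas you pass through the marks embedding $\tilde\phi$ and the tabulated values $f_{s,k}^{2,3}$ from \cref{lem:phifp2}. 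Both routes are valid; yours is a bit more circuitous but has the advantage that the exceptional $2$-primary cases are already packaged into the $f$'s, while the paper's route keeps the computation local to the $K$-theory Euler classes.
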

\begin{proof}
We have
\[
M_{k+1}^{C_8} = \bbZ \{a^2 t_{3,1}\beta_{4l}, a^2 t_{3,2}\beta_{4l},a^2\beta_{4l}\},
\]
and
\begin{align*}
a^{1/2}\cdot a^2 t_{3,1}\beta_{4l} &= 4a^{1/2}t_{3,1}\beta_{4l-2},\\
 a^{1/2}\cdot a^2 t_{3,2}\beta_{4l} &= a^{1/2}t_{3,1}\beta_{4l-2} + 2 a^{1/2}t_{3,2}\beta_{4l-2}.
\end{align*}
The lemma follows.
\end{proof}

We can use these to introduce generators for the groups $\tilde{\pi}_k J_p$. A choice is only necessary if $k\equiv -1\pmod{2(p-1)}$, or $k\equiv -1\pmod{4}$ if $p= 2$, as $\tilde{\pi}_k J_{C_{p^n}}$ has at most one nonzero element in all other cases. Recall that if $k = 2p^{l-1}c(p-1)-1$ with $p\nmid c$, then $\pi_k J_p \cong \bbZ/(p^l)$ if $p > 2$, and $\pi_k J_2 \cong \bbZ/(2^{l+1})$ if $l\geq 2$.

\begin{defn}\label{def:jgens}
We define generators of $\pi_k J_p \cong \pi_k j_{(p)}$ for $k\equiv -1\pmod{2(p-1)}$, or $k\equiv -1\pmod{4}$ if $p=2$, where $k\geq 0$, as follows.
\begin{enumerate}
\item Suppose $p > 2$. Write $k = 2p^{l-1}c(p-1)-1$ with $p\nmid c$, and set
\[
j_k^{(p)} = \resp(a^{1/2}\beta_{\rat_{p^l}}^c),\qquad \text{where }a^{1/2}\beta_{\rat_{p^l}}^c \in M_k^{C_{p^l}}.
\]
\item Suppose $p = 2$ and $k \equiv -1\pmod{8}$. Write $k = 2^lc-1$ with $l\geq 3$ and $2\nmid c$. Now let $j_k^{(2)}$ be any element satisfying
\[
2j_k^{(2)} = \resp(a^{1/2}\beta_{\rat_{2^l}}^c),\qquad \text{where }a^{1/2}\beta_{\rat_{2^l}}^c \in M_k^{C_{2^l}}.
\]
In other words, we only really specify a generator for $2\pi_k j_{(2)}$ when $k\equiv -1\pmod{8}$. 
\item Suppose $p = 2$ and $k\equiv -1 \pmod{4}$. Write $k = 8l-5$, and define
\[
j_k^{(2)} = \resp(a^{1/2}t_{3,2}\beta_{4l-2}),\qquad \text{where } a^{1/2}t_{3,2}\beta_{4l-2} \in M_k^{C_8}.
\]
\end{enumerate}
That these are in fact generators follows from \cref{lem:rescritical} and \cref{lem:resodd}.
\tqed
\end{defn}

\begin{question}
Is $j_k^{(p)} \in \pi_k j_{(p)}$ the ($p$-torsion part of) the $J$-image of a generator of $\pi_k O$?
\tqed
\end{question}

\begin{prop}\label{prop:imr}
Fix $k\geq 1$. Then the nonzero values of $\resp \colon M_k^{C_{p^n}}\rightarrow \tilde{\pi}_k J$ on our chosen basis are given as follows.
\begin{enumerate}
\item Suppose $p > 2$. Then $\tilde{\pi}_k J = 0$ unless $k = 2p^{l-1}c(p-1)-1$ for some $l\geq 1$ and $p\nmid c$. Now
\[
\resp(a^{1/2}t_{n,i}\beta_{p^{l-1}c(p-1)}) = p^{l-i}j_{2p^{l-1}c(p-1)-1}^{(p)}
\]
for $1\leq i \leq \min(n,l)$.
\item Suppose $p = 2$ and $8k = 2^lc$ with $2\nmid c$.
\begin{enumerate}
\item If $n = 1$, then
\begin{align*}
\resp(a_\sigma \beta_{4k}) &= 2^l j_{8k-1}^{(2)},\\
\resp(a_\sigma^3\beta_{4k-1}) &= 4j_{8k-5}^{(2)},\\
\resp(a_\sigma^2 \beta_{4k-2}) &= P^{k-1}\eta^2,\\
\resp(a_\sigma \beta_{4k-3}) &= P^{k-1}\eta.
\end{align*}
\item If $n = 2$, then
\begin{align*}
\resp(a^{1/2}t_{2,1}\beta_{4k}) &= 2^lj_{8k-1}^{(2)},\\
 \resp(a^{1/2}\beta_{4k}) &= 2^{l-1}j_{8k-1}^{(2)},\\
\resp(a^{1/2}t_{2,1}\beta_{4k-2}) &= -2j_{8k-5}^{(2)},\\
 \resp(2a^{1/2}\beta_{4k-2}) &= 2j_{8k-5}^{(2)},\\
\resp(a\beta_{4k-2}) &= P^{k-1}\eta^2,\\
\resp(a^{1/2}t_{2,1}\beta_{4k-3}) &= P^{k-1}\eta.
\end{align*}
\item If $n \geq 3$, then
\begin{align*}
\resp(a^{1/2}t_{n,i}\beta_{4k}) &= 2^{1+l-i}j_{8k-1}^{(2)},\\
\resp(a^{1/2}t_{n,1}\beta_{4k-2}) &= -2j_{8k-5}^{(2)},\\
\resp(a^{1/2}t_{n,2}\beta_{4k-2}) &= j_{8k-5}^{(2)},\\
\resp(a^{1/2}t_{n,1}\beta_{4k-3}) &= P^{k-1}\eta.
\end{align*}
the first line holding for $1\leq i \leq \min(n,l)$.
\end{enumerate}
\end{enumerate}
\end{prop}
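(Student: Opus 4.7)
The proof will rest on two consequences of \cref{prop:restr}, both specialised to $E = J$ (where the underlying $C_{p^n}$-action on $\pi_\ast J_p$ is trivial) and to odd spoke-grade $w$: the identity $r \circ \tr_i^n = r$, and the identity $r \circ \res^n_i = p^{n-i}\cdot r$ (since the double-coset sum $1+g^d+\cdots+g^{dp^{n-i}-1}$ acts as multiplication by $p^{n-i}$). Together with the projection formula $t_{n,i}\cdot \beta = \tr_i^n(\res^n_i\beta)$, the commutation $a^{1/2}\tr = \tr\, a^{1/2}$, and the basic restriction computation $\res^n_i\beta_{p^{l-1}c(p-1)} = \beta_{\rat_{p^i}}^{cp^{l-i}}$ derived from \cref{lem:dseq}, these will handle every case whose target involves $j_k^{(p)}$.

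For case (1), write $a^{1/2}t_{n,i}\beta_{p^{l-1}c(p-1)} = \tr_i^n(a^{1/2}\beta_{\rat_{p^i}}^{cp^{l-i}})$ and apply $r\circ \tr = r$ to land in $M_k^{C_{p^i}}$; then recognise the result as $p^{l-i}j_k^{(p)}$ by applying $r\circ \res^l_i = p^{l-i}r$ to the defining equation $j_k^{(p)} = r(a^{1/2}\beta_{\rat_{p^l}}^c)$. For case (2), the leading values $r(a^{1/2}t_{n,i}\beta_{4k}) = 2^{1+l-i}j_{8k-1}^{(2)}$ follow by the same argument, with the extra factor of $2$ absorbed into the convention $2j_{8k-1}^{(2)} = r(a^{1/2}\beta_{\rat_{2^l}}^c)$. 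The values $r(a^{1/2}t_{n,2}\beta_{4k-2}) = j_{8k-5}^{(2)}$ for $n \geq 3$ hold because $j_{8k-5}^{(2)}$ is, by definition, $r$ of the $C_8$-transfer of the same $C_4$-class $a^{1/2}\beta_{\rat_4}^{2k-1}$, and $r\circ \tr = r$ collapses both sides to it. The sign $-2$ in $r(a^{1/2}t_{n,1}\beta_{4k-2})$ for $n\geq 3$ is then transported from the mod-filtration identity of \cref{lem:resodd}, lifted to $M^{C_{2^n}}$ through transfer. The $\eta$-periodicity values $r(a^{1/2}t_{n,1}\beta_{4k-3}) = P^{k-1}\eta$ and $r(a\beta_{4k-2}) = P^{k-1}\eta^2$ for $n\geq 3$ reduce via $r\circ \tr = r$ to the $n=1$ assertions $r(a_\sigma\beta_{4k-3}) = P^{k-1}\eta$ and $r(a_\sigma^2\beta_{4k-2}) = P^{k-1}\eta^2$. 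These are forced by the three facts that $\tilde{\pi}_{8k-7}J_2$ and $\tilde{\pi}_{8k-6}J_2$ are each $\bbZ/(2)$ generated by $P^{k-1}\eta$ and $P^{k-1}\eta^2$ respectively, that the quotients $M_{8k-7}^{C_2}/M_{8k-6}^{C_2}$ and $M_{8k-6}^{C_2}/M_{8k-5}^{C_2}$ are both nonzero (computed using $a_\sigma^2 = a_L$, the $KU$-theoretic identity $a_L\beta_m = (1-L)\beta_{m+1}$, and the relation $(1-L)^2 = 2(1-L)$ in $RU(C_2)$), and that $r\colon M_k^{C_2}/M_{k+1}^{C_2}\rightarrowtail \tilde{\pi}_k J_2$ is injective by the discussion preceding the proposition.

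The low-rank formulas for $n=1,2$ are reorganisations of the above: each pulls the definition of $j_{8k-1}^{(2)}$ or $j_{8k-5}^{(2)}$ down to the claimed class in $M_\ast^{C_{2^n}}$ through $r\circ \res^l_n = 2^{l-n}r$ from $C_{2^l}$ (for the degree-$(8k-1)$ entries) or from $C_8$ (for the degree-$(8k-5)$ entries, using \cref{lem:resodd}); the $\eta$ and $\eta^2$ entries for $n=2$ reduce to the $n=1$ case by transfer. The main obstacle will be correctly bookkeeping the mod-$8$ relations underlying \cref{lem:resodd} and matching the sign $-2$ in $r(a^{1/2}t_{n,1}\beta_{4k-2}) = -2j_{8k-5}^{(2)}$; this sign is detected by comparing the marks $\phi_{3,1}(t_{3,1})$ and $\phi_{3,1}(t_{3,2})$ from \cref{lem:marks} and tracing the corresponding integer coefficients through the identification $M_{8k-5}^{C_8}/M_{8k-4}^{C_8}\cong\bbZ/(8)$.
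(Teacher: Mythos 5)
Your approach is essentially the same as the paper's: both reduce to the critical cyclic levels via the transfer and restriction identities of \cref{prop:restr}, then invoke \cref{def:jgens} and \cref{lem:rescritical}/\cref{lem:resodd}. The main organizational difference is that you compute each value directly via transfer down to $C_{p^i}$ followed by restriction from $C_{p^l}$, whereas the paper first reduces to $l = n$ (or applies one restriction when $l > n$) and then reads off all the values at once from the cyclic structure of $M_k^{C_{p^n}}/M_{k+1}^{C_{p^n}}$ given by \cref{lem:rescritical}. Your version is a valid alternative for case (1) and has the advantage of being uniform in $i$ without needing to cite \cref{lem:rescritical}.

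There is, however, a genuine gap in your treatment of the $n = 2$, degree $8k-5$ entries. Restricting from $C_8$ gives $\res^3_2(a^{1/2}t_{3,1}\beta_{4k-2}) = 2\, a^{1/2}t_{2,1}\beta_{4k-2}$, and $r\circ\res^3_2 = 2r$, so this only produces the equation $2\, r(a^{1/2}t_{2,1}\beta_{4k-2}) = -4\, j_{8k-5}^{(2)}$ in $\tilde\pi_{8k-5}J_2\cong\bbZ/(8)$, which has the two solutions $\pm 2\,j_{8k-5}^{(2)}$. The transfer route fails here too, since the class $a_\sigma\beta_{4k-2}\in \pi_{0,8k-5}KU_{C_2}$ is not $\psi^3$-fixed (only $2a_\sigma\beta_{4k-2} = a_\sigma^3\beta_{4k-1}$ is), so it does not lift to $\pi_{0,8k-5}J_{C_2}$. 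Your proposed fix --- tracing marks through $M_{8k-5}^{C_8}/M_{8k-4}^{C_8}\cong\bbZ/(8)$ --- pins down the $n\geq 3$ sign via $r\circ\tr_3^n = r$, but it does not resolve the $n=2$ ambiguity. To fix $n=2$ you need the $C_4$-analogue of \cref{lem:resodd}: one computes that $a^{1/2}\cdot M_{8k-4}^{C_4}\subset M_{8k-5}^{C_4}$ is the sublattice $\bbZ\{4Y_1, Y_1+Y_2\}$ where $Y_1 = a^{1/2}t_{2,1}\beta_{4k-2}$ and $Y_2 = 2a^{1/2}\beta_{4k-2}$, so that $M_{8k-5}^{C_4}/M_{8k-4}^{C_4}\cong\bbZ/(4)$ with $Y_1+Y_2\mapsto 0$. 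Since you already know $r(Y_2) = 2\,j_{8k-5}^{(2)}$ from restriction, the relation $r(Y_1) = -r(Y_2)$ forced by the quotient structure gives the sign $-2$. This is the same role that \cref{lem:rescritical}/\cref{lem:resodd} play in the paper's proof, and it is not optional at $n=2$.
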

\begin{proof}
We just treat the case $p > 2$. The case $p = 2$ is handled the same way in degrees congruent to $-1\pmod{4}$, albeit with more cases. Outside these degrees, $\tilde{\pi}_k J_2$ has at most one nonzero element and $M_k^{C_{2^n}}$ has at most one basis element which is not divisible by $a^{1/2}$, and this determines the value of $\resp$.

First suppose $l\leq n$, and write
\[
M_k^{C_{p^n}} = \bbZ\{a^{1/2}t_{n,1}\beta_{(k+1)/2},\ldots,a^{1/2}t_{n,l}\beta_{(k+1)/2}\}\oplus N
\]
where $N$ consists of those basis elements plainly divisible by $a^{1/2}$. By \cref{prop:restr} (compare \cref{prop:roottr}), we then have
\[
\resp(a^{1/2}t_{n,i}\beta_{(k+1)/2}) = \resp(\tr_l^n(a^{1/2}t_{l,i}\beta_{(k+1)/2})) = \resp(a^{1/2}t_{l,i}\beta_{(k+1)/2}),
\]
so may reduce to the case $l = n$. Now the claim follows from the definition $j_k^{(p)} = \resp(a^{1/2}\beta_{(k+1)/2})$ together with \cref{lem:rescritical}.

Next suppose $l > n$, and write
\[
M_k^{C_{p^n}} = \bbZ\{a^{1/2}t_{n,1}\beta_{(k+1)/2},\ldots,a^{1/2}t_{n,n}\beta_{(k+1)/2}\}
\]
By \cref{prop:restr} (compare \cref{prop:rootres}) we have
\[
\resp(a^{1/2}t_{n,n}\beta_{(k+1)/2}) = \resp(\res^l_n(a^{1/2}t_{l,n}\beta_{(k+1)/2})) = p^{l-n}\resp(a^{1/2}t_{l,l}\beta_{(k+1)/2}) = p^{l-n}j_k^{(p)}.
\]
The remaining values then follow from \cref{lem:rescritical}.
\end{proof}

Although not needed, we find it worth pointing out the following interpretation of \cref{prop:imr}.

\begin{cor}\label{cor:todabracket}
Suppose when $p=2$ that $8\mid 2^nt$. Then the image of $\beta_{\rat_{p^n}}^t \in \pi_{t\rat_{p^n}}KU_{C_{p^n}}$ in the cofiber
\[
\pi_{t\rat_{p^n}}(KU_{C_{p^n}} \otimes \Cof\left(\nabla\colon C_{p^n+}\rightarrow S^0)\right) = \pi_{0,2tp^{n-1}(p-1)-1}KU_{C_{p^n}}
\]
lifts to an element of the composition Toda bracket $\langle a^{1/2},\nabla,\hat{\alpha}\rangle \subset \pi_{0,2tp^{n-1}(p-1)-1}S_{C_{p^n}}$, where $\hat{\alpha} \colon S^{t\rat_{p^n}-1}\rightarrow C_{p^n+}$ is adjoint to an element $\alpha \in \pi_{2tp^{n-1}(p-1)-1}S$ of order $p^n$ in the image of $J$.
\end{cor}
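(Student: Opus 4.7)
The plan is to combine the explicit $KU$-theory identification of Proposition \ref{prop:imr} with the equivariant Adams conjecture (Proposition \ref{prop:adamsconjecture}) to produce the required lift, and then to recognize it through the long exact sequence of the cofiber sequence defining the Toda bracket.

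First, I would unwind the spoke-graded identification. Since $V_{tp^{n-1}(p-1)} = t\rat_{p^n}$ in the conventions of \S\ref{ssec:spoke}, the image of $\beta_{\rat_{p^n}}^t$ in the cofiber of $\nabla$ is precisely the class $a^{1/2}\beta_{\rat_{p^n}}^t \in \pi_{0, 2tp^{n-1}(p-1)-1}^{C_{p^n}} KU$. By Proposition \ref{prop:imr} applied with $l = n$ and $c = t$ (and the hypothesis $8 \mid 2^n t$ ensuring for $p = 2$ that we land in the regular range of \cref{thm:kofp} where $M^{C_{p^n}} = M^{\bbC, C_{p^n}}$), the restriction $r(a^{1/2}\beta_{\rat_{p^n}}^t)$ is a distinguished generator $\alpha$ of the image of $J$ of order $p^n$ in degree $2tp^{n-1}(p-1)-1$. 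By Lemma \ref{lem:kufopodd} or the relevant part of \cref{thm:kofp}, this class is fixed by all Adams operations, so by Proposition \ref{prop:adamsconjecture} (applied to $T = SC_{p^n}$, whose Borel construction is simply connected) there is a lift $Y \in \pi_{0, 2tp^{n-1}(p-1)-1}^{C_{p^n}} S$ whose Hurewicz image is $a^{1/2}\beta_{\rat_{p^n}}^t$ and which restricts to $\alpha$, with indeterminacy controlled by Lemma \ref{lem:indeterminacy} (and vanishing under our hypotheses).

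Second, I would recognize $Y$ via the Toda bracket. The cofiber sequence $C_{p^n+} \xrightarrow{\nabla} S^0 \xrightarrow{a^{1/2}} SC_{p^n}$ yields a long exact sequence whose connecting map
\[
\partial\colon \pi_{t\rat_{p^n}}^{C_{p^n}} (SC_{p^n} \wedge S_{C_{p^n}}) \longrightarrow \pi_{t\rat_{p^n}-1}^{C_{p^n}}(C_{p^n+} \wedge S_{C_{p^n}}) \cong \pi_{2tp^{n-1}(p-1)-1} S
\]
corresponds, under the Frobenius/Wirthm\"uller isomorphism, to the nonequivariant restriction $r$. The composition Toda bracket $\langle a^{1/2}, \nabla, \hat{\alpha}\rangle$ consists precisely of the preimage of $\hat{\alpha}$ under $\partial$ (modulo the image of $a^{1/2}_*$); since $\partial Y$ corresponds to $r(Y) = \alpha$, hence to $\hat{\alpha}$, the class $Y$ represents this bracket.

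The main obstacle is verifying well-definedness of the Toda bracket, i.e., the vanishing of $\nabla_* \hat{\alpha}$ equivariantly. Under the identifications above, this amounts to showing that $\alpha$ (of order $p^n$) admits an equivariant realization killed by $\nabla_*$. An isotropy-separation argument using that $(t\rat_{p^n} - 1)^H = -1 < 0$ for $H \neq e$ (which forces $\pi_{V}^{C_{p^n}}(\tilde{E}C_{p^n}) = \pi_{-1} S = 0$ in the geometric fixed-point part) reduces this to the vanishing of $p^n \cdot \alpha$, which holds by construction of $\alpha$. Consistency of the two identifications of $Y$—one via $KU$-theory and one via the Toda bracket—is then enforced by the naturality of the splitting provided by Proposition \ref{prop:adamsconjecture} applied to the map of pointed $C_{p^n}$-spaces $SC_{p^n} \to SC_{p^n}$.
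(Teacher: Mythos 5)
Your overall strategy matches the paper's proof: you produce a class $y \in \pi_{0,2tp^{n-1}(p-1)-1}S_{C_{p^n}}$ whose $K$-theoretic image is $a^{1/2}\beta_{\rat_{p^n}}^t$ and whose boundary $r(y)$ is an element $\alpha$ of order $p^n$ in the image of $J$, via \cref{prop:imr} and the equivariant Adams conjecture (i.e.\ the argument of \cref{lem:jcomputeroot}), and then identify the set of such $y$ with the Toda bracket through the cofiber sequence $C_{p^n+}\xrightarrow{\nabla}S^0\xrightarrow{a^{1/2}}SC_{p^n}$. That said, there are two substantive issues.

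First, your application of \cref{prop:adamsconjecture} is misstated. The relevant pointed $G$-space is not $T = SC_{p^n}$ but the skeleton $\widetilde{E}C_{p^n}^{\leq w-1}$ with $w = 2tp^{n-1}(p-1)-1$, since $\pi_{0,w}S_{C_{p^n}} = S^0_{C_{p^n}}(\widetilde{E}C_{p^n}^{\leq w-1})$. Moreover, $(SC_{p^n})_{\h C_{p^n}} \simeq BC_{p^n}$ is emphatically \emph{not} simply connected. What saves the $p=2$ case is that the hypothesis $8 \mid 2^n t$ forces a high enough skeleton that the Borel construction is a Thom space of a complex bundle of rank $\geq 2$ over $BC_{2^n}$, hence simply connected; this is precisely where the hypothesis earns its keep, alongside the regularity condition in \cref{thm:kofp}. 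Second, your final paragraph is superfluous and its argument is incomplete. Once the lift $y$ is produced, the bracket is automatically nonempty: $\nabla\circ\hat{\alpha}$ factors through the consecutive maps $SC_{p^n}\to\Sigma C_{p^n+}\xrightarrow{\nabla}S^1$ of the cofiber sequence and so vanishes. There is no need for a separate verification, and the isotropy-separation sketch does not actually close the gap you set up: the free part of the sequence is $\pi_{|V|}\Sigma^\infty_+ BC_{p^n}$ rather than $\pi_{-1}S$, so the vanishing of $p^n\alpha$ alone does not give $\tr(\alpha) = 0$ without more work. (A small additional imprecision: \cref{prop:imr} with $l=n$, $c=t$ only applies verbatim when $p\nmid t$; in general $l = n + v_p(t)$ and one gets $p^{l-n}j^{(p)}_k$, though the conclusion that $\alpha$ has order $p^n$ survives.)
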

\begin{proof}
Consider the diagram
\begin{center}\begin{tikzcd}
&SC_{p^n}\ar[d]&\Sigma C_{p^n+}\ar[d,"\nabla"]\\
S^{t\rat_{p^n}}\ar[r,"\hat{\alpha}"]\ar[ur,dashed,"y"]&\Sigma C_{p^n+}\ar[r,"\nabla"]\ar[ur,dashed]&S^1\ar[r,"a^{1/2}"]&\Sigma SC_{p^n}
\end{tikzcd}.\end{center}
Because $a^{1/2}$ is the cofiber of $\nabla$, there is a preferred nullhomotopy of $a^{1/2}\circ\nabla$, and the subset of $\langle a^{1/2},\nabla,\hat{\alpha}\rangle$ corresponding to this is exactly the set of lifts $y$. In other words, it is exactly the set of $y \in \pi_{0,2tp^{n-1}(p-1)-1}S_{C_{p^n}}$ satisfying $\resp(y) = \alpha$.

The image of $\beta_{\rat_{p^n}}^t$ in $\pi_{0,2tp^{n-1}(p-1)-1}KU_{C_{p^n}}$ is $a^{1/2}\beta_{\rat_{p^n}}^t$. The assumption that $8\mid p^nt$ when $p=2$ ensures that this lives in $M_{2p^{n-1}t(p-1)-1}^{C_{p^n}}$. \cref{prop:imr} then implies that $\resp(a^{1/2}\beta_{\rat_{p^n}}^t) \in \pi_{2p^{n-1}t(p-1)-1}J_p$ is an element of order $p^n$. Now the same argument from \cref{lem:jcomputeroot} ensures that $a^{1/2}\beta_{\rat_{p^n}}^t$ lifts to a class $y \in \pi_{0,2tp^{n-1}(p-1)-1}S_{C_{p^n}}$ for which $\resp(y)$ is an element of order $p^n$ in the image of $J$.
\end{proof}

We can now give the proof of \cref{thm:mainroot2}.

\begin{theorem}\label{thm:micomputation}
Suppose $k\geq 1$ and $X\in \Gamma_k(C_{p^{n-1}})\setminus \Gamma_{k+1}(C_{p^{n-1}})$, and write $\phi(X) = c_1 f_{1,k}^{p,n}+\cdots+c_n f_{n,k}^{p,n}$. Then
\begin{enumerate}
\item If $p>2$, then $k=2p^{l-1}c(p-1)-1$ for some $l\geq 1$ and $p\nmid c$. In this case $j_{2p^{l-1}c(p-1)-1}^{(p)}$ has order $p^l$, and if $t = \min(n,l)$ then
\[
M_{C_{p^n}}(X)\cap \pi_{2p^{l-1}c(p-1)-1} j_{(p)} = \{(p^{t-1}c_1+p^{t-2}c_2+\cdots+c_t) \cdot p^{l-t}j_{2p^{l-1}c(p-1)-1}^{(p)}\}.
\]
\item If $p=2$, then $k$ is congruent to one of $1,2,3,7\pmod{8}$. In this case,
\begin{enumerate}
\item If $k = 1$, then $M_{C_{2^n}}(X) = \{\eta\}$.
\item If $k = 8l+1$ with $l\geq 1$, then
\[
M_{C_{2^n}}(X)\cap \pi_{8l+1} j_{(2)} = P^l \eta + \bbZ/(2)\{P^{l-1}\eta\epsilon\}.
\]
\item If $k = 8l+2$, then 
\[
M_{C_{2^n}}(X)\cap \pi_{8l+2} j_{(2)} = \{P^l \eta^2\}.
\]
Moreover, this can only happen for $n\leq 2$.
\item If $k = 8l+3$, then $j_{8l+3}^{(2)}$ has order $8$ and
\begin{enumerate}
\item If $n=1$, then 
\[
M_{C_2}(X) \cap \pi_{8l+3} j_{(2)} = \{4j_{8l+3}^{(2)}\}.
\]
\item If $n=2$, then 
\[
M_{C_4}(X) \cap \pi_{8l+3} j_{(2)} = \{-2(c_1-c_2)j_{8l+3}^{(2)}\}.
\]
\item If $n\geq 3$, then 
\[
M_{C_{2^n}}(X)\cap \pi_{8l+3} j_{(2)} = \{-(2c_1-c_2)j_{8l+3}^{(2)}\}.
\]
\end{enumerate}
\item If $k=2^{l}c-1$ with $2\nmid c$ and $l\geq 3$, then $j_{2^{l}c-1}^{(2)}$ has order $2^{l+1}$, and if $t = \min(n,l)$ then
\[
M_{C_{2^n}}(X) \cap \pi_{2^{l}c-1} j_{(2)} = \{(2^{t-1}c_1+2^{t-2}c_2+\cdots+c_t) 2^{l+1-t} j_{2^{l}c-1}^{(2)}\}.
\]
\end{enumerate}
\end{enumerate}
\end{theorem}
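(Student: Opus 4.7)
The strategy is to apply \cref{lem:jcomputeroot}: writing $\tilde X \in I(C_{p^n})$ for the unique preimage of $X$ under $\Phi$ and $\tilde X = a^{k/2} Y$ with $Y \in M_k^{C_{p^n}}$, the computation of $M_{C_{p^n}}(X)\cap\pi_\ast j_{(p)}$ reduces to identifying $r(\tilde Y)$ for a lift $\tilde Y \in \pi_{0,k}J_{C_{p^n}}$, modulo the indeterminacy exhibited in \cref{lem:indeterminacy}. This indeterminacy is trivial except in degrees $k = 8l+\delta$ with $\delta \in \{0,1\}$, where it accounts precisely for the cyclic group $\bbZ/(2)\{P^{l-1}\epsilon\eta^\delta\}$ that appears in claim (2)(b).

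To identify $Y$, I would use \cref{lem:phifp2}, which asserts $\tilde\phi(y_s) = f^{p,n}_{s,k}$ on the basis of \cref{thm:kufp}/\cref{thm:kofp}, together with the commutative diagram of \cref{rmk:gammak} identifying $\tilde\phi(Y)$ with $\phi(X)$. Since $\tilde\phi$ is injective on the free group $M_k^{C_{p^n}}$, the hypothesis $\phi(X) = c_1 f^{p,n}_{1,k}+\cdots+c_n f^{p,n}_{n,k}$ forces $Y = c_1 y_1 + \cdots + c_n y_n$. By linearity, $r(Y) = \sum_s c_s\, r(y_s)$, with each $r(y_s)$ given verbatim by \cref{prop:imr}.

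Each claim of the theorem then follows by direct substitution. For (1), only the first $t := \min(n,l)$ basis elements have nonzero $r$, and $r(y_s) = p^{l-s}j^{(p)}_k$ sums to $p^{l-t}(p^{t-1}c_1 + \cdots + c_t)j^{(p)}_k$. Claims (2)(d)(iii) and (2)(e) follow identically for $p=2$ after reading off the relevant $r$-values, and (2)(d)(i)--(ii) proceed the same way using the smaller-$n$ bases. In degrees $k = 8l+1$ and $k = 8l+2$, every basis generator except one is $a^{1/2}$-divisible and hence in the kernel of $r$ (because $r\circ a^{1/2}=0$); the distinguished generator has $r$-value $P^l\eta$ or $P^l\eta^2$ and must appear with odd coefficient, as the hypothesis $X \notin \Gamma_{k+1}(C_{p^{n-1}})$ forces $r(Y)\neq 0$. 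The side assertion in (2)(c) that $n \leq 2$ reflects that in $M^{C_{2^n}}_{8l+2}$ for $n \geq 3$ \emph{every} basis element is $a^{1/2}$-divisible, so $r(Y) = 0$ for all $Y$ and no $X$ can achieve $|M_{C_{2^n}}(X)| = 8l+2$. The $k=1$ case in (2)(a) is immediate because $\eta$ is the unique nonzero element of $\pi_1 S$.

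The main obstacle is purely bookkeeping at $p=2$: the basis of $M_k^{C_{2^n}}$ in \cref{thm:kofp} depends on $k \bmod 8$ and on whether $n$ is $1$, $2$, or $\geq 3$, so each of the three regimes in (2)(d) must be verified separately. For instance, the expression $-(2c_1 - c_2)j^{(2)}_{8l+3}$ for $n \geq 3$ arises by substituting $r(y_1) = -2 j^{(2)}_{8l+3}$ and $r(y_2) = j^{(2)}_{8l+3}$ from \cref{prop:imr} and noting that the remaining $y_s$ with $s \geq 3$ satisfy $r(y_s) = 0$; the formulas for $n = 1$ and $n = 2$ come from the corresponding two-generator bases by the same calculation, using that $r(2a^{1/2}\beta_{4k-2}) = 2 j^{(2)}_{8k-5}$ at $n=2$.
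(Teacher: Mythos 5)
Your proposal is correct and matches the paper's proof essentially verbatim: identify $Y = c_1 y_1 + \cdots + c_n y_n \in M_k^{C_{p^n}}$ from the marks expansion of $\phi(X)$, then apply \cref{lem:jcomputeroot} and \cref{prop:imr} to read off $r(Y)$ modulo the indeterminacy of \cref{lem:indeterminacy}. The only nitpick is a transient wording slip — for $k = 8l+2$ and $n\geq 3$ \emph{every} basis generator is $a^{1/2}$-divisible, not ``every except one'' — but you correct this yourself in the very next sentence.
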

\begin{proof}
By \cref{lem:phifp2}, to say that $\phi(X) = c_1 f_{1,k}^{p,n}+\cdots+c_n f_{n,k}^{p,n}$ is to say that $X \in A(C_{p^{n-1}})$ lifts to $\widetilde{X} = a^{k/2}(c_1y_1+\cdots+c_ny_n)$ in $A(C_{p^n})$, where $y_i$ is the $i$th basis element of $M_k^{C_{p^n}}$. Now the theorem follows from \cref{lem:jcomputeroot} and \cref{prop:imr}.
\end{proof}

\begin{ex}\label{ex:miexamples}
Up to modifying $\nu$ and $\sigma$ by an odd integer, we have
\begin{gather*}
M_{C_2}(2) \ni \eta,\quad M_{C_4}(2[C_2]) \ni \eta,\quad M_{C_4}(4) \ni \eta,\quad M_{C_8}(2[C_4]) \ni \eta,\quad M_{C_8}(8)\ni \eta, \\
M_{C_2}(4) \ni \eta^2,\qquad M_{C_4}(2+[C_2]) \ni \eta^2,\\
M_{C_4}(4+2[C_2]) \ni 2\nu,\qquad M_{C_4}(4[C_2]) \ni -2\nu,\\
M_{C_8}(4[C_4/C_2]+2[C_4]) \ni 2\nu,\qquad M_{C_8}(4+6[C_4/C_2]) \ni 6\nu,\\
M_{C_8}(4[C_4]) \ni -\nu,\quad M_{C_8}(2[C_4/C_2]+[C_4]) \ni \nu,\quad M_{C_8}(2+[C_4/C_2]+3[C_4]) \ni 2\sigma.\tag*{$\triangleleft$}
\end{gather*}
\end{ex}

\begin{ex}\label{ex:miexamplesodd}
For any prime $p>2$ we have 
\[
j_{2(p-1)-1}^{(p)} \in M_{C_{p^n}}(p[C_{p^{n-1}}]),\qquad j_{2(p-1)-1}^{(p)} \in M_{C_{p^n}}(p^n).
\]
The first is implicit in \cref{prop:imr} and is an instance of \cref{prop:roottr}; it is realized by
\[
\Phi(a^{p-1/2}\cdot a^{1/2}t_{n,1}\beta_{p-1}) = p[C_{p^{n-1}}],\qquad\resp(a^{1/2}t_{n,1}\beta_{p-1}) = j_{2(p-1)-1}^{(p)}.
\]
Hence to verify the second it suffices to show that $p^n - p[C_{p^{n-1}}]$ lives in higher filtration, which follows from the identity $p^n - p[C_{p^{n-1}}] = \Phi(pz_{n,2}+p^2z_{n,3}+\cdots+p^{n-1}z_{n,n})$ as $z_{n,2},\ldots,z_{n,n} \in A(C_{p^n})$ are already in the image of $a^{p-1}\colon M_{2(p-1)}^{C_{p^n}}\rightarrow A(C_{p^n})$.
\tqed
\end{ex}

\cref{thm:micomputation} has the following corollary.

\begin{cor}\label{cor:imagemahowald}
The full intersection $M_{C_{p^n}}(A(C_{p^{n-1}}))\cap \pi_\ast j_{(p)}$ can be described as follows.
\begin{enumerate}
\item If $p > 2$, then it consists of exactly the elements of order at most $p^n$ in $\pi_\ast j_{(p)}$.
\item If $p = 2$, then
\begin{enumerate}
\item If $n \leq 2$, then it consists of exactly the elements of order at most $2^n$ in $\pi_k j_{(2)}$ for $k\equiv 2,3,7\pmod{8}$, as well as both classes in $\pi_k j_{(2)}$ for $k\equiv 1\pmod{8}$ that are not in the kernel of the map $\pi_k j_{(2)}\rightarrow \pi_k KO$.
\item If $n\geq 3$, then it consists of exactly the elements of order at most $2^n$ in $\pi_k j_{(2)}$ for $k\equiv 3\pmod{8}$ and $2\pi_k j_{(2)}$ for $k\equiv 7\pmod{8}$, as well as both classes in $\pi_k j_{(2)}$ for $k\equiv 1\pmod{8}$ that are not in the kernel of the map $\pi_k j_{(2)}\rightarrow \pi_k KO$.
\end{enumerate}
\end{enumerate}
In particular, if $p>2$ or $n\geq 3$ then, modulo indeterminacy, $M_{C_{p^n}}(A(C_{p^{n-1}}))$ consists of exactly those elements of order at most $p^n$ in the image of the \textit{complex} $J$-homomorphism.
\qed
\end{cor}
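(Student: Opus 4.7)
The proof is a direct bookkeeping exercise based on Theorem \ref{thm:micomputation}. The first observation is that by Theorem \ref{thm:cpnmahowaldfilt}, the functions $f_{1,k}^{p,n},\ldots,f_{n,k}^{p,n}$ form a $\bbZ$-basis of the image $\phi(\Gamma_k(C_{p^{n-1}}))$, so as $X$ varies the coefficient tuple $(c_1,\ldots,c_n)$ appearing in Theorem \ref{thm:micomputation} ranges over every element of $\bbZ^n$. It therefore suffices to determine, for each stem $k$ arising in Theorem \ref{thm:micomputation}, the image of the displayed formula there as $(c_1,\ldots,c_n)$ varies, and then to take the union over $k$.

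For $p>2$ and $k=2p^{l-1}c(p-1)-1$, set $t=\min(n,l)$. As $c_t$ varies over $\bbZ$ the integer $p^{t-1}c_1+\cdots+c_t$ takes every value, so the achievable subset of $\pi_k j_{(p)}\cong\bbZ/p^l$ is the cyclic subgroup $p^{l-t}\bbZ\cdot j_k^{(p)}$ of order $p^t=\min(p^n,p^l)$, which is exactly the $\leq p^n$-torsion of $\pi_k j_{(p)}$; this proves (1). The same linear-algebra argument handles $p=2$ and $k=2^lc-1$ with $l\geq 3$: one obtains the subgroup of order $2^{\min(n,l)}$ in $\pi_k j_{(2)}\cong\bbZ/2^{l+1}$. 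For $n\leq 2$ this is the $\leq 2^n$-torsion of $\pi_k j_{(2)}$, while for $n\geq 3$ it sits inside $2\pi_k j_{(2)}\cong\bbZ/2^l$ as the $\leq 2^n$-torsion there.

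The remaining $2$-primary stems are read directly off Theorem \ref{thm:micomputation}(2). For $k\equiv 1\pmod 8$ one gets the $n$-independent coset $P^l\eta+\bbZ/(2)\{P^{l-1}\eta\epsilon\}$ (to be read as $\{\eta\}$ when $l=0$), which is exactly the set of classes mapping nontrivially to $\pi_k KO$. For $k\equiv 2\pmod 8$ one gets only $P^l\eta^2$, and only when $n\leq 2$. For $k\equiv 3\pmod 8$, the formulas of Theorem \ref{thm:micomputation}(2)(d) yield, as $(c_1,c_2)$ varies: the subgroup $4\bbZ\cdot j_k^{(2)}$ when $n=1$, the subgroup $2\bbZ\cdot j_k^{(2)}$ when $n=2$ (since $-2(c_1-c_2)$ surjects onto $2\bbZ$), and all of $\pi_k j_{(2)}$ when $n\geq 3$ (since $-(2c_1-c_2)$ surjects onto $\bbZ$), in each case the $\leq 2^n$-torsion of the order-$8$ group. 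Stems $k\equiv 0,4,5,6\pmod 8$ do not occur as $|M_{C_{2^n}}(X)|$ for any nonzero $X$, so they contribute nothing beyond $0$. Unioning across all stems yields (a) and (b).

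The final summary is then immediate. For $p>2$, the real and complex image-of-$J$ homomorphisms coincide in the contributing stems, and the claim reduces to (1). For $p=2$ with $n\geq 3$, the contributing stems $\equiv 3,7\pmod 8$ are exactly those where the complex $J$-homomorphism at $2$ is nontrivial, and the $\leq 2^n$-torsion of its image matches the descriptions above; the $\eta$-family stems $\equiv 1,2\pmod 8$ are absent from the complex image-of-$J$. There is no serious obstacle in this argument; the only computations worth verifying are the surjectivity of the linear forms $-2(c_1-c_2)$ and $-(2c_1-c_2)$ onto the relevant subgroups of $\bbZ$ in the $k\equiv 3\pmod 8$ cases, which is immediate.
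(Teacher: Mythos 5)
Your argument is correct and is exactly the bookkeeping that the paper's terse $\qed$ after the corollary leaves to the reader: one just takes Theorem \ref{thm:micomputation}, lets $(c_1,\ldots,c_n)$ sweep out $\bbZ^n$ (justified by Theorem \ref{thm:cpnmahowaldfilt}), and computes the image of the linear forms appearing there in each residue class of $k$. The case analysis you carry out — the $p>2$ reduction to $p^{l-\min(n,l)}\bbZ\cdot j_k^{(p)}$, the $2$-primary splits at $k\equiv 1,2,3,7\pmod 8$, and the observation that $-2(c_1-c_2)$ and $-(2c_1-c_2)$ surject onto $2\bbZ$ and $\bbZ$ respectively — all checks out, and the identification of the complex image of $J$ in the closing summary is handled correctly. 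No gap.
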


\begin{ex}
The class $\eta^2$ is in the image of the $C_{2^n}$-Mahowald invariant only for $n\in\{1,2\}$, realized in either case by $\resp(a\beta_2) = \eta^2$, with compatibility being an instance of \cref{prop:restr}. The fact that this does not work for $n\geq 3$ then corresponds to the fact that $a \beta_2 = (1-L^{-1})\beta_L \in \pi_{0,2}KO_{C_{2^n}}\subset \pi_{0,2}KU_{C_{2^n}}$ is only fixed by $\psi^\ell$ for $n \in\{1,2\}$.
\tqed
\end{ex}

\begingroup
\raggedright
\bibliography{refs}
\bibliographystyle{alpha}
\endgroup

\end{document}